\newcommand{\Z}{\mathbb{Z}}
\newcommand{\C}{\mathbb{C}}
\newcommand{\F}{\mathbb{F}}
\newcommand{\Q}{\mathbb{Q}}
\newcommand{\Ql}{\mathbb{Q}_\ell}
\newcommand{\bk}{\Bbbk}
\newcommand{\Ga}{\mathbb{G}_{\mathrm{a}}}
\newcommand{\Mod}{\mathrm{Mod}}
\newcommand{\biMod}{\text{-}\Mod\text{-}}
\newcommand{\lMod}{\text{-}\Mod}
\newcommand{\grbiMod}{\text{-}\Mod^\Z\text{-}}
\newcommand{\grlMod}{\text{-}\Mod^\Z}
\DeclareMathOperator{\Sym}{Sym}
\newcommand{\real}{\mathrm{real}}
\DeclareMathOperator{\gr}{gr}
\newcommand{\sA}{\mathscr{A}}
\newcommand{\sT}{\mathscr{T}}
\newcommand{\Cb}{C^{\mathrm{b}}}
\newcommand{\Kb}{K^{\mathrm{b}}}
\newcommand{\mix}{{\mathrm{mix}}}
\newcommand{\Db}{D^{\mathrm{b}}}
\newcommand{\Dmix}{D^\mix}
\newcommand{\coH}{\mathsf{H}}
\newcommand{\fh}{\mathfrak{h}}
\newcommand{\uu}[1]{\underline{#1}}
\newcommand{\uv}{{\underline{v}}}
\newcommand{\uw}{{\underline{w}}}
\newcommand{\bim}{{\mathrm{bim}}}
\newcommand{\mult}{{\mathsf{m}}}
\newcommand{\cX}{\mathscr{X}}
\newcommand{\GKM}{\mathscr{G}}
\newcommand{\BKM}{\mathscr{B}}
\newcommand{\TKM}{\mathscr{T}}
\newcommand{\UKM}{\mathscr{U}}
\newcommand{\PKM}{\mathscr{P}}
\newcommand{\LKM}{\mathscr{L}}
\newcommand{\bX}{\mathbf{X}}
\newcommand*\leftdash{\!\rotatebox[origin=c]{-45}{$\dabar@\dabar@\dabar@$}\!}
\newcommand*\rightdash{\!\rotatebox[origin=c]{45}{$\dabar@\dabar@\dabar@$}\!}
\renewcommand{\fatslash}{\!\mathord{\mathchar"2728}\;}
\renewcommand{\fatbslash}{\mathord{\mathchar"2729}}
\newcommand{\BGB}{\BKM \backslash \GKM / \BKM}
\newcommand{\UGB}{\UKM \fatbslash \GKM/\BKM}
\newcommand{\UGBold}{\UKM \backslash \GKM/\BKM}
\newcommand{\UGU}{\UKM \fatbslash \GKM\fatslash \UKM}
\newcommand{\UGUby}{\UKM\leftdash\GKM\rightdash\UKM}
\newcommand{\UGUvee}{\UKM^\vee \fatbslash \GKM^\vee\fatslash \UKM^\vee}
\newcommand{\UGUveeby}{\UKM^\vee \leftdash \GKM^\vee\rightdash \UKM^\vee}
\newcommand{\BGBvee}{\BKM^\vee\backslash \GKM^\vee/\BKM^\vee}
\newcommand{\BGUvee}{\BKM^\vee\backslash \GKM^\vee/\UKM^\vee}
\newcommand{\PGBvee}{\PKM_J^\vee\backslash \GKM^\vee/\BKM^\vee}
\newcommand{\PGUvee}{\PKM_J^\vee\backslash \GKM^\vee/\UKM^\vee}
\newcommand{\Parity}{\mathrm{Parity}}
\newcommand{\ParityBS}{\mathrm{Parity}_{\mathrm{BS}}}
\newcommand{\ParityBSp}{\mathrm{Parity}_{\mathrm{BS}}^\oplus}
\newcommand{\cE}{\mathcal{E}}
\newcommand{\Diag}{\mathscr{D}}
\newcommand{\DiagBS}{\mathscr{D}_{\mathrm{BS}}}
\newcommand{\DiagBSp}{\mathscr{D}_{\mathrm{BS}}^\oplus}
\newcommand{\oDiagBSp}{\overline{\mathscr{D}}{}^\oplus_{\mathrm{BS}}}
\newcommand{\uDiagBSp}{\underline{\mathscr{D}}{}^\oplus_{\mathrm{BS}}}
\newcommand{\ustar}{\mathbin{\underline{\star}}}
\newcommand{\Perv}{\mathrm{Perv}}
\newcommand{\Tilt}{\mathrm{Tilt}}
\newcommand{\TiltBS}{\mathrm{\Tilt}_{\mathrm{BS}}}
\newcommand{\TiltBSp}{\mathrm{\Tilt}_{\mathrm{BS}}^\oplus}
\newcommand{\cT}{\mathcal{T}}
\newcommand{\Tmon}{\widetilde{\mathcal{T}}}
\newcommand{\hatstar}{\mathbin{\widehat{\star}}}
\newcommand{\cP}{\mathcal{P}}
\newcommand{\Pmon}{\widetilde{\mathcal{P}}}
\newcommand{\bbDelta}{\Delta\llap{$\scriptstyle\Delta$}}
\newcommand{\bbnabla}{\raisebox{2pt}{\rlap{$\scriptstyle\nabla$}}\nabla}
\newcommand{\BE}{\mathsf{BE}}
\newcommand{\RE}{\mathsf{RE}}
\newcommand{\LM}{\mathsf{LM}}
\newcommand{\FM}{\mathsf{FM}}
\newcommand{\LE}{{\mathsf{RE}^\vee}}
\newcommand{\ForBERE}{\mathsf{For}^{\BE}_{\RE}}
\newcommand{\ForLMRE}{\mathsf{For}^{\LM}_{\RE}}
\newcommand{\ForFMRE}{\mathsf{For}^{\FM}_{\RE}}
\newcommand{\ForFMLM}{\mathsf{For}^{\FM}_{\LM}}
\newcommand{\ForBELE}{\mathsf{For}^{\FM}_{\LE}}
\newcommand{\cF}{\mathcal{F}}
\newcommand{\cG}{\mathcal{G}}
\newcommand{\cH}{\mathcal{H}}
\newcommand{\cL}{\mathcal{L}}
\newcommand{\Whit}{\mathrm{Wh}}
\newcommand{\Av}{\mathsf{Av}}
\newcommand{\IC}{\mathrm{IC}}
\newsavebox\lowerdot
\savebox\lowerdot{%
\begin{tikzpicture}[scale=0.3,thick,baseline]
 \draw (0,-0.5) to (0,0.5);
 \node at (0,-0.5) {$\bullet$};
% \node at (0,1.4) {\small $s$};
\end{tikzpicture}%
}
\newsavebox\upperdot
\savebox\upperdot{%
\begin{tikzpicture}[scale=0.3,thick,baseline]
 \draw (0,-0.5) to (0,0.5);
 \node at (0,0.5) {$\bullet$};
% \node at (0,1.4) {\small $s$};
\end{tikzpicture}%
}
\newsavebox\upperlowerdot
\savebox\upperlowerdot{%
\begin{tikzpicture}[scale=0.3,thick,baseline]
 \draw (0,-1) to (0,-0.4);
 \draw (0,0.4) to (0,1);
 \node at (0,-0.4) {$\bullet$};
 \node at (0,0.4) {$\bullet$};
% \node at (0,1.4) {\small $s$};
\end{tikzpicture}%
}
\newsavebox\lowerupperdot
\savebox\lowerupperdot{%
\begin{tikzpicture}[scale=0.3,thick,baseline]
 \draw (0,-0.5) to (0,0.5);
 \node at (0,-0.5) {$\bullet$};
 \node at (0,0.5) {$\bullet$};
% \node at (0,1.4) {\small $s$};
\end{tikzpicture}%
}
\newsavebox\capmor
\savebox\capmor{%
\begin{tikzpicture}[yscale=0.1,xscale=0.1,baseline,thick] \draw[black] (-1,0) to[out=90, in=180] (0,2) to[out=0, in=90] (1,0); \end{tikzpicture}%
}
\newsavebox\cupmor
\savebox\cupmor{%
\begin{tikzpicture}[yscale=0.1,xscale=0.1,thick] \draw[black] (-1,2) to[out=90, in=180] (0,0) to[out=0, in=90] (1,2); \end{tikzpicture}%
}
\newsavebox\invymor
\savebox\invymor{%
\begin{tikzpicture}[yscale=0.2,xscale=0.1,baseline,thick] \draw (-1,-1) -- (0,0) -- (1,-1); \draw (0,0) -- (0,1); \end{tikzpicture}%
}
\newsavebox\ymor
\savebox\ymor{%
\begin{tikzpicture}[yscale=-0.2,xscale=0.1,baseline,thick] \draw (-1,-1) -- (0,0) -- (1,-1); \draw (0,0) -- (0,1); \end{tikzpicture}%
}
\newcommand{\id}{\mathrm{id}}
\newcommand{\pt}{\mathrm{pt}}
\newcommand{\simto}{\overset{\sim}{\to}}
\newcommand{\la}{\langle}
\newcommand{\ra}{\rangle}
\DeclareMathOperator{\Hom}{Hom}
\DeclareMathOperator{\End}{End}
\DeclareMathOperator{\uHom}{\underline{Hom}}
\DeclareMathOperator{\gHom}{\mathbb{H}\mathsf{om}}
\DeclareMathOperator{\gEnd}{\mathbb{E}\mathsf{nd}}
\newcommand{\V}{\mathbb{V}}
\newcommand{\W}{\mathbb{W}}
\newcommand{\JW}{{}^J \hspace{-1pt} W}
\newcommand{\Gp}{G}
\newcommand{\BGp}{B}
\newcommand{\TGp}{T}
\newcommand{\Iw}{\mathrm{Iw}}
\newcommand{\Fl}{\mathrm{Fl}}
\newcommand{\Gr}{\mathrm{Gr}}
\newcommand{\Wf}{W_{\mathrm{f}}}
\newcommand{\fW}{{}^{\mathrm{f}} W}
\newcommand{\Sf}{S_{\mathrm{f}}}
\newcommand{\IW}{\mathcal{IW}}
\newcommand{\ppn}{{}^\ell \hspace{-1pt} n}
\newcommand{\bG}{\mathbf{G}}
\newcommand{\bB}{\mathbf{B}}
\newcommand{\bT}{\mathbf{T}}
\newcommand{\bbX}{\mathbb{X}}
\renewcommand{\hat}{\widehat}
\renewcommand{\tilde}{\widetilde}
\newcommand{\hs}{{\hat{s}}}
\newcommand{\htt}{{\hat{t}}}
\newcommand{\hu}{{\hat{u}}}
\newcommand{\uH}{\underline{H}}
\def\lotimes{\@ifnextchar_{\@lotimessub}{\@lotimesnosub}}
\def\@lotimessub_#1{\mathchoice{\mathbin{\mathop{\otimes}^L}_{#1}}%
  {\otimes^L_{#1}}{\otimes^L_{#1}}{\otimes^L_{#1}}}
\def\@lotimesnosub{\mathbin{\mathop{\otimes}^L}}
\numberwithin{equation}{section}
\numberwithin{figure}{section}
\newtheorem{thm}{Theorem}[section]
\newtheorem{lem}[thm]{Lemma}
\newtheorem{prop}[thm]{Proposition}
\newtheorem{cor}[thm]{Corollary}
\theoremstyle{definition}
\theoremstyle{remark}
\newtheorem{rmk}[thm]{Remark}
\title[Koszul duality and characters of tilting modules]{Koszul duality for Kac--Moody groups and characters of tilting modules}
\thanks{P.A. was supported by NSF Grant No.~DMS-1500890. S.R. was partially supported by ANR Grant No.~ANR-13-BS01-0001-01. This project has received funding from the European Research Council (ERC) under the European Union's Horizon 2020 research and innovation programme (grant agreement No 677147).}
\author[P. N. Achar]{Pramod N. Achar}
\address{Department of Mathematics\\
  Louisiana State University\\
  Baton Rouge, LA 70803\\
  U.S.A.}
\email{pramod@math.lsu.edu}
\author[S. Makisumi]{Shotaro Makisumi}
\address{Department of Mathematics\\
Stanford University \\
Stanford, CA \\
U.S.A.}
\email{makisumi@stanford.edu}
\author[S. Riche]{Simon Riche}
\address{Universit\'e Clermont Auvergne, CNRS, LMBP, F-63000 Clermont-Ferrand, France.
}
\email{simon.riche@uca.fr}
\author[G. Williamson]{Geordie Williamson}
\address{School of Mathematics and Statistics F07, University of
  Sydney NSW 2006, Australia. }
\email{g.williamson@sydney.edu.au}
\begin{document}

\begin{abstract}
We establish a character formula for indecomposable tilting modules for connected reductive groups in characteristic $\ell$ in
terms of $\ell$-Kazhdan--Lusztig polynomials, for $\ell > h$ the Coxeter number. Using results of Andersen,
one may deduce a character formula for simple modules if $\ell \ge 2h-2$. Our results are a consequence of an extension to modular coefficients of 
a monoidal Koszul duality equivalence established by Bezrukavnikov and Yun.
% We extend the results of Bezrukavnikov--Yun on Koszul duality for
% constructible sheaves on Kac--Moody flag varieties to the setting of
% positive-characteristic coefficients. We deduce a
% character formula for indecomposable tilting modules 
% in terms of the $\ell$-canonical basis of
% the corresponding affine Hecke algebra.
\end{abstract}

\maketitle

%%%%%%%%%%%%%%%%%%%%%%%%%%%%%%%%%%%%%%%%%%%%%%%%%%%%%%%%%%%%%%%%%%%%%%%%%%%
\section{Introduction}
\label{sec:intro}
%%%%%%%%%%%%%%%%%%%%%%%%%%%%%%%%%%%%%%%%%%%%%%%%%%%%%%%%%%%%%%%%%%%%%%%%%%%

\subsection{Overview}

Let $\mathbf{G}$ denote a connected reductive group defined over an
algebraically closed field $\bk$ of positive characteristic $\ell$ bigger than the Coxeter number $h$ of $\mathbf{G}$, and let
$\mathrm{Rep}(\mathbf{G})$ denote its category of algebraic
representations. In this paper we establish a character formula for
the indecomposable tilting modules in the principal block $\mathrm{Rep}_0(\mathbf{G})$ of $\mathrm{Rep}(\mathbf{G})$ (which, by classical work, implies in theory a character formula for any tilting module in $\mathrm{Rep}(\mathbf{G})$). The answer
is given in terms of the $\ell$-Kazhdan--Lusztig polynomials of the affine Hecke
algebra of the dual root system, and confirms a conjecture of the last
two authors ~\cite{rw}. Thanks to an observation of Andersen, our
results also imply a formula for the characters of the simple modules
of $\mathbf{G}$ if $\ell \geq 2h-2$.

The problem of determining the simple characters of $\mathbf{G}$ has a rich
history. Following important early calculations of Jantzen in ranks${}\le 3$, Lusztig proposed a conjecture under the assumption that
$\ell$ is larger than the Coxeter number \cite{lusztig-conjecture}. Lusztig's conjecture was
established for sufficiently large $\ell$ \cite{AJS, kl-quantum,LUSMon,kt} and
subsequently for $\ell$ larger than an explicit 
enormous bound \cite{F}. On the other hand, ideas of Soergel,
Elias, He, and the fourth author led to a uniform construction of many
counterexamples \cite{soergel-poschar,ew,HeW,williamson-explosion}. These
counterexamples involve primes $\ell$ which grow exponentially
in the Coxeter number. 

The question of tilting characters is even more mysterious. Despite
the central importance of tilting modules in the modular representation theory of $\mathbf{G}$ and
related groups (e.g. symmetric groups), their characters appear
extremely difficult to determine: at present there
is a complete understanding only for tori (where the
problem is trivial) and $\mathbf{G} = \mathrm{SL}_2$.
 The case of a quantum group at a
root of unity was settled in work of Soergel~\cite{soergel-char-for,
  soergel-char-tilt}, and a conjecture of Andersen would imply that
these characters determine the modular tilting characters for weights
in the lowest $\ell^2$-alcove. However, for tilting modules (in
contrast to simple modules), there is no finite set of weights which
determines the answer in general.

Until the present series of works, all known or conjectured character
formulas for algebraic groups or quantum groups involved
some sort of Kazhdan--Lusztig polynomials. These polynomials admit a
combinatorial definition (involving only the affine Weyl group, viewed
as a Coxeter group), but also have a geometric meaning as the
graded dimensions of the stalks of intersection cohomology
complexes. The character formula proved in the current work instead involves
$\ell$-Kazhdan--Lusztig polynomials. These polynomials may be computed
algorithmically via diagrammatic algebra, and also have a geometric
meaning as the graded dimensions of the stalks of the $\ell$-parity
sheaves. It is important to note, however, that the algorithm to
calculate the $\ell$-Kazhdan--Lusztig polynomials is much more involved
than the original Kazhdan--Lusztig algorithm. On the other hand, the
formulas involving $\ell$-Kazhdan--Lusztig polynomials hold as soon as
$\ell$ is larger than the Coxeter 
number.\footnote{In fact, we expect a form of these formulas to hold for all
  $\ell$. See \cite[Conjecture 1.7]{rw} and \cite{el} where
  this conjecture is proved for the general linear group.} Thus
``independence of $\ell$'' and the Lusztig conjecture hold as soon as one has
agreement between $\ell$-Kazhdan--Lusztig polynomials and their
classical counterparts. (When this agreement occurs remains, however,
an important open question.)

The proof of our main result relies on a body of recent
work~\cite{prinblock,arider,mr:etsps,rw} establishing links between
representations of reductive groups and the geometry of affine flag
varieties.  This earlier work, summarized in
Figure~\ref{fig:modular-new} and discussed in~\S\ref{ss:intro-application-RT} below, had suggested that the character formula
for tilting modules would follow from a suitable kind of ``monoidal
modular Koszul duality'' for Hecke categories of parity sheaves
associated to affine flag varieties.  (An important antecedent for
these ideas is work of Bezrukavnikov--Yun \cite{by}, which establishes such an
equivalence with coefficients of characteristic $0$.) The authors'
previous paper~\cite{amrw} made it possible to formulate the monoidal
Koszul duality conjecture precisely.  In the present paper, we prove
the monoidal Koszul duality theorem, and we deduce our tilting
character formula as a consequence.

A striking aspect of monoidal Koszul duality is that the Hecke
category attached to a Kac--Moody group and to its Langlands dual are (in
a sense made precise by Theorem \ref{thm:monoidal-intro} below) formal
consequences of one another. In other words, the Hecke category already ``knows''
the Hecke category of its Langlands dual group. One can view this
result as analogous to the geometric Satake equivalence: any
complex reductive group already ``knows'' the category of
representations of its dual group. We expect this Langlands duality
for Hecke categories to have other applications in modular representation theory.

In the remainder of the introduction, we review what Koszul duality means for flag varieties, and what role it has played in representation theory.  We will give a precise statement of monoidal Koszul duality, and we will discuss characteristic-$0$ antecedents to our results.

\begin{figure}
\[
\begin{tikzcd}[row sep=large, column sep=small]
&&& \Db\Perv^\mix_{(\Iw)}(\Gr, \bk) 
  \ar[dlll, rightarrow, bend right=20, "\substack{\text{graded Finkelberg--}\\ \text{Mirkovi\'c conjecture~\cite{prinblock}}}" description]
  \ar[dd, leftrightarrow, bend left=20, dashed, "\substack{\text{parabolic--}\\ \text{Whittaker}\\ \text{duality}}"{name=P, description}]
& \Tilt(\UGU,\bk)
  \ar[dd, leftrightarrow, bend left=20, dashed, "\substack{\text{monoidal}\\ \text{Koszul}\\ \text{duality}}"{name=M, description}]
  \ar[l] \\
\Db\mathrm{Rep}_0(\mathbf{G})
  \ar[rr, leftarrow, "\text{\cite{prinblock}}" description] & \hspace{2em} &
\Db\mathrm{Coh}\rlap{$^{G \times \mathbb{G}_{\mathrm{m}}}(\widetilde{\mathcal{N}})$}
  \ar[ur, leftrightarrow, "\text{\cite{arider,mr:etsps}}" description] \\
%  \ar[dr, leftrightarrow, "\text{\cite{ab}}" description] \\
&&& \Db\Perv^\mix_{\mathcal{IW}}(\Fl, \bk)
  \ar[ulll, rightarrow, dashed, bend left=20, "\substack{\text{tilting character formula}\\ \text{(conjectured in~\cite{rw})}}" description]
& \Parity(\BGB,\bk)
  \ar[l]
  \ar[Rightarrow, from=M, to=P]
\end{tikzcd}
\]
\caption{Reductive groups and Koszul duality}\label{fig:modular-new}
\end{figure}
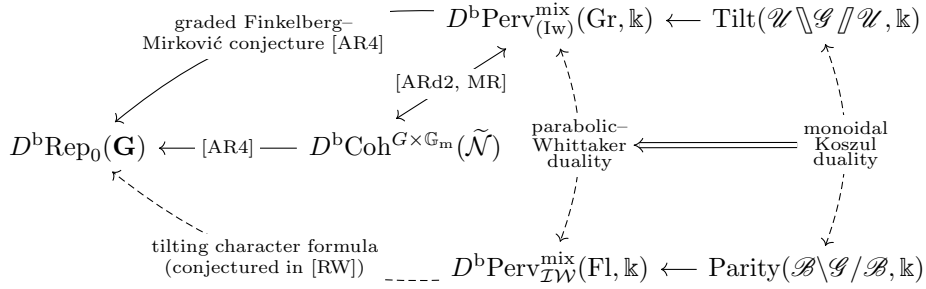

%--------------------------------------------------------------------------
\subsection{Koszul duality for flag varieties of reductive groups}
\label{ss:bgs-duality-new}
%--------------------------------------------------------------------------

Let $G$ be a complex semisimple algebraic group, let $B \subset G$ be a Borel subgroup, and let $T \subset B$ be a maximal torus. Let $\Db_{(B)}(G/B, \C)$ be the derived category of complexes of $\C$-sheaves on $G/B$ which are constructible with respect to the stratification by $B$-orbits (called the \emph{Bruhat stratification}), and let $\Perv_{(B)}(G/B, \C)$ be the heart of the perverse t-structure on this category.  Let $G^\vee$ be the Langlands dual group. In general, we use a superscript ``$^\vee$'' to indicate objects attached to $G^\vee$: for example, $T^\vee$, $\Perv_{(B^\vee)}(G^\vee/B^\vee,\C)$, etc.

Koszul duality for $G/B$ was first introduced by Be{\u\i}linson--Ginzburg--Soergel in~\cite{bgs}, motivated by two related ideas:
\begin{enumerate}
\item
the desire to explain the Kazhdan--Lusztig inversion formula for Kazhdan--Lusztig polynomials in categorical terms;
\item
the desire to relate two different geometric approaches to the study of the category $\mathcal{O}$ of the Lie algebra of $G$: one which originates in the Be{\u\i}linson--Bernstein localization theory~\cite{beilinson-bernstein} and leads to an equivalence of categories between a regular block of $\mathcal{O}$ and $\Perv_{(B)}(G/B, \C)$, as in~\cite[Proposition~3.5.2]{bgs}; and one due to Soergel which relates projective objects in a regular block of $\mathcal{O}$ with semisimple complexes (i.e.~direct sums of shifted simple perverse sheaves) in $\Db_{(B^\vee)}(G^\vee/B^\vee,\C)$, as in~\cite{soergel-kat}.
\end{enumerate}

The statement of Koszul duality in~\cite{bgs} involves a new category, denoted by $\Perv^\mix_{(B)}(G/B, \C)$, that serves as a ``graded version'' of $\Perv_{(B)}(G/B, \C)$.  (It is defined in terms of Deligne's mixed sheaves on an $\F_p$-version of the flag variety; see~\cite[\S 1.2]{amrw} for a more precise discussion.)  For each $w \in W$, there are four notable objects supported on the closure of $BwB/B$: denote by
\[
\IC^\mix_w, \qquad
\Delta^\mix_w, \qquad
\nabla^\mix_w, \qquad
\mathrm{T}^\mix_w
\]
the simple, standard, costandard, and indecomposable tilting objects, respectively, normalized so that their restrictions to $BwB/B$ have weight $0$.

Let us set $\Dmix_{(B)}(G/B,\C) := \Db \Perv^\mix_{(B)}(G/B, \C)$.  The construction of~\cite{bgs} provides an equivalence of categories\footnote{To be precise, the functor we call $\varkappa$ is actually the composition of the functor constructed in~\cite{bgs} with the Radon transform of~\cite{bbm,yun} (see also~\cite{bg}).  For a discussion of various versions of Koszul duality, see~\cite[Chapter~1]{amrw}.}
\begin{equation}\label{eqn:bgs-duality-intro}
\varkappa : \Dmix_{(B)}(G/B, \C) \simto \Dmix_{(B^\vee)}(G^\vee/B^\vee, \C)
\end{equation}
that satisfies $\varkappa \circ \langle 1 \rangle \cong [1] \langle -1 \rangle \circ \varkappa$, where $\langle 1 \rangle$ is the inverse of a square root of the Tate twist.  It also satisfies
\begin{align*}
\varkappa(\IC^\mix_w) &\cong \mathrm{T}^{\vee, \mix}_{w^{-1}}, & \varkappa(\Delta^\mix_w) &\cong \Delta^{\vee,\mix}_{w^{-1}} \\
\varkappa(\mathrm{T}^\mix_w) &\cong \IC^{\vee, \mix}_{w^{-1}}, & \varkappa(\nabla^\mix_w) &\cong \nabla^{\vee,\mix}_{w^{-1}}.
\end{align*}
The Kazhdan--Lusztig inversion formula can be understood as a ``combinatorial shadow'' of this equivalence.

%--------------------------------------------------------------------------
\subsection{The Kac--Moody case and quantum groups}
\label{ss:kac-moody-intro}
%--------------------------------------------------------------------------

These ideas were later generalized by Bezrukavnikov--Yun~\cite{by} to the case where $G$ is replaced by a general Kac--Moody group $\GKM$.  Let $\BKM \subset \GKM$ be a Borel subgroup, and let $\UKM \subset \BKM$ be its unipotent radical.  An important new idea in~\cite{by} (also suggested in~\cite{bg}) is that a richer version of Koszul duality can be obtained if one ``deforms'' the categories of semisimple complexes on $\GKM/\BKM$ and tilting perverse sheaves on $\GKM^\vee/\BKM^\vee$ along a polynomial ring. The $\BKM$-constructible semisimple complexes are thus replaced by the \emph{$\BKM$-equivariant} semisimple complexes, and the tilting perverse sheaves are replaced by the so-called ``free-monodromic'' objects constructed (via a very technical procedure) by Yun using certain pro-objects in the derived category of $\GKM^\vee/\UKM^\vee$, see~\cite[Appendix~A]{by}.  These deformed categories each have a monoidal structure, given by an appropriate kind of convolution product.  The main result of~\cite{by} is an equivalence of monoidal categories
\begin{equation}\label{eqn:by-duality-intro}
\tilde\varkappa: \mathrm{Semis}(\BGB,\C) \simto \Tilt(\UGUveeby,\C)
\end{equation}
relating $\BKM$-equivariant semisimple complexes on $\GKM/\BKM$ and free-monodromic tilting perverse sheaves attached to $\GKM^\vee$. 
From this, Bezrukavnikov--Yun then deduce a Kac--Moody analogue of~\eqref{eqn:bgs-duality-intro}.

As in~\S\ref{ss:bgs-duality-new}, this result has a combinatorial motivation in terms of Kazhdan--Lusztig polynomials~\cite{yun}, and a representation-theoretic motivation in terms of analogues of the category $\mathcal{O}$ for Kac--Moody Lie algebras.

But a third motivation for the work in~\cite{by}, specifically in the case of \emph{affine} Kac--Moody groups, came from the hope of uniting two geometric approaches to the study of representations of Lusztig's quantum groups at a root of unity (see e.g.~\cite[\S 1.2]{bez:ctm}), which we review below.  Let $\mathrm{Rep}_0(\mathsf{U}_\zeta)$ denote the principal block of the category of finite-dimensional representations of Lusztig's quantum group $\mathsf{U}_\zeta$ associated with an adjoint semisimple complex algebraic group $G$, specialized at a root of unity $\zeta$.

The first approach comes from~\cite{abg}.  The main result of~\cite[Part~I]{abg} relates\footnote{We will not try to make the meaning of ``relates'' precise; this involves technical difficulties which are irrelevant for our present purposes.} $\mathrm{Rep}_0(\mathsf{U}_\zeta)$ to the derived category of equivariant coherent sheaves on the Springer resolution $\widetilde{\mathcal{N}}$ of $G$, denoted by $\Db\mathrm{Coh}^{G \times \mathbb{G}_{\mathrm{m}}}(\widetilde{\mathcal{N}})$. Then the main result of~\cite[Part~II]{abg} states that $\Db\mathrm{Coh}^{G \times \mathbb{G}_{\mathrm{m}}}(\widetilde{\mathcal{N}})$ is equivalent to the derived category of Iwahori-constructible perverse sheaves on the affine Grassmannian $\Gr$ of the Langlands dual semisimple group $G^\vee$. Together, these results give a new proof of Lusztig's character formula for simple modules in $\mathrm{Rep}_0(\mathsf{U}_\zeta)$.  (This character formula was already known when~\cite{abg} appeared, by combining work of Kazhdan--Lusztig~\cite{kl-quantum}, Lusztig~\cite{LUSMon} and Kashiwara--Tanisaki~\cite{kt}.)

The second approach comes from~\cite{ab}, whose main result gives an equivalence between $\Db\mathrm{Coh}^{G \times \mathbb{G}_{\mathrm{m}}}(\widetilde{\mathcal{N}})$ and a certain category of Iwahori--Whittaker\footnote{See~\cite{ab} or~\S\ref{ss:duality-Gr} below for the meaning of this term.} sheaves on the affine flag variety $\Fl$ of $G^\vee$.  The composition of this equivalence with~\cite[Part~I]{abg} matches simple Iwahori--Whittaker perverse sheaves on $\Fl$ with \emph{tilting} (rather than simple) modules in $\mathrm{Rep}_0(\mathsf{U}_\zeta)$.  This leads to a new proof of a character formula for tilting modules, previously obtained by Soergel~\cite{soergel-char-tilt, soergel-char-for}. (See~\cite{jantzen} for more details on these questions.)

The two approaches to $\mathrm{Rep}_0(\mathsf{U}_\zeta)$ described above are summarized in (the left half of) Figure~\ref{fig:quantum-new}.  From this diagram, one might speculate that there is an equivalence relating $\Db\Perv^\mix_{(\Iw)}(\Gr, \C)$ to $\Db\Perv^\mix_{\mathcal{IW}}(\Fl, \C)$ that sends tilting perverse sheaves to simple ones, and vice versa.  This is achieved in~\cite{by}, where the desired equivalence, a form of ``parabolic Koszul duality,'' is deduced from~\eqref{eqn:by-duality-intro} in the case where $\GKM$ is the affine Kac--Moody group associated to $G^\vee$. (In this case, one can use the same group $\GKM$ on both sides of~\eqref{eqn:by-duality-intro} because of symmetrizability.)

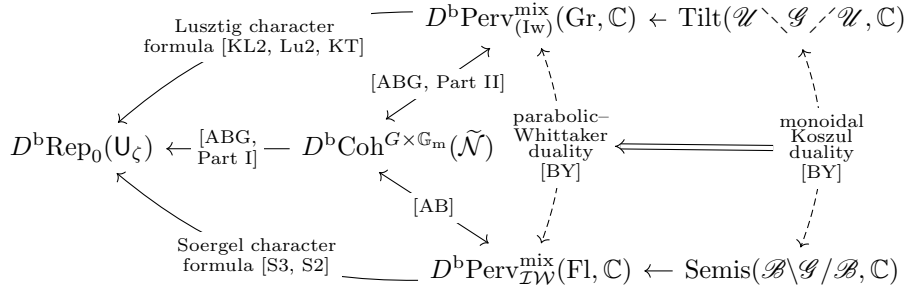
\begin{figure}
\[
\begin{tikzcd}[row sep=large, column sep=0.675em]
&&& \Db\Perv^\mix_{(\Iw)}(\Gr, \C) 
  \ar[dlll, rightarrow, bend right=20, "\substack{\text{Lusztig character}\\ \text{formula~\cite{kl-quantum,LUSMon,kt}}}" description]
  \ar[dd, leftrightarrow, dashed, bend left=20, "\substack{\text{parabolic--}\\ \text{Whittaker}\\ \text{duality}\\ \text{\cite{by}}}"{name=P, description}]
& \Tilt(\UGUby,\C)
  \ar[dd, leftrightarrow, dashed, bend left=20, "\substack{\text{monoidal}\\ \text{Koszul}\\ \text{duality}\\ \text{\cite{by}}}"{name=M, description}]
  \ar[l] \\
\Db\mathrm{Rep}_0(\mathsf{U}_\zeta)
  \ar[rr, leftarrow, "\substack{\text{[ABG,}\\ \text{Part I]}}" description] & \hspace{2.5em} &
\Db\mathrm{Coh}\rlap{$^{G \times \mathbb{G}_{\mathrm{m}}}(\widetilde{\mathcal{N}})$}
  \ar[ur, leftrightarrow, "\text{\cite[Part II]{abg}}" description] 
  \ar[dr, leftrightarrow, "\text{\cite{ab}}" description] \\
&&& \Db\Perv^\mix_{\mathcal{IW}}(\Fl, \C)
  \ar[ulll, rightarrow, bend left=20, "\substack{\text{Soergel character}\\ \text{formula~\cite{soergel-char-tilt,soergel-char-for}}}" description]
& \mathrm{Semis}(\BGB,\C)
  \ar[l]
  \ar[Rightarrow, from=M, to=P]
\end{tikzcd}
\]
\caption{Quantum groups and Koszul duality}\label{fig:quantum-new}
\end{figure}

%--------------------------------------------------------------------------
\subsection{The modular case}
%--------------------------------------------------------------------------

The main geometric result of the present paper is an analogue of~\eqref{eqn:by-duality-intro} in the case when the sheaves under consideration have coefficients in a field of arbitrary characteristic.  We return to the setting where $\GKM$ is an arbitrary complex Kac--Moody group, and $\BKM \subset \GKM$ is a Borel subgroup. Let $\bk$ be a field. The first difficulty when trying to generalize the constructions of~\S\S\ref{ss:bgs-duality-new}--\ref{ss:kac-moody-intro} to the setting of Bruhat-constructible $\bk$-sheaves on $\GKM/\BKM$ is to understand the appropriate definition of the category $\Perv^\mix_{(\BKM)}(\GKM/\BKM, \bk)$ of ``mixed'' perverse sheaves, as Deligne's notion of mixed perverse sheaves has no obvious analogue in this setting. This difficulty was overcome in~\cite{modrap2}, where this category was defined in terms of chain complexes over the additive category $\Parity_{(\BKM)}(\GKM/\BKM,\bk)$ of Bruhat-constructible parity complexes on $\GKM/\BKM$ (in the sense of Juteau--Mautner--Williamson~\cite{jmw}).

As explained in~\S\ref{ss:kac-moody-intro}, the starting point of the Bezrukavnikov--Yun approach is the consideration of two ``deformations'' of the category of Bruhat-constructible sheaves along a polynomial ring. The replacement of constructible sheaves by equivariant sheaves has a straightforward analogue in our setting, and leads to the monoidal category $(\Parity(\BGB,\bk), \star)$ of $\BKM$-equivariant parity complexes on $\GKM/\BKM$. The second deformation uses ``free-monodromic'' sheaves; the adaptation of this construction to our setting is much more difficult. A major hurdle is that the ``log of monodromy'' construction (central to~\cite{by}) is problematic in characteristic $p$ because of denominators. This problem was circumvented in~\cite{amrw}, where we constructed the monoidal category $(\Tilt(\UGU, \bk), \hatstar)$ of free-monodromic mixed tilting perverse sheaves on $\GKM/\BKM$.  With this notation introduced, we can state our main geometric results.

\begin{thm}
\label{thm:monoidal-intro}
There is an equivalence of monoidal categories
\[
\tilde\varkappa: \Parity(\BGB,\bk) \to \Tilt(\UGUvee,\bk).
\]
%which satisfies $\tilde\varkappa \circ \{ 1 \} \cong \la 1\ra \circ \tilde\varkappa$.
% and $\tilde\varkappa(\cE_w) \cong \Tmon_w$.
\end{thm}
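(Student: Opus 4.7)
The plan is to present both sides of the equivalence via Bott--Samelson generators and reduce the theorem to a combinatorial statement about diagrammatic Hecke categories. Both $\Parity(\BGB,\bk)$ and $\Tilt(\UGUvee,\bk)$ are Krull--Schmidt additive categories, and they are generated as Karoubian additive categories by Bott--Samelson parity complexes $\cE_{\bs}$ on the $\GKM$-side and by free-monodromic Bott--Samelson tilting objects $\tilde T_{\bs}$ on the $\GKM^\vee$-side (the latter constructed in~\cite{amrw}), where $\bs = (s_1, \ldots, s_n)$ ranges over sequences of simple reflections. It therefore suffices to build a monoidal equivalence between the non-Karoubian Bott--Samelson subcategories and then pass to Karoubi envelopes.

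I would identify both Bott--Samelson subcategories with the Elias--Williamson diagrammatic Bott--Samelson category $\DiagBS$ attached to suitable realizations. On the $\GKM$-side, a monoidal functor from $\DiagBS$ for the realization $(\fh, \{\alpha_s\}, \{\alpha_s^\vee\})$ of $\GKM$ into Bott--Samelson parity complexes follows from the (by now standard) diagrammatic description of parity sheaves on Kac--Moody flag varieties, and is already known to be an equivalence of monoidal categories. On the $\GKM^\vee$-side, one must construct the images of the Elias--Williamson generators --- dots, trivalent vertices, cups, caps, and $2m_{st}$-valent vertices --- as explicit morphisms between free-monodromic Bott--Samelson tilting objects built from the machinery of~\cite{amrw}, verify that these images satisfy the full list of Elias--Williamson relations, and compute graded dimensions of $\Hom$ spaces to show that the resulting monoidal functor from $\DiagBS$ for the \emph{dual} realization $(\fh^\vee, \{\alpha_s^\vee\}, \{\alpha_s\})$ is fully faithful. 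Because Langlands duality of realizations swaps roots and coroots, there is a canonical monoidal isomorphism between the two versions of $\DiagBS$, and composing yields the desired functor $\tilde\varkappa$.

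The main obstacle is the analysis on the $\GKM^\vee$-side. Because free-monodromic tilting objects are intrinsically pro-objectal and the $\log$-of-monodromy trick used by Bezrukavnikov--Yun~\cite{by} in characteristic zero fails when $\mathrm{char}(\bk) > 0$, one must rely throughout on the more indirect free-monodromic formalism of~\cite{amrw}. Producing the diagrammatic generators at the level of free-monodromic Bott--Samelsons, checking the Elias--Williamson relations, and verifying full faithfulness by matching Soergel-type $\Hom$ formulas on both sides will occupy the technical bulk of the argument. Once that is accomplished, monoidality is built into the construction --- concatenation $\bs \cdot \bt$ corresponds to $\cE_{\bs} \star \cE_{\bt}$ on the left and to $\tilde T_{\bs} \hatstar \tilde T_{\bt}$ on the right --- and essential surjectivity passes automatically to the Karoubian completions, since both target categories are Karoubi envelopes of their respective Bott--Samelson subcategories.
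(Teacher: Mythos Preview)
Your overall strategy---use the Elias--Williamson diagrammatic category as an intermediary, with the parity side already identified in~\cite{rw} and the tilting side requiring a new construction---is exactly the paper's approach. But two points need correction.

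First, a conceptual slip: there is no ``canonical monoidal isomorphism between the two versions of $\DiagBS$'', and none is needed. The diagrammatic category that describes $\Tilt(\UGUvee,\bk)$ is $\DiagBS(\fh_\bk,W)$, the \emph{same} one that describes $\Parity(\BGB,\bk)$---not the one attached to the realization of $\GKM^\vee$. The swap of roots and coroots happens internally in the functor $\Phi:\DiagBS(\fh_\bk,W)\to\TiltBS(\UGUvee,\bk)$: polynomial boxes are sent to monodromy operators, which act via $R^\vee=\Sym(V)$ rather than $R=\Sym(V^*)$. So there is one diagrammatic category and two functors out of it, not two diagrammatic categories and an identification between them. (In general $\DiagBS(\fh,W)$ and $\DiagBS(\fh^*,W)$ are genuinely different; they only coincide in the symmetrizable case after inverting some primes, cf.\ Theorem~\ref{thm:duality-affine}.)

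Second, your plan for the two hard steps is too optimistic. The paper does \emph{not} verify the Elias--Williamson relations by direct computation in the free-monodromic category, and does \emph{not} prove full faithfulness by matching $\Hom$-dimensions. For the relations, the key maneuver is that each relation involves at most three simple reflections generating a finite parabolic, so one reduces to $\GKM$ reductive; a further base-change argument (Proposition~\ref{prop:morph-Tilt-UGU}) reduces to characteristic~$0$; and there one builds a fully faithful monoidal functor $\V$ from free-monodromic tiltings to Soergel bimodules (using a coalgebra structure on the big tilting object $\Pmon$, Section~\ref{sec:functor-V}) and checks the relations in bimodules, where they are already known. For full faithfulness, the paper first proves the non-monoidal triangulated version (Theorem~\ref{thm:self-Koszul}) by showing that the induced functor on the constructible mixed category sends standards to standards and costandards to costandards (Proposition~\ref{prop:Psi-standards}), then invokes Be{\u\i}linson's lemma; full faithfulness of the monoidal functor is finally deduced by a graded Nakayama argument, using that the $\Hom$-spaces on both sides are free over $R^\vee$. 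Simply matching graded ranks would not give injectivity without this extra structure.
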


By ``killing'' the deformations and passing to bounded homotopy categories, we obtain the following consequence (where we denote by $\Delta_w$, $\nabla_w$, $\cE_w$, $\cT_w$ the standard object, costandard object, indecomposable parity complex and indecomposable tilting perverse sheaves attached to $w$ respectively).

\begin{thm}
\label{thm:intro-main-new}
There is an equivalence of triangulated categories
\[
\kappa : \Dmix_{(\BKM)}(\GKM/\BKM, \bk) \simto \Dmix_{(\BKM^\vee)}(\BKM^\vee \backslash \GKM^\vee, \bk)
\]
which satisfies $\kappa \circ \langle -1 \rangle[1] \cong \la 1 \ra \circ \kappa$ and
\[
\kappa(\Delta_w) \cong \Delta^\vee_w, \quad \kappa(\nabla_w) \cong \nabla^\vee_w, \quad \kappa(\cE_w) \cong \cT^\vee_w, \quad \kappa(\cT_w) \cong \cE^\vee_w.
\]
\end{thm}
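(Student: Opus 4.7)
The plan is to deduce Theorem~\ref{thm:intro-main-new} from Theorem~\ref{thm:monoidal-intro} by applying the bounded homotopy category functor $\Kb$ on both sides and then ``killing'' the two polynomial-ring deformations described in~\S\ref{ss:kac-moody-intro}. By the construction recalled above, the left-hand target satisfies $\Dmix_{(\BKM)}(\GKM/\BKM, \bk) = \Kb \Parity_{(\BKM)}(\GKM/\BKM, \bk)$. For the right-hand target, a tilting realization theorem of the type established in~\cite{amrw} identifies $\Dmix_{(\BKM^\vee)}(\BKM^\vee \backslash \GKM^\vee, \bk)$ with the bounded homotopy category of an analogous category of ordinary (non-free-monodromic) mixed tilting perverse sheaves. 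In this way, both targets become $\Kb$ of an additive category obtained from $\Parity(\BGB,\bk)$ or $\Tilt(\UGUvee,\bk)$ by a ``de-deformation'' step.

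First I would construct the two de-deformation functors. On the parity side, one essentially forgets the $\BKM$-equivariance, an operation realizable via a derived tensor product with $\bk$ over the equivariant cohomology ring $H^\bullet_\BKM(\pt)$. On the free-monodromic tilting side, the analogous functor kills the pro-unipotent monodromy action and produces ordinary mixed tilting perverse sheaves. Under the Koszul duality philosophy these two operations are mutually dual, and I would verify that they are intertwined by the monoidal equivalence of Theorem~\ref{thm:monoidal-intro}, so that after applying $\Kb$ the equivalence $\tilde\varkappa$ descends to the desired triangulated equivalence $\kappa$.

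It then remains to verify the object-matching statements by tracking standards, costandards, indecomposable parity complexes, and indecomposable tilting perverse sheaves through the descent. The indecomposable equivariant parity complex lifting $\cE_w$ maps under $\tilde\varkappa$ to the indecomposable free-monodromic tilting sheaf lifting $\cT^\vee_w$, giving $\kappa(\cE_w) \cong \cT^\vee_w$; the reverse identification $\kappa(\cT_w) \cong \cE^\vee_w$ is formally equivalent by running the argument with the inverse equivalence. For standards and costandards, one uses that both deformed categories admit filtrations whose layers are compatible with $\tilde\varkappa$. The shift formula $\kappa \circ \langle -1 \rangle[1] \cong \la 1 \ra \circ \kappa$ should emerge from comparing native shifts: although $\tilde\varkappa$ preserves $\langle 1 \rangle$ on both deformed sides, the free-monodromic shift $\langle 1 \rangle$ on the tilting side picks up an additional homological shift when the monodromy is killed, while the equivariant shift remains unchanged under forgetting equivariance. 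The main obstacle, I expect, is checking that the de-deformation functor on the tilting side is sufficiently well-behaved --- in particular, that the tilting realization on the dual flag variety is compatible with all the monoidal and homological structures involved. This is precisely the technical territory prepared in~\cite{amrw}, so the task here is chiefly to assemble those ingredients correctly into the descent argument.
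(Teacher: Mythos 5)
Your high-level plan matches the paper's strategy: build the de-deformed equivalence from the monoidal one by passing to homotopy categories, using a realization functor to bridge the tilting side. But the sketch is too light where the real work happens, and a few of the intermediate steps as you describe them either don't quite make sense or cannot be carried out in the form suggested.

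First, a structural remark. In the paper the logical order is actually reversed: it constructs the additive functor $\Phi$ (which will become $\tilde\varkappa$), passes to the de-deformed quotient to define $\underline{\Phi}$, and proves the constructible-level equivalence $\selfKos$ \emph{first} via Be{\u\i}linson's lemma; Theorem~\ref{thm:monoidal-intro} is then \emph{deduced} from this by a graded Nakayama argument (the Hom-spaces on both sides are free of finite rank over $R^\vee$, by Proposition~\ref{prop:morph-Tilt-UGU} on the tilting side and \cite[Lemma~2.2]{mr:etsps} on the parity side). Your reorganization---taking Theorem~\ref{thm:monoidal-intro} as given and descending---is legitimate, but you need precisely these freeness facts to make ``killing the deformations'' well-defined and compatible with $\tilde\varkappa$; a reference to a ``derived tensor product with $\bk$'' does not by itself dispose of this.

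The genuine gap is in the object-matching claims. You write that for standards and costandards ``one uses that both deformed categories admit filtrations whose layers are compatible with $\tilde\varkappa$.'' But the deformed additive categories $\Parity(\BGB,\bk)$ and $\Tilt(\UGUvee,\bk)$ contain no standard or costandard objects and carry no relevant filtration structure---standards and costandards only exist after de-deformation, in the heart of the perverse t-structure on $\Dmix$. The argument the paper actually runs (Proposition~\ref{prop:Psi-standards}) is an induction on length: one uses the wall-crossing functor $C_s$, its identification with $\cE_s^\vee\ustar(-)$, and a nonvanishing lemma for the morphism $\tilde\epsilon_s$ against standards. None of this is present in your sketch, and it is not a formality. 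Similarly, obtaining $\kappa(\cE_w)\cong\cT^\vee_w$ ``by running the argument with the inverse equivalence'' does not work: there is no a priori reason that $\kappa^{-1}$ sends parity complexes to tilting perverse sheaves, and the paper has to \emph{prove} that $\kappa(\cE_w)$ lies in the heart of the perverse t-structure and is tilting there, using the already-established standard/costandard matching and Hom-vanishing statements. Only the identification $\kappa(\cT_w)\cong\cE^\vee_w$ is immediate from the construction. Finally, a small inversion: the extra homological shift in the formula $\kappa\circ\langle-1\rangle[1]\cong\langle1\rangle\circ\kappa$ is produced on the \emph{parity} side, where the shift $\{1\}$ on parity complexes is identified with $\langle-1\rangle[1]$ inside $\Dmix$; the Tate twist on the tilting side passes through unchanged. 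Your account has this the other way around.
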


The proofs of Theorems~\ref{thm:monoidal-intro} and~\ref{thm:intro-main-new} make use of the Elias--Williamson diagrammatic category~\cite{ew} as an intermediary between the two sides. In~\cite{by}, this intermediary role was instead played by Soergel bimodules, and the proof involved the study of two functors called $\mathbb{H}$ and $\mathbb{V}$, as in the following diagram:
\[
\begin{tikzcd}[column sep=small]
\left\{ 
\begin{array}{c}
\text{equivariant parity} \\
\text{complexes on $\GKM/\BKM$}
\end{array}
\right\} \ar[r, dashed, "\mathbb{H}"]
&
\left\{
\begin{array}{c}
\text{Soergel} \\
\text{bimodules}
\end{array}
\right\}
&
\left\{ 
\begin{array}{c}
\text{free-monodromic tilting} \\
\text{sheaves on $\GKM^\vee/\BKM^\vee$}
\end{array}
\right\}.
\ar[l, dashed, "\mathbb{V}" swap]
\end{tikzcd}
\]
In our setting, since the Elias--Williamson category is defined by generators and relations, we rather reverse these arrows and consider the diagram
\[
\begin{tikzcd}[column sep=small]
\left\{ 
\begin{array}{c}
\text{equivariant parity} \\
\text{complexes on $\GKM/\BKM$}
\end{array}
\right\}
&
\left\{
\begin{array}{c}
\text{E.--W.} \\
\text{category}
\end{array}
\right\}
\ar[r, dashed]
\ar[l, dashed]
&
\left\{ 
\begin{array}{c}
\text{free-monodromic tilting} \\
\text{sheaves on $\GKM^\vee/\BKM^\vee$}
\end{array}
\right\}.
\end{tikzcd}
\]
The left arrow has already been constructed by the last two authors in~\cite{rw}; what we do here is to construct the right arrow. As in~\cite{rw}, to do this, one must say where to send generating objects and morphisms, and then one must check relations.  It is straightforward to deal with the generators.  To check relations, we reduce the question  to the case where $\GKM$ is a (finite-dimensional) reductive group and $\bk$ has characteristic $0$. This case can be studied using known properties of Soergel bimodules, along with an analogue of the functor $\mathbb{V}$.

As in~\cite{by}, there is a further generalization of Theorem~\ref{thm:intro-main-new} to the setting where on the left-hand side the flag variety $\GKM/\BKM$ is replaced by $\GKM/\PKM$ for $\PKM$ a parabolic subgroup of finite type. The right-hand side must then be replaced by an appropriate category of Whittaker-type sheaves on $\GKM^\vee/\BKM^\vee$; see Section~\ref{sec:parabolic-whittaker} for details.

%--------------------------------------------------------------------------
\subsection{Application to representation theory}
\label{ss:intro-application-RT}
%--------------------------------------------------------------------------

The main motivation for us to construct the modular Koszul duality equivalence in the Kac--Moody setting rather than only for reductive groups (as already obtained by the first and third authors in~\cite{modrap2}) comes from the hope of completing Figure~\ref{fig:modular-new}, with inspiration from Figure~\ref{fig:quantum-new}.

Let $\mathbf{G}$ be an adjoint semisimple group over an algebraically closed field $\bk$ of characteristic $\ell$ bigger than the Coxeter number $h$ of $\mathbf{G}$, and let $\mathrm{Rep}_0(\mathbf{G})$ be the principal block of the category of finite-dimensional algebraic representations of $\mathbf{G}$. As in~\S\ref{ss:kac-moody-intro}, there should be two geometric approaches to $\mathrm{Rep}_0(\mathbf{G})$.

The first approach was developed in~\cite{arider,mr:etsps,prinblock}.  In~\cite{prinblock}, the first and third authors constructed a functor relating $\mathrm{Rep}_0(\mathbf{G})$ to $\Db\mathrm{Coh}^{G \times \mathbb{G}_{\mathrm{m}}}(\widetilde{\mathcal{N}})$, analogous to that in~\cite[Part~I]{abg}.  When combined with earlier work with Rider~\cite{arider} and with Mautner~\cite{mr:etsps}, this leads to a functor
\[
\Perv^\mix_{(\Iw)}(\Gr, \bk) \to
\mathrm{Rep}_0(\mathbf{G}),
\]
which realizes $\Perv^\mix_{(\Iw)}(\Gr, \bk)$ as a ``graded version'' of $\mathrm{Rep}_0(\mathbf{G})$. In particular, this result reduces the problem of computing the characters of indecomposable tilting modules in $\mathrm{Rep}_0(\mathbf{G})$ to that of describing the indecomposable tilting perverse sheaves in $\Perv^\mix_{(\Iw)}(\Gr, \bk)$---but it does \emph{not} solve the problem, since no description of the latter was known at the time. (The approach developed in~\cite{yun} does not apply in the modular setting, since Yun's crucial ``condition (W)'' does not hold in this case.)

The second approach conjecturally aims to relate $\mathrm{Rep}_0(\mathbf{G})$ to Iwahori--Whittaker sheaves on $\Fl$, which provide a categorification of the \emph{antispherical module} of the affine Hecke algebra.  In~\cite{rw}, the third and fourth authors, inspired by~\cite{ab}, conjectured that characters of tilting modules in $\mathrm{Rep}_0(\mathbf{G})$ can be expressed in terms of the $\ell$-canonical basis of the antispherical module.  This conjecture was proved in~\cite{rw} in the case $\mathbf{G}=\mathrm{GL}_n(\bk)$, but by methods specific to the type-$\mathbf{A}$ situation.  The conjecture would hold in general if a modular analogue of~\cite{ab} were known, but this was not available when~\cite{rw} was written.

Recall that in Figure~\ref{fig:quantum-new}, Koszul duality provided a link between two known geometric approaches to $\mathrm{Rep}_0(\mathsf{U}_\zeta)$.  In Figure~\ref{fig:modular-new}, we turn this idea around: by combining the results of~\cite{arider,mr:etsps,prinblock} with the special case of Theorem~\ref{thm:intro-main-new} where $\GKM$ is an affine Kac--Moody group, we prove the conjecture of~\cite{rw} in general.  The precise statement appears in Theorem~\ref{thm:char-formula-tiltings}.

%---------------------------------------------------------------
\subsection{Some perspectives}
%---------------------------------------------------------------

The tilting character formula that we have obtained is an important result in itself, but we also believe it will lead to a better understanding of the category $\mathrm{Rep}(\mathbf{G})$, as illustrated by the following further results.

The fourth author has obtained and implemented an algorithm for explicit computations with the character formula from Theorem~\ref{thm:char-formula-tiltings}; see~\cite{jw}. This algorithm has made it possible to compute tilting characters far beyond what was previously known. It seems likely that this formula can be made more explicit, at least in certain cases; see~\cite{lw} for first results and conjectures in this direction.

In a different direction, this formula allows one to generalize Ostrik's description of tensor ideals in categories of representations of quantum groups at a root of unity~\cite{ostrik} to the setting of modular representations of reductive groups; here the proof is essentially identical, replacing the Kazhdan--Lusztig combinatorics by the $p$-Kazhdan--Lusztig combinatorics. This result provides a new tool to attack the Humphreys conjecture on support varieties of tilting $\mathbf{G}$-modules~\cite{humphreys}; see~\cite{ahr} for some progress in this direction.

%---------------------------------------------------------------
\subsection{Contents}
%---------------------------------------------------------------

We begin in~\S\ref{sec:prelim} with background related to the Elias--Williamson diagrammatic category, mixed perverse sheaves, and results from~\cite{amrw}.  In~\S\ref{sec:functor-V}, we define and study the functor $\V$ in the finite type case.  Next,~\S\ref{sec:Diag-Tilt} contains the construction of the functor from the Elias--Williamson category to free-monodromic tilting sheaves. In~\S\ref{sec:Koszul-duality}, we further study this functor, and we prove Theorems~\ref{thm:monoidal-intro} and~\ref{thm:intro-main-new}.  The parabolic--Whittaker variant of Koszul duality is deduced in~\S\ref{sec:parabolic-whittaker}.  Lastly, in~\S\ref{sec:application}, we complete the program described in~\S\ref{ss:intro-application-RT} to determine the tilting character formula.

\section{Preliminaries}
\label{sec:prelim}
%%%%%%%%%%%%%%%%%%%%%%%%%%%%%%%%%%%%%%%%%%%%%%%%%%%%%%%%%%%%%%%%%%%%%%%%%%%

In this section we review the main constructions of~\cite{amrw}, and quote the results we will need in the subsequent sections.

%--------------------------------------------------------------------------
\subsection{The Elias--Williamson diagrammatic category}
%--------------------------------------------------------------------------

Let $(W,S)$ be a Coxeter system with $S$ finite, and let $\bk$ be an
integral domain. A finite sequence of elements of $S$ will be called
an \emph{expression}. A \emph{realization} of $(W,S)$ over $\bk$ is a
triple $\fh = (V, \{\alpha_s^\vee\}_{s \in S}, \{\alpha_s\}_{s \in
  S})$ where $V$ is a finitely generated free $\bk$-module, and the
subsets $\{\alpha_s^\vee\}_{s \in S} \subset V$, $\{\alpha_s\}_{s
\in S} \subset V^* := \Hom_\bk(V,\bk)$ of ``simple coroots'' and
``simple roots'' satisfy certain conditions recalled
in~\cite[\S 2.1]{amrw}. If the realization $\fh$ satisfies further
technical conditions (it is \emph{balanced} and satisfies \emph{Demazure
surjectivity}), then, following Elias--Williamson~\cite{ew}, one can associate to $(W,S)$ and $\fh$ a $\bk$-linear strict monoidal category $\DiagBS(\fh,W)$ defined by generators and relations; see~\cite[\S 2.2--2.3]{amrw}. This category carries a ``shift-of-grading'' autoequivalence, denoted by $(1)$.  For any expression $\uw$, there is a corresponding object $B_\uw$, and every object of $\DiagBS(\fh,W)$ is of the form $B_\uw(n)$ for some expression $\uw$ and some integer $n$. For any $X,Y$ in  $\DiagBS(\fh,W)$, the graded $\bk$-module $\bigoplus_{n \in \Z} \Hom_{\DiagBS(\fh,W)}(X,Y(n))$ admits a natural structure of graded bimodule over the ring
$R := \Sym(V^*)$,
where $V^*$ is in degree $2$. (This structure is obtained by adding ``polynomial boxes'' to the left or to the right of a given diagram.)

The category $\DiagBS(\fh,W)$ is not additive, and it is sometimes convenient to take its additive envelope (i.e., to formally adjoin direct sums).  The resulting category is denoted by $\DiagBSp(\fh,W)$.
If $\bk$ is a field or a complete local ring, we may also work with the Karoubian envelope of $\DiagBSp(\fh,W)$, denoted simply by $\Diag(\fh,W)$.  Up to shift, the isomorphism classes of indecomposable objects in $\Diag(\fh,W)$ are in bijection with $W$~\cite{ew}.  In particular, for each $w \in W$, there is a corresponding indecomposable object denoted by $B_w$.

In this paper we will only consider a certain family of Coxeter groups and realizations that we call \emph{Cartan realizations of crystallographic Coxeter groups}, and which arise in the following way.
Let $A$ be a generalized Cartan matrix with rows and columns
parametrized by a finite set $I$, and let $(I,\bX, \{\alpha_i\}_{i \in
  I}, \{\alpha_i^\vee \}_{i \in I})$ be an associated Kac--Moody root
datum in the sense of~\cite[\S1.2]{tits}; in other words $\bX$ is a
finitely generated free abelian group, $\{\alpha_i \}_{i \in I}
\subset \bX$, $\{\alpha_i^\vee \}_{i \in I} \subset \Hom_\Z(\bX,\Z)$ are subsets,
and $\alpha_i^\vee(\alpha_j) = a_{ij}$ for any $i,j \in I$. To $A$
one associates in a standard way a (crystallographic) Coxeter system
$(W,S)$ with $S$ in bijection with $I$; see~\cite[\S10.1]{amrw}. Then
for any integral domain $\bk$ one can define a realization $\fh_\bk =
(V, \{\alpha_s^\vee\}_{s \in S}, \{\alpha_s\}_{s \in S})$ of $(W,S)$
over $\bk$ as follows. We set $V = \bk \otimes_\Z
\Hom_\Z(\bX,\Z)$. Then for $s \in S$ we let $\alpha_s$,
resp.~$\alpha_s^\vee$, denote the image of the corresponding simple
root, resp.~coroot, in $V^*$, resp.~$V$. The realizations obtained in
this way are always balanced, but they might not satisfy Demazure
surjectivity. We remedy this in the following way. If all the maps $\alpha_s : \Hom_\Z(\bX,\Z) \to \Z$ and $\alpha_s^\vee : \bX \to \Z$ are surjective we set $\Z'=\Z$, and otherwise we set $\Z'=\Z[\frac{1}{2}]$. Then $\fh_\bk$ satisfies Demazure surjectivity provided there exists a ring morphism $\Z' \to \bk$.

%--------------------------------------------------------------------------
\subsection{Kac--Moody groups and their flag varieties}
\label{ss:KM-groups}
%--------------------------------------------------------------------------

From now on we assume that $\bk$ is a Noetherian integral domain of finite global dimension, and that there exists a ring morphism $\Z' \to \bk$.

The Cartan realizations of crystallographic Coxeter groups are related to geometry in the following way.
Following~\cite{mathieu,mathieu-KM}, one can associate to $A$ and the
root datum an integral Kac--Moody group $\GKM_\Z$ (a group ind-scheme
over $\Z$), together with a Borel subgroup $\BKM_\Z$ (see~\cite[\S
10.2]{amrw} for further remarks, and~\cite[\S 9.1]{rw} for an overview of the construction). Let $\UKM_\Z$ be the pro-unipotent radical of $\BKM_\Z$.  Denote by $\GKM$, $\BKM$, and $\UKM$ the base change to $\C$ of $\GKM_\Z$, $\BKM_\Z$, and $\UKM_\Z$, respectively.  Let $\cX := \GKM/\BKM$ be the flag variety, and recall that we have a Bruhat decomposition
\[
\cX = \bigsqcup_{w \in W} \cX_w,
\]
where each $\cX_w$ is a $\BKM$-orbit isomorphic to an affine space of dimension $\ell(w)$. As in~\cite{amrw}, we denote the $\BKM$-equivariant derived category of $\bk$-sheaves on $\cX$ by $\Db(\BGB,\bk)$. (By definition, the objects in this category are supported on a \emph{finite} union of $\BKM$-orbits.) As in~\cite{modrap2, amrw}, the shift functor on this category will be denoted by $\{1\}$.

To each expression $\uw$, one can associate an object $\cE_{\uw}$ of
$\Db(\BGB,\bk)$, called the \emph{Bott--Samelson parity complex}
associated to $\uw$ .  The strictly full subcategory of $\Db(\BGB,\bk)$ consisting of objects that are isomorphic to shifts of Bott--Samelson parity complexes is denoted by $\ParityBS(\BGB,\bk)$, and its additive envelope is denoted by $\ParityBSp(\BGB,\bk)$.  These are monoidal categories with respect to the convolution product $\star$.

If $\bk$ is a field or a complete local ring, we may also work with the Karoubian envelope of $\ParityBSp(\BGB,\bk)$, denoted by $\Parity(\BGB,\bk)$.  Up to shift, the isomorphism classes of indecomposable objects in $\Parity(\BGB,\bk)$ are in bijection with $W$~\cite{jmw}.  In particular, for each $w \in W$, there is a corresponding indecomposable object denoted by $\cE_w$.

By~\cite[Theorem~10.6]{rw}, there exists a canonical equivalence of monoidal categories
\begin{equation}
\label{eqn:equivalence-rw}
\Psi: \DiagBS(\fh_\bk,W) \simto \ParityBS(\BGB,\bk)
\end{equation}
that intertwines $(1)$ with $\{1\}$ and sends $B_\uw$ to $\cE_{\uw}$. This equivalence induces an equivalence
\[
\DiagBSp(\fh_\bk,W) \cong \ParityBSp(\BGB,\bk)
\]
and, if $\bk$ is a field or a complete local ring, an equivalence
\[
\Diag(\fh_\bk,W) \cong \Parity(\BGB,\bk).
\]

%--------------------------------------------------------------------------
\subsection{Free-monodromic tilting sheaves}
\label{ss:fm-tilting-sheaves}
%--------------------------------------------------------------------------

In~\cite[Chap.~10]{amrw}, we have defined the category of \emph{Bott--Samelson free-monodromic tilting sheaves} on $\cX$, denoted by $\TiltBS(\UGU,\bk)$.  This category is equipped with an autoequivalence $\la 1\ra$, called the \emph{Tate twist}.  For every expression $\uw$, there is a corresponding object $\Tmon_\uw^\bk$, and every object is isomorphic to $\Tmon^\bk_\uw\la n\ra$ for some expression $\uw$ and some integer $n$. (Below, the superscript ``$\bk$'' will be omitted when no confusion is likely.) The explicit construction of this category (and of the convolution bifunctor considered below) is long and quite technical, but its details will not be needed in the present paper.

By construction, the category $\TiltBS(\UGU,\bk)$ is a full subcategory in a category $\Dmix(\UGU,\bk)$, whose objects are pairs consisting of a sequence of objects of $\ParityBSp(\BGB,\bk)$ and a certain ``differential.'' If $\cF,\cG$ are objects of $\Dmix(\UGU,\bk)$, then $\Hom_{\Dmix(\UGU,\bk)}(\cF,\cG)$ is the degree-$(0,0)$ cohomology of a complex of graded $\bk$-modules denoted by $\uHom_\FM(\cF,\cG)$, whose total cohomology (a $\Z^2$-graded $\bk$-module) is denoted by $\gHom_\FM(\cF,\cG)$. The Tate twist autoequivalence $\langle 1 \rangle$ extends to $\Dmix(\UGU,\bk)$, and for any $j \in \Z$ we have $\Hom_{\Dmix(\UGU,\bk)}(\cF,\cG \langle j \rangle) = \gHom_\FM(\cF,\cG)^0_{-j}$.

Again by construction,
for $\cF,\cG$ in $\Dmix(\UGU,\bk)$, the $\Z^2$-graded $\bk$-module $\gHom_\FM(\cF,\cG)$ admits a natural right action of $R^\vee=\Sym(V)$, where $V$ is in bidegree $(0,-2)$.  This action, called the \emph{right monodromy action}, is compatible with composition: for any $f \in \gHom_\FM(\cF,\cG)$, $g \in \gHom_\FM(\cG,\cH)$, and $x \in R^\vee$, we have
\begin{equation}\label{eqn:rightmon-morph}
(g \circ f) \cdot x = (g \cdot x) \circ f = g \circ (f \cdot x).
\end{equation}
On the other hand, by~\cite[Theorem~5.2.2]{amrw}, we also have a $\Z^2$-graded algebra morphism
\[
\mu_\cF : R^\vee \to \gHom_\FM(\cF,\cF),
\]
called the \emph{left monodromy map}.  It has the property that for any $f \in \gHom_\FM(\cF,\cG)$ and any $x \in R^\vee$, we have
\begin{equation}
\label{eqn:mu-morph}
\mu_\cG(x) \circ f = f \circ \mu_\cF(x).
\end{equation}

For any Noetherian integral domain $\bk'$ of finite global dimension and any ring morphism $\bk \to \bk'$, there exists a natural functor
\[
\bk' : \Dmix(\UGU,\bk) \to \Dmix(\UGU,\bk')
\]
that commutes with Tate twists and sends $\Tmon^\bk_\uw$ to $\Tmon^{\bk'}_\uw$ for any expression $\uw$.

The additive envelope of $\TiltBS(\UGU,\bk)$ is denoted by $\TiltBSp(\UGU,\bk)$.  If $\bk$ is a field,\footnote{The same results hold if $\bk$ is a complete local ring, but this case was not treated explicitly in~\cite{amrw}.} we may also work with the Karoubian envelope of the category $\TiltBSp(\UGU,\bk)$, denoted by $\Tilt(\UGU,\bk)$.  This category is Krull--Sch\-midt, and its indecomposable objects were classified in~\cite[Theorem~10.7.1]{amrw}: up to Tate twist, they are in bijection with $W$.  In particular, for each $w \in W$, there is a corresponding indecomposable object, denoted by $\Tmon_w^\bk$.

The main result of~\cite{amrw} asserts that $\TiltBS(\UGU,\bk)$ is a monoidal category with respect to \emph{monodromic convolution}, denoted by $\hatstar$.  Of course, the category $\TiltBSp(\UGU,\bk)$ (and, when appropriate, the category $\Tilt(\UGU,\bk)$) inherit a monoidal structure as well. 
In fact, for $\cF,\cF',\cG,\cG'$ in $\TiltBS(\UGU,\bk)$, the action of $\hatstar$ on morphisms is induced by a morphism of complexes
\[
\uHom_\FM(\cF,\cG) \otimes \uHom_\FM(\cF',\cG') \to \uHom_\FM(\cF \hatstar \cF', \cG \hatstar \cG');
\]
see~\cite[\S 6.2]{amrw}.
It therefore induces a morphism
\[
\gHom_\FM(\cF,\cG) \otimes \gHom_\FM(\cF',\cG') \to \gHom_\FM(\cF \hatstar \cF', \cG \hatstar \cG').
\]
By construction, for $f \in \gHom_\FM(\cF,\cG)$, $g \in \gHom_\FM(\cF',\cG')$, and $x \in R^\vee$, we have
\begin{equation}
\label{eqn:morph-Rvee-left-right}
(f\cdot x) \hatstar g = f \hatstar (\mu_{\cG'}(x) \circ g).
\end{equation}
Finally, by construction again, for any expressions $\uv, \uw$ we have
\begin{equation}
\label{eqn:Tmon-conv}
\Tmon_{\uv} \hatstar \Tmon_\uw \cong \Tmon_{\uv\uw},
\end{equation}
where $\uv\uw$ means the concatenation of $\uv$ and $\uw$.

%--------------------------------------------------------------------------
\subsection{The constructible derived category}
\label{ss:constructible-Dmix}
%--------------------------------------------------------------------------

In~\cite{amrw}, in addition to the categories defined above, we considered two other categories of sheaves on $\cX$: the \emph{left-monodromic category}, denoted by $\Dmix(\UGB,\bk)$, and the \emph{right-equivariant category}, denoted by $\Dmix(\UGBold,\bk)$.  These categories are related by various functors as shown below:
\[
\begin{tikzcd}[column sep=0pt]
\hspace{-2em} \TiltBSp(\UGU,\bk), \hatstar \hspace{-2em}\ar[dr, "\ForFMLM" description] &&&&
\hspace{-2em}\ParityBSp(\BGB,\bk), \star \hspace{-2em}\ar[dl, "\ForBERE" description] \\
& \hspace{-2em}\Dmix(\UGB,\bk) \ar[rr, "\sim", "\ForLMRE"'] & \hspace{1em} &
\Dmix(\UGBold,\bk)\hspace{-2em}
\end{tikzcd}
\]
Here $\Dmix(\UGB,\bk)$ and $\Dmix(\UGBold,\bk)$ admit natural structures of triangulated categories, and the functor $\ForLMRE$ is an equivalence of triangulated categories by~\cite[Theorem~4.6.2]{amrw}.
By construction, the category $\Dmix(\UGBold, \bk)$ is canonically equivalent (as a triangulated category) to $\Kb\ParityBSp(\UGBold, \bk)$, where $\ParityBSp(\UGBold, \bk)$ is defined as for $\ParityBSp(\BGB, \bk)$, but using the $\UKM$-equivariant derived category of $\cX$ instead of its $\BKM$-equivariant derived category. Therefore, if $\bk$ is a field or a complete local ring, this category is equivalent to $\Kb\Parity(\UGBold, \bk)$, i.e.~to the category denoted $\Dmix_{(\BKM)}(\cX, \bk)$ in~\cite{modrap2} (see~\cite[\S 4.9 and \S 10.4]{amrw}); in particular any object of $\Parity(\UGBold, \bk)$ can be naturally considered as an object of $\Dmix(\UGBold, \bk)$.

As in the category $\Dmix(\UGU,\bk)$, for $\cF,\cG$ in $\Dmix(\UGB, \bk)$, the $\bk$-module $\Hom_{\Dmix(\UGB,\bk)}(\cF,\cG)$ is defined as the degree-$(0,0)$ cohomology of a complex of graded $\bk$-modules denoted $\uHom_\LM(\cF,\cG)$. The total cohomology of this complex is denoted $\gHom_\LM(\cF,\cG)$; then for $i,j \in \Z$ we have
\begin{equation}
\label{eqn:gHom-Hom-LM}
\gHom_\LM(\cF,\cG)^i_j \cong \Hom_{\Dmix(\UGB,\bk)}(\cF, \cG[i] \langle -j \rangle),
\end{equation}
see~\cite[Remark~4.5.2]{amrw}. For $\cF,\cG$ in $\Dmix(\UGU, \bk)$, the action of the functor $\ForFMLM$ is induced by a morphism of complexes
\[
\uHom_\FM(\cF,\cG) \to \uHom_\LM(\ForFMLM(\cF), \ForFMLM(\cG)),
\]
see~\cite[\S 5.1]{amrw}.

The Tate twist and extension-of-scalars functors are also defined for the categories $\Dmix(\UGB,\bk)$ and $\Dmix(\UGBold,\bk)$, and commute with the forgetful functors. For any expression $\uw$ we set
\[
\cT^\bk_\uw := \ForFMLM(\Tmon^\bk_\uw).
\]
(In this setting also, the superscript ``$\bk$'' will be omitted when no confusion is likely.)

For the following result, see~\cite[Corollary~10.6.2]{amrw}.

\begin{prop}
\label{prop:morph-Tilt-UGU}
For any expressions $\uv,\uw$ and any $i,j \in \Z$, we have
\[
\gHom_\FM(\Tmon^\bk_\uv, \Tmon^\bk_\uw)^i_j = 0 \quad \text{unless $i=0$.}
\]
Moreover, $\gHom_\FM(\Tmon^\bk_\uv, \Tmon^\bk_\uw)^0_\bullet$ is graded free as a right $R^\vee$-module, and the morphism
\[
\gHom_\FM(\Tmon^\bk_\uv, \Tmon^\bk_\uw)^0_\bullet \otimes_{R^\vee} \bk \to \gHom_\LM(\cT^\bk_\uv, \cT^\bk_\uw)^0_\bullet
\]
induced by the functor $\ForFMLM$ is an isomorphism. Finally, for any Noetherian integral domain $\bk'$ of finite global dimension and any ring morphism $\bk \to \bk'$, the functor $\bk'$ induces an isomorphism
\[
\bk' \otimes_\bk \Hom_{\Dmix(\UGU,\bk)}(\Tmon^\bk_\uv, \Tmon^\bk_\uw \langle j \rangle) \simto \Hom_{\Dmix(\UGU,\bk')}(\Tmon^{\bk'}_\uv, \Tmon^{\bk'}_\uw \langle j \rangle)
\]
for any $j \in \Z$.
\end{prop}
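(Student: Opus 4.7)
The plan is to prove all four assertions together by induction on $\ell(\uv) + \ell(\uw)$, drawing on the concatenation formula~\eqref{eqn:Tmon-conv}, the explicit chain-complex model for $\Dmix(\UGU, \bk)$ constructed in \cite[Chap.~10]{amrw}, and the compatibility~\eqref{eqn:morph-Rvee-left-right} of the left and right monodromy actions under convolution. The base case $\uv = \uw = \emptyset$ reduces to identifying the endomorphisms of the monoidal unit $\Tmon_\emptyset$ (for $\hatstar$) with $R^\vee$, concentrated in cohomological degree $0$, the identification coming from the left monodromy map $\mu_{\Tmon_\emptyset}$; this is essentially definitional. The next small case $\uv = \emptyset$, $\uw = (s)$ is handled directly from the explicit two-term description of $\Tmon_s$ in terms of $\cE_s$.

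For the inductive step, write $\uw = \uw' \cdot (s)$ so that $\Tmon_\uw \cong \Tmon_{\uw'} \hatstar \Tmon_s$. Using the above description of $\Tmon_s$ together with the morphism of complexes
\[
\uHom_\FM(\cF, \cG) \otimes \uHom_\FM(\cF', \cG') \to \uHom_\FM(\cF \hatstar \cF', \cG \hatstar \cG')
\]
from \S 6.2 of \cite{amrw}, the computation of $\gHom_\FM(\Tmon_\uv, \Tmon_\uw)$ can be expressed in terms of $\gHom_\FM$ between strictly shorter expressions, where the inductive hypothesis delivers both vanishing in cohomological degrees $\neq 0$ and graded freeness over $R^\vee$. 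These properties are preserved under the reduction since $R^\vee$ is a polynomial ring and tensor products of graded free $R^\vee$-modules remain graded free.

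For the comparison with $\gHom_\LM(\cT_\uv, \cT_\uw)$, observe that the forgetful morphism of complexes $\uHom_\FM(\cF, \cG) \to \uHom_\LM(\ForFMLM(\cF), \ForFMLM(\cG))$ recorded in \S 5.1 of \cite{amrw} can be checked to factor through the quotient by the left monodromy action of $R^\vee_{>0}$, yielding a natural map
\[
\gHom_\FM(\Tmon_\uv, \Tmon_\uw)^0_\bullet \otimes_{R^\vee} \bk \to \gHom_\LM(\cT_\uv, \cT_\uw)^0_\bullet.
\]
Once freeness is in hand, this map is shown to be an isomorphism by a parallel induction, anchored at the small cases where both sides reduce to explicit computations on the parity-complex side, and propagating via \eqref{eqn:Tmon-conv} together with the compatible concatenation behavior of $\cT_\uw$. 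The base-change assertion is then mechanical: from graded freeness and from the fact that the construction of $\Tmon^\bk_\uw$ commutes with scalar extension, tensoring with $\bk'$ over $\bk$ commutes with taking $\gHom_\FM(\Tmon^\bk_\uv, \Tmon^\bk_\uw)^0_{-j}$, which by definition equals $\Hom_{\Dmix(\UGU, \bk)}(\Tmon^\bk_\uv, \Tmon^\bk_\uw \langle j \rangle)$.

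The main obstacle is executing the inductive step for the vanishing and freeness claims: one must carefully track how the left and right monodromy actions interact under $\hatstar$, in view of~\eqref{eqn:morph-Rvee-left-right} and~\eqref{eqn:mu-morph}, so that the reduction to shorter expressions respects the $R^\vee$-module structure and introduces no spurious cohomology. A related subtlety is that ``killing the left monodromy'' cannot \emph{a priori} be identified with $\uHom_\LM$ before freeness is known, so the four assertions must be interleaved in the correct logical order inside a single induction rather than proved separately.
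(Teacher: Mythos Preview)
The paper does not prove this proposition at all: it is quoted verbatim from the companion paper, with the sentence ``For the following result, see~\cite[Corollary~10.6.2]{amrw}.'' So there is no ``paper's own proof'' to compare your outline to; the result is imported as a black box from~\cite{amrw}.

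That said, your sketch has a genuine gap in the inductive step. You propose to reduce the computation of $\gHom_\FM(\Tmon_\uv, \Tmon_{\uw'} \hatstar \Tmon_s)$ to Homs between strictly shorter expressions ``using the morphism of complexes $\uHom_\FM(\cF,\cG) \otimes \uHom_\FM(\cF',\cG') \to \uHom_\FM(\cF \hatstar \cF', \cG \hatstar \cG')$.'' But that morphism goes only one way and is not in general an isomorphism, so it does not by itself \emph{express} the target in terms of smaller pieces. Concretely, taking $\cF = \Tmon_\uv$, $\cG = \Tmon_{\uw'}$, $\cF' = \Tmon_\varnothing$, $\cG' = \Tmon_s$ only gives you a map \emph{into} the Hom you want to compute, and you have no control over its cokernel or over cohomology created by the convolution. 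Likewise, appealing to a ``two-term description'' of $\Tmon_s$ would at best produce a long exact sequence in which the connecting homomorphism could \emph{a priori} create cohomology in degree~$1$; ruling this out is exactly the nontrivial content.

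The argument in~\cite{amrw} does not proceed this way. It goes through the construction of free-monodromic standard and costandard objects $\tD_w$, $\tN_w$ (the notation already appears among the paper's macros), shows that each $\Tmon_\uw$ admits both a $\tD$-filtration and a $\tN$-filtration, and computes $\gHom_\FM(\tD_x, \tN_y)$ directly (it is $R^\vee$ in cohomological degree~$0$ when $x=y$ and zero otherwise). The vanishing and freeness for $\gHom_\FM(\Tmon_\uv, \Tmon_\uw)$ then follow from the usual Ext-orthogonality argument for tilting objects in a highest-weight setting; the comparison with $\gHom_\LM$ and the base-change statement are consequences of this freeness together with the analogous filtrations on $\cT_\uw$. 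Your convolution-based reduction would need either a biadjunction for $\Tmon_s \hatstar(-)$ (which does not shorten $\ell(\uv)+\ell(\uw)$) or precisely this $\tD/\tN$ machinery to control the connecting maps, so as written the induction does not close.
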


In the case when $\bk$ is a field, we have also defined a subcategory
\[
\Tilt(\UGB,\bk) \subset \Dmix(\UGB,\bk)
\]
in~\cite[\S 10.5]{amrw}. By~\cite[Theorem~11.4.2]{amrw}, this category admits a natural action of the monoidal category $\Tilt(\UGU,\bk)$; the corresponding bifunctor will also be denoted $\hatstar$. By~\cite[(6.18)]{amrw}, for $\cF,\cG$ in $\Tilt(\UGU,\bk)$, we have
\begin{equation}
\label{eqn:For-hatstar}
\ForFMLM(\cF \hatstar \cG) \cong \cF \hatstar \ForFMLM(\cG).
\end{equation}
The indecomposable objects in this category were classified in~\cite[Corollary~10.5.5]{amrw}: up to Tate twist, they are in bijection with $W$.  In particular, for each $w \in W$, there is a corresponding indecomposable object, denoted by $\cT^\bk_w$. Moreover we have $\cT^\bk_w \cong \ForFMLM(\Tmon^\bk_w)$.

%--------------------------------------------------------------------------
\subsection{Realization functors}
\label{ss:realization}
%--------------------------------------------------------------------------

In this subsection, we review a (variant of a) construction due to Be{\u\i}linson~\cite[Appendix]{beilinson}.  A triangulated category $\sT$ is said to \emph{admit a filtered version} if there exists a filtered triangulated category $\tilde\sT$ over $\sT$, in the sense of~\cite[Definition~A.1]{beilinson}. An additive subcategory $\sA \subset \sT$ is said to \emph{have no negative self-Exts} if $\Hom_\sT(M,N[n]) = 0$ for all $M, N \in \sA$ and all $n < 0$.

The following is a variant of the main result of~\cite[Appendix]{beilinson}.

\begin{prop}
\label{prop:realization}
Let $\sT$ be a triangulated category that admits a filtered version, and let $\sA \subset \sT$ be a full additive category with no negative self-Exts.  There is a functor of triangulated categories
\[
\real : \Kb\sA \to \sT
\]
whose restriction to $\sA$ is the inclusion functor.  In addition, if $\sA$ is the heart of a t-structure (and hence an abelian category), this functor factors through a functor
\[
\real: \Db\sA \to \sT.
\]
\end{prop}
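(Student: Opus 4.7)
The plan is to adapt Be{\u\i}linson's construction from \cite[Appendix]{beilinson}. The key tool is the filtered version $\tilde\sT$ of $\sT$: a triangulated category equipped with a forgetful functor $\omega : \tilde\sT \to \sT$, a ``filtration-shift'' autoequivalence $s$, a natural transformation $\alpha : s \to \id$, and truncation functors $\sigma_{\geq n}$, $\sigma_{\leq n}$ whose associated graded pieces take values (via $\omega$) in $\sT$ in a controlled way. For every bounded complex $C^\bullet \in \Cb\sA$, I would construct inductively on the length an object $\tilde C \in \tilde\sT$ filtered so that its $n$th graded piece is $C^n$ placed in filtration degree $n$, and whose associated connecting morphisms recover the differentials $d^n$; the inductive step extends a lift $\tilde{C}_{<n}$ of $\sigma^{<n} C^\bullet$ by $C^n$ in filtration degree $n$, glued along a morphism in $\tilde\sT$ whose image under $\omega$ encodes $d^{n-1}$. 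The requisite lifting in $\tilde\sT$ is provided by the axioms of a filtered version. Set $\real(C^\bullet) := \omega(\tilde C)$, and lift a chain map $f : C^\bullet \to D^\bullet$ to a filtered morphism $\tilde f : \tilde C \to \tilde D$ by the same inductive procedure.

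The main obstacle, and the place where the no-negative-self-Ext hypothesis enters decisively, is to show that $\omega(\tilde f)$ depends only on the homotopy class of $f$. When $f$ is null-homotopic via maps $h^n : C^n \to D^{n-1}$, the lift $\tilde f$ itself need not vanish, but one attempts to produce a filtered null-homotopy $\tilde h$ between $\tilde f$ and $0$. The obstructions to the successive extensions of $\tilde h$ live in graded pieces of filtered $\Hom$-spaces, which take the form $\Hom_\sT(C^n, D^m[k])$ for various $k < 0$; these all vanish by hypothesis, so $\tilde h$ exists and $\omega(\tilde f) = 0$. Compatibility of $\real$ with shifts and with mapping cones is built in via $s$ and the cones formed in $\tilde\sT$, yielding a triangulated functor $\real : \Kb\sA \to \sT$ whose restriction to $\sA$ is the inclusion.

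For the second statement, suppose $\sA$ is the heart of a t-structure on $\sT$. The filtration on $\tilde C$ transports under $\omega$ to a filtration on $\real(C^\bullet)$ in $\sT$ whose successive subquotients are the shifted objects $C^n[-n]$; a standard spectral-sequence computation with respect to this t-structure then gives $H^n(\real(C^\bullet)) \cong H^n(C^\bullet)$ in $\sA$. In particular, $\real$ annihilates every bounded acyclic complex, and by the universal property of the Verdier quotient $\Db\sA = \Kb\sA / \Kb_{\mathrm{acyc}}\sA$ it factors through a triangulated functor $\real : \Db\sA \to \sT$, as desired.
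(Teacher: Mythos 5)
Your proposal is correct and takes essentially the same route as the paper's sketch: both work via the filtered version $\tilde\sT$, with the no-negative-self-Ext hypothesis entering exactly to make the construction well-defined on homotopy classes, and both handle the second statement by observing that for $\sA$ a heart the functor kills acyclic complexes. The only difference is one of presentation: the paper defines the subcategory $\tilde\sA \subset \tilde\sT$ by conditions on graded pieces and cites the analogue of Be{\u\i}linson's Proposition~A.5 for the equivalence $\tilde\sA \cong \Cb\sA$, whereas you reconstruct that equivalence directly by the inductive gluing argument. (One minor caveat on wording: there is no intrinsic notion of ``filtered null-homotopy'' in $\tilde\sT$; the obstruction-theoretic phrasing you use is fine, but what it really shows is that the image of a null-homotopic chain map under $\omega$ factors through an object that is killed by $\omega$, rather than that a homotopy in $\tilde\sT$ exists.)
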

In~\cite{beilinson}, this result is only stated in the case where $\sA$ is the heart of a t-structure. For details in a more general setting, see~\cite[\S3]{rider}.
\begin{proof}[Sketch of proof]
The filtered category $\tilde\sT$ comes with functors $\gr_i: \tilde\sT \to \sT$ for each $i$.  Let $\tilde\sA \subset \tilde\sT$ be the full subcategory consisting of objects $M$ such that $\gr_i M = 0$ for all but finitely many $i$, and such that $\gr_i M \in \sA[-i]$ for all $i \in \Z$.  An argument similar to~\cite[Proposition~A.5]{beilinson} shows that $\tilde\sA \cong \Cb\sA$.  The forgetful functor $\tilde\sT \to \sT$ induces an additive functor $\Cb\sA \to \sT$, which then factors through $\Kb\sA$ or, if $\sA$ is the heart of a t-structure, through $\Db\sA$.
\end{proof}

The following statement is a variant of~\cite[Lemma~A.7.1]{beilinson}.  We omit its proof.

\begin{prop}
\label{prop:realization-functor}
Let $\sT_1$ and $\sT_2$ be two triangulated categories admitting a filtered version, and let $\sA_1 \subset \sT_1$, $\sA_2 \subset \sT_2$ be two additive categories with no negative self-Exts. Let $F: \sT_1 \to \sT_2$ be a triangulated functor that restricts to an additive functor $F_0: \sA_1 \to \sA_2$.  If $F$ lifts to a functor of filtered triangulated categories $\tilde F: \tilde\sT_1 \to \tilde\sT_2$, then the following diagram commutes up to natural isomorphism:
\[
\begin{tikzcd}
\Kb\sA_1 \ar[r, "\real"] \ar[d, "\Kb(F_0)"'] & \sT_1 \ar[d, "F"] \\
\Kb\sA_2 \ar[r, "\real"] & \sT_2.
\end{tikzcd}
\]
\end{prop}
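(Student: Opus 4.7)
The plan is to argue by tracing through the construction of the realization functor described in the sketch of Proposition~\ref{prop:realization} and checking that the filtered lift $\tilde F$ is compatible with this construction at each step. Specifically, the realization functor $\real: \Kb\sA_i \to \sT_i$ is built as the composition of three functors: first the identification $\Kb\sA_i \simeq \Kb\tilde\sA_i/(\text{null-homotopic})$ coming from $\tilde\sA_i \cong \Cb\sA_i$; second, this descends from a functor $\Cb\sA_i \to \sT_i$ induced by the forgetful functor $\omega_i: \tilde\sT_i \to \sT_i$ restricted to $\tilde\sA_i$. So it suffices to produce a functor $\tilde F_0: \tilde\sA_1 \to \tilde\sA_2$ that matches $\Cb(F_0)$ under the identifications $\tilde\sA_i \cong \Cb\sA_i$, and then use the fact that $\omega_2 \circ \tilde F \cong F \circ \omega_1$ (a property of filtered lifts) to conclude.

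First I would verify that $\tilde F$ restricts to a functor $\tilde\sA_1 \to \tilde\sA_2$. By the axioms of a filtered triangulated category (see~\cite[Appendix~A]{beilinson}), a filtered lift $\tilde F$ commutes with the grading functors in the sense that there is a natural isomorphism $\gr_i \circ \tilde F \cong F \circ \gr_i$ for each $i \in \Z$. Thus for $M \in \tilde\sA_1$, almost all $\gr_i(\tilde F(M)) \cong F(\gr_i M)$ vanish, and each nonvanishing one lies in $F(\sA_1[-i]) = F_0(\sA_1)[-i] \subset \sA_2[-i]$, since $F$ is triangulated and restricts to $F_0$. This gives the desired restriction $\tilde F_0: \tilde\sA_1 \to \tilde\sA_2$.

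Next I would check that under the equivalence $\tilde\sA_i \simeq \Cb\sA_i$ of~\cite[Proposition~A.5]{beilinson}, the functor $\tilde F_0$ corresponds to $\Cb(F_0)$. The equivalence sends an object $M$ of $\tilde\sA_i$ to the complex whose term in degree $i$ is $\gr_i(M)[i]$, with differentials coming from the canonical triangles relating successive pieces of the filtration. Since $\tilde F$ commutes with $\gr_i$ and with the shift, and is triangulated (hence sends the canonical triangles on the source to those on the target), the image complex of $\tilde F_0(M)$ is obtained from that of $M$ by applying $F_0$ termwise with the induced differentials, which is precisely $\Cb(F_0)$.

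Finally, the compatibility $\omega_2 \circ \tilde F \cong F \circ \omega_1$, which is part of the definition of a filtered lift, yields a natural isomorphism between the two composites $\Cb\sA_1 \to \sT_2$ obtained by going around the square at the level of bounded complexes. This descends to bounded homotopy categories, and (when $\sA_i$ is the heart of a t-structure) further to bounded derived categories, giving the commutative square. The main obstacle I expect is the careful verification of the second step: ensuring that the identification $\tilde\sA_i \cong \Cb\sA_i$ is natural enough that $\tilde F_0$ really corresponds to $\Cb(F_0)$ and not merely to some functor homotopic to it; this requires unpacking the precise isomorphism in~\cite[Proposition~A.5]{beilinson} and the additional axioms relating a filtered lift to the graded and forgetful functors.
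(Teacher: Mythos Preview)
The paper does not actually prove this proposition: it says only that the statement is a variant of~\cite[Lemma~A.7.1]{beilinson} and omits the proof. Your proposal is the natural way to fill in the details, and the outline is correct; the one point deserving care is exactly the one you flag, namely that the identification $\tilde\sA_i \cong \Cb\sA_i$ is functorial enough that the restriction of $\tilde F$ matches $\Cb(F_0)$, which comes down to the compatibility of $\tilde F$ with the canonical distinguished triangles relating successive filtration pieces.
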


In this paper, we will mainly use these constructions in the case where $\sT = \Dmix(\UGB,\bk)$ or $\Dmix(\UGBold,\bk)$.  These two are equivalent (via~$\ForLMRE$), and the latter, as the homotopy category of an additive category, admits a filtered version by the construction of~\cite[\S2.5]{ar:kdsf}.  Here is an application of this theory.

\begin{lem}\label{lem:tilt-realization}
There is an equivalence of triangulated categories
\[
\real: \Kb\TiltBSp(\UGB,\bk) \to \Dmix(\UGB,\bk).
\]
\end{lem}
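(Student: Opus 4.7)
The plan is to apply Proposition~\ref{prop:realization} to construct $\real$, and then to check that the resulting functor is fully faithful and essentially surjective.

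\textbf{Construction of the functor.} Via the equivalence $\ForLMRE$, we may identify $\Dmix(\UGB,\bk)$ with $\Dmix(\UGBold,\bk) \cong \Kb\ParityBSp(\UGBold,\bk)$; being the bounded homotopy category of an additive category, this admits a filtered version by~\cite[\S2.5]{ar:kdsf}, and hence so does $\Dmix(\UGB,\bk)$. To invoke Proposition~\ref{prop:realization}, it remains to check that
\[
\Hom_{\Dmix(\UGB,\bk)}(\cT_\uv, \cT_\uw[n]) = 0 \qquad \text{for all expressions } \uv, \uw \text{ and all } n < 0.
\]
By~\eqref{eqn:gHom-Hom-LM}, this Hom-group is $\gHom_\LM(\cT_\uv, \cT_\uw)^n_0$. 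Morally, the forgetful functor $\ForFMLM$ at the level of morphism complexes corresponds to the operation $\bullet \lotimes_{R^\vee} \bk$; combined with the concentration in cohomological degree $0$ and $R^\vee$-freeness of $\gHom_\FM(\Tmon_\uv,\Tmon_\uw)$ supplied by Proposition~\ref{prop:morph-Tilt-UGU}, this yields the vanishing of $\gHom_\LM(\cT_\uv,\cT_\uw)^n$ for \emph{all} $n \neq 0$. In particular the hypotheses of Proposition~\ref{prop:realization} are satisfied, producing the triangulated functor $\real$.

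\textbf{Essential surjectivity.} Since via $\ForLMRE$ we have $\Dmix(\UGB,\bk) \cong \Kb\ParityBSp(\UGBold,\bk)$, this category is generated as a triangulated category by the Bott--Samelson parity complexes, and in turn (using their $\Delta$-filtrations coming from the Bruhat stratification) by the standard objects $\Delta_w$. It therefore suffices to show that each $\Delta_w$ lies in the triangulated subcategory of $\Dmix(\UGB,\bk)$ generated by $\TiltBSp(\UGB,\bk)$. We argue by induction on $\ell(w)$: for a reduced expression $\uw$ of $w$, the Bott--Samelson tilting $\cT_\uw$ has a standard filtration with top quotient $\Delta_w$ and lower subquotients of the form $\Delta_v$ for $v < w$, which by induction all lie in the desired triangulated subcategory; the exact triangles relating the successive pieces of this filtration to $\cT_\uw$ then yield $\Delta_w$ as well.

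\textbf{Full faithfulness.} Using the stronger vanishing $\Hom(\cT_\uv,\cT_\uw[n]) = 0$ for all $n \neq 0$ obtained in the construction step, a standard d\'evissage (induction on the amplitude of bounded complexes in $\Kb\TiltBSp(\UGB,\bk)$, writing an arbitrary complex as a cone of a morphism between complexes of strictly smaller amplitude) shows that $\real$ induces an isomorphism on all Hom-spaces. Combined with the essential surjectivity from the previous step, this gives the desired equivalence.

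\textbf{Main obstacle.} The key technical input is the full Ext-vanishing $\Hom(\cT_\uv,\cT_\uw[n]) = 0$ for $n \neq 0$, which goes well beyond the ``no negative self-Exts'' hypothesis needed merely to construct the realization functor. Establishing it rests on the cohomological concentration and $R^\vee$-freeness provided by Proposition~\ref{prop:morph-Tilt-UGU}, together with a careful comparison of how $\ForFMLM$ relates the free-monodromic and left-monodromic morphism complexes; the $R^\vee$-freeness is exactly what ensures that the ``derived'' reduction $\lotimes_{R^\vee}\bk$ agrees with the underived one and therefore does not create cohomology in nonzero degrees.
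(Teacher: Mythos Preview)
Your proposal is correct and follows essentially the same approach as the paper: obtain the full Ext-vanishing $\Hom(\cT_\uv,\cT_\uw[n])=0$ for all $n\neq 0$, apply Proposition~\ref{prop:realization} to get $\real$, deduce full faithfulness from this vanishing by d\'evissage, and check essential surjectivity by a support/filtration argument. The only notable difference is how the Ext-vanishing is sourced: the paper simply cites \cite[Proposition~10.6.1]{amrw} directly, whereas you attempt to recover it from its corollary, Proposition~\ref{prop:morph-Tilt-UGU}, via the ``moral'' identification $\uHom_\LM\simeq\uHom_\FM\lotimes_{R^\vee}\bk$. That identification is true (and is essentially how \cite[Proposition~10.6.1]{amrw} is proved), but note that Proposition~\ref{prop:morph-Tilt-UGU} as stated only controls $\gHom_\LM^0$, so your route genuinely needs this extra input from \cite{amrw}; citing \cite[Proposition~10.6.1]{amrw} directly is cleaner.
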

\begin{proof}
By~\cite[Proposition~10.6.1]{amrw}, for all $\cF, \cG \in \TiltBSp(\UGB,\bk)$, we have $\Hom_{\Dmix(\UGB,\bk)}(\cF,\cG[n]) = 0$ for all $n \ne 0$.  It follows from this that the realization functor exists and is fully faithful.  A routine support argument shows that the image of this functor generates $\Dmix(\UGB,\bk)$, so it is essentially surjective as well.
\end{proof}

%--------------------------------------------------------------------------
\subsection{The perverse t-structure}
\label{ss:perv}
%--------------------------------------------------------------------------

In this subsection we assume that $\bk$ is a field.

As recalled in~\cite[\S 10.5]{amrw}, the category $\Dmix(\UGBold,\bk)$ admits a natural ``perverse'' t-structure, constructed in~\cite{modrap2}. We will denote by
\[
\Perv(\UGB,\bk) \subset \Dmix(\UGB,\bk)
\]
the inverse image under the equivalence $\ForLMRE$ of the heart of this t-structure. This category is stable under the Tate twist, and has a natural structure of graded highest weight category with weight poset $W$ (for the Bruhat order). We will denote by $\bbDelta_w$ and $\bbnabla_w$ the corresponding standard and costandard objects.
By~\cite[Proposition~10.5.1]{amrw}, the category of tilting objects in $\Perv(\UGB,\bk)$ identifies with the subcategory $\Tilt(\UGB, \bk)$ considered above. From this it follows that the natural functors
\begin{equation}
\label{eqn:realization-Dmix}
\Kb \Tilt(\UGB, \bk) \to \Db \Perv(\UGB,\bk) \to \Dmix(\UGB,\bk)
\end{equation}
are equivalences of triangulated categories, using, say,~\cite[Lemma~A.5]{modrap2}.

As a special case of~\cite[Proposition~7.6.3]{amrw}, for any $s \in S$ there exists a triangulated functor
\[
C_s : \Dmix(\UGB,\bk) \to \Dmix(\UGB,\bk)
\]
whose restriction to $\Tilt(\UGB, \bk)$ is isomorphic to the functor $\Tmon_s \hatstar (-)$.

\begin{lem}
\label{lem:Cs-exact}
The functor $C_s$ is exact for the perverse t-structure.
\end{lem}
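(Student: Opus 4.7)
My plan is to reduce t-exactness of $C_s$ to an exactness property on the tilting subcategory, and then propagate it via tilting (co)resolutions.

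First, I would verify that $C_s$ preserves tiltings. The restriction of $C_s$ to $\Tilt(\UGB,\bk)$ is isomorphic to $\Tmon_s \hatstar (-)$, and by~\cite[Theorem~11.4.2]{amrw} (recalled in~\S\ref{ss:constructible-Dmix}) the bifunctor $\hatstar$ endows $\Tilt(\UGB,\bk)$ with the structure of a module category over $\Tilt(\UGU,\bk)$; since $\Tmon_s \in \Tilt(\UGU,\bk)$, this gives $C_s(\Tilt(\UGB,\bk)) \subset \Tilt(\UGB,\bk)$. Because $C_s$ is triangulated and tiltings are perverse, the image under $C_s$ of any short exact sequence of tiltings in $\Perv(\UGB,\bk)$---viewed as a distinguished triangle with tilting vertices in $\Dmix(\UGB,\bk) \cong \Db\Perv(\UGB,\bk)$---is again a distinguished triangle with tilting vertices, hence again a short exact sequence in $\Perv(\UGB,\bk)$.

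To propagate to all of $\Perv(\UGB,\bk)$, I would use tilting resolutions and coresolutions. The graded highest-weight structure on $\Perv(\UGB,\bk)$ with weight poset $W$ (see~\S\ref{ss:perv}), together with the fact that every perverse sheaf has finite support on the Bruhat stratification, ensures that every $X \in \Perv(\UGB,\bk)$ admits both a finite tilting left resolution $[T^{-n} \to \cdots \to T^0] \simeq X$ and a finite tilting right resolution $X \simeq [T^0 \to \cdots \to T^m]$ in $\Db\Perv(\UGB,\bk)$. Since $C_s$ is triangulated and preserves tiltings, applying it yields complexes of tiltings in the same cohomological ranges representing $C_s(X)$. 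The left resolution then shows that $C_s(X)$ has perverse cohomology concentrated in degrees $\leq 0$, the right that it is concentrated in degrees $\geq 0$, and so $C_s(X) \in \Perv(\UGB,\bk)$; this is exactly t-exactness.

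The main technical input is the existence of \emph{both} finite tilting left and right resolutions for every perverse sheaf. This is a standard consequence of the graded highest-weight structure: restricting to the finite subcategory of perverse sheaves supported on a closed union of Bruhat strata containing the support of $X$, projectives and injectives exist, have $\bbDelta$- respectively $\bbnabla$-filtrations, and therefore admit finite tilting (co)resolutions; these then induce the desired (co)resolutions of $X$.
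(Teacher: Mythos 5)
The step that fails is your claimed existence of tilting (co)resolutions. You assert that every $X \in \Perv(\UGB,\bk)$ admits a finite tilting left resolution $[T^{-n} \to \cdots \to T^0] \simeq X$ and a finite tilting right resolution $X \simeq [T^0 \to \cdots \to T^m]$. Such a left resolution would in particular require a surjection $T^0 \twoheadrightarrow X$ from a tilting object. But this is impossible in general: every tilting object has a $\bbnabla$-filtration, and by~\cite[Lemma~4.9]{modrap2} the head of each $\bbnabla_w$ is a Tate twist of $\cT_1$. Hence $\Hom(T,\IC^\mix_w)=0$ whenever $T$ is tilting and $w\neq 1$, so the only simple quotients of a tilting object are Tate twists of $\cT_1$. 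In particular, for $w \neq 1$ the simple object $\IC^\mix_w$ is not a quotient of any tilting, and no tilting left resolution of it exists. Dually (using that the socle of each $\bbDelta_w$ is a Tate twist of $\cT_1$), $\IC^\mix_w$ is not a subobject of any tilting for $w \neq 1$, so there is no tilting right resolution either. What \emph{is} true is that $\Kb\Tilt(\UGB,\bk)\simto\Dmix(\UGB,\bk)$, so every perverse sheaf is a bounded complex of tiltings; but that complex need not live in nonpositive (or nonnegative) degrees, and that is not enough to conclude t-exactness.

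The reduction in your final paragraph does not repair this: a finite projective resolution of $X$ consists of $\bbDelta$-filtered objects, and each of those has a tilting \emph{co}resolution (in nonnegative degrees); splicing these together yields a bounded complex of tiltings representing $X$, but with terms in both negative and positive degrees, not a left resolution. The mechanism that makes the argument work is not preservation of tiltings, but the stronger fact that $C_s$ sends standard objects to $\bbDelta$-filtered perverse sheaves and costandard objects to $\bbnabla$-filtered ones; this is~\cite[Lemma~10.5.3]{amrw}, and it does not follow from preservation of the tilting subcategory alone. Once this is in hand, one can argue exactly as you intended but with $\bbDelta$'s and $\bbnabla$'s in place of tiltings (a projective resolution gives $C_s(X)\in\Perv^{\le 0}$, an injective coresolution gives $C_s(X)\in\Perv^{\ge 0}$). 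The paper phrases the same idea slightly differently, via the characterization from~\cite[Proposition~3.4]{modrap2} of the two halves of the t-structure as being generated under extensions by the objects $\bbDelta_w\langle n\rangle[m]$ ($m\ge 0$) and $\bbnabla_w\langle n\rangle[m]$ ($m\le 0$), respectively, both of which $C_s$ manifestly preserves by~\cite[Lemma~10.5.3]{amrw}.
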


\begin{proof}
By~\cite[Proposition~3.4]{modrap2} the nonnegative part, resp.~the nonpositive part, of the perverse t-structure is generated under extensions by the objects of the form $\bbnabla_w \langle n \rangle [m]$ with $w \in W$, $n \in \Z$ and $m \in \Z_{\leq 0}$, resp.~by the objects of the form $\bbDelta_w \langle n \rangle [m]$ with $w \in W$, $n \in \Z$ and $m \in \Z_{\geq 0}$. With this in mind, the claim follows from~\cite[Lemma~10.5.3]{amrw}.
\end{proof}

Following~\cite[\S3.1]{modrap2}, we denote by $\IC^\mix_w$ the image of the natural map $\bbDelta_w \to \bbnabla_w$.  Every simple object in $\Perv(\UGB,\bk)$ is isomorphic to $\IC^\mix_w\la n\ra$ for some $w \in W$ and some $n \in \Z$. In the special case $w=1$, we have $\IC^\mix_1=\cT_1=\bbDelta_1=\bbnabla_1$.

\begin{lem}
\label{lem:Cs-simple}
If $w \ne 1$, then $[ C_s(\IC^\mix_w) : \cT_1\la n\ra ] = 0$ for all $n \in \Z$.
\end{lem}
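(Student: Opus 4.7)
The plan is to exploit Lemma~\ref{lem:Cs-exact} together with the standard filtration of the indecomposable tilting $\cT_s = \ForFMLM(\Tmon_s) \in \Tilt(\UGB,\bk)$, namely the short exact sequence of perverse sheaves
\[
0 \to \bbDelta_1\la 1\ra \to \cT_s \to \bbDelta_s \to 0
\]
(with $\cT_s$ self-dual, so the Tate twist on $\bbDelta_1$ is forced). Since $C_s$ is a triangulated functor restricting to $\Tmon_s\hatstar(-)$ on tiltings, convolving this sequence with $\IC^\mix_w$ (and using $\bbDelta_1\hatstar\IC^\mix_w \cong \IC^\mix_w$) produces a distinguished triangle
\[
\IC^\mix_w\la 1\ra \to C_s(\IC^\mix_w) \to X \xrightarrow{+1}
\]
in $\Dmix(\UGB,\bk)$, where $X$ is the derived convolution of $\bbDelta_s$ with $\IC^\mix_w$.

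The middle term is perverse by Lemma~\ref{lem:Cs-exact}, and the first morphism, being from the simple perverse sheaf $\IC^\mix_w\la 1\ra$, is either zero or injective. In either case, taking perverse cohomology of this triangle yields a short exact sequence
\[
0 \to \IC^\mix_w\la 1\ra \to C_s(\IC^\mix_w) \to {}^p\!\cH^0(X) \to 0
\]
together with the vanishing ${}^p\!\cH^{-1}(X) = 0$. Since $w \neq 1$, the simple object $\IC^\mix_w\la 1\ra$ contributes no composition factor of the form $\cT_1\la n\ra$, so the lemma reduces to showing that ${}^p\!\cH^0(X)$ contains no such composition factor.

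For this final reduction, I would pass to the Grothendieck group and identify $K_0(\Perv(\UGB,\bk))$ with the Iwahori--Hecke algebra $\cH(W,S)$ of $W$ via $[\bbDelta_w]\leftrightarrow H_w$, with the Tate twist $\la 1\ra$ acting as multiplication by $v$. Under this identification one has $[\cT_s]=H_s+vH_1=\underline{H}_s$, and the claim $[C_s(\IC^\mix_w):\cT_1\la n\ra]=0$ is equivalent to the statement that the coefficient of $H_1=[\cT_1]$ in the product $\underline{H}_s\cdot[\IC^\mix_w]$, expanded in the simple basis $\{[\IC^\mix_v]\}$, vanishes when $w\neq 1$. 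The hard part will be verifying this coefficient cancellation, since in the modular setting $[\IC^\mix_v]$ involves $p$-Kazhdan--Lusztig polynomials and may itself contain an $H_1$ contribution; one approach is a direct Hecke computation using $H_s\cdot H_v=H_{sv}$ (if $sv>v$) or $(q-1)H_v+qH_{sv}$ (if $sv<v$), combined with positivity of the $p$-canonical basis to track the cancellation between the contribution of the subobject $\IC^\mix_w\la 1\ra$ and that of ${}^p\!\cH^0(X)$. Alternatively, one can argue geometrically via proper base change for the $\mathbb{P}^1$-bundle $\pi_s:\cX\to\cX/\cX_s$ (through which convolution with $\bbDelta_s$ factors up to shift), combined with the $\IC$-axiom constraining the stalks of $\IC^\mix_w$ at the strata $\cX_v$ for $v<w$ to strictly negative perverse degrees.
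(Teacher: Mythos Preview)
Your reduction does not actually reduce anything: the claim that ${}^p\!\cH^0(X)$ (with $X$ the convolution of $\bbDelta_s$ with $\IC^\mix_w$) has no composition factor $\cT_1\la n\ra$ is \emph{equivalent} to the lemma, not weaker than it. Indeed, since $\IC^\mix_w\la 1\ra$ contributes nothing for $w\neq 1$, the identity $[C_s(\IC^\mix_w)] = [X] + v[\IC^\mix_w]$ in the Grothendieck group shows that the $\cT_1$-multiplicities in $C_s(\IC^\mix_w)$ and in $X$ coincide. Your proposed Hecke-algebra argument then becomes circular: you are asking whether the coefficient of $[\cT_1]$ in $\uH_s\cdot[\IC^\mix_w]$ (simple-basis expansion) vanishes, which is exactly the statement to be proved, and in the modular setting there is no positivity or explicit formula for the $p$-canonical basis that would let you see this cancellation directly. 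The alternative geometric sketch via $\pi_s$ is plausible in spirit but is not carried out, and would in any case require transporting the stalk constraints on $\IC^\mix_w$ through the mixed formalism of \cite{modrap2}, which is nontrivial. (There is also a minor error: the standard filtration of $\cT_s$ has $\bbDelta_s$ as the \emph{subobject} and $\bbDelta_1\la 1\ra$ as the quotient, not the other way around; compare the triangle $\Delta_s \to \cE_s \to \cE_1\{1\}$ used later in the paper.)

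The paper's argument avoids all of this by a clean additivity count. Setting $q(\cF)=\sum_n[\cF:\cT_1\la n\ra]$, one knows from \cite[Lemma~4.9]{modrap2} that $\bbDelta_w$ fits in a short exact sequence $0\to\cT_1\la -\ell(w)\ra\to\bbDelta_w\to\cG\to 0$ with $q(\cG)=0$, so $q(\bbDelta_w)=1$; and from \cite[Lemma~10.5.3]{amrw} that $q(C_s(\bbDelta_w))=2$ for every $w$. Applying the exact functor $C_s$ to the short exact sequence and comparing, one gets $q(C_s(\cG))=0$. For $w\neq 1$ the simple $\IC^\mix_w$ is a quotient of $\cG$, hence $C_s(\IC^\mix_w)$ is a quotient of $C_s(\cG)$, and the vanishing follows. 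The key idea you are missing is to work with $\bbDelta_w$ rather than with $\cT_s$: the socle filtration of $\bbDelta_w$ isolates the single $\cT_1$-factor cleanly, and applying $C_s$ to \emph{that} sequence makes the count transparent.
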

\begin{proof}
For $\cF \in \Perv(\UGB,\bk)$, let $q(\cF) := \sum_{n \in \Z} [\cF : \cT_1\la n\ra]$.  The lemma amounts to saying that $q(C_s(\IC^\mix_w)) = 0$ if $w \ne 1$.  By~\cite[Lemma~4.9]{modrap2}, there is a short exact sequence
\begin{equation}\label{eqn:delta-socle}
0 \to \cT_1\la -\ell(w)\ra \to \bbDelta_w \to \cG \to 0
\end{equation}
where $q(\cG) = 0$. We deduce that $q(\bbDelta_w) = 1$.  Then,
using~\cite[Proposition~10.5.3]{amrw}, we find that
$q(C_s(\bbDelta_w)) = 2$ for all $w \in W$.  (This holds even if $w =
1$.)  Now apply $C_s$ to~\eqref{eqn:delta-socle} to obtain
\[
0 \to C_s(\bbDelta_1\la -\ell(w)\ra) \to C_s(\bbDelta_w) \to C_s(\cG) \to 0.
\]
Since $q(C_s(\bbDelta_1\la -\ell(w)\ra)) = q(C_s(\bbDelta_w)) = 2$, and since $q$ is  additive on short exact sequences, we find that $q(C_s(\cG)) = 0$.  If $w \ne 1$, then $\IC^\mix_w$ is a quotient of $\cG$, so $C_s(\IC^\mix_w)$ is a quotient of $C_s(\cG)$.  It follows that $q(C_s(\IC^\mix_w)) = 0$, as desired.
\end{proof} 

%--------------------------------------------------------------------------
\subsection{Tilting Hom formula}
%--------------------------------------------------------------------------
The Hecke algebra $\cH_W$ is the algebra with free $\Z[v, v^{-1}]$-basis $\{ H_w \mid w \in W\}$, with multiplicative unit $H_1$, and multiplication determined by the rule
\[
 H_wH_s = \begin{cases}
           H_{ws} &\text{if $ws > w$;} \\
           (v^{-1} - v)H_w + H_{ws} &\text{if $ws < w$.}
          \end{cases}
\]
Similar formulas describe $H_sH_w$ depending on whether $sw < w$ or $sw > w$.  Next, for any expression $\uw = (s_1, \ldots, s_k)$, set
\[
 \uH_\uw := (H_{s_1} + v) \cdots (H_{s_k} + v) \in \cH_W.
\]
Observe that
\begin{equation}\label{eqn:hecke-us-w}
\uH_s H_w = (H_s + v)H_w =
\begin{cases}
H_{sw} + v H_w & \text{if $sw > w$;} \\
H_{sw} + v^{-1}H_w & \text{if $sw < w$.}
\end{cases}
\end{equation}
Define a symmetric $\Z[v, v^{-1}]$-bilinear pairing
\[
 \la -, - \ra : \cH_W \times \cH_W \to \Z[v, v^{-1}]
\]
by $\la H_x, H_y \ra = \delta_{xy}$ for $x,y \in W$.

\begin{lem}
\label{lem:tilting-hom-formula-pre}
Assume that $\bk$ is a field, and let $\uw$ be an expression.  We have
\[
\uH_\uw = \sum_{\substack{y \in W \\ n \in \Z}} (\cT_\uw : \bbDelta_y\la n\ra) v^n H_y
= \sum_{\substack{y \in W \\ n \in \Z}} (\cT_\uw : \bbnabla_y\la -n\ra) v^n H_y.
\]
\end{lem}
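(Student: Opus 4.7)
The plan is to prove the identity by induction on the length of the expression $\uw$, with the standard-filtration version handled first. The base case $\uw = \emptyset$ is immediate: we have $\cT_\emptyset = \cT_1 = \bbDelta_1 = \bbnabla_1$ as noted in~\S\ref{ss:perv}, and $\uH_\emptyset = H_1$, so all three expressions equal $H_1$.

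For the inductive step, write $\uw = s\uv$ with $s$ the first letter of $\uw$. Combining~\eqref{eqn:Tmon-conv} with~\eqref{eqn:For-hatstar} yields
\[
\cT_\uw \;\cong\; \ForFMLM(\Tmon_s \hatstar \Tmon_\uv) \;\cong\; \Tmon_s \hatstar \cT_\uv \;=\; C_s(\cT_\uv).
\]
By Lemma~\ref{lem:Cs-exact} the functor $C_s$ is exact for the perverse t-structure, so it preserves the class of $\bbDelta$-filtered objects once its action on each generator $\bbDelta_y$ is known. The explicit description of $C_s(\bbDelta_y)$ provided by~\cite[Proposition~10.5.3]{amrw} shows that $C_s(\bbDelta_y)$ has a two-step standard filtration whose multiplicities match exactly the product $\uH_s H_y$ as computed in~\eqref{eqn:hecke-us-w} (the two cases $sy > y$ and $sy < y$ corresponding to the two formulas there). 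Setting
\[
\mathrm{ch}^\Delta(\cF) := \sum_{y,n} (\cF : \bbDelta_y\la n\ra) v^n H_y
\]
on $\bbDelta$-filtered objects, additivity in short exact sequences plus exactness of $C_s$ give $\mathrm{ch}^\Delta(C_s \cF) = \uH_s \cdot \mathrm{ch}^\Delta(\cF)$; the inductive hypothesis $\mathrm{ch}^\Delta(\cT_\uv) = \uH_\uv$ then yields $\mathrm{ch}^\Delta(\cT_\uw) = \uH_s \uH_\uv = \uH_\uw$, as desired.

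The costandard identity follows by the same induction using the $\bbnabla$-analogue of~\cite[Proposition~10.5.3]{amrw} for $C_s(\bbnabla_y)$; the sign-flip $\la n\ra \mapsto \la -n\ra$ in the statement of the lemma is exactly the adjustment needed so that the $v$- and $v^{-1}$-terms in~\eqref{eqn:hecke-us-w} line up to give the same Hecke element $\uH_\uw$. Alternatively, one may invoke a Verdier-type mixed duality on $\Dmix(\UGB,\bk)$ that swaps $\bbDelta_y\la n\ra \leftrightarrow \bbnabla_y\la -n\ra$ while fixing each $\cT_\uw$, from which $(\cT_\uw : \bbDelta_y\la n\ra) = (\cT_\uw : \bbnabla_y\la -n\ra)$ follows at once. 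I expect no genuine difficulty; the only real task is bookkeeping the Tate twists in the two cases of~\cite[Proposition~10.5.3]{amrw} and checking they agree with~\eqref{eqn:hecke-us-w}.
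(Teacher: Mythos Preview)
Your proposal is correct and follows essentially the same route as the paper's own (sketched) proof: induction on the length of $\uw$, using the explicit standard filtration of $C_s(\bbDelta_y)$ from~\cite[Lemma~10.5.3]{amrw} and matching it against the Hecke identity~\eqref{eqn:hecke-us-w}, then treating the costandard case by the parallel argument. Your write-up is in fact a bit more detailed than the paper's sketch (you spell out the base case and introduce the character map $\mathrm{ch}^\Delta$), and your offered duality alternative for the $\bbnabla$-side is a legitimate shortcut the paper does not invoke.
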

\begin{proof}[Sketch of proof]
According to~\cite[Lemma~10.5.3]{amrw}, for any $w \in W$, the perverse sheaf $C_s(\bbDelta_w)$ has a filtration by standard objects, and the multiplicities are given by $(C_s(\bbDelta_w) : \bbDelta_{sw}) = 1$,
\[
(C_s(\bbDelta_w) : \bbDelta_w\la n\ra) =
\begin{cases}
1 & \text{if $n = 1$ and $sw > w$, or if $n = -1$ and $sw < w$,} \\
0 & \text{otherwise,}
\end{cases}
\]
and $(C_s(\bbDelta_w) : \bbDelta_y\la n\ra) = 0$ in all other cases. Comparing this with~\eqref{eqn:hecke-us-w}, one can show by induction on the length of $\uw$ that $(\cT_\uw : \bbDelta_y\la n\ra)$ is equal to the coefficient of $v^n H_y$ in $\uH_\uw$.  Similar reasoning shows that this same integer is also equal to $(\cT_\uw: \bbnabla_y\la -n\ra)$.
\end{proof}

\begin{lem}
\label{lem:tilting-hom-formula}
 For any expressions $\uv, \uw$, we have
 \[
  \sum_{n \in \Z} \left( \mathrm{rk}_\bk \Hom_{\Dmix(\UGB,\bk)}(\cT_\uv, \cT_\uw\la n \ra) \right) v^n = \la\uH_\uv, \uH_\uw\ra.
 \]
\end{lem}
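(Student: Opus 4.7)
The plan is to expand both sides in terms of multiplicities and match them coefficient by coefficient. Throughout, I assume $\bk$ is a field, since the perverse $t$-structure framework of~\S\ref{ss:perv} (and hence Lemma~\ref{lem:tilting-hom-formula-pre}) requires this; if needed, the general case should reduce to this one by base change via Proposition~\ref{prop:morph-Tilt-UGU}.

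First I would substitute the two expansions from Lemma~\ref{lem:tilting-hom-formula-pre} into the pairing, applying the $\bbDelta$-formula to $\uH_\uv$ and the $\bbnabla$-formula to $\uH_\uw$, and use the orthogonality $\la H_x, H_y\ra = \delta_{xy}$ to obtain
\[
\la \uH_\uv, \uH_\uw\ra = \sum_{y \in W,\, m,n \in \Z} (\cT_\uv : \bbDelta_y\la m\ra)\,(\cT_\uw : \bbnabla_y\la -n\ra)\, v^{m+n}.
\]
Reindexing with $k = m+n$, the coefficient of $v^k$ becomes $\sum_{y,m}(\cT_\uv : \bbDelta_y\la m\ra)(\cT_\uw : \bbnabla_y\la m-k\ra)$, which is the quantity to match with $\mathrm{rk}_\bk \Hom_{\Dmix(\UGB,\bk)}(\cT_\uv, \cT_\uw\la k\ra)$.

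The key representation-theoretic input is the standard--costandard Hom orthogonality
\[
\Hom_{\Dmix(\UGB,\bk)}(\bbDelta_x\la i\ra, \bbnabla_y\la j\ra[r]) = \begin{cases} \bk & \text{if } (x,i,r)=(y,j,0),\\ 0 & \text{otherwise,}\end{cases}
\]
which follows from the graded highest-weight structure on $\Perv(\UGB, \bk)$ recalled in~\S\ref{ss:perv}. Since $\cT_\uv \in \Tilt(\UGB,\bk)$ admits a $\bbDelta$-filtration and $\cT_\uw\la k\ra$ admits a $\bbnabla$-filtration, a routine dévissage---induction on filtration length via the long exact sequences obtained by applying $\Hom(-, \cT_\uw\la k\ra)$ and $\Hom(\bbDelta_y\la m\ra, -)$ to the successive subquotients---yields
\[
\mathrm{rk}_\bk \Hom_{\Dmix(\UGB,\bk)}(\cT_\uv, \cT_\uw\la k\ra) = \sum_{y,m}(\cT_\uv : \bbDelta_y\la m\ra)\,(\cT_\uw\la k\ra : \bbnabla_y\la m\ra).
\]
The identity $(\cT_\uw\la k\ra : \bbnabla_y\la m\ra) = (\cT_\uw : \bbnabla_y\la m-k\ra)$ then matches this with the coefficient of $v^k$ above, completing the proof.

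The only nontrivial step is the standard--costandard orthogonality; but this is a well-established feature of highest-weight categories, coming ultimately from the adjunctions associated with the open/closed Bruhat-stratum inclusions. I therefore expect no real obstacle, beyond bookkeeping in the dévissage and the reduction to the field case.
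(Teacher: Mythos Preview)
Your proposal is correct and follows essentially the same approach as the paper: reduce to a field via Proposition~\ref{prop:morph-Tilt-UGU}, expand $\la\uH_\uv,\uH_\uw\ra$ using Lemma~\ref{lem:tilting-hom-formula-pre}, compute the Hom dimension from the $\bbDelta$-filtration of $\cT_\uv$ and the $\bbnabla$-filtration of $\cT_\uw$ via standard--costandard orthogonality, and match coefficients. The paper simply asserts the Hom formula without spelling out the d\'evissage, but your more detailed justification is the standard argument behind it.
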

\begin{proof}
By the last statement of Proposition~\ref{prop:morph-Tilt-UGU}, we may check this after extension of scalars to any field. Over a field, we have
\[
\sum_{n \in \Z} \left( \dim \Hom(\cT_\uv, \cT_\uw\la n \ra) \right) v^n \\
= \sum_{\substack{y \in W\\ n,m \in \Z}}
(\cT_\uv : \bbDelta_y\la m\ra)(\cT_\uw\la n\ra : \bbnabla_y\la m\ra) v^n.
\]
On the other hand, using Lemma~\ref{lem:tilting-hom-formula-pre}, we have
\[
\la\uH_\uv, \uH_\uw\ra
= \sum_{\substack{y \in W\\ m, k \in \Z}} (\cT_\uv : \bbDelta_y\la m\ra)(\cT_\uw : \bbnabla_y\la -k\ra)v^{m+k}.
\]
The result follows by setting $k = n-m$.
\end{proof}

%%%%%%%%%%%%%%%%%%%%%%%%%%%%%%%%%%%%%%%%%%%%%%%%%%%%%%%%%%%%%%%%%%%%%%%%%%%
\section{Constructing a functor \texorpdfstring{$\V$}{V}}
\label{sec:functor-V}
%%%%%%%%%%%%%%%%%%%%%%%%%%%%%%%%%%%%%%%%%%%%%%%%%%%%%%%%%%%%%%%%%%%%%%%%%%%

In this section we assume that $A$ is of finite type, i.e.~that $\GKM$
is a connected complex reductive group. We also assume that $\bk$ is a field of characteristic $0$.\footnote{We restrict to characteristic~$0$ since this is the setting we will need. But more generally the results of this section hold if there exists a ring morphism $\Z' \to \bk$ and if the natural morphism $R^\vee \to \coH^\bullet(\BKM^\vee \backslash \GKM^\vee; \bk)$ introduced in the proof of Lemma~\ref{lem:dim-Hom-V'} is surjective. This is satisfied e.g.~if $\GKM$ is isomorphic to a product of groups $\mathrm{GL}_n(\bk)$ and of quasi-simple groups not of type $\mathbf{A}$, and if $\mathrm{char}(\bk)$ is good for $G$; see~\cite[Proposition~4.1]{modrap1}.} 

%--------------------------------------------------------------------------
\subsection{The big tilting perverse sheaf}
%--------------------------------------------------------------------------

Let $w_0$ be the longest element in $W$, and consider the indecomposable object $\cT_{w_0}$ in $\Tilt(\UGB,\bk)$. We set
\[
\cP:= \cT_{w_0} \langle - \ell(w_0) \rangle.
\]
The same arguments as in~\cite[\S 5.11]{modrap1}, using the results of~\cite[\S 4.4]{modrap2}, show that $\cP$ is the projective cover of the simple object $\cT_1$ in the abelian category $\Perv(\UGB,\bk)$. In particular, using~\eqref{eqn:gHom-Hom-LM} and~\eqref{eqn:realization-Dmix} we deduce that we have
\begin{equation}
\label{eqn:gHom-Tw0-T1}
\gHom_\LM(\cP, \cT_1)^n_m = \begin{cases}
\bk & \text{if $n=m=0$;} \\
0 & \text{otherwise.}
\end{cases}
\end{equation}
Using~\cite[Lemma~4.9]{modrap2} we also deduce that for $w \in W$ and $m \in \Z$ we have
\begin{equation}
\label{eqn:Hom-P-costandard}
\dim \bigl( \Hom_{\Dmix(\UGB,\bk)}(\cP, \bbnabla_w \langle m \rangle) \bigr) = \begin{cases}
1 & \text{if $m=-\ell(w)$;} \\
0 & \text{otherwise.}
\end{cases}
\end{equation}

\begin{lem}
\label{lem:multiplicities-P}
In the abelian category $\Perv(\UGB,\bk)$ we have
\[
[\cP : \cT_1 \langle m \rangle] = 0 \quad \text{unless $m \leq 0$,}
\]
and moreover $[\cP : \cT_1]=1$. In particular, we have
\[
\End_{\Dmix(\UGB,\bk)}(\cP) = \bk \cdot \id.
\]
\end{lem}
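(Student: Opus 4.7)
The plan is to first compute the standard-filtration multiplicities $(\cP : \bbDelta_w\la m\ra)$, then to count composition factors of the form $\cT_1\la m\ra$ inside each such $\bbDelta_w\la m\ra$, and finally to deduce the endomorphism statement from the projective-cover property.

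First I would use that $\cP = \cT_{w_0}\la -\ell(w_0)\ra$ is a tilting object, hence admits a standard filtration in $\Perv(\UGB,\bk)$. Standard highest-weight-category arguments (vanishing of $\Ext^1(\bbDelta_v,\bbnabla_w\la m\ra)$) show that the multiplicities can be read off from Hom-pairings with costandards:
\[
(\cP : \bbDelta_w\la m\ra) \;=\; \dim_\bk \Hom_{\Dmix(\UGB,\bk)}(\cP,\bbnabla_w\la m\ra).
\]
By \eqref{eqn:Hom-P-costandard}, this multiplicity equals $1$ if $m = -\ell(w)$ and $0$ otherwise. Thus $\cP$ has a filtration whose subquotients are exactly $\bbDelta_w\la -\ell(w)\ra$ for $w\in W$, each occurring once.

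Next I would analyze the composition factors of the form $\cT_1\la k\ra$ inside each $\bbDelta_w\la -\ell(w)\ra$. The short exact sequence from \cite[Lemma~4.9]{modrap2} recalled in the proof of Lemma~\ref{lem:Cs-simple},
\[
0 \to \cT_1\la -\ell(w)\ra \to \bbDelta_w \to \cG \to 0,
\]
with $\cG$ having no composition factor isomorphic to any $\cT_1\la k\ra$, shifts to show that $\bbDelta_w\la -\ell(w)\ra$ has a unique such composition factor, namely $\cT_1\la -2\ell(w)\ra$. Summing over the standard filtration yields
\[
[\cP : \cT_1\la m\ra] \;=\; \#\{\, w \in W : -2\ell(w) = m\,\}.
\]
This is $0$ for $m>0$, and for $m=0$ only $w=1$ contributes, giving $[\cP : \cT_1] = 1$.

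For the final statement, I would use projectivity of $\cP$ together with the fact that $\cP$ is the projective cover of $\cT_1 = \IC^\mix_1$. By \eqref{eqn:gHom-Tw0-T1}, $\Hom_{\Dmix(\UGB,\bk)}(\cP, \IC^\mix_w\la m\ra) = \bk$ if $(w,m) = (1,0)$ and $0$ otherwise. Applying the exact functor $\Hom_{\Dmix(\UGB,\bk)}(\cP,-)$ to a composition series of $\cP$ then gives
\[
\dim_\bk \End_{\Dmix(\UGB,\bk)}(\cP) \;=\; [\cP : \cT_1] \;=\; 1,
\]
so $\End_{\Dmix(\UGB,\bk)}(\cP) = \bk\cdot\id$.

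The main obstacle is really a bookkeeping point in the first step: knowing that $\cP$ has a standard filtration with multiplicities given by Hom's into costandards. Once this highest-weight input is in hand (which follows from $\cP$ being tilting together with the graded highest weight structure already set up in \S\ref{ss:perv}), the rest is a direct consequence of \eqref{eqn:Hom-P-costandard}, the standard-to-$\cT_1$ short exact sequence, and the projective-cover property encoded by \eqref{eqn:gHom-Tw0-T1}.
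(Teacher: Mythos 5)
Your proof is correct and follows essentially the same route as the paper's: compute the standard-filtration multiplicities of $\cP$ from~\eqref{eqn:Hom-P-costandard}, feed the resulting filtration through the short exact sequence of~\cite[Lemma~4.9]{modrap2} to get $[\cP : \cT_1\la m\ra] = \#\{w : m = -2\ell(w)\}$, and deduce the endomorphism statement from $\cP$ being the projective cover of $\cT_1$. The only minor difference is that you justify the existence of a standard filtration via tiltingness whereas the paper invokes projectivity, and you spell out the Ext-vanishing and the final $\dim\End(\cP) = [\cP : \cT_1]$ count in more detail than the paper does; both are sound.
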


\begin{proof}
Since $\cP$ is a projective object in $\Perv(\UGB,\bk)$, it admits a filtration by standard objects. Moreover,~\eqref{eqn:Hom-P-costandard} shows that the subquotients in such a filtration are the objects $\bbDelta_w \langle -\ell(w) \rangle$ for $w \in W$, each appearing once. Combining this with~\cite[Lemma~4.9]{modrap2} we deduce that
\[
[\cP : \cT_1 \langle m \rangle] = \# \{w \in W \mid m=-2\ell(w)\},
\]
which implies the desired statement.
\end{proof}

\begin{lem}
\label{lem:Cs-P}
For any $s \in S$ we have $\Tmon_s \hatstar \cP \cong \cP \langle -1 \rangle \oplus \cP \langle 1 \rangle$.
\end{lem}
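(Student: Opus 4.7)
The plan is to identify $\Tmon_s \hatstar \cP$ with $\cP\la -1\ra \oplus \cP\la 1\ra$ by comparing their classes in the standard basis of the Hecke algebra $\cH_W$, and then invoking Krull--Schmidt in the category of tilting perverse sheaves.

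First, I observe that $\cP = \cT_{w_0}\la -\ell(w_0)\ra$ lies in $\Tilt(\UGB,\bk)$, and that monodromic convolution $\Tmon_s \hatstar(-)$ preserves this subcategory---this is the action of~\cite[Theorem~11.4.2]{amrw}, which agrees on tiltings with the exact functor $C_s$ of Lemma~\ref{lem:Cs-exact}. Hence $\Tmon_s \hatstar \cP$ is a tilting perverse sheaf. Since $\Tilt(\UGB,\bk)$ is Krull--Schmidt, and since the standard-filtration classes of the indecomposable tiltings $\cT_w\la n\ra$ are linearly independent in $\cH_W$---each $\cT_w$ has $\bbDelta_w$ as its top standard with no $\bbDelta_y$ for $y > w$, so the transition matrix to the basis $\{H_w\}$ is unitriangular---the decomposition of $\Tmon_s \hatstar \cP$ into indecomposables is determined by its image in $\cH_W$.

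Second, I compute the standard-filtration class of $\cP$. By~\eqref{eqn:Hom-P-costandard} combined with the Hom-orthogonality of $\bbDelta$'s and $\bbnabla$'s in the graded highest weight category $\Perv(\UGB,\bk)$, the multiplicity $(\cP : \bbDelta_y \la n\ra)$ equals $\dim\Hom(\cP, \bbnabla_y\la n \ra)$, which is $1$ if $n = -\ell(y)$ and $0$ otherwise. So, in the notation of Lemma~\ref{lem:tilting-hom-formula-pre}, the class of $\cP$ in $\cH_W$ is
\[
\chi := \sum_{y \in W} v^{-\ell(y)} H_y.
\]
Next, by Lemma~\ref{lem:Cs-exact} the functor $C_s$ is perverse-exact, and $C_s(\bbDelta_y)$ has standard class $\uH_s H_y$ (as used in the proof of Lemma~\ref{lem:tilting-hom-formula-pre}, via~\cite[Lemma~10.5.3]{amrw}). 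Thus the class of $\Tmon_s \hatstar \cP$ in $\cH_W$ equals $\uH_s \cdot \chi$.

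Finally, a short computation in $\cH_W$ splitting into the cases $sy > y$ and $sy < y$ verifies the Hecke algebra identity $\uH_s \cdot \chi = (v + v^{-1})\chi$. Since the right-hand side is precisely the class of $\cP\la -1\ra \oplus \cP\la 1\ra$, Krull--Schmidt yields the desired isomorphism. The only genuine calculation is this Hecke identity; everything else is a formal consequence of exactness of $C_s$, the highest weight structure on $\Perv(\UGB,\bk)$, and Krull--Schmidt in $\Tilt(\UGB,\bk)$.
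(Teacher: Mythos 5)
Your proof is correct and takes essentially the same approach as the paper: both reduce to showing that $\Tmon_s \hatstar \cP$ has standard multiplicities $(\Tmon_s \hatstar \cP : \bbDelta_w\la i\ra) = 1$ iff $i = -\ell(w)\pm 1$, using exactness of $C_s$ and the multiplicity data from \cite[Lemma~10.5.3]{amrw}, and both appeal to the fact that tilting objects are determined by their standard multiplicities (equivalently, Krull--Schmidt plus linear independence of Hecke classes of indecomposable tiltings). The paper simply states the target multiplicities and says they follow "easily"; you carried out the same computation explicitly via the Hecke identity $\uH_s\cdot\chi = (v+v^{-1})\chi$, which is a clean way to organize it.
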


\begin{proof}
The object $\Tmon_s \hatstar \cP$ belongs to $\Tilt(\UGB, \bk)$. Since such an object is uniquely characterized by the multiplicities of standard objects in a standard filtration, to conclude it suffices to prove that for any $i \in \Z$ and $w \in W$ we have
\[
(\Tmon_s \hatstar \cP : \bbDelta_w \langle i \rangle) = (\cP \langle -1 \rangle \oplus \cP \langle 1 \rangle : \bbDelta_w \langle i \rangle),
\]
i.e.~that
\[
(\Tmon_s \hatstar \cP : \bbDelta_w \langle i \rangle) = \begin{cases}
1 & \text{if $i=-\ell(w) \pm 1$;}\\
0 & \text{otherwise.}
\end{cases}
\]
This easily follows from~\cite[Lemma~10.5.3]{amrw} (see also~\cite[Proof of Lem\-ma~10.5.4]{amrw}).
\end{proof}

%--------------------------------------------------------------------------
\subsection{A free-monodromic analogue of \texorpdfstring{$\cP$}{P}}
\label{ss:Pmon}
%--------------------------------------------------------------------------

From now on we fix (once and for all) an object $\Tmon_{w_0}$ as in~\S\ref{ss:fm-tilting-sheaves}; then $\Tmon_{w_0}$ belongs to $\Tilt(\UGU,\bk)$ and satisfies $\ForFMLM(\Tmon_{w_0}) \cong \cT_{w_0}$. We set
\[
\Pmon := \Tmon_{w_0} \langle -\ell(w_0) \rangle,
\]
so that $\ForFMLM(\Pmon) \cong \cP$.

Using Proposition~\ref{prop:morph-Tilt-UGU} and~\eqref{eqn:gHom-Tw0-T1} we see that there exists an isomorphism of bigraded vector spaces
\[
\gHom_\FM(\Pmon,\Tmon_1) \cong R^\vee.
\]
In particular, we deduce that
\[
\dim_\bk \bigl( \Hom_{\Tilt(\UGU,\bk)}(\Pmon, \Tmon_1) \bigr) = 1.
\]
We also fix a nonzero morphism $\xi : \Pmon \to \Tmon_1$ (which is unique up to nonzero scalar), and set $\xi' := \ForFMLM(\xi)$, a generator of $\Hom_{\Dmix(\UGB, \bk)}(\cP, \cT_1)$.

\begin{lem}
\label{lem:conv-P}
The objects $\Pmon \hatstar \cP$ and $\Pmon \hatstar \Pmon \hatstar \cP$ are direct sums of copies of $\cP \langle i \rangle$ with $i \in \Z_{\leq 0}$, with $\cP$ appearing once.
\end{lem}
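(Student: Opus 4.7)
The strategy is to first show $\Pmon \hatstar \cP$ is a direct sum of shifts of $\cP$, and then identify the shifts via a character calculation in the Hecke algebra $\cH_W$. For the first step, iterating Lemma~\ref{lem:Cs-P} gives by induction on $\ell(\uv)$ that for any expression $\uv$, the object $\Tmon_\uv \hatstar \cP$ is a direct sum of shifts of $\cP$ (with total graded multiplicity $(v+v^{-1})^{\ell(\uv)}$). Taking $\uv$ a reduced expression for $w_0$ and invoking Krull--Schmidt in $\TiltBS(\UGU,\bk)$ to write $\Tmon_\uv \cong \Tmon_{w_0} \oplus \bigoplus_{y<w_0,\,m} \Tmon_y\la m\ra^{\oplus a_{y,m}}$, we see that $\Tmon_{w_0} \hatstar \cP$ is a direct summand of a sum of shifts of $\cP$; since $\Tilt(\UGB,\bk)$ is Krull--Schmidt, this summand is itself such a sum. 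Hence $\Pmon \hatstar \cP \cong \bigoplus_i \cP\la n_i\ra$ for some integers $n_i$.

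To pin down the multiset $\{n_i\}$, define the character $\mathrm{ch}(\cF) := \sum_{y, n}(\cF:\bbDelta_y\la n\ra)\,v^n H_y \in \cH_W$ for any object $\cF$ admitting a standard filtration. By Lemma~\ref{lem:multiplicities-P}, $\mathrm{ch}(\cP) = \sum_{y \in W} v^{-\ell(y)} H_y$, and setting $b_{w_0} := v^{\ell(w_0)}\mathrm{ch}(\cP) = \sum_y v^{\ell(w_0)-\ell(y)} H_y$ gives $\mathrm{ch}(\cT_{w_0}) = b_{w_0}$. An induction from \cite[Lemma~10.5.3]{amrw} shows $\mathrm{ch}(\Tmon_\uv \hatstar \cF) = \uH_\uv \cdot \mathrm{ch}(\cF)$ for every Bott--Samelson expression $\uv$, and a further induction on $y$ in the Bruhat order (using the Krull--Schmidt decomposition of $\Tmon_\uv$) upgrades this to the multiplicativity
\[
\mathrm{ch}(\Tmon_y \hatstar \cF) = \mathrm{ch}(\Tmon_y) \cdot \mathrm{ch}(\cF) \quad \text{for all } y \in W.
\]
A direct calculation with the basis $\{H_x\}_{x \in W}$ of $\cH_W$ gives $H_x \cdot b_{w_0} = v^{-\ell(x)} b_{w_0}$, whence $b_{w_0}^2 = \bigl(\sum_y v^{\ell(w_0)-2\ell(y)}\bigr) b_{w_0}$. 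Combining the pieces,
\[
\mathrm{ch}(\Pmon \hatstar \cP) = v^{-\ell(w_0)}\, b_{w_0} \cdot \mathrm{ch}(\cP) = \Bigl(\sum_{y \in W} v^{-2\ell(y)}\Bigr) \mathrm{ch}(\cP),
\]
and matching this with $\mathrm{ch}(\Pmon\hatstar\cP) = \bigl(\sum_i v^{n_i}\bigr) \mathrm{ch}(\cP)$ forces $\{n_i\} = \{-2\ell(y) : y \in W\}$ as multisets. Every $n_i$ lies in $\Z_{\leq 0}$, and $n_i = 0$ occurs exactly once (for $y=1$), proving the claim for $\Pmon \hatstar \cP$. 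The claim for $\Pmon \hatstar \Pmon \hatstar \cP$ then follows by applying $\Pmon \hatstar (-)$ to the explicit decomposition $\Pmon \hatstar \cP \cong \bigoplus_{y \in W} \cP\la -2\ell(y) \ra$, yielding $\Pmon \hatstar \Pmon \hatstar \cP \cong \bigoplus_{y, y' \in W} \cP\la -2(\ell(y)+\ell(y')) \ra$, in which all shifts are nonpositive and the shift $0$ occurs exactly once (for $y = y' = 1$).

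The main obstacle will be the multiplicativity identity $\mathrm{ch}(\Tmon_y \hatstar \cF) = \mathrm{ch}(\Tmon_y) \cdot \mathrm{ch}(\cF)$ for non-Bott--Samelson $\Tmon_y$: the free-monodromic convolution does not \emph{a priori} intertwine with Hecke algebra multiplication, and one has to bootstrap from the Bott--Samelson case by a careful Bruhat induction using Krull--Schmidt. Once this identity is in hand, everything else reduces to the routine Hecke-algebra computation of $b_{w_0}^2$.
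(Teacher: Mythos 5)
Your first half matches the paper exactly: iterate Lemma~\ref{lem:Cs-P} to see that $(\Tmon_\uw\langle -\ell(w_0)\rangle)\hatstar\cP$ decomposes as a sum of $\cP\langle i\rangle$ with $i\in\{-2\ell(w_0),\dots,0\}$, with $\cP$ itself appearing once, and then note that $\Pmon\hatstar\cP$ is a direct summand of it. (A minor slip: Krull--Schmidt holds in the Karoubian envelope $\Tilt(\UGU,\bk)$, not in $\TiltBS(\UGU,\bk)$.)

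For the second half --- that $\cP$ really does appear --- you take a genuinely different and substantially heavier route. The paper's own argument is two sentences: $\id\hatstar\xi'\colon\Pmon\hatstar\cP\to\Pmon\hatstar\cT_1\cong\cP$ is surjective (because $\xi'$ is surjective and $\Pmon\hatstar(-)$ is exact by Lemma~\ref{lem:Cs-exact}, being a summand of $\Tmon_\uw\langle -\ell(w_0)\rangle\hatstar(-)$), and since the image of any map $\cP\langle i\rangle\to\cP$ with $i<0$ lies in the radical of $\cP$, the summand $\cP$ must occur. Your alternative computes the full multiset of shifts via Hecke-algebra characters. The Hecke computation itself is correct (the eigenvector identity $H_x b_{w_0}=v^{-\ell(x)}b_{w_0}$ holds and gives $b_{w_0}^2=\bigl(\sum_y v^{\ell(w_0)-2\ell(y)}\bigr)b_{w_0}$), but, as you yourself flag, everything hinges on the multiplicativity $\mathrm{ch}(\Tmon_w\hatstar\cF)=\mathrm{ch}(\Tmon_w)\,\mathrm{ch}(\cF)$ for the \emph{indecomposable} $\Tmon_w$, which is not supplied by the Bott--Samelson case alone. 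The Bruhat induction you gesture at does close this gap --- decompose $\Tmon_\uw$ via Krull--Schmidt and subtract the inductively known contributions of the $\Tmon_y$ with $y<w$ --- so the proposal is repairable, but as written it leaves a real lemma unproved, and the payoff (the exact multiset $\{-2\ell(y):y\in W\}$) is more than the statement requires. One further citation nit: the formula $\mathrm{ch}(\cP)=\sum_{y\in W} v^{-\ell(y)}H_y$ comes from~\eqref{eqn:Hom-P-costandard} (equivalently, the proof of Lemma~\ref{lem:multiplicities-P}), not from the statement of Lemma~\ref{lem:multiplicities-P}, which records composition-factor multiplicities $[\cP:\cT_1\langle m\rangle]$ rather than standard-filtration multiplicities $(\cP:\bbDelta_y\langle n\rangle)$.
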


\begin{proof}
It is enough to prove the claim for $\Pmon \hatstar \cP$; the case of $\Pmon \hatstar \Pmon \hatstar \cP$ follows.

Let $\uw$ be a reduced expression for $w_0$. Then, by~\cite[Theorem~10.7.1]{amrw}, $\Pmon$ is a direct summand in $\Tmon_{\uw} \langle - \ell(w_0) \rangle$, so $\Pmon \hatstar \cP$ is a direct summand in $(\Tmon_{\uw} \langle - \ell(w_0) \rangle) \hatstar \cP$. The first claim is then a direct consequence of Lemma~\ref{lem:Cs-P}. We also deduce that the multiplicity of $\cP$ in $\Pmon \hatstar \cP$ is at most $1$.

To prove the claim about the multiplicity of $\cP$, we observe that the morphism $\id \hatstar \xi' : \Pmon \hatstar \cP \to \Pmon \hatstar \cT_1 \cong \cP$ is surjective. (Since $\Pmon$ is a direct summand in $\Tmon_{\uw} \langle - \ell(w_0) \rangle$, this follows from Lemma~\ref{lem:Cs-exact}.)
Since the image of any morphism $\cP \langle i \rangle \to \cP$ with $i<0$ is contained in the radical of $\cP$, we deduce that $\cP$ does indeed occur as a direct summand of $\Pmon \hatstar \cP$.
\end{proof}

\begin{cor}
\label{cor:Hom-P-conv}
We have
\[
\dim_\bk \bigl( \Hom_{\Dmix(\UGB, \bk)}(\cP, \Pmon \hatstar \cP \langle i \rangle) \bigr) = \begin{cases}
1 & \text{if $i=0$;} \\
0 & \text{if $i<0$.}
\end{cases}
\]
We also have
\[
\dim_\bk \bigl( \Hom_{\Dmix(\UGB, \bk)}(\cP, \Pmon \hatstar \Pmon \hatstar \cP \langle i \rangle) \bigr) = \begin{cases}
1 & \text{if $i=0$;} \\
0 & \text{if $i<0$.}
\end{cases}
\]
\end{cor}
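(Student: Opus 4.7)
The plan is to deduce Corollary~\ref{cor:Hom-P-conv} as a direct combinatorial consequence of Lemma~\ref{lem:conv-P} together with Lemma~\ref{lem:multiplicities-P}, once we have an auxiliary computation of $\Hom_{\Dmix(\UGB,\bk)}(\cP, \cP\langle k\rangle)$.

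First I would prove the auxiliary fact that
\[
\dim_\bk \bigl(\Hom_{\Dmix(\UGB,\bk)}(\cP, \cP \langle k \rangle)\bigr) = \begin{cases} 1 & \text{if } k=0, \\ 0 & \text{if } k<0. \end{cases}
\]
Since $\cP$ is projective in $\Perv(\UGB,\bk)$, the functor $\Hom_{\Dmix(\UGB,\bk)}(\cP,-)$ is exact on perverse sheaves, so d\'evissage yields
\[
\dim_\bk \bigl(\Hom_{\Dmix(\UGB,\bk)}(\cP, M)\bigr) = \sum_{L, n} [M : L\langle n\rangle]\cdot \dim_\bk \Hom_{\Dmix(\UGB,\bk)}(\cP, L\langle n\rangle)
\]
for $M \in \Perv(\UGB,\bk)$, where $L$ ranges over simples. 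Because $\cP$ is a projective cover of $\cT_1$, the only simple object with a nonzero map from $\cP$ (preserving the grading) is $\cT_1$ itself, so the right-hand side reduces to $[M : \cT_1]$. Applying this with $M = \cP \langle k \rangle$ gives $[\cP : \cT_1 \langle -k \rangle]$, and Lemma~\ref{lem:multiplicities-P} tells us this is $0$ for $k<0$ and $1$ for $k=0$.

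Next I would invoke Lemma~\ref{lem:conv-P} to write
\[
\Pmon \hatstar \cP \cong \bigoplus_{j \leq 0} \cP\langle j\rangle^{\oplus n_j}, \qquad n_0 = 1,
\]
and analogously for $\Pmon \hatstar \Pmon \hatstar \cP$. Then
\[
\Hom_{\Dmix(\UGB,\bk)}(\cP, \Pmon \hatstar \cP \langle i \rangle) = \bigoplus_{j \leq 0} \Hom_{\Dmix(\UGB,\bk)}(\cP, \cP\langle j+i\rangle)^{\oplus n_j}.
\]
For $i < 0$ every shift $j+i$ appearing is strictly negative, so by the auxiliary computation all summands vanish. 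For $i = 0$ the summand with $j = 0$ contributes a one-dimensional space and all other summands (with $j<0$) vanish, giving total dimension $n_0 = 1$. The case of $\Pmon \hatstar \Pmon \hatstar \cP$ is identical.

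This argument is essentially bookkeeping, so I do not expect any real obstacle: all the hard work has been done in Lemma~\ref{lem:conv-P} (the structural statement about $\Pmon \hatstar \cP$) and Lemma~\ref{lem:multiplicities-P} (the multiplicity bound that feeds the auxiliary Hom calculation). The only minor point to be careful about is the sign of the Tate twist in the reduction $[\cP\langle k\rangle : \cT_1] = [\cP : \cT_1 \langle -k\rangle]$, so that the inequality on multiplicities from Lemma~\ref{lem:multiplicities-P} is applied in the correct direction.
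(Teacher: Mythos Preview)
Your proposal is correct and follows essentially the same approach as the paper, which simply says the claims follow from Lemma~\ref{lem:multiplicities-P} and Lemma~\ref{lem:conv-P}; you have spelled out the intermediate step (the computation of $\Hom(\cP,\cP\langle k\rangle)$ via projectivity and multiplicities) that the paper leaves implicit.
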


\begin{proof}
The claims follow from Lemma~\ref{lem:multiplicities-P} and Lemma~\ref{lem:conv-P}.
\end{proof}

%--------------------------------------------------------------------------
\subsection{Morphisms from \texorpdfstring{$\Pmon$}{P} to \texorpdfstring{$\Tmon_s$}{Ts}}
\label{ss:Hom-P-Ts}
%--------------------------------------------------------------------------

Let us fix $s \in S$. Consider the morphism
\[
\ForFMLM(\hat{\epsilon}_s) \langle -1 \rangle : \cT_s \langle -1 \rangle \to \cT_1,
\]
where $\hat{\epsilon}_s$ is defined in~\cite[\S 5.3.4]{amrw}.
Since $\cP$ is projective, and since $[\cT_s \langle -1 \rangle : \cT_1]=1$ (see~\cite[Example~4.6.4]{amrw}), there exists a unique morphism $\zeta_s' : \cP \to \cT_s \langle -1 \rangle$ such that $(\ForFMLM(\hat{\epsilon}_s) \langle -1 \rangle) \circ \zeta_s' = \xi'$.

\begin{lem}
\label{lem:V-Ts}
There exists a unique morphism
\[
\zeta_s : \Pmon \to \Tmon_s \langle - 1 \rangle
\]
in $\Dmix(\UGU, \bk)$ such that $\ForFMLM(\zeta_s)=\zeta_s'$. Moreover we have $(\hat{\epsilon}_s \langle -1 \rangle) \circ \zeta_s = \xi$, and there exists a unique $\Z^2$-graded $R^\vee$-bimodule isomorphism
\[
R^\vee \otimes_{(R^\vee)^s} R^\vee \simto \gHom_{\FM}(\Pmon, \Tmon_s  \langle -1 \rangle)
\]
sending $1 \otimes 1$ to $\zeta_s$.
\end{lem}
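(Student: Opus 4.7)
The plan has three stages: compute $\gHom_\LM(\cP, \cT_s\la -1\ra)$ using the projective property of $\cP$; transfer to $\gHom_\FM(\Pmon, \Tmon_s\la -1\ra)$ via Proposition~\ref{prop:morph-Tilt-UGU}; and upgrade the resulting morphism $\zeta_s$ to a bimodule isomorphism.

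For the first two stages, since $\cP$ is the projective cover of $\cT_1 = \IC^\mix_1$ in the graded highest weight category $\Perv(\UGB,\bk)$, projectivity yields $\dim \Hom(\cP, \cF\la m\ra) = [\cF : \cT_1\la -m\ra]$ for any $\cF$ in $\Perv$. The standard filtration of $\cT_s$ read off from $\uH_{(s)} = H_s + v H_1$ via Lemma~\ref{lem:tilting-hom-formula-pre} combined with~\cite[Lemma~4.9]{modrap2} shows that $\cT_s$ has composition factors $\cT_1\la -1\ra$, $\IC^\mix_s$, and $\cT_1\la 1\ra$, each of multiplicity one. Invoking~\eqref{eqn:gHom-Hom-LM}, it follows that $\gHom_\LM(\cP, \cT_s\la -1\ra)^0_j$ is one-dimensional for $j \in \{0, -2\}$ and zero otherwise. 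Proposition~\ref{prop:morph-Tilt-UGU} then identifies $\gHom_\FM(\Pmon, \Tmon_s\la-1\ra)^0_\bullet$ as a graded free right $R^\vee$-module whose reduction modulo $R^\vee_+$ recovers this two-dimensional space; graded Nakayama therefore forces free generators in bidegrees $(0,0)$ and $(0,-2)$, and the other bigraded slices vanish. In particular, $\gHom_\FM(\Pmon, \Tmon_s\la-1\ra)^0_0$ is one-dimensional, and $\ForFMLM$ carries it isomorphically onto $\bk \cdot \zeta_s'$, yielding both existence and uniqueness of $\zeta_s$. The identity $(\hat\epsilon_s\la -1\ra) \circ \zeta_s = \xi$ then holds because both sides lie in $\gHom_\FM(\Pmon, \Tmon_1)^0_0$, which by~\eqref{eqn:gHom-Tw0-T1} and Proposition~\ref{prop:morph-Tilt-UGU} is one-dimensional and on which $\ForFMLM$ is injective; both sides map to $\xi'$ under $\ForFMLM$ by the defining property of $\zeta_s'$.

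For the bimodule identification, the pairing $(x, y) \mapsto \mu_{\Tmon_s\la-1\ra}(x) \circ \zeta_s \cdot y$ induces a morphism of graded $R^\vee$-bimodules $\Phi : R^\vee \otimes_\bk R^\vee \to \gHom_\FM(\Pmon, \Tmon_s\la-1\ra)$ by~\eqref{eqn:rightmon-morph} and~\eqref{eqn:mu-morph}. The crux is to check that $\Phi$ factors through $R^\vee \otimes_{(R^\vee)^s} R^\vee$, which is equivalent to the identity
\[
\mu_{\Tmon_s\la-1\ra}(f) \circ \zeta_s = \zeta_s \cdot f \qquad \text{for all } f \in (R^\vee)^s.
\]
By~\eqref{eqn:rightmon-morph} this reduces to the intrinsic identity $\mu_{\Tmon_s}(f) = \id_{\Tmon_s} \cdot f$ in $\gEnd_\FM(\Tmon_s)$ for $s$-invariant $f$, which must be extracted from the construction of $\Tmon_s$ and of the left monodromy map in~\cite[Chap.~5 and Chap.~10]{amrw}, most naturally by realizing $\Tmon_s$ as the Bott--Samelson $\Tmon_{(s)}$ and tracking how $(R^\vee)^s$ interacts with the defining differentials. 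Once this descent is in place, the induced $\bar\Phi : R^\vee \otimes_{(R^\vee)^s} R^\vee \to \gHom_\FM(\Pmon, \Tmon_s\la-1\ra)^0_\bullet$ is a map between graded free right $R^\vee$-modules of rank two with generators in matching bidegrees $(0,0)$ and $(0,-2)$, sending $1 \otimes 1$ to $\zeta_s$; by graded Nakayama it suffices to check that $\bar\Phi(\alpha_s^\vee \otimes 1) = \mu_{\Tmon_s\la-1\ra}(\alpha_s^\vee) \circ \zeta_s$ is nonzero modulo $\zeta_s \cdot R^\vee_+$, which can be seen by applying $\ForFMLM$ and using nontriviality of the left monodromy action of $\alpha_s^\vee$ on $\cT_s$. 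Uniqueness of the isomorphism is automatic since $1 \otimes 1$ generates the source as a bimodule. The main obstacle is the descent of $\Phi$: the identity $\mu_{\Tmon_s}(f) = \id_{\Tmon_s} \cdot f$ for $s$-invariant $f$ is an intrinsic property of $\Tmon_s$ that is not directly available from Proposition~\ref{prop:morph-Tilt-UGU}, and requires genuine unraveling of the left monodromy construction of~\cite{amrw}.
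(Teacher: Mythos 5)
Your proposal is correct and follows essentially the same route as the paper: compute $\gHom_\LM(\cP,\cT_s\la-1\ra)$ from the projectivity of $\cP$ and the composition series of $\cT_s$, lift via Proposition~\ref{prop:morph-Tilt-UGU} to get existence and uniqueness of $\zeta_s$ and the identity with $\xi$, then define the bimodule map from $R^\vee \otimes_\bk R^\vee$ and reduce its descent through $(R^\vee)^s$ and its bijectivity to facts imported from~\cite{amrw}. The one step you flag as needing unraveling — that $\mu_{\Tmon_s}(f) = \id_{\Tmon_s}\cdot f$ for $f \in (R^\vee)^s$, or equivalently the bimodule description of $\gEnd_\FM(\Tmon_s)$ — is precisely the content the paper imports by citing~\cite[Proposition~5.3.1 and its proof]{amrw} for the descent (and~\cite[Example~4.7.4]{amrw} for the final isomorphism check mod $R^\vee_+$), so your identification of the needed input is accurate rather than a gap in the argument.
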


\begin{proof}
Since $[\cT_s \langle -1 \rangle : \cT_1] = [\cT_s \langle -1 \rangle : \cT_1 \langle -2 \rangle]=1$ and $[\cT_s \langle -1 \rangle : \cT_1 \langle m \rangle]=0$ if $m \notin \{0, -2\}$, the $\Z^2$-graded $\bk$-vector space $\gHom_\LM(\cP, \cT_s \langle -1 \rangle)$ has dimension $2$, and is nonzero in degrees $0$ and $-2$. By Proposition~\ref{prop:morph-Tilt-UGU} this implies that the graded (right) $R^\vee$-module $\gHom_{\FM}(\Pmon, \Tmon_s \langle -1 \rangle)$ is free of rank $2$, and generated in degrees $0$ and $-2$, and that the functor $\ForFMLM$ induces an isomorphism
\[
\Hom_{\Dmix(\UGU, \bk)}(\Pmon, \Tmon_s \langle -1 \rangle) \simto \Hom_{\Dmix(\UGB, \bk)}(\cP, \cT_s \langle -1 \rangle).
\]
This proves the existence and uniqueness of $\zeta_s$. The fact that $(\hat{\epsilon}_s \langle -1 \rangle) \circ \zeta_s = \xi$ follows from similar arguments.

Now, we consider the morphism
\[
R^\vee \otimes_\bk R^\vee \to \gHom_{\FM}(\Pmon, \Tmon_s  \langle -1 \rangle)
\]
defined by $x \otimes y \mapsto (\mu_{\Tmon_s}(x) \circ \zeta_s) \cdot y$.  By~\eqref{eqn:rightmon-morph}, for $x,y \in R^\vee$, we have
\[
(\mu_{\Tmon_s}(x) \circ \zeta_s) \cdot y = (\mu_{\Tmon_s}(x) \cdot y) \circ \zeta_s.
\]
In view of~\cite[Proposition~5.3.1 and its proof]{amrw}, this implies that our morphism factors through a ($\Z^2$-graded) bimodule morphism
\[
R^\vee \otimes_{(R^\vee)^s} R^\vee \to \gHom_{\LM}(\Pmon, \Tmon_s  \langle -1 \rangle).
\]
The right $R^\vee$-modules under consideration are both free of rank $2$, and generated in degrees $0$ and $-2$ (see again~\cite[Proposition~5.3.1 and its proof]{amrw} for the left-hand side). Hence to prove that our morphism is an isomorphism, it suffices to show that the induced morphism
\[
R^\vee \otimes_{(R^\vee)^s} \bk \to \gHom_{\FM}(\Pmon, \Tmon_s  \langle -1 \rangle) \otimes_{R^\vee} \bk \cong \gHom_\LM(\cP, \cT_s \langle -1 \rangle)
\]
is an isomorphism. The latter fact is clear from the discussion in \cite[Example~4.7.4]{amrw}.
\end{proof}

%--------------------------------------------------------------------------
\subsection{Coalgebra structure}
%--------------------------------------------------------------------------

\begin{prop}
\label{prop:Pmon-coalgebra}
The object $\Pmon$ admits a canonical coalgebra structure in the monoidal category $\Tilt(\UGU,\bk)$ with counit $\xi$.
\end{prop}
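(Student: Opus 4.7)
The strategy is to identify the comultiplication $\Delta: \Pmon \to \Pmon \hatstar \Pmon$ as the essentially unique nonzero element of a one-dimensional Hom space, normalize it using the counit $\xi$, and then derive both the remaining counit axiom and coassociativity from dimension considerations.

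The technical heart is the computation
\[
\dim_\bk \Hom_{\Tilt(\UGU,\bk)}(\Pmon, \Pmon \hatstar \Pmon) = \dim_\bk \Hom_{\Tilt(\UGU,\bk)}(\Pmon, \Pmon \hatstar \Pmon \hatstar \Pmon) = 1.
\]
Both target objects are direct summands of shifted Bott--Samelson objects (using~\eqref{eqn:Tmon-conv} and~\cite[Theorem~10.7.1]{amrw}), so Proposition~\ref{prop:morph-Tilt-UGU} extends to them: the relevant graded right $R^\vee$-modules $\gHom_\FM(\cdot,\cdot)^0_\bullet$ are free (as graded summands of finitely generated free modules over the polynomial ring $R^\vee$), and tensoring over $R^\vee$ with $\bk$ recovers the corresponding $\gHom_\LM(\cdot,\cdot)^0_\bullet$. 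By Corollary~\ref{cor:Hom-P-conv}, each of these left-monodromic modules has its degree-$0$ part of dimension one and vanishes in strictly positive degrees. Since $R^\vee$ is concentrated in non-positive degrees (as $V$ sits in bidegree $(0,-2)$), a free $R^\vee$-generator in degree $d$ contributes $R^\vee_{-d}$ to the degree-$0$ component, which is nonzero only when $d=0$; only the unique degree-$0$ generator survives, yielding the claimed dimensions.

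Given these, I would define $\Delta$ as the unique element of $\Hom(\Pmon, \Pmon \hatstar \Pmon)$ satisfying $(\id \hatstar \xi) \circ \Delta = \id_\Pmon$. This normalization makes sense because $\Hom(\Pmon,\Pmon) = \bk$ (by the same free-module argument, noting that $\gHom_\LM(\cP,\cP)^0_\bullet$ is concentrated in non-positive degrees by Lemma~\ref{lem:tilting-hom-formula} since $\la \uH_\uw, \uH_\uw \ra$ has no negative $v$-exponents), and the map $\Hom(\Pmon, \Pmon \hatstar \Pmon) \to \Hom(\Pmon, \Pmon)$ induced by $\id \hatstar \xi$ is nonzero: after applying $\ForFMLM$, the canonical section $\cP \to \Pmon \hatstar \cP$ furnished by the multiplicity-one $\cP$-summand of Lemma~\ref{lem:conv-P} maps to $\id_\cP$ and lifts to $\Tilt(\UGU,\bk)$ via the extended Proposition~\ref{prop:morph-Tilt-UGU}. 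For the symmetric axiom, write $(\xi \hatstar \id) \circ \Delta = \lambda \cdot \id_\Pmon$; the bifunctoriality identity $(\id \hatstar \xi) \circ (\xi \hatstar \id) = (\xi \hatstar \id) \circ (\id \hatstar \xi) = \xi \hatstar \xi$ yields two expressions for $(\xi \hatstar \xi) \circ \Delta: \Pmon \to \Tmon_1$, equal respectively to $\lambda \xi$ and $\xi$, forcing $\lambda = 1$. Finally, for coassociativity, both $(\Delta \hatstar \id) \circ \Delta$ and $(\id \hatstar \Delta) \circ \Delta$ belong to the one-dimensional space $\Hom(\Pmon, \Pmon \hatstar \Pmon \hatstar \Pmon)$, and both are sent to $\Delta$ by post-composition with $\id \hatstar \id \hatstar \xi$ (using the counit axiom in each case); since this post-composition is a nonzero map between one-dimensional spaces, hence an isomorphism, the two sides agree.

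The main obstacle is the opening dimension computation, which requires the careful extension of Proposition~\ref{prop:morph-Tilt-UGU} to direct summands of Bott--Samelson objects and the graded bookkeeping exploiting the concentration of $R^\vee$ in non-positive degrees. Once these are in place, all coalgebra axioms follow essentially formally from the one-dimensionality.
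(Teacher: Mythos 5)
Your proposal is correct and follows essentially the same strategy as the paper: use Corollary~\ref{cor:Hom-P-conv} together with (an extension of) Proposition~\ref{prop:morph-Tilt-UGU} to show that $\Hom(\Pmon, \Pmon\hatstar\Pmon)$ and $\Hom(\Pmon, \Pmon\hatstar\Pmon\hatstar\Pmon)$ are one-dimensional, define the comultiplication by normalizing against $\xi$, and obtain the remaining axioms by one-dimensionality (the paper normalizes via $(\xi\hatstar\xi)\circ\delta=\xi$ rather than $(\id\hatstar\xi)\circ\Delta=\id$, but these pick out the same morphism). One small imprecision: your justification that $\End(\Pmon)=\bk$ invokes the claim that $\langle \uH_\uw,\uH_\uw\rangle$ has no negative $v$-exponents, which is \emph{not} true for an arbitrary expression (e.g.\ $\uw=(s,s)$ gives $v^{-2}+3+3v^2+v^4$) and, even restricted to a reduced $\uw$, is a nontrivial positivity statement requiring justification; the paper instead gets the needed vanishing directly from the projectivity of $\cP$ via Lemma~\ref{lem:multiplicities-P}, which is the cleaner route.
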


\begin{proof}
Our proof is very close to that in~\cite[Proposition~4.6.4]{by}. We need to define a counit morphism $\Pmon \to \Tmon_1$ (which should be $\xi$) and a comultiplication morphism $\delta : \Pmon \to \Pmon \hatstar \Pmon$, and check that these data satisfy the counit and coassociativity axioms.

To define the comultiplication, we first observe that there exists a unique morphism $\delta' : \cP \to \Pmon \hatstar \cP$ such that $(\xi \hatstar \xi') \circ \delta' = \xi'$. In fact, the morphism $\xi \hatstar \xi' = (\xi \hatstar \id_{\cT_1}) \circ (\id_{\Pmon} \hatstar \xi')$ is surjective by the proof of Lemma~\ref{lem:conv-P}. Since its restriction to any summand of the form $\cP \langle i \rangle$ with $i<0$ must vanish, this proves the existence of $\delta'$ in view of Lemma~\ref{lem:conv-P} and~\eqref{eqn:gHom-Tw0-T1}. Uniqueness is also clear from this lemma since $\End_{\Dmix(\UGB,\bk)}(\cP) = \bk \cdot \id$ and $\Hom_{\Dmix(\UGB,\bk)}(\cP, \cP\langle i \rangle)=0$ if $i<0$.

Now, combining Corollary~\ref{cor:Hom-P-conv} and Proposition~\ref{prop:morph-Tilt-UGU} (see also~\eqref{eqn:For-hatstar}) we see that $\gHom_\FM(\Pmon, \Pmon \hatstar \Pmon)$ is a direct sum of copies of $R^\vee\la i\ra$ with $i \le 0$, in which $R^\vee$ itself occurs with multiplicity~$1$.  Moreover, the functor $\ForFMLM$ induces an isomorphism
\[
\Hom_{\Dmix(\UGU,\bk)}(\Pmon, \Pmon \hatstar \Pmon) \simto \Hom_{\Dmix(\UGB,\bk)}(\cP, \Pmon \hatstar \cP).
\]
From the previous paragraph we then deduce that there exists a unique morphism $\delta : \Pmon \to \Pmon \hatstar \Pmon$ such that $(\xi \hatstar \xi) \circ \delta = \xi$. This defines our comultiplication.

It remains to show that $\xi$ and $\delta$ satisfy the required axioms. We observe that as above the vector space $\Hom_{\Dmix(\UGU,\bk)}(\Pmon, \Pmon \hatstar \Pmon \hatstar \Pmon)$ is $1$-dimensional. Hence $(\delta \hatstar \id) \circ \delta$ and $(\id \hatstar \delta) \circ \delta$ are proportional. Moreover, we have
\[
(\xi \hatstar \xi \hatstar \xi) \circ ((\delta \hatstar \id) \circ \delta) = (\xi \hatstar \xi \hatstar \xi) \circ ((\id \hatstar \delta) \circ \delta) = \xi.
\]
Hence $(\delta \hatstar \id) \circ \delta = (\id \hatstar \delta) \circ \delta$, proving coassociativity. The counit axiom can be checked similarly, and the proof is complete.
\end{proof}

%--------------------------------------------------------------------------
\subsection{The functor \texorpdfstring{$\V$}{V}}
%--------------------------------------------------------------------------

The $\Z^2$-graded algebra $R^\vee$ is concentrated in degrees in $\{0\} \times \Z$, so it makes sense to regard it as just a $\Z$-graded algebra. Similarly, if $\cF, \cG$ belong to $\Tilt(\UGU, \bk)$, then by Proposition~\ref{prop:morph-Tilt-UGU}, $\gHom_\FM(\cF,\cG)$ can (and will) be regarded as a $\Z$-graded $\bk$-module.

Consider the $\Z$-graded algebra morphism
\[
R^\vee \otimes R^\vee \to \gEnd_\FM(\Pmon)
\]
sending $x \otimes y$ to $\mu_{\Pmon}(x) \cdot y$. This morphism allows us to define a functor
\[
\V := \gHom_\FM(\Pmon, -) : \Tilt(\UGU, \bk) \to R^\vee \grbiMod R^\vee,
\]
where $R^\vee \grbiMod R^\vee$ is the category of graded $R^\vee$-bimodules. This functor intertwines Tate twist with the shift-of-grading functor $\langle 1 \rangle$ on $R^\vee \grbiMod R^\vee$, where the latter is normalized as in~\cite[\S 3.1]{amrw}.

The arguments below will sometimes make use of the functor
\[
\V' := \gHom_\LM(\cP, -) : \Tilt(\UGB, \bk) \to R^\vee \grlMod,
\]
where $R^\vee \grlMod$ is the category of graded left $R^\vee$-modules, and the morphism $R^\vee \to \gEnd_\LM(\cP)$ is $\mu_{\cP}$.  Proposition~\ref{prop:morph-Tilt-UGU} implies that the following diagram commutes up to natural isomorphism:
\begin{equation}\label{eqn:VVprime-commute}
\begin{tikzcd}
\Tilt(\UGU,\bk) \ar[r, "\V"] \ar[d, "\ForFMLM"'] & R^\vee \grbiMod R^\vee \ar[d, "({-}) \otimes_{R^\vee} \bk"] \\
\Tilt(\UGB,\bk) \ar[r, "\V'"] & R^\vee \grlMod.
\end{tikzcd}
\end{equation}

\begin{prop}
\label{prop:V-monoidal}
The functor $\V$ admits a canonical monoidal structure which intertwines the convolution $\hatstar$ on $\Tilt(\UGU,\bk)$ and the natural tensor product of graded $R^\vee$-bimodules.
\end{prop}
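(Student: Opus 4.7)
The plan is to construct the monoidal structure on $\V$ from the coalgebra structure on $\Pmon$ provided by Proposition~\ref{prop:Pmon-coalgebra}. For $\cF, \cG \in \Tilt(\UGU, \bk)$, define
\[
\phi_{\cF, \cG} : \V(\cF) \otimes_{R^\vee} \V(\cG) \to \V(\cF \hatstar \cG), \qquad f \otimes g \mapsto (f \hatstar g) \circ \delta,
\]
where $\delta: \Pmon \to \Pmon \hatstar \Pmon$ is the comultiplication, and take the unit isomorphism $R^\vee \simto \V(\Tmon_1)$ to be $x \mapsto \xi \cdot x$, which is an iso by~\eqref{eqn:gHom-Tw0-T1}. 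Well-definedness of $\phi_{\cF,\cG}$ over $\otimes_{R^\vee}$ is exactly~\eqref{eqn:morph-Rvee-left-right}: with the left $R^\vee$-action on $\V(\cG)$ defined by $x \cdot g := \mu_\cG(x) \circ g$, one has $((f \cdot x) \hatstar g) \circ \delta = (f \hatstar (\mu_\cG(x) \circ g)) \circ \delta$. The $R^\vee$-bimodule linearity on the outer slots is a combination of~\eqref{eqn:mu-morph} and the compatibility of $\delta$ with the monodromy map $\mu_\Pmon$, which is built into its construction inside $\Tilt(\UGU, \bk)$.

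The associativity and unitality axioms for $\phi$ then follow formally from the coassociativity and counit identities for $(\Pmon, \delta, \xi)$ verified in Proposition~\ref{prop:Pmon-coalgebra}, applied pointwise to triples and pairs of morphisms out of $\Pmon$.

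The main obstacle is to show that $\phi_{\cF, \cG}$ is an isomorphism. By additivity in each variable and compatibility with the Tate twist, I reduce to $\cF = \Tmon_\uv$, $\cG = \Tmon_\uw$, in which case the target is $\V(\Tmon_{\uv\uw})$ by~\eqref{eqn:Tmon-conv}. Since $\Pmon$ is a direct summand of $\Tmon_\bullet \langle -\ell(w_0) \rangle$ for any reduced expression of $w_0$, Proposition~\ref{prop:morph-Tilt-UGU} implies that both $\V(\Tmon_\uv)$ and $\V(\Tmon_{\uv\uw})$ are graded free as right $R^\vee$-modules. An induction on $\ell(\uv)$, with Lemma~\ref{lem:V-Ts}---where the explicit identification $\V(\Tmon_s \langle -1 \rangle) \cong R^\vee \otimes_{(R^\vee)^s} R^\vee$ is visibly free on both sides---as the base case, further shows that $\V(\Tmon_\uv)$ is free as a left $R^\vee$-module as well, so the source of $\phi_{\Tmon_\uv, \Tmon_\uw}$ is also graded free of the expected rank. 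By Proposition~\ref{prop:morph-Tilt-UGU} together with the commutative square~\eqref{eqn:VVprime-commute}, it then suffices to check that $\phi \otimes_{R^\vee} \bk$ is an isomorphism of graded left $R^\vee$-modules. Equality of graded ranks is provided by Lemma~\ref{lem:tilting-hom-formula}, which identifies both sides with the Hecke algebra pairing $\la \uH_\uv, \uH_\uw \ra$. Surjectivity is established by induction on $\ell(\uw)$, with Lemma~\ref{lem:V-Ts} furnishing the base case $\uw = (s)$ and~\eqref{eqn:Tmon-conv} driving the inductive step.
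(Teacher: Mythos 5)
The construction of the monoidal data $(\gamma,\phi)$ from the coalgebra structure on $\Pmon$, together with the verification of the associativity and unitality axioms, matches the paper's argument exactly, and the reduction (via right-freeness and graded Nakayama) to checking that $\phi\otimes_{R^\vee}\bk$ is an isomorphism is also the paper's first step. The difficulty lies entirely in the second half, and there your argument has real gaps.

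First, the claim that $\V(\Tmon_\uv)$ is free as a \emph{left} $R^\vee$-module ``by induction on $\ell(\uv)$'' is circular: the inductive step would require knowing $\V(\Tmon_{s}\hatstar\Tmon_{\uv'})\cong\V(\Tmon_s)\otimes_{R^\vee}\V(\Tmon_{\uv'})$, which is precisely $\phi$ being an isomorphism. This claim is also not needed — it suffices that $\V$ lands in bimodules free as \emph{right} $R^\vee$-modules (Proposition~\ref{prop:morph-Tilt-UGU}), since a tensor product of right-free bimodules over $R^\vee$ is again right-free.

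Second, the dimension count does not follow from Lemma~\ref{lem:tilting-hom-formula} as quoted. That lemma computes $\sum_n \dim\Hom(\cT_\uv,\cT_\uw\langle n\rangle)\cdot v^n = \la\uH_\uv,\uH_\uw\ra$, whereas after killing the right action the source of $\phi$ is $\V(\Tmon_\uv)\otimes_{R^\vee}\V'(\cT_\uw)$ and the target is $\V'(\cT_{\uv\uw})$. Neither of these has graded dimension $\la\uH_\uv,\uH_\uw\ra$; rather their dimensions are governed by the multiplicities $[\cT_{\uv\uw}:\cT_1\langle m\rangle]$. A correct count can be extracted from Lemma~\ref{lem:tilting-hom-formula-pre} and~\cite[Lemma~4.9]{modrap2}, but it is a different computation from the one you cite.

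Most seriously, the ``surjectivity by induction on $\ell(\uw)$ with $\uw=(s)$ as base case'' is not substantiated and does not obviously close. However one sets up such an induction, one is forced to establish surjectivity of $\phi_{\Tmon_s,\cT_\uw}$ (or $\phi_{\Tmon_\uv,\cT_s}$) for arbitrary $\uw$ (resp.~$\uv$), and Lemma~\ref{lem:V-Ts} only computes $\V(\Tmon_s)$ itself, not this map. This is where the paper's proof does genuine work that your sketch omits: it extends $\V(\Tmon_s)\otimes_{R^\vee}\V'(-)$ and $\V'(\Tmon_s\hatstar-)$ to \emph{exact} functors $\W_1^s,\W_2^s$ on $\Perv(\UGB,\bk)$ (using projectivity of $\cP$, freeness of $\V(\Tmon_s)$, and Lemma~\ref{lem:Cs-exact}), so that it suffices by the five lemma to check the natural transformation on simple objects, where the vanishing $\W_i^s(\IC^\mix_w)=0$ for $w\neq 1$ (Lemma~\ref{lem:Cs-simple}) and a direct computation for $w=1$ finish the proof. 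You will need an argument of comparable substance; a bare dimension count plus an unspecified induction is not enough.
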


\begin{proof}
Let $\gamma : R^\vee \simto \V(\Tmon_1)$ be the isomorphism determined by $\gamma(1) = \xi$, where $\xi : \Pmon \to \Tmon_1$ is the morphism fixed in~\S\ref{ss:Pmon}. We need to define a natural isomorphism of bifunctors
\[
 \beta : \V(-) \otimes_{R^\vee} \V(-) \to \V(- \hatstar -)
\]
so that the data $(\V, \beta, \gamma)$ satisfies the associativity and unitality axioms of a monoi\-dal functor.

We begin by defining a morphism of bifunctors
\[
\V(-) \otimes_\bk \V(-) \to \V(- \hatstar -)
\]
as follows. If $\cF, \cG$ belong to $\Tilt(\UGU,\bk)$ and $f \in \V(\cF)_m$, $g \in \V(\cG)_{m'}$, then we can consider
\[
f \hatstar g : \Pmon \hatstar \Pmon \to \cF \hatstar \cG \langle -m-m' \rangle.
\]
Composing this morphism with the comultiplication from Proposition~\ref{prop:Pmon-coalgebra}, we obtain an element of $\V(\cF \hatstar \cG)_{m+m'}$. This defines the desired morphism, and by~\eqref{eqn:morph-Rvee-left-right} this morphism factors through a morphism
\[
\beta : \V(-) \otimes_{R^\vee} \V(-) \to \V(- \hatstar -).
\]
For later use, note that a very similar construction, using the map $\delta': \cP \to \Pmon \hatstar \cP$ from the proof of Proposition~\ref{prop:Pmon-coalgebra} in place of the comultiplication, yields a natural transformation
\[
\beta' : \V(-) \otimes_{R^\vee} \V'(-) \to \V'(- \hatstar -).
\]

The associativity axiom for $(\V, \beta, \gamma)$ follows from the bifunctoriality of $\hatstar$, the compatibility of the associator isomorphism in $\Tilt(\UGU,\bk)$ with morphisms (see \cite[Proposition~7.2.2]{amrw}), and the coassociativity axiom for the coalgebra structure of $\Pmon$ (see Proposition~\ref{prop:Pmon-coalgebra}). The unitality axioms for $(\V, \beta, \gamma)$ follow from the naturality of the unitor isomorphisms in $\Tilt(\UGU,\bk)$ (see \cite[Lemma~7.1.1]{amrw}) and the counit axioms for the coalgebra structure of $\Pmon$ (see Proposition~\ref{prop:Pmon-coalgebra}).

To conclude, it remains only to prove that $\beta$ is an isomorphism. By Proposition~\ref{prop:morph-Tilt-UGU},
$\V$ takes values in the subcategory consisting of bimodules which are free as graded right $R^\vee$-modules. It is therefore enough to prove that $\beta$ remains an isomorphism after applying $({-}) \otimes_{R^\vee} \bk: R^\vee \grbiMod R^\vee \to R^\vee \grlMod$.  In other words, it is enough to prove that $\beta'$ is an isomorphism.  Using~\eqref{eqn:Tmon-conv}, we can further reduce the problem to showing that for any $s \in S$, the morphism of functors
\[
\beta'(\Tmon_s, -) : \V(\Tmon_s) \otimes_{R^\vee} \V'(-) \to \V'(\Tmon_s \hatstar -)
\]
is an isomorphism.

For this we will ``extend'' the functors $\V(\Tmon_s) \otimes_{R^\vee} \V'(-)$ and $\V'(\Tmon_s \hatstar -)$ to exact functors $\Perv(\UGB, \bk) \to R^\vee \grlMod$ as follows. First, as explained at the beginning of the section, the category $\Tilt(\UGB, \bk)$ identifies naturally with an additive subcategory of $\Perv(\UGB, \bk)$. We can extend $\V(\Tmon_s) \otimes_{R^\vee} \V'(-)$ to a functor
\[
\W_1^s : \Perv(\UGB, \bk) \to R^\vee \grlMod
\]
by setting $\W_1^s(\cF) := \V(\Tmon_s) \otimes_{R^\vee} \gHom_{\LM}(\cP, \cF)$. Since $\V(\Tmon_s)$ is free as a right $R^\vee$-module and since $\cP$ is a projective object in $\Perv(\UGB, \bk)$, this functor is exact.
For the functor $\V'(\Tmon_s \hatstar -)$, we define an exact functor
\[
\W_2^s : \Perv(\UGB, \bk) \to R^\vee \grlMod
\]
by setting $\W_2^s(\cF) := \gHom_\LM(\cP, C_s(\cF))$. In this case, exactness follows from Lemma~\ref{lem:Cs-exact}.

We claim that the morphism $\beta'(\Tmon_s, -)$ is induced by a morphism of functors $\gamma^s : \W^s_1 \to \W^s_2$.  To see this we need a different construction of the functor $\W_2^s$. Consider the functor
\[
\Kb(\V'(\Tmon_s \hatstar -)) : \Kb(\Tilt(\UGB, \bk)) \to \Kb(R^\vee \grlMod).
\]
As seen in~\S\ref{ss:perv}, the natural functor
\[
\Kb(\Tilt(\UGB, \bk)) \to \Db(\Perv(\UGB, \bk))
\]
is an equivalence. Moreover, it is clear by construction that the following diagram commutes:
\[
\begin{tikzcd}[column sep=large]
\Kb \Tilt(\UGB, \bk) \ar[d, "\wr" swap] \ar[rr, "\Kb(\V'(\Tmon_s \hatstar -))"] && \Kb(R^\vee \grlMod) \ar[d] \\
\Db \Perv(\UGB, \bk) \ar[rr, "\Db(\W_2^s)"] && \Db(R^\vee \grlMod).
\end{tikzcd}
\]
Similarly we have a commutative diagram
\[
\begin{tikzcd}[column sep=huge]
\Kb \Tilt(\UGB, \bk) \ar[d, "\wr" swap] \ar[rr, "\Kb(\V(\Tmon_s) \otimes_{R^\vee} \V'(-))"] && \Kb(R^\vee \grlMod) \ar[d] \\
\Db \Perv(\UGB, \bk) \ar[rr, "\Db(\W_1^s)"] && \Db(R^\vee \grlMod).
\end{tikzcd}
\]
Hence the morphism of functors $\beta'(\Tmon_s, -)$ induces a morphism $\Db(\W_1^s) \to \Db(\W_2^s)$, which restricts to the desired morphism $\gamma^s$.

We will now prove that $\gamma^s$ is an isomorphism, thereby finishing the proof. By the 5-lemma, it is enough to prove that $\gamma^s(\cF)$ is an isomorphism for any simple object $\cF$ in $\Perv(\UGB, \bk)$. After a Tate twist, we may assume that $\cF = \IC^\mix_w$ for some $w \in W$.  If $w \ne 1$, then it is clear that $\W_1^s(\IC^\mix_w) = 0$, and it follows from Lemma~\ref{lem:Cs-simple} that $\W_2^s(\IC^\mix_w) = 0$, so there is nothing to prove in this case.  It remains to consider the case $w = 1$. In other words, we must prove that the morphism
\[
\gamma^s(\cT_1) : \W_1^s(\cT_1) \to \W_2^s(\cT_1)
\]
is an isomorphism. By construction this morphism identifies with $\beta'(\Tmon_s, \cT_1)$. Recall now that 
\[
\V(\Tmon_s) \cong R^\vee \otimes_{(R^\vee)^s} R^\vee \langle 1 \rangle, \quad \V'(\cT_1) \cong \bk, \quad \V'(\Tmon_s \hatstar \cT_1) \cong \V'(\cT_s) \cong R^\vee \otimes_{(R^\vee)_+^s} \bk \langle 1 \rangle.
\]
In particular, both $\W_1^s(\cT_1)$ and $\W_2^s(\cT_1)$ are cyclic as left $R^\vee$-modules, and generated in degree $1$. Hence to conclude, it remains only to prove that
\[
\beta'(\Tmon_s, \cT_1)(\zeta_s \otimes \xi') = \zeta_s',
\]
i.e.~that
\begin{equation}\label{eqn:V-monoidal-last}
(\zeta_s \hatstar \xi') \circ \delta' = \zeta'_s.
\end{equation}
(Here we identify $\Tmon_s \hatstar \cT_1$ and $\cT_s$ in the canonical way; see~\eqref{eqn:For-hatstar}.) However we have
\begin{multline*}
(\ForFMLM(\hat{\epsilon}_s) \langle -1 \rangle) \circ (\zeta_s \hatstar \xi') \circ \delta' = ((\hat{\epsilon}_s \langle -1 \rangle)  \hatstar \id_{\cT_1}) \circ (\zeta_s \hatstar \xi') \circ \delta' \\
= ((\hat{\epsilon}_s \langle -1 \rangle \circ \zeta_s)  \hatstar \xi') \circ \delta' = (\xi \hatstar \xi') \circ \delta' = \xi'.
\end{multline*}
By construction of $\zeta_s'$ (see~\S\ref{ss:Hom-P-Ts}), this proves~\eqref{eqn:V-monoidal-last}, as desired.
\end{proof}

%--------------------------------------------------------------------------
\subsection{Full faithfulness}
\label{ss:V-ff}
%--------------------------------------------------------------------------

The goal of this subsection is to prove the following claim.

\begin{thm}
\label{thm:V-fully-faithful}
The functor
\[
\V : \Tilt(\UGU, \bk) \to R^\vee \grbiMod R^\vee
\]
is fully faithful.
\end{thm}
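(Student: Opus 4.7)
The plan is to prove full faithfulness by reducing to Bott--Samelson objects, identifying both source and target as graded-free right $R^\vee$-modules of the same rank, and then passing to a statement about the companion functor $\V'$ on the more familiar category $\Tilt(\UGB,\bk)$. In the finite-type, characteristic-zero setting, the final step is essentially Soergel's classical Struktursatz.

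First, since $\V$ is additive, intertwines the Tate twist with the grading shift $\la 1\ra$, and both $\Tilt(\UGU,\bk)$ and $R^\vee\grbiMod R^\vee$ are Karoubian, it is enough to show that for any two expressions $\uv,\uw$ the induced map
\[
\V_{\uv,\uw} : \gHom_\FM(\Tmon_\uv,\Tmon_\uw) \to \gHom_{R^\vee\grbiMod R^\vee}(\V\Tmon_\uv,\V\Tmon_\uw)
\]
is an isomorphism of graded right $R^\vee$-modules (the right actions coming from the right action on the second argument, which is compatible with $\V_{\uv,\uw}$ by~\eqref{eqn:rightmon-morph}). Using the monoidal structure on $\V$ (Proposition~\ref{prop:V-monoidal}), together with~\eqref{eqn:Tmon-conv} and Lemma~\ref{lem:V-Ts}, I would identify $\V(\Tmon_\uw)$ for $\uw=(s_1,\dots,s_k)$ with the Bott--Samelson bimodule $R^\vee \otimes_{(R^\vee)^{s_1}} R^\vee \otimes_{(R^\vee)^{s_2}} \cdots \otimes_{(R^\vee)^{s_k}} R^\vee \la k\ra$. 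Thus $\V$ factors through the additive subcategory of Bott--Samelson bimodules.

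Next I would check that both sides of $\V_{\uv,\uw}$ are graded free right $R^\vee$-modules of the same graded rank. The source is graded free by Proposition~\ref{prop:morph-Tilt-UGU}, and Lemma~\ref{lem:tilting-hom-formula} identifies its graded rank with $\la \uH_\uv,\uH_\uw\ra$. On the target side, classical results on Bott--Samelson bimodules in the finite-type, characteristic-zero setting (the Soergel Hom formula) give the same graded rank $\la \uH_\uv,\uH_\uw\ra$, together with graded freeness. By a graded Nakayama argument, $\V_{\uv,\uw}$ is then an isomorphism if and only if it becomes surjective after applying $-\otimes_{R^\vee}\bk$.

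Using the commutative diagram~\eqref{eqn:VVprime-commute} and Proposition~\ref{prop:morph-Tilt-UGU}, this reduction identifies with the map
\[
\V'_{\uv,\uw} : \gHom_\LM(\cT_\uv,\cT_\uw) \to \gHom_{R^\vee\grlMod}(\V'\cT_\uv,\V'\cT_\uw)
\]
induced by $\V'$. The remaining and hardest step is to prove that $\V'_{\uv,\uw}$ is an isomorphism. This is the classical Soergel Struktursatz for tilting perverse sheaves on the flag variety of a reductive group in characteristic zero, and I expect this to be the main technical input: it would be proved by a direct comparison of $\Hom$-spaces using the explicit action of $\V'$ on standard and costandard objects (recall $\V'(\cT_1)=\bk$ and $\V'(\cT_s)\cong R^\vee\otimes_{(R^\vee)^s_+}\bk\la 1\ra$, as used in the proof of Proposition~\ref{prop:V-monoidal}), or by invoking Soergel's original results together with the analysis of the parallel functor $\mathbb{V}$ in~\cite{by}.
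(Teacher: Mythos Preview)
Your proposal is essentially correct and follows the same overall strategy as the paper: reduce to Bott--Samelson objects, use that both sides of $\V_{\uv,\uw}$ are graded free right $R^\vee$-modules of the same rank, and then reduce modulo $R^\vee_+$ to a statement about $\V'$ settled by Soergel's classical results (in the paper, Lemmas~\ref{lem:V'-faithful} and~\ref{lem:dim-Hom-V'}).

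One point deserves more care. When you write ``this reduction identifies with the map $\V'_{\uv,\uw}$,'' diagram~\eqref{eqn:VVprime-commute} and Proposition~\ref{prop:morph-Tilt-UGU} only give you the identification on the \emph{source} side; on the target side they produce a natural map
\[
\gHom_{R^\vee\grbiMod R^\vee}(\V\Tmon_\uv,\V\Tmon_\uw)\otimes_{R^\vee}\bk \;\longrightarrow\; \gHom_{R^\vee\grlMod}(\V'\cT_\uv,\V'\cT_\uw),
\]
and it is not automatic that this is an isomorphism. The paper handles this by passing to the Langlands dual flag variety: it identifies $\V(\Tmon_\uw)$ with the $\BKM^\vee$-equivariant cohomology of a Bott--Samelson parity complex $\cE^\vee_\uw$, and then the freeness of the target and the identification of its reduction mod $R^\vee_+$ with non-equivariant $\Hom$ follow from standard facts about parity complexes (\cite[Lemma~2.2]{mr:etsps}, \cite[Proposition~2]{soergel-Langlands}, \cite[Erweiterungssatz~17]{soergel-kat}). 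Your purely algebraic route via the Soergel Hom formula is equivalent in characteristic~$0$, but you should either invoke the analogous algebraic statement explicitly, or note that you do not actually need this identification to be an isomorphism: since the composite through it equals $\V'_{\uv,\uw}$, injectivity of $\V'_{\uv,\uw}$ already forces $\V_{\uv,\uw}\otimes_{R^\vee}\bk$ to be injective, hence an isomorphism because source and target have the same (finite) graded dimension.
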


Before proving this result we need some preliminary lemmas.

\begin{lem}
\label{lem:V'-faithful}
The functor
\[
\V' : \Tilt(\UGB, \bk) \to R^\vee \grlMod
\]
introduced in the proof of Proposition~{\rm \ref{prop:V-monoidal}}
is faithful.
\end{lem}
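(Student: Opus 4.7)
My plan is to reduce the problem to morphisms between Bott--Samelson tilting sheaves via the monoidal structure of $\V$ from Proposition~\ref{prop:V-monoidal}, then to match graded dimensions of morphism spaces on the two sides using Lemma~\ref{lem:tilting-hom-formula}, and finally to conclude injectivity by invoking the characteristic-zero theory of Soergel bimodules for the complex reductive group $\GKM^\vee$.

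I would first apply the natural transformation $\beta'$ from the proof of Proposition~\ref{prop:V-monoidal} (shown there to be an isomorphism), iteratively along an expression $\uw = (s_1, \ldots, s_k)$. Combined with the identification $\V'(\cT_1) \cong \bk$ (where $R^\vee$ acts through its augmentation), this gives a natural isomorphism
\[
\V'(\cT_\uw) \cong \V(\Tmon_\uw) \otimes_{R^\vee} \bk,
\]
so that $\V'(\cT_\uw)$ is the classical Bott--Samelson $R^\vee$-module attached to $\uw$. Since every indecomposable object of $\Tilt(\UGB, \bk)$ is, up to Tate twist, a direct summand of some such $\cT_\uw$, and since faithfulness passes to direct summands, it would suffice to prove injectivity of the map
\[
\gHom_\LM(\cT_\uv, \cT_\uw) \longrightarrow \Hom_{R^\vee \grlMod}\bigl(\V'(\cT_\uv), \V'(\cT_\uw)\bigr)
\]
induced by $\V'$, for arbitrary expressions $\uv, \uw$.

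I would then match graded dimensions on the two sides. By Lemma~\ref{lem:tilting-hom-formula}, the generating function for the left-hand side is $\la \uH_\uv, \uH_\uw \ra$. For the right-hand side, the classical theory of Soergel bimodules in characteristic zero---applied to the complex reductive group $\GKM^\vee$, where the natural morphism $R^\vee \to H^\bullet(\BKM^\vee \backslash \GKM^\vee; \bk)$ mentioned in the footnote is surjective---gives the same generating function $\la \uH_\uv, \uH_\uw \ra$. Hence the displayed map is a morphism between finite-dimensional graded $\bk$-vector spaces of matching dimensions, so injectivity is equivalent to surjectivity. To establish surjectivity, I would lift each $R^\vee$-module morphism between the two Bott--Samelson modules to a morphism in the Elias--Williamson diagrammatic category $\DiagBS(\fh_\bk, W)$ (via the classical Soergel theory in characteristic zero), and then to a morphism of parity complexes through the equivalence~\eqref{eqn:equivalence-rw}, yielding a morphism of tilting sheaves.

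The main obstacle will be carrying out the Soergel-bimodule comparison rigorously without circularly invoking the full-faithfulness of $\V$ itself (which is the content of Theorem~\ref{thm:V-fully-faithful} and is thus unavailable at this stage). The essential structural inputs that make the comparison possible are the monoidality of $\V$ established in Proposition~\ref{prop:V-monoidal} and the free right $R^\vee$-module structure on graded Hom's between free-monodromic tilting objects from Proposition~\ref{prop:morph-Tilt-UGU}, which together ensure that the left-hand side is captured accurately by its image under $\V'$.
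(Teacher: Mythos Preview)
Your approach has a genuine gap in the surjectivity step, and it also misses a much more elementary route that the paper takes.

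The paper's proof is a short highest-weight-category argument in the style of~\cite{bbm}. Since $\V' = \gHom_\LM(\cP,-)$ and $\cP$ is the projective cover of $\cT_1$, faithfulness reduces to showing that the image of any nonzero morphism between tilting objects has some Tate twist of $\cT_1$ as a composition factor. This holds because every tilting object has a costandard filtration, and by~\cite[Lemma~4.9]{modrap2} the head of every costandard object is a Tate twist of $\cT_1$; hence the head (and so the image) of any nonzero map out of a tilting object already contains such a factor. No dimension counting, no Soergel bimodules, no monoidal structure is needed.

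In your proposal, the dimension-matching part is essentially the content of Lemma~\ref{lem:dim-Hom-V'}, which the paper proves separately and independently. The problem is your surjectivity argument. You propose to lift an $R^\vee$-module morphism to a morphism in $\DiagBS(\fh_\bk,W)$ and then to a morphism of parity complexes via the equivalence~\eqref{eqn:equivalence-rw}, ``yielding a morphism of tilting sheaves.'' But~\eqref{eqn:equivalence-rw} lands in $\ParityBS(\BGB,\bk)$, not in $\Tilt(\UGB,\bk)$; parity complexes and tilting perverse sheaves are different objects, and the functor relating the diagrammatic category to free-monodromic tilting sheaves is precisely the functor $\Phi$ of Theorem~\ref{thm:Diag-Tilt}, constructed only in Section~\ref{sec:Diag-Tilt} and whose construction \emph{uses} the full faithfulness of $\V$ (Theorem~\ref{thm:V-fully-faithful}), which in turn rests on the present lemma. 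So the circularity you flagged is not merely a risk: the lifting step, as written, is unavailable at this point in the paper. Your argument effectively attempts to prove Theorem~\ref{thm:V-fully-faithful} directly rather than the weaker Lemma~\ref{lem:V'-faithful}, and the paper's logical order is deliberately the reverse: first establish faithfulness of $\V'$ by the elementary argument above, then match dimensions (Lemma~\ref{lem:dim-Hom-V'}), and only then deduce full faithfulness of $\V$.
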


\begin{proof}
The argument for this proof is taken from~\cite{bbm}. By construction of the functor $\V'$, to prove the lemma it suffices to prove that the image of any nonzero morphism between objects of $\Tilt(\UGB, \bk)$ admits a Tate twist of $\cT_1$ as a composition factor. In fact this follows from the observation that the only possible simple quotients of objects of $\Tilt(\UGB, \bk)$ are Tate twists of $\cT_1$, since such objects admit costandard filtrations, and since the head of any costandard object in $\Perv(\UGB, \bk)$ is a Tate twist of $\cT_1$, by~\cite[Lemma~4.9]{modrap2}.
\end{proof}

\begin{lem}
\label{lem:dim-Hom-V'}
For any $\cF, \cG$ in $\TiltBSp(\UGB, \bk)$, the $\bk$-vector spaces
\[
\bigoplus_{m \in \Z} \Hom_{\Dmix(\UGB, \bk)}(\cF, \cG \langle m \rangle) \quad \text{and} \quad \Hom_{R^\vee}(\V'(\cF), \V'(\cG))
\]
have the same dimension.
\end{lem}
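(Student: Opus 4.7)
The plan is to exploit the faithfulness of $\V'$ (Lemma~\ref{lem:V'-faithful}) to get one inequality and then match Hom dimensions to the Hecke pairing to get the other. By additivity of both sides in each argument, we may assume that $\cF = \cT_\uv$ and $\cG = \cT_\uw$ for some expressions $\uv, \uw$. By Lemma~\ref{lem:tilting-hom-formula}, the total $\bk$-dimension of the left-hand side equals $\la \uH_\uv, \uH_\uw \ra\big|_{v=1}$.

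Next, the faithfulness of $\V'$ (Lemma~\ref{lem:V'-faithful}) gives, for each $m \in \Z$, an injection
\[
\Hom_{\Dmix(\UGB,\bk)}(\cT_\uv, \cT_\uw \la m\ra) \hookrightarrow \Hom_{R^\vee\grlMod}(\V'(\cT_\uv), \V'(\cT_\uw) \la m \ra),
\]
since $\V'$ intertwines Tate twists with grading shifts. Viewing the right-hand side as the degree $-m$ part of the ungraded $R^\vee$-module Hom between the graded modules $\V'(\cT_\uv)$ and $\V'(\cT_\uw)$ and summing over $m$, we obtain the inequality
\[
\la \uH_\uv, \uH_\uw\ra\big|_{v=1} \;\leq\; \dim_\bk \Hom_{R^\vee}\bigl(\V'(\cT_\uv), \V'(\cT_\uw)\bigr).
\]

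For the reverse inequality, the plan is to compute $\V'(\cT_\uw)$ explicitly. By the monoidality of $\V$ (Proposition~\ref{prop:V-monoidal}), the commutative diagram~\eqref{eqn:VVprime-commute}, and the description $\V(\Tmon_s) \cong R^\vee \otimes_{(R^\vee)^s} R^\vee \la 1\ra$ from Lemma~\ref{lem:V-Ts}, we identify
\[
\V'(\cT_\uw) \;\cong\; R^\vee \otimes_{(R^\vee)^{s_1}} R^\vee \otimes_{(R^\vee)^{s_2}} \cdots \otimes_{(R^\vee)^{s_{k-1}}} R^\vee \otimes_{(R^\vee)^{s_k}} \bk \;\la \ell(\uw)\ra,
\]
i.e., the classical Bott--Samelson module associated with $\uw$. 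Since we are working in characteristic $0$ with a finite Weyl group, Soergel's classical categorification theorem applies: the graded rank of $\Hom$ between such modules is computed by the standard pairing on the Hecke algebra, yielding precisely $\la \uH_\uv, \uH_\uw\ra$ as a polynomial in~$v$. Specializing at $v = 1$ gives the desired upper bound, and combining with Step 2 yields equality.

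The main obstacle is Step~3, namely carrying out the explicit Soergel-theoretic computation of $\Hom_{R^\vee}$ between iterated Bott--Samelson modules. One approach is to invoke Soergel's Hom formula for Bott--Samelson bimodules together with his Struktursatz to transfer the statement from bimodules to modules. A self-contained alternative is to argue by induction on $\ell(\uw)$, using the Hecke recursion $\uH_{\uw s} = \uH_\uw \cdot (H_s + v)$ and the tensor--hom adjunction $\Hom_{R^\vee}(M, N \otimes_{(R^\vee)^s} \bk) \leftrightarrow \Hom_{(R^\vee)^s}(\mathrm{Res}\, M, \bk \otimes \cdots)$ arising from the Frobenius extension $R^\vee / (R^\vee)^s$; either way, the required characteristic-zero input is essentially classical.
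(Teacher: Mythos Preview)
Your argument is correct, and at its core it coincides with the paper's: both reduce to $\cT_\uv$, $\cT_\uw$, use Lemma~\ref{lem:tilting-hom-formula} for the left-hand side, identify $\V'(\cT_\uw)$ with the Bott--Samelson left $R^\vee$-module, and then appeal to Soergel's theory to compute the module Hom. The paper phrases this last step geometrically: it introduces the Langlands dual flag variety $\BKM^\vee\backslash\GKM^\vee$, identifies $\V'(\cT_\uw)$ with $\coH^{-\bullet}(\BKM^\vee\backslash\GKM^\vee,\cE^\vee_\uw)$, invokes Soergel's Erweiterungssatz to pass from module Homs to sheaf Homs, and then reads off the dimension from~\cite[Proposition~2.6]{jmw}. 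Your route is more direct---you get the Bott--Samelson module from Lemma~\ref{lem:V-Ts} and Proposition~\ref{prop:V-monoidal} and then cite Soergel's Hom formula plus Struktursatz---but this is the same input in algebraic rather than geometric dress.

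One comment: your Step~2 (the inequality from faithfulness of $\V'$) is redundant. Once your Step~3 computes $\dim\Hom_{R^\vee}(\V'(\cT_\uv),\V'(\cT_\uw))$ exactly as $\la\uH_\uv,\uH_\uw\ra|_{v=1}$, you have equality outright; the faithfulness bound adds nothing. The paper likewise does not use Lemma~\ref{lem:V'-faithful} here---that lemma is saved for the proof of Theorem~\ref{thm:V-fully-faithful}, where faithfulness of $\V'$ combines with the present dimension count to deduce that $\V'$ is fully faithful.
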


\begin{proof}[Sketch of proof]
By construction of the category $\TiltBSp(\UGB, \bk)$, we can assume that $\cF=\cT_\uv$ and $\cG = \cT_\uw$ for some expressions $\uv, \uw$. In this case, the dimension of $\bigoplus_{n \in \Z} \Hom_{\Dmix(\UGB, \bk)}(\cT_{\uv}, \cT_{\uw} \langle n \rangle)$ is determined in Lemma~\ref{lem:tilting-hom-formula}.

On the other hand, let $\TKM^\vee$ be the torus which is Langlands dual to $\TKM$, and let $\GKM^\vee$ be a complex connected reductive group containing $\TKM^\vee$ as a maximal torus and whose root system (with respect to $\TKM^\vee$) is dual to that of $(\GKM,\TKM)$. Let also $\BKM^\vee$ be the Borel subgroup of $\GKM^\vee$ containing $\TKM^\vee$ whose roots are the coroots of $\BKM$. Then the Borel construction shows that there exists a natural surjective algebra morphism $R^\vee \to \coH^{-\bullet}(\BKM^\vee \backslash \GKM^\vee; \bk)$. For any $s \in S$ we let $\PKM_s^\vee$ be the minimal parabolic subgroup of $\GKM^\vee$ containing $\BKM^\vee$, and set $\cE_s^\vee := \uu{\bk}_{\BKM^\vee \backslash \PKM_s^\vee} \{1\} \in \Db(\BKM^\vee \backslash \GKM^\vee / \BKM^\vee, \bk)$. Then for any expression $\uu{u}=(s_1, \ldots, s_r)$, we set
\[
\cE_{\uu{u}}^\vee := \cE_{s_1}^\vee \star^{\BKM^\vee} \cdots \star^{\BKM^\vee} \cE_{s_r}^\vee,
\]
where $\star^{\BKM^\vee}$ is the natural convolution product on $\Db(\BKM^\vee \backslash \GKM^\vee / \BKM^\vee, \bk)$. (In the present proof these objects will be considered as objects in the ordinary derived category $\Db(\BKM^\vee \backslash \GKM^\vee, \bk)$.)

If $\uv=(s_1, \ldots, s_i)$ and $\uw = (t_1, \ldots, t_j)$, then it is well known from the theory of Soergel bimodules that we have canonical isomorphisms of $R^\vee$-modules
\begin{align*}
\coH^{-\bullet}(\BKM^\vee \backslash \GKM^\vee, \cE_\uv^\vee) &\cong R^\vee \otimes_{(R^\vee)^{s_1}} \cdots \otimes_{(R^\vee)^{s_{i-1}}} R^\vee\otimes_{(R^\vee)^{s_{i}}} \bk \langle i \rangle, \\ 
\coH^{-\bullet}(\BKM^\vee \backslash \GKM^\vee, \cE_\uw^\vee) &\cong R^\vee \otimes_{(R^\vee)^{t_1}} \cdots \otimes_{(R^\vee)^{t_{j-1}}} R^\vee \otimes_{(R^\vee)^{t_{j}}} \bk \langle j \rangle;
\end{align*}
see e.g.~\cite[Korollar~2]{soergel-kat}.
Comparing with Lemma~\ref{lem:V-Ts} and using Proposition~\ref{prop:V-monoidal} and its proof, we deduce isomorphisms of $R^\vee$-modules
\[
\coH^{-\bullet}(\BKM^\vee \backslash \GKM^\vee, \cE_\uv^\vee) \cong \V'(\cT_\uv), \quad \coH^{-\bullet}(\BKM^\vee \backslash \GKM^\vee, \cE_\uw^\vee) \cong \V'(\cT_\uw).
\]

It is also well known that the functor $\coH^{-\bullet}(\BKM^\vee \backslash \GKM^\vee,-)$ induces an isomorphism
\begin{multline*}
\bigoplus_{m \in \Z} \Hom_{\Db(\BKM^\vee \backslash \GKM^\vee, \bk)}(\cE_\uv, \cE_\uw \{m\}) \simto \\
\Hom_{\coH^{-\bullet}(\BKM^\vee \backslash \GKM^\vee; \bk)} \bigl(\coH^{-\bullet}(\BKM^\vee \backslash \GKM^\vee, \cE_\uv^\vee), \coH^{-\bullet}(\BKM^\vee \backslash \GKM^\vee, \cE_\uw^\vee) \bigr),
\end{multline*}
by~\cite[Erweiterungssatz~17]{soergel-kat}. (See also~\cite{ginzburg} and~\cite[Theorem~4.1]{arider1} for alternative proofs, in more general contexts.) Using~\cite[Proposition~2.6]{jmw} to compute the dimension of the left-hand side, we finally obtain a formula for the dimension of $\Hom_{R^\vee}(\V'(\cT_{\uv}), \V'(\cT_{\uw}))$ which coincides with the one for the vector space $\bigoplus_{n \in \Z} \Hom_{\Dmix(\UGB, \bk)}(\cT_{\uv}, \cT_{\uw} \langle n \rangle)$ considered above. 
\end{proof}

\begin{proof}[Proof of Theorem~{\rm \ref{thm:V-fully-faithful}}]
We have to prove that for any expressions $\uv, \uw$ and any $m \in \Z$, the functor $\V$ induces an isomorphism
\[
\Hom_{\Dmix(\UGU, \bk)}(\Tmon_\uv, \Tmon_\uw \langle m \rangle) \simto \Hom_{R^\vee \grbiMod R^\vee}(\V(\Tmon_\uv), \V(\Tmon_\uw) \langle m \rangle).
\]
In fact we will prove that this functor induces an isomorphism
\begin{equation}
\label{eqn:V-ff}
\gHom_{\FM}(\Tmon_\uv, \Tmon_\uw) \simto \Hom_{R^\vee \biMod R^\vee}(\V(\Tmon_\uv), \V(\Tmon_\uw)),
\end{equation}
where the right-hand side means morphisms of (ungraded) bimodules.

For this we note that by Proposition~\ref{prop:morph-Tilt-UGU} the left-hand side is graded free as a right $R^\vee$-module, of finite rank, and that there exists a canonical isomorphism
\[
\gHom_{\FM}(\Tmon_\uv, \Tmon_\uw) \otimes_{R^\vee} \bk \simto \gHom_\LM(\cT_\uv, \cT_\uw).
\]
Now, recall the notation introduced in the proof of Lemma~\ref{lem:dim-Hom-V'}. Then we have a natural surjective algebra morphism $R^\vee \otimes_\bk R^\vee \to \coH^{-\bullet}(\BKM^\vee \backslash \GKM^\vee / \BKM^\vee ; \bk)$, canonical isomorphisms
\[
\coH^{-\bullet}(\BKM^\vee \backslash \GKM^\vee / \BKM^\vee, \cE_\uv^\vee) \cong \V(\cT_\uv), \quad \coH^{-\bullet}(\BKM^\vee \backslash \GKM^\vee / \BKM^\vee, \cE_\uw^\vee) \cong \V(\cT_\uw),
\]
and the functor $\coH^\bullet(\BKM^\vee \backslash \GKM^\vee / \BKM^\vee, -)$ induces an isomorphism
\begin{multline*}
\bigoplus_{m \in \Z} \Hom_{\Db(\BKM^\vee \backslash \GKM^\vee / \BKM^\vee, \bk)}(\cE_\uv, \cE_\uw [m]) \simto \\
\Hom_{\coH^{-\bullet}(\BKM^\vee \backslash \GKM^\vee / \BKM^\vee ; \bk)} \bigl(\coH^{-\bullet}(\BKM^\vee \backslash \GKM^\vee / \BKM^\vee, \cE_\uv^\vee), \coH^{-\bullet}(\BKM^\vee \backslash \GKM^\vee / \BKM^\vee, \cE_\uw^\vee) \bigr),
\end{multline*}
by~\cite[Proposition~2]{soergel-Langlands}. (See also~\cite[Proposition~3.1.5]{by} and~\cite[Remark~3.19]{mr:etsps} for alternative proofs, in more general contexts.) It is well known that the left-hand side is graded free as a right $R^\vee$-module, of finite rank, and that the natural morphism
\[
\left( \bigoplus_{m \in \Z} \Hom_{\Db(\BKM^\vee \backslash \GKM^\vee / \BKM^\vee, \bk)}(\cE_\uv, \cE_\uw [m]) \right) \otimes_{R^\vee} \bk \to \bigoplus_{m \in \Z} \Hom_{\Db(\BKM^\vee \backslash \GKM^\vee, \bk)}(\cE_\uv, \cE_\uw [m])
\]
is an isomorphism: see e.g.~\cite[Lemma~2.2]{mr:etsps}. Combining this with the results used in the proof of Lemma~\ref{lem:dim-Hom-V'}, we deduce that $\Hom_{R^\vee \biMod R^\vee}(\V(\Tmon_\uv), \V(\Tmon_\uw))$ is free over $R^\vee$, of finite rank, and that the natural morphism
\[
\left( \Hom_{R^\vee \biMod R^\vee}(\V(\Tmon_\uv), \V(\Tmon_\uw)) \right) \otimes_{R^\vee} \bk \to \Hom_{R^\vee \lMod}(\V'(\cT_\uv), \V'(\cT_\uw))
\]
is an isomorphism.

Finally, the isomorphism~\eqref{eqn:V-ff} follows from the fact that $\V'$ is fully faithful, as follows from Lemma~\ref{lem:V'-faithful} and Lemma~\ref{lem:dim-Hom-V'}.
\end{proof}

%%%%%%%%%%%%%%%%%%%%%%%%%%%%%%%%%%%%%%%%%%%%%%%%%%%%%%%%%%%%%%%%%%%%%%%%%%%
\section{From diagrams to tilting perverse sheaves}
\label{sec:Diag-Tilt}
%%%%%%%%%%%%%%%%%%%%%%%%%%%%%%%%%%%%%%%%%%%%%%%%%%%%%%%%%%%%%%%%%%%%%%%%%%%

In this section we come back to the general assumption that $\bk$ is a Noetherian integral domain of finite global dimension such that there exists a ring morphism $\Z' \to \bk$. 

%--------------------------------------------------------------------------
\subsection{Statement}
%--------------------------------------------------------------------------

We now consider the realization $\fh^*_\bk$ of $W$ over $\bk$ which is dual to $\fh_\bk$, i.e.~given by the triple $(V^*, \{\alpha_s\}_{s \in S}, \{\alpha_s^\vee \}_{s \in S})$ where $V^* := \Hom_\bk(V,\bk) = \bk \otimes_\Z \bX$. This realization satisfies Demazure surjectivity, so that we can consider the corresponding Elias--Williamson diagrammatic category $\DiagBS(\fh_\bk^*,W)$.

The goal of this section is to prove the following result.

\begin{thm} \label{thm:Diag-Tilt}
There exists a canonical $\bk$-linear monoidal functor
\[
\Phi : \DiagBS(\fh_\bk^*,W) \to \TiltBS(\UGU, \bk)
\]
sending $B_s$ to $\Tmon_s$, and such that $\Phi \circ (1) = \langle 1 \rangle \circ \Phi$.
\end{thm}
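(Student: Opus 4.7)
The plan is to follow the strategy sketched in the introduction: define $\Phi$ on generating objects and morphisms by explicit constructions in $\TiltBS(\UGU,\bk)$, and verify the Elias--Williamson relations by reducing to a computation in Soergel bimodules via the functor $\V$ of Section~\ref{sec:functor-V}. On objects, we set $\Phi(B_\uw) := \Tmon_\uw$; compatibility with the monoidal product is ensured by~\eqref{eqn:Tmon-conv}. On generating morphisms we assign: to each polynomial box labelled by $f \in R^\vee = \Sym(V)$, the endomorphism $\mu_{\Tmon_1}(f)$ coming from the monodromy action; to the upper and lower dots $B_s \to B_\emptyset(1)$ and $B_\emptyset(-1) \to B_s$, the morphisms $\hat\epsilon_s$ and $\hat\eta_s$ (or their duals) from~\cite[Chapter~5]{amrw}; to the trivalent morphisms, multiplication and comultiplication maps coming from an appropriate (co)algebra structure on $\Tmon_s$, analogous to the coalgebra structure of Proposition~\ref{prop:Pmon-coalgebra}; and to the $2m_{st}$-valent morphism for each pair $s \ne t$ with $m_{st} < \infty$, a morphism produced by an explicit rank-two construction. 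The intent is to carry all of this out universally over $\Z'$, so that $\Phi$ over an arbitrary admissible $\bk$ is then obtained by applying the base-change construction from Section~\ref{ss:fm-tilting-sheaves}.

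The remaining, and more substantial, task is verifying the Elias--Williamson relations. We reduce in two stages. First, every relation involves only finitely many simple reflections, and the subset of $S$ involved is necessarily of finite type (in fact of rank at most three, since the Zamolodchikov-type relations are the only ones involving three colors). We may therefore replace $\GKM$ by a finite-dimensional reductive subgroup corresponding to a standard finite-type parabolic subsystem. Second, by Proposition~\ref{prop:morph-Tilt-UGU} the $\Hom$-groups $\Hom_{\Dmix(\UGU,\Z')}(\Tmon_\uv^{\Z'}, \Tmon_\uw^{\Z'} \la m \ra)$ are graded free as right $R^\vee$-modules, hence in particular free over $\Z'$, and the base-change map to the corresponding $\Q$-Hom-space is injective. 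Hence every relation can be checked after base-change from $\Z'$ to $\Q$.

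Having reduced to the finite-type, characteristic-zero setting, we use the monoidal functor $\V : \Tilt(\UGU,\Q) \to R^\vee\grbiMod R^\vee$ of Proposition~\ref{prop:V-monoidal}, which is fully faithful by Theorem~\ref{thm:V-fully-faithful}. The computation performed in the proof of Lemma~\ref{lem:dim-Hom-V'} identifies $\V(\Tmon_\uw)$ (for any expression $\uw$) with the Bott--Samelson Soergel bimodule associated to $\uw$. With appropriate choices for the images of the generators listed above, the composition $\V \circ \Phi$ coincides with Soergel's classical realization functor from $\DiagBS(\fh^*_\Q, W)$ into Soergel bimodules, in which the Elias--Williamson relations hold by the very definition of that category~\cite{ew}. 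Full faithfulness of $\V$ then transports each relation back to $\TiltBS(\UGU,\Q)$, and thus---by the reductions of the previous paragraph---to $\TiltBS(\UGU,\bk)$ for any admissible $\bk$.

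The principal obstacle is producing the $2m_{st}$-valent generating morphism integrally over $\Z'$. Dots and trivalent morphisms arise from universal (co)algebra structures, but the $2m_{st}$-valent morphism has no such categorical description. Over a characteristic-zero field, one could extract it from the unique indecomposable summand common to $\Tmon_{(s,t,s,\ldots)}$ and $\Tmon_{(t,s,t,\ldots)}$ (namely, the tilting object indexed by the longest element of $\langle s,t\rangle$), but this Krull--Schmidt-type argument is unavailable over $\Z'$. The most plausible route is an explicit rank-two calculation in $\TiltBS(\UGU,\Z')$ for a rank-two finite-type Kac--Moody group, producing a specific $\Z'$-integral morphism whose base-change to $\Q$ matches the Soergel-bimodule generator under the identification $\V \circ \Phi$.
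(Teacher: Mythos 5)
Your overall architecture matches the paper's: define $\Phi$ on objects by $B_\uw \mapsto \Tmon_\uw$, send polynomial boxes to monodromy, dots to $\hat\eta_s$ and $\hat\epsilon_s$, reduce relation verification to the finite-type characteristic-zero case, and use the fully faithful functor $\V$ to transport the verification into Soergel bimodules, citing \cite[Claim~5.14]{ew}. Those reductions are exactly the paper's two-stage reduction to a Levi subgroup $\LKM_{S'}$ and then from $\Z'$ to $\Q$ via Proposition~\ref{prop:morph-Tilt-UGU}.

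The genuine gap is exactly where you flagged it: the construction of the $2m_{st}$-valent generator over $\Z'$. You correctly observe that the Krull--Schmidt description (the common indecomposable summand $\Tmon_{s t s \cdots}$) is unavailable integrally, but your proposed fallback---``an explicit rank-two calculation'' that writes down the morphism as a chain map---is not a workable plan. Even in the unmixed Soergel-bimodule setting, producing explicit $2m_{st}$-valent morphisms for $m_{st}=6$ is notoriously painful, and here you would need to do it in the free-monodromic category $\TiltBS(\UGU,\Z')$, where the underlying sequences of parity complexes are considerably larger. The paper avoids this entirely by a localization argument that you did not anticipate: it restricts along the open inclusion $\cX \smallsetminus \partial\cX_w \hookrightarrow \cX$ (where $w$ is the longest element of $\langle s,t\rangle$), obtaining a functor $q^w$ to a ``localized'' free-monodromic category in which $q^w(\Tmon_\hs)$ and $q^w(\Tmon_\htt)$ become \emph{canonically} isomorphic. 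Lemma~\ref{lem:2mst-isom-mod-lower-terms} shows that $q^w$ induces isomorphisms of rank-one free $\Z'$-modules on the relevant Hom-spaces (checked by a spreading-out argument: both sides are free of rank one over $\Z'$, and the map is nonzero after base change to any field, where Krull--Schmidt \emph{is} available). This pins down $\hat g_{s,t}$, $\hat g_{t,s}$ up to scalar, and the scalar is fixed by the normalization~\eqref{eqn:epsilon-g-const} against $\hat\epsilon_\hs$, $\hat\epsilon_\htt$, followed by Lemma~\ref{lem:cst-cts} showing $c_{s,t} c_{t,s}=1$. None of this is ``an explicit rank-two calculation''; it is a structural characterization via the open cell, and it is the decisive new idea needed to make the $\Z'$-integral construction go through.

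A secondary, smaller point: for the trivalent vertices you invoke ``an appropriate (co)algebra structure on $\Tmon_s$, analogous to the coalgebra structure of Proposition~\ref{prop:Pmon-coalgebra}.'' This is roughly the right spirit (the data $\hat\eta_s,\hat\epsilon_s,\hat b_1,\hat b_2$ does make $\Tmon_s$ Frobenius-like), but the paper does not proceed by exhibiting such a structure abstractly; it proves Lemma~\ref{lem:ss-comp} directly via explicit chain maps $\phi_s,\psi_s$ in the right-equivariant category $\Dmix(\UGBold,\Z')$ and lifts them to the free-monodromic category, then \emph{characterizes} $\hat b_1,\hat b_2$ uniquely by the identities~\eqref{eqn:trivalent-defn}. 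Your version would still need to prove that the required Frobenius data exists integrally, so the content is the same; it is more a difference in packaging than a gap.
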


The construction of $\Phi$ is similar to the construction of the functor $\Psi$ appearing in~\eqref{eqn:equivalence-rw} (see~\cite[\S 10.4--10.5]{rw}). Namely, we define $\Phi$ on objects by $\Phi(B_\uw) = \Tmon_\uw$. To define $\Phi$ on morphisms, we need to specify the images of the generating morphisms, and check that these images satisfy the appropriate relations. These images will be described in a rather explicit way; then to check the relations we will reduce to the case $\bk$ is a field of characteristic $0$ and $A$ is of finite type, in which case we can use the functor $\V$ of Section~\ref{sec:functor-V} to deduce this claim from the corresponding (known) fact for Soergel bimodules.

We need only consider the case when $\bk = \Z'$: by the last statement of Proposition~\ref{prop:morph-Tilt-UGU}, we deduce from this case the definition of $\Phi$ for any $\bk$, and the fact that the relations hold over $\Z'$ implies that they also hold over $\bk$.

%--------------------------------------------------------------------------
\subsection{Construction of the functor \texorpdfstring{$\Phi$}{Phi}}
\label{ss:rels-defn}
%--------------------------------------------------------------------------

In this subsection, we define the image of $\Phi$ on each generating morphism.

%..........................................................................
\subsubsection{Polynomials}
%..........................................................................

Consider the morphism $B_\varnothing \to B_\varnothing(2m)$ given by a region labelled by $x \in (R^\vee)^0_{-2m}$. We define
\[
 \Phi\left(
  \begin{tikzpicture}[thick,scale=0.07,baseline=-2pt]
   \node at (0,0) {$x$};
   \draw[dotted] (-5,-5) rectangle (5,5);
  \end{tikzpicture}
 \right)
 := \mu_{\Tmon_\varnothing}(x) : \Tmon_\varnothing \to \Tmon_\varnothing\la2m\ra.
\]

%..........................................................................
\subsubsection{Dot morphisms}
%..........................................................................
Fix a simple reflection $s \in S$. We define
\[
 \Phi\left(
  \begin{tikzpicture}[thick,scale=0.07,baseline]
   \draw (0,5) to (0,0);
   \node at (0,0) {$\bullet$};
   \node at (0,6.7) {\tiny $s$};
  \end{tikzpicture}
 \right) := \hat\eta_s: \Tmon_\varnothing\la -1\ra \to \Tmon_s
 \quad \text{and} \quad
 \Phi\left(
  \begin{tikzpicture}[thick,scale=0.07,baseline=-5pt]
   \draw (0,-5) to (0,0);
   \node at (0,0) {$\bullet$};
   \node at (0,-6.7) {\tiny $s$};
  \end{tikzpicture}
 \right) := \hat\epsilon_s: \Tmon_s \to \Tmon_\varnothing\la 1\ra,
\]
where $\hat\eta_s$ and $\hat\epsilon_s$ are the morphisms defined in~\cite[\S5.3.4]{amrw}.

%..........................................................................
\subsubsection{Trivalent vertices}
%..........................................................................
Fix a simple reflection $s \in S$. The definition of the image of the trivalent vertices will rely on the following lemma.

\begin{lem} \label{lem:ss-comp}
 The following maps are isomorphisms:
 \begin{align*}
  \Hom_{\Dmix(\UGU,\Z')}(\Tmon_s \la1\ra, \Tmon_s \hatstar \Tmon_s) &\simto \Hom_{\Dmix(\UGU,\Z')}(\Tmon_s \la1\ra, \Tmon_s \la1\ra) \\
  f &\mapsto ( \id_{\Tmon_s} \hatstar \hat{\epsilon}_s) \circ f, \\
  \Hom_{\Dmix(\UGU,\Z')}(\Tmon_s \hatstar \Tmon_s, \Tmon_s \la-1\ra) &\simto \Hom_{\Dmix(\UGU,\Z')}(\Tmon_s \la-1\ra, \Tmon_s \la-1\ra) \\
  f &\mapsto f \circ ( \id_{\Tmon_s} \hatstar \hat{\eta}_s).
 \end{align*}
\end{lem}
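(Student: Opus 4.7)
By~\eqref{eqn:Tmon-conv}, $\Tmon_s \hatstar \Tmon_s = \Tmon_{(s,s)}$ is a Bott--Samelson object, so all four Hom groups appearing in the lemma fall within the scope of Proposition~\ref{prop:morph-Tilt-UGU}. They are therefore free $\Z'$-modules of finite rank whose formation commutes with base change along any ring morphism $\Z' \to \bk$, and tensoring over $R^\vee$ with $\bk$ via the forgetful functor $\ForFMLM$ takes them to the corresponding Hom groups in $\Dmix(\UGB,\bk)$. My plan is, for each field $\bk$ receiving a morphism from $\Z'$, to reduce to the analogous statement in $\Dmix(\UGB,\bk)$: setting $\epsilon_s := \ForFMLM(\hat{\epsilon}_s)$ and $\eta_s := \ForFMLM(\hat{\eta}_s)$, it suffices to show that the induced map
\[
\Hom_{\Dmix(\UGB,\bk)}(\cT_s\la 1\ra, \Tmon_s \hatstar \cT_s) \to \End_{\Dmix(\UGB,\bk)}(\cT_s\la 1\ra)
\]
and its analogue for the second statement are isomorphisms.

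A Hecke algebra computation pins down the relevant dimensions. From $\uH_s = H_s + vH_1$ and $H_s^2 = (v^{-1}-v)H_s + H_1$ one deduces $\uH_s \uH_s = (v+v^{-1})\uH_s$, whence $\la \uH_s, \uH_{(s,s)} \ra = v^{-1} + 2v + v^3$ and $\la \uH_s, \uH_s \ra = 1 + v^2$. Reading off the coefficients of $v^{-1}$ and $v^0$ and invoking Lemma~\ref{lem:tilting-hom-formula} yields $\dim_\bk \Hom_\LM(\cT_s\la 1\ra, \Tmon_s \hatstar \cT_s) = \dim_\bk \End_\LM(\cT_s) = 1$, and the dual computation handles the second map. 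Each base-changed map is thus a morphism between one-dimensional $\bk$-vector spaces, so it is an isomorphism if and only if it is nonzero.

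For the first map, the standard multiplicities of $\Tmon_s \hatstar \cT_s$, extracted from $\uH_{(s,s)} = (v+v^{-1})H_s + (1+v^2)H_1$ via Lemma~\ref{lem:tilting-hom-formula-pre}, coincide with those of $\cT_s\la 1\ra \oplus \cT_s\la -1\ra$, so the Krull--Schmidt property of $\Tilt(\UGB,\bk)$ forces a decomposition $\Tmon_s \hatstar \cT_s \cong \cT_s\la 1\ra \oplus \cT_s\la -1\ra$. A nonzero element $f$ in the source is, up to scalar, the inclusion of the $\cT_s\la 1\ra$-summand; by Tate-weight considerations $\id \hatstar \epsilon_s$ must vanish on the $\cT_s\la -1\ra$-summand, so $(\id \hatstar \epsilon_s) \circ f$ is a scalar multiple of $\id_{\cT_s\la 1\ra}$. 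The main obstacle is to show that this scalar is nonzero, equivalently that $\id \hatstar \hat{\epsilon}_s$ is not the zero morphism. I would settle this by unpacking the explicit construction of $\hat{\epsilon}_s$ in~\cite[\S 5.3.4]{amrw} together with the definition of the convolution bifunctor from~\cite[Chap.~6]{amrw}, reducing to a direct non-vanishing check at the level of parity sheaves (where $\epsilon_s$ is the natural counit morphism associated to the minimal parabolic attached to $s$). The second map is handled by a dual argument, with $\hat{\eta}_s$ in place of $\hat{\epsilon}_s$ and the projection onto the $\cT_s\la -1\ra$-summand in place of the inclusion of $\cT_s\la 1\ra$.
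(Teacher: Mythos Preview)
Your opening reduction is exactly the paper's: both sides are free $\Z'$-modules of rank~$1$ by Proposition~\ref{prop:morph-Tilt-UGU} and Lemma~\ref{lem:tilting-hom-formula}, so it suffices to check the map is nonzero (or, as the paper phrases it, surjective). The problem is the step that follows.

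You claim that ``by Tate-weight considerations $\id \hatstar \epsilon_s$ must vanish on the $\cT_s\la -1\ra$-summand.'' That is false: the relevant Hom is $\Hom_{\Dmix(\UGB,\bk)}(\cT_s\la -1\ra, \cT_s\la 1\ra)$, the coefficient of $v^2$ in $\la \uH_s,\uH_s\ra = 1+v^2$, which is $1$, not $0$. (The \emph{other} component, $\Hom(\cT_s\la 1\ra,\cT_s\la -1\ra)$, does vanish, which is why $f$ lands in the $\la 1\ra$-summand; but you need the dual vanishing, and it does not hold.) Consequently your ``equivalently that $\id \hatstar \hat{\epsilon}_s$ is not the zero morphism'' is not an equivalence: $\id \hatstar \epsilon_s$ could be nonzero only on the $\la -1\ra$-summand, in which case the composition with $f$ would vanish. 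Showing $\id \hatstar \epsilon_s \neq 0$ is easy (it is a surjection, since $C_s$ is exact), but it does not finish the argument.

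What is really needed is to pin down a \emph{specific} splitting of $\Tmon_s \hatstar \cT_s$ and compute that the restriction of $\id \hatstar \epsilon_s$ to the $\la 1\ra$-summand is the identity. This is precisely what the paper does: it writes down an explicit chain map $\phi_s : \cF_s \to \cT'_s\la 1\ra \oplus \cT'_s\la -1\ra$ in the right-equivariant category $\Dmix(\UGBold,\Z')$, lifts it through $\ForFMRE$, and then checks by direct inspection of the chain-level formulas for $\phi_s$, $\psi_s$, and $\ForFMRE(\id \hatstar \hat\epsilon_s)$ that the relevant compositions are the identity. Your proposed ``direct non-vanishing check at the level of parity sheaves'' would, if carried out correctly, amount to this same explicit computation; but the reduction you describe before that point is not valid, and the computation you would need is against a specific decomposition, not a generic one.
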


\newsavebox\slowerdot
\savebox\slowerdot{%
\begin{tikzpicture}[scale=0.2,thick,baseline=-2pt]
 \draw (0,-1) to (0,0); \node at (0,0) {\tiny $\bullet$};
\end{tikzpicture}%
}
\newsavebox\supperdot
\savebox\supperdot{%
\begin{tikzpicture}[scale=0.2,thick,baseline=-2pt]
 \draw (0,0) to (0,1); \node at (0,0) {\tiny $\bullet$};
\end{tikzpicture}%
}
\newsavebox\scapmor
\savebox\scapmor{%
\begin{tikzpicture}[scale=0.1,thick,baseline=0pt]
 \draw (-1,0) to[out=90, in=180] (0,2) to[out=0, in=90] (1,0);
\end{tikzpicture}%
}
\newsavebox\scupmor
\savebox\scupmor{%
\begin{tikzpicture}[scale=0.1,thick,baseline=0pt]
 \draw (-1,2) to[out=-90, in=180] (0,0) to[out=0, in=-90] (1,2);
\end{tikzpicture}%
}
\newsavebox\sinvymor
\savebox\sinvymor{%
\begin{tikzpicture}[scale=0.1,thick,baseline=-2pt]
 \draw (-1,-1) -- (0,0) -- (1,-1); \draw (0,0) -- (0,1);
\end{tikzpicture}%
}
\newsavebox\symor
\savebox\symor{%
\begin{tikzpicture}[scale=0.1,thick,baseline=-2pt]
 \draw (-1,1) -- (0,0) -- (1,1); \draw (0,0) -- (0,-1);
\end{tikzpicture}%
}
\newsavebox\slinemor
\savebox\slinemor{%
\begin{tikzpicture}[scale=0.1,thick,baseline=-2pt]
 \draw (0,-1) -- (0,1);
\end{tikzpicture}%
}
% more small diagrammatics
\newsavebox\slineline
\savebox\slineline{%
\begin{tikzpicture}[scale=0.1,thick,baseline=-2pt]
 \draw (-0.5,-1) -- (-0.5,1); \draw (0.5,-1) -- (0.5,1);
\end{tikzpicture}%
}
\newsavebox\slinelowerdot
\savebox\slinelowerdot{%
\begin{tikzpicture}[scale=0.2,thick,baseline=-2pt]
 \draw (-0.5,-1) to (-0.5,1);
 \draw (0,-1) to (0,0); \node at (0,0) {\tiny $\bullet$};
\end{tikzpicture}%
}
\newsavebox\slineupperdot
\savebox\slineupperdot{%
\begin{tikzpicture}[scale=0.2,thick,baseline=-2pt]
 \draw (-0.5,-1) to (-0.5,1);
 \draw (0,0) to (0,1); \node at (0,0) {\tiny $\bullet$};
\end{tikzpicture}%
}
\newsavebox\slowerdotline
\savebox\slowerdotline{%
\begin{tikzpicture}[scale=0.2,thick,baseline=-2pt]
 \draw (0,-1) to (0,0); \node at (0,0) {\tiny $\bullet$};
 \draw (0.5,-1) to (0.5,1);
\end{tikzpicture}%
}
\newsavebox\supperdotline
\savebox\supperdotline{%
\begin{tikzpicture}[scale=0.2,thick,baseline=-2pt]
 \draw (0,0) to (0,1); \node at (0,0) {\tiny $\bullet$};
 \draw (0.5,-1) to (0.5,1);
\end{tikzpicture}%
}

Before proving this lemma, we require some preparatory work in the right equivariant category
\[
\Dmix(\UGBold, \bk) = \Kb \ParityBSp(\UGBold, \bk).
\]
For simplicity, write $\ForFMRE = \ForLMRE \circ \ForFMLM$, and set
\[
 \cF_s := \ForFMRE(\Tmon_s \hatstar \Tmon_s).
\]
Recall the right equivariant complex $\cT'_s = \ForLMRE(\cT_s)$ from~\cite[Example~4.3.4]{amrw}. We define morphisms
\[
 \phi_s : \cF_s \to \cT'_s\la1\ra \oplus \cT'_s\la-1\ra
 \quad \text{and} \quad
 \psi_s : \cT'_s\la1\ra \oplus \cT'_s\la-1\ra \to \cF_s
\]
in $\Dmix(\UGBold, \bk)$ as follows.
From the definitions we see that $\cF_s$ is given by the following complex in degrees $-2$ to $2$, where we omit direct sum signs, and we silently pass through the equivalence~\eqref{eqn:equivalence-rw}:
\[
 \begin{tikzcd}[ampersand replacement=\&, column sep=1.6cm]
  \begin{smallmatrix} \cE_\varnothing\{-2\} \end{smallmatrix}
  \ar[r, "{\left[\begin{smallmatrix} \usebox\supperdot \\ \usebox\supperdot \end{smallmatrix}\right]}"]
  \&
  \begin{smallmatrix}\cE_s\{-1\} \\ \cE_s\{-1\}\end{smallmatrix}
  \ar[r, "{\left[\begin{smallmatrix} \usebox\slowerdot & 0 \\ - \hspace{-1pt}\usebox\slineupperdot & \usebox\supperdotline \\ 0 & \usebox\slowerdot \end{smallmatrix}\right]}"]
  \&
  \begin{smallmatrix}\cE_\varnothing \\ \cE_{ss} \\ \cE_\varnothing\end{smallmatrix}
  \ar[r, "{\left[\begin{smallmatrix} \usebox\supperdot & \usebox\slowerdotline & 0 \\ -2\hspace{-3pt}\usebox\supperdot & -\hspace{-1pt}\usebox\slinelowerdot & \usebox\supperdot \end{smallmatrix}\right]}"]
  \&
  \begin{smallmatrix}\cE_s\{1\} \\ \cE_s\{1\}\end{smallmatrix}
  \ar[r, "{\left[\begin{smallmatrix} \usebox\slowerdot & \usebox\slowerdot \end{smallmatrix}\right]}"]
  \&
  \begin{smallmatrix} \cE_\varnothing\{2\}. \end{smallmatrix}
 \end{tikzcd}
\]
We also depict $\cT'_s\la1\ra \oplus \cT'_s\la-1\ra$ as the following complex in degrees $-2$ to $2$:
\[
 \begin{tikzcd}[ampersand replacement=\&, column sep=1.5cm]
  \cE_\varnothing\{-2\}
  \ar[r, "{\left[\begin{smallmatrix} \usebox\supperdot \end{smallmatrix}\right]}"]
  \&
  \cE_s\{-1\}
  \ar[r, "{\left[\begin{smallmatrix} 0 \\ \usebox\slowerdot \end{smallmatrix}\right]}"]
  \&
  \begin{smallmatrix}\cE_\varnothing \\ \cE_\varnothing\end{smallmatrix}
  \ar[r, "{\left[\begin{smallmatrix} \usebox\supperdot & 0 \end{smallmatrix}\right]}"]
  \&
  \cE_s\{1\}
  \ar[r, "{\left[\begin{smallmatrix} \usebox\slowerdot \end{smallmatrix}\right]}"]
  \&
  \cE_\varnothing\{2\}.
 \end{tikzcd}
\]
Now, let $\phi_s$ and $\psi_s$ be the morphisms represented by the following chain maps:
\[
 \begin{tikzcd}[ampersand replacement=\&, column sep=1.5cm]
  % top row
  \cE_\varnothing\{-2\} \ar[r]
  \ar[d, "{\left[\begin{smallmatrix} 1 \end{smallmatrix}\right]}"]
  \&
  \begin{smallmatrix}\cE_s\{-1\} \\ \cE_s\{-1\}\end{smallmatrix} \ar[r]
  \ar[d, "{\left[\begin{smallmatrix} \usebox\slinemor & 0 \end{smallmatrix}\right]}"]
  \&
  \begin{smallmatrix}\cE_\varnothing \\ \cE_{ss} \\ \cE_\varnothing\end{smallmatrix} \ar[r]
  \ar[d, "{\left[\begin{smallmatrix} -1 & -\usebox\scapmor & 1 \\ 1 & 0 & 0 \end{smallmatrix}\right]}"]
  \&
  \begin{smallmatrix}\cE_s\{1\} \\ \cE_s\{1\}\end{smallmatrix} \ar[r]
  \ar[d, "{\left[\begin{smallmatrix} \usebox\slinemor & \usebox\slinemor \end{smallmatrix}\right]}"]
  \&
  \cE_\varnothing\{2\}
  \ar[d, "{\left[\begin{smallmatrix} 1 \end{smallmatrix}\right]}"] \\
  % bottom row
  \cE_\varnothing\{-2\} \ar[r]
  \&
  \cE_s\{-1\} \ar[r]
  \&
  \begin{smallmatrix}\cE_\varnothing \\ \cE_\varnothing\end{smallmatrix} \ar[r]
  \&
  \cE_s\{1\} \ar[r]
  \&
  \cE_\varnothing\{2\},
 \end{tikzcd}
\]
\[
 \begin{tikzcd}[ampersand replacement=\&, column sep=1.5cm]
  % top row
  \cE_\varnothing\{-2\} \ar[r]
  \&
  \begin{smallmatrix}\cE_s\{-1\} \\ \cE_s\{-1\}\end{smallmatrix} \ar[r]
  \&
  \begin{smallmatrix}\cE_\varnothing \\ \cE_{ss} \\ \cE_\varnothing\end{smallmatrix} \ar[r]
  \&
  \begin{smallmatrix}\cE_s\{1\} \\ \cE_s\{1\}\end{smallmatrix} \ar[r]
  \&
  \cE_\varnothing\{2\} \\
  % bottom row
  \cE_\varnothing\{-2\} \ar[r]
  \ar[u, "{\left[\begin{smallmatrix} 1 \end{smallmatrix}\right]}"]
  \&
  \cE_s\{-1\} \ar[r]
  \ar[u, "{\left[\begin{smallmatrix} \usebox\slinemor \\ \usebox\slinemor \end{smallmatrix}\right]}"]
  \&
  \begin{smallmatrix}\cE_\varnothing \\ \cE_\varnothing\end{smallmatrix} \ar[r]
  \ar[u, "{\left[\begin{smallmatrix} 0 & 1 \\ 0 & -\usebox\scupmor \\ 1 & 1 \end{smallmatrix}\right]}"]
  \&
  \cE_s\{1\} \ar[r]
  \ar[u, "{\left[\begin{smallmatrix} 0 \\ \usebox\slinemor \end{smallmatrix}\right]}"]
  \&
  \cE_\varnothing\{2\}.
  \ar[u, "{\left[\begin{smallmatrix} 1 \end{smallmatrix}\right]}"]
 \end{tikzcd}
\]
Of course, one needs to check that these are indeed chain maps. In this calculation, for the component $\cE_{ss} \leadsto \cE_s\{1\}$, resp.~$\cE_s\{-1\} \leadsto \cE_{ss}$, one uses the equality
\newsavebox\linelowerdot
\savebox\linelowerdot{%
\begin{tikzpicture}[scale=0.5,thick,baseline=-2pt]
 \draw (-0.5,-1) to (-0.5,1);
 \draw (0,-1) to (0,0); \node at (0,0) {\tiny $\bullet$};
\end{tikzpicture}%
}
\newsavebox\lineupperdot
\savebox\lineupperdot{%
\begin{tikzpicture}[scale=0.5,thick,baseline=-2pt]
 \draw (-0.5,-1) to (-0.5,1);
 \draw (0,0) to (0,1); \node at (0,0) {\tiny $\bullet$};
\end{tikzpicture}%
}
\newsavebox\lowerdotline
\savebox\lowerdotline{%
\begin{tikzpicture}[scale=0.5,thick,baseline=-2pt]
 \draw (0,-1) to (0,0); \node at (0,0) {\tiny $\bullet$};
 \draw (0.5,-1) to (0.5,1);
\end{tikzpicture}%
}
\newsavebox\upperdotline
\savebox\upperdotline{%
\begin{tikzpicture}[scale=0.5,thick,baseline=-2pt]
 \draw (0,0) to (0,1); \node at (0,0) {\tiny $\bullet$};
 \draw (0.5,-1) to (0.5,1);
\end{tikzpicture}%
}
\newsavebox\capupperdot
\savebox\capupperdot{%
\begin{tikzpicture}[scale=0.5,thick,baseline=-2pt]
 \draw (0,0.25) -- (0,1); \draw node at (0,0.25) {\tiny $\bullet$};
 \draw (-0.5,-1) to[out=90, in=180] (0,-0.25) to[out=0, in=90] (0.5,-1);
\end{tikzpicture}%
}
\newsavebox\lowerdotcup
\savebox\lowerdotcup{%
\begin{tikzpicture}[scale=0.5,thick,baseline=-2pt]
 \draw (0,-0.25) -- (0,-1); \draw node at (0,-0.25) {\tiny $\bullet$};
 \draw (-0.5,1) to[out=-90, in=180] (0,0.25) to[out=0, in=-90] (0.5,1);
\end{tikzpicture}%
}
\[
 -\usebox\capupperdot - \usebox\lowerdotline + \usebox\linelowerdot = 0, \quad \text{resp.~} -\usebox\lineupperdot + \usebox\upperdotline + \usebox\lowerdotcup = 0,
\]
in $\ParityBSp(\UGBold, \Z')$.

Then we choose some lifts
 \[
  \hat{\phi}_s : \Tmon_s \hatstar \Tmon_s \simto \Tmon_s\la1\ra \oplus \Tmon_s\la-1\ra
  \quad \text{and} \quad
  \hat{\psi}_s : \Tmon_s\la1\ra \oplus \Tmon_s\la-1\ra \simto \Tmon_s \hatstar \Tmon_s
 \]
of $\phi_s$ and $\psi_s$ to $\Dmix(\UGU,\bk)$. (The existence of such lifts is guaranteed by Proposition~\ref{prop:morph-Tilt-UGU}. One can check that $\phi_s$ and $\psi_s$ are mutually inverse isomorphisms, and deduce that $\hat{\phi}_s$ and $\hat{\psi}_s$ can be chosen to be mutually inverse isomorphisms; but we will not need these facts.)

We are now ready to prove Lemma~\ref{lem:ss-comp}.

\begin{proof}[Proof of Lemma~{\rm \ref{lem:ss-comp}}]
It follows from Proposition~\ref{prop:morph-Tilt-UGU}, Lemma~\ref{lem:tilting-hom-formula}, and an easy Hecke algebra calculation that all four $\Z'$-modules in the statement of the lemma are free of rank 1. 
Hence to prove that our maps are isomorphisms it suffices to prove that they are surjective, and for this it suffices to prove that the compositions
\begin{gather*}
  \Tmon_s\la-1\ra \xrightarrow{\id_{\Tmon_s} \hatstar \hat{\eta}_s} \Tmon_s \hatstar \Tmon_s \xrightarrow{\hat{\phi}_s} \Tmon_s\la1\ra \oplus \Tmon_s\la-1\ra \xrightarrow{\hat{p}} \Tmon_s\la-1\ra, \\
  \Tmon_s\la1\ra \xrightarrow{\hat{\imath}} \Tmon_s\la1\ra \oplus \Tmon_s\la-1\ra \xrightarrow{\hat{\psi}_s} \Tmon_s \hatstar \Tmon_s \xrightarrow{\id_{\Tmon_s} \hatstar \hat{\epsilon}_s} \Tmon_s\la1\ra,
\end{gather*}
where $\hat{\imath}$ (resp.~$\hat{p}$) is the inclusion (resp.~projection), are the identity maps.

For this, note that by Proposition~\ref{prop:morph-Tilt-UGU} and Lemma~\ref{lem:tilting-hom-formula}, $\ForFMLM$ induces an isomorphism
\[
 \End_{\Dmix(\UGU, \Z')}(\Tmon_s) \simto \End_{\Dmix(\UGB, \Z')}(\cT_s).
\]
Hence one may check the claim after applying $\ForFMRE$, i.e.~show that the compositions
\begin{gather*}
 \cT'_s\la-1\ra \xrightarrow{\ForFMRE(\id_{\Tmon_s} \hatstar \hat{\eta}_s)} \cF_s \xrightarrow[\sim]{\phi_s} \cT'_s \la1\ra \oplus \cT'_s \la-1\ra \xrightarrow{p} \cT'_s\la-1\ra, \\
 \cT'_s\la1\ra \xrightarrow{i} \cT'_s\la1\ra \oplus \cT'_s\la-1\ra \xrightarrow[\sim]{\psi_s} \cF_s \xrightarrow{\ForFMRE(\id_{\Tmon_s} \hatstar \hat{\epsilon}_s)} \cT'_s \la1\ra,
\end{gather*}
where $i$ (resp.~$p$) is the inclusion (resp.~projection), are the identity maps. Depicting $\cF_s$ and $\cT'_s\la1\ra \oplus \cT'_s\la-1\ra$ as for the definition of $\phi_s$ and $\psi_s$, the morphisms
\[
 \ForFMRE(\id_{\Tmon_s} \hatstar \hat{\eta}_s) : \cT'_s\la-1\ra \to \cF_s \quad \text{and} \quad \ForFMRE(\id_{\Tmon_s} \hatstar \hat{\epsilon}_s) : \cF_s \to \cT'_s\la1\ra
\]
are represented by the following chain maps:
\[
 \begin{tikzcd}[ampersand replacement=\&, column sep=1.5cm]
  % top row
  \& \&
  \cE_\varnothing \ar[r]
  \ar[d, "{\left[\begin{smallmatrix} 0 \\ 0 \\ 1 \end{smallmatrix}\right]}"]
  \&
  \cE_s\{1\} \ar[r]
  \ar[d, "{\left[\begin{smallmatrix} 0 \\ \usebox\slinemor \end{smallmatrix}\right]}"]
  \&
  \cE_\varnothing\{2\}
  \ar[d, "{\left[\begin{smallmatrix} 1 \end{smallmatrix}\right]}"] \\
  % bottom row
  \cE_\varnothing\{-2\} \ar[r]
  \&
  \begin{smallmatrix}\cE_s\{-1\} \\ \cE_s\{-1\}\end{smallmatrix} \ar[r]
  \&
  \begin{smallmatrix}\cE_\varnothing \\ \cE_{ss} \\ \cE_\varnothing\end{smallmatrix} \ar[r]
  \&
  \begin{smallmatrix}\cE_s\{1\} \\ \cE_s\{1\}\end{smallmatrix} \ar[r]
  \&
  \cE_\varnothing\{2\},
 \end{tikzcd}
\]
\[
 \begin{tikzcd}[ampersand replacement=\&, column sep=1.5cm]
  % top row
  \cE_\varnothing\{-2\} \ar[r]
  \ar[d, "{\left[\begin{smallmatrix} 1 \end{smallmatrix}\right]}"]
  \&
  \begin{smallmatrix}\cE_s\{-1\} \\ \cE_s\{-1\}\end{smallmatrix} \ar[r]
  \ar[d, "{\left[\begin{smallmatrix} \usebox\slinemor & 0 \end{smallmatrix}\right]}"]
  \&
  \begin{smallmatrix}\cE_\varnothing \\ \cE_{ss} \\ \cE_\varnothing\end{smallmatrix} \ar[r]
  \ar[d, "{\left[\begin{smallmatrix} 1 & 0 & 0 \end{smallmatrix}\right]}"]
  \&
  \begin{smallmatrix}\cE_s\{1\} \\ \cE_s\{1\}\end{smallmatrix} \ar[r]
  \&
  \cE_\varnothing\{2\} \\
  % bottom row
  \cE_\varnothing\{-2\} \ar[r]
  \&
  \cE_s\{-1\} \ar[r]
  \&
  \cE_\varnothing.
  \&
  \&
 \end{tikzcd}
\]
Then the desired claim follows from the explicit description of the chain maps representing $\phi_s$ and $\psi_s$.
\end{proof}

By Lemma~\ref{lem:ss-comp}, we may define
\begin{gather*}
\hat{b}_1 : \Tmon_s \to \Tmon_s \hatstar \Tmon_s\la-1\ra
\quad \text{and} \quad 
\hat{b}_2 : \Tmon_s \hatstar \Tmon_s \to \Tmon_s\la-1\ra
\end{gather*}
to be the unique morphisms satisfying
\begin{equation} \label{eqn:trivalent-defn}
  (\id_{\Tmon_s} \hatstar \hat{\epsilon}_s) \circ  \hat{b}_1  = \id_{\Tmon_s}
  \quad \text{and} \quad
  \hat{b}_2 \circ ( \id_{\Tmon_s} \hatstar \hat{\eta}_s) = \id_{\Tmon_s}.
\end{equation}
We now define
\[
 \Phi\left(
  \begin{tikzpicture}[thick,baseline=-2pt,scale=0.07]
   \draw (-4,5) to (0,0) to (4,5);
   \draw (0,-5) to (0,0);
   \node at (0,-6.7) {\tiny $s$};
   \node at (-4,6.7) {\tiny $s$};
   \node at (4,6.7) {\tiny $s$};      
  \end{tikzpicture}
  \right) := \hat{b}_1
  \quad \text{and} \quad
 \Phi\left(
  \begin{tikzpicture}[thick,baseline=-2pt,scale=-0.07]
   \draw (-4,5) to (0,0) to (4,5);
   \draw (0,-5) to (0,0);
   \node at (0,-6.7) {\tiny $s$};
   \node at (-4,6.7) {\tiny $s$};
   \node at (4,6.7) {\tiny $s$};
  \end{tikzpicture}
  \right) := \hat{b}_2.
\]

%..........................................................................
\subsubsection{$2m_{st}$-valent vertices}
%..........................................................................

Fix $s, t \in S$ such that $st \in W$ has finite order. Let $m_{st}$ be this order, and let $\hs = (s,t, \ldots)$ and $\htt = (t, s, \ldots)$, where both sequences have $m_{st}$ elements. Finally, let $w:=st \ldots = ts \ldots$ (with $m_{st}$ elements in both products).

Instead of using as a starting point the category $\Db(\BGB, \Z')$, one can consider the same categories as those constructed in~\cite{amrw} but starting with the $\BKM$-equivariant derived category of $\Z'$-sheaves on $\cX \smallsetminus \partial \cX_w$, where $\partial \cX_w := \overline{\cX_w} \smallsetminus \cX_w$. In this way one obtains a ``free-monodromic'' category $\Dmix_w(\UGU, \Z')$ and a functor
\[
q^w : \Dmix(\UGU, \Z') \to \Dmix_w(\UGU, \Z')
\]
induced by pullback along the open embedding $\cX \smallsetminus \partial \cX_w \hookrightarrow \cX$.
Note that every term except $\cE_\hs$ (resp.~$\cE_\htt$) in the underlying sequence of parity complexes of $\Tmon_\hs$ (resp.~$\Tmon_\htt$) restricts to $0$ on $\cX \smallsetminus \partial \cX_w$,
and that the differential of $\Tmon_\hs$ (resp.~$\Tmon_\htt$) has component
$\cE_\hs \leadsto \cE_\hs$, resp.~$\cE_\htt \leadsto \cE_\htt$, given by $\sum w(e_i) \otimes \id \otimes \check e_i$
in the notation of~\cite{amrw}. (Here, $(e_1, \ldots, e_r)$ is a basis of $V^*$, and $(\check e_1, \ldots, \check e_r)$ is the dual basis of $V$.)
Since the restrictions of both $\cE_\hs$ and $\cE_\htt$ to $\cX_w$ are canonically isomorphic to the constant sheaf, we deduce that the functor $q^w$
sends $\Tmon_\hs$ and $\Tmon_\htt$ to canonically isomorphic objects.

\begin{lem} \label{lem:2mst-isom-mod-lower-terms}
 The functor $q^w$ induces isomorphisms
 \begin{align*}
  \Hom_{\Dmix(\UGU, \Z')}(\Tmon_\hs, \Tmon_\htt) &\simto \Hom_{\Dmix_w(\UGU, \Z')}(q^w(\Tmon_\hs), q^w(\Tmon_\htt)), \\
  \Hom_{\Dmix(\UGU, \Z')}(\Tmon_\htt, \Tmon_\hs) &\simto \Hom_{\Dmix_w(\UGU, \Z')}(q^w(\Tmon_\htt), q^w(\Tmon_\hs)).
 \end{align*}
Moreover, all of these spaces are free $\Z'$-modules of rank $1$.
\end{lem}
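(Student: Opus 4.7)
The plan is to establish each Hom space on the left as free of rank $1$ over $\Z'$ via a Hecke-algebra dimension count, to make the analogous computation in $\Dmix_w(\UGU, \Z')$ for the right-hand side, and then to show $q^w$ induces an isomorphism by reducing to residue fields of $\Z'$.

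By Proposition~\ref{prop:morph-Tilt-UGU} the $R^\vee$-module $\gHom_\FM(\Tmon_\hs, \Tmon_\htt)^0_\bullet$ is graded free, with $\gHom_\FM(\Tmon_\hs, \Tmon_\htt)^0_\bullet \otimes_{R^\vee} \Z' \simto \gHom_\LM(\cT_\hs, \cT_\htt)^0_\bullet$; combined with Lemma~\ref{lem:tilting-hom-formula}, its Poincar\'e series in $v$ equals $\langle \uH_\hs, \uH_\htt \rangle / (1-v^2)^{\dim V}$. Hence $\mathrm{rk}_{\Z'} \Hom_{\Dmix(\UGU,\Z')}(\Tmon_\hs, \Tmon_\htt)$ is the constant term of $\langle \uH_\hs, \uH_\htt \rangle$. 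Since $\hs$ and $\htt$ are reduced expressions of the longest element $w \in \langle s, t \rangle$, each of $\uH_\hs, \uH_\htt$ expands as $H_w + \sum_{x<w} h_{x, -}(v) H_x$ with $H_w$-coefficient equal to~$1$, contributing~$1$ to the constant term; a direct dihedral Hecke-algebra check (facilitated by positivity of Bott--Samelson expansions in dihedral types) shows that for each $x < w$, at least one of $h_{x, \hs}(v), h_{x, \htt}(v)$ vanishes at $v = 0$, so all further contributions are zero. Freeness over $\Z'$ is automatic from inclusion in a free $R^\vee$-module over the PID $\Z'$. Applying the same argument in $\Dmix_w(\UGU, \Z')$ gives rank~$1$ on the right-hand side, and the symmetric pair of Hom spaces is handled identically.

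To show $q^w$ induces an isomorphism, we use that Hom spaces commute with base change $\Z' \to \bk$ (by the last statement of Proposition~\ref{prop:morph-Tilt-UGU} and its $\Dmix_w$-analogue), reducing the claim to surjectivity after passage to any residue field $\bk$ of $\Z'$. Over $\bk$, \cite[Theorem~10.7.1]{amrw} decomposes $\Tmon^\bk_\hs \cong \Tmon^\bk_w \oplus \mathcal{R}_\hs$ and $\Tmon^\bk_\htt \cong \Tmon^\bk_w \oplus \mathcal{R}_\htt$, where $\mathcal{R}_\hs, \mathcal{R}_\htt$ are sums of $\Tmon^\bk_x$ for $x < w$; these summands are supported on $\partial\cX_w$ and annihilated by $q^w$, so the canonical inclusions of the $\Tmon^\bk_w$-summands become, after $q^w$, canonical identifications of $q^w\Tmon^\bk_\hs, q^w\Tmon^\bk_w, q^w\Tmon^\bk_\htt$ matching the iso described before the lemma (as verified by restricting the chain-complex descriptions to $\cX_w$). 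The rank-$1$ count forces $\Hom_{\Dmix(\UGU,\bk)}(\Tmon^\bk_\hs, \Tmon^\bk_\htt)$ to be spanned by $\id_{\Tmon^\bk_w}$ under the decomposition, and $q^w$ sends this to $\id_{q^w\Tmon^\bk_w}$, which is nonzero in the (also rank-$1$) target. Surjectivity, hence bijectivity, follows.

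The main obstacle is the dihedral Hecke-algebra constant-term calculation: verifying for each $x < w$ that at least one of $h_{x, \hs}(v), h_{x, \htt}(v)$ vanishes at $v = 0$. This is a finite combinatorial check across the dihedral types, tractable in each type $I_2(m)$; alternatively, it can be derived uniformly from the positivity and reducedness properties of the corresponding Bott--Samelson Soergel bimodules.
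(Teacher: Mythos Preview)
Your strategy matches the paper's: establish both sides as free of rank~$1$ over $\Z'$, reduce to residue fields via base change, and use the decomposition from \cite[Theorem~10.7.1]{amrw} to produce a morphism $\Tmon_\hs \twoheadrightarrow \Tmon_w \hookrightarrow \Tmon_\htt$ that survives $q^w$. Two steps in your write-up need tightening, though.

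The assertion that ``the same argument'' yields rank~$1$ on the right-hand side does not go through as written: your argument for the left-hand side rests on Proposition~\ref{prop:morph-Tilt-UGU} and Lemma~\ref{lem:tilting-hom-formula}, neither of which is stated for $\Dmix_w(\UGU,\Z')$. The paper instead computes this rank directly from the explicit description of $q^w(\Tmon_\hs)\cong q^w(\Tmon_\htt)$ given just before the lemma (a single surviving parity term supported on $\cX_w$, with differential $\sum w(e_i)\otimes\id\otimes\check e_i$), referring to~\cite[\S5.3.1]{amrw} for the model calculation. Separately, your deduction that $\mathrm{rk}_{\Z'}\Hom_{\Dmix(\UGU,\Z')}(\Tmon_\hs,\Tmon_\htt)$ equals the constant term of $\langle\uH_\hs,\uH_\htt\rangle$ from the Poincar\'e-series identity tacitly assumes that $\langle\uH_\hs,\uH_\htt\rangle\in\Z[v]$: any term $c_{-2k}v^{-2k}$ with $k>0$ would contribute $c_{-2k}\binom{k+\dim V-1}{\dim V-1}$ to the $v^0$-coefficient after multiplying by $(1-v^2)^{-\dim V}$. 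This nonnegativity does hold for reduced dihedral expressions (each $h_{x,\hs},h_{x,\htt}$ lies in $\Z_{\geq 0}[v]$) and can be folded into your dihedral case check, but it should be made explicit. The paper sidesteps these details entirely, citing the rank-$1$ fact for the left-hand side as ``a standard computation in the Hecke algebra.''
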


\begin{proof}
 By symmetry, we only need to consider the first map. Let us first show that both sides are free $\Z'$-modules of rank 1. For the left-hand side, this follows from Lemma~\ref{lem:tilting-hom-formula} and a standard computation in the Hecke algebra. For the right-hand side, this follows from the description of $q^w(\Tmon_\hs) \cong q^w(\Tmon_\htt)$ above and the definition of morphisms in $\Dmix_w(\UGU, \Z')$ (see~\cite[\S 5.3.1]{amrw} for a similar computation). To show that the first map is an isomorphism, it therefore suffices to show that it is nonzero after extension of scalars from $\Z'$ to any field $\bk$. The map so obtained may be identified with the map
 \[
  \Hom_{\Dmix(\UGU, \bk)}(\Tmon_\hs, \Tmon_\htt) \to \Hom_{\Dmix_w(\UGU, \bk)}(q^w(\Tmon_\hs), q^w(\Tmon_\htt))
 \]
 defined in the same way, using coefficients $\bk$ instead of $\Z'$.
 
 For field coefficients, by~ \cite[Theorem~10.7.1]{amrw} there are direct sum decompositions
 \begin{align*}
  \Tmon_\hs \cong \Tmon_w \oplus \text{(lower terms)} \quad \text{and} \quad  \Tmon_\htt \cong \Tmon_w \oplus \text{(lower terms)},
 \end{align*}
 where the lower terms restrict to $0$ on $\cX \smallsetminus \partial \cX_w$. Fixing these decompositions, the composition $\Tmon_\hs \twoheadrightarrow \Tmon_w \hookrightarrow \Tmon_\htt$ (projection to $\Tmon_w$ followed by inclusion) defines a morphism $\Tmon_\hs \to \Tmon_\htt$ that remains nonzero (in fact, an isomorphism) on $\cX \smallsetminus \partial \cX_w$.
\end{proof}

We now define
\[
 \hat{g}_{s,t} : \Tmon_\hs \to \Tmon_\htt
 \quad \text{and} \quad
 \hat{g}_{t,s} : \Tmon_\htt \to \Tmon_\hs
\]
to be the unique morphisms that are sent to the canonical isomorphism $q^w(\Tmon_\hs) \cong q^w(\Tmon_\htt)$ considered above under the isomorphisms of Lemma~\ref{lem:2mst-isom-mod-lower-terms}.

To define the morphisms $\hat{f}_{s,t}$, $\hat{f}_{t,s}$ that will be the image of the $2m_{st}$-valent vertices, we need another lemma. For any expression $\uw = (s_1, \ldots, s_{\ell(\uw)})$, define
\[
 \hat{\epsilon}_\uw := \hat{\epsilon}_{s_1} \hatstar \cdots \hatstar {}\hat{\epsilon}_{s_{\ell(\uw)}} : \Tmon_\uw \to \Tmon_\varnothing\la \ell(\uw) \ra,
\]
where the maps $\hat{\epsilon}_{s_i}$ are defined in~\cite[\S 5.3.4]{amrw}.

\begin{lem} \label{lem:epsilon-gen}
 For $u \in \{s, t\}$, we have
 \[
  \Hom_{\Dmix(\UGB,\Z')}(\cT_\hu, \cT_\varnothing\la m_{st}\ra) = \Z' \cdot \ForFMLM(\hat{\epsilon}_\hu).
 \]
\end{lem}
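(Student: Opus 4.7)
The plan is to proceed in two steps: first, show that $\Hom_{\Dmix(\UGB,\Z')}(\cT_\hu, \cT_\varnothing \la m_{st}\ra)$ is a free $\Z'$-module of rank one; second, verify that $\ForFMLM(\hat{\epsilon}_\hu)$ generates it.

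For the rank computation, I would invoke Lemma~\ref{lem:tilting-hom-formula} (valid over a field; combining it with the last statement of Proposition~\ref{prop:morph-Tilt-UGU} reduces the case of $\Z'$ to that of $\Q$), which gives
\[
\sum_{n} \mathrm{rk}\, \Hom_{\Dmix(\UGB,\Z')}(\cT_\hu, \cT_\varnothing \la n\ra)\, v^n = \la \uH_\hu, H_1\ra,
\]
equal to the coefficient of $H_1$ in the Bott--Samelson element $\uH_\hu = \prod_{i=1}^{m_{st}}(H_{s_i}+v) = \sum_{I}\, v^{m_{st}-|I|} \prod_{i \in I} H_{s_i}$. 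The coefficient of $v^{m_{st}}$ in $[H_1]\uH_\hu$ receives a contribution only from $I = \emptyset$ (giving $1$), because for any nonempty $I$ the $v$-degree of the $H_1$-coefficient of $\prod_{i \in I} H_{s_i}$ is strictly less than $|I|$; this follows from the standard degree bound $\deg_v\, [H_y](H_{t_1}\cdots H_{t_k}) \le k - \ell(y)$ applied to $y = t_1$ of length one, combined with the recursive identity $[H_1](H_{t_1}\cdot X) = [H_{t_1}](X)$. Freeness of the Hom space over $\Z'$ then follows from the graded freeness of $\gHom_\FM(\Tmon_\hu,\Tmon_\varnothing)^0_\bullet$ over $R^\vee$ provided by Proposition~\ref{prop:morph-Tilt-UGU}.

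To show $\ForFMLM(\hat{\epsilon}_\hu)$ is a generator, I would again use Proposition~\ref{prop:morph-Tilt-UGU} to reduce to checking its nonvanishing after base change to every residue field $\bk$ of $\Z'$, since the same Hecke-algebra computation produces a rank-one Hom space over $\bk$. Over such a $\bk$, nonvanishing can be verified at the stalk at the base point $e \in \cX$: the exact functor $\iota_e^*$ annihilates $\bbDelta_y$ for $y \ne e$, so the standard filtration of $\cT_\hu$ yields $\iota_e^*\cT_\hu \cong \bigoplus_n (\cT_\hu:\bbDelta_e\la n\ra)\,\bk\la n\ra$, whose weight-$m_{st}$ summand is a copy of $\bk\la m_{st}\ra$ matching $\iota_e^*(\cT_\varnothing\la m_{st}\ra)$. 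By the construction of $\hat\epsilon_s$ in~\cite[\S5.3.4]{amrw}, each such morphism is the identity on its top-weight subquotient, and compatibility with $\hatstar$ then forces $\iota_e^*(\ForFMLM(\hat\epsilon_\hu))$ to be the identity on the weight-$m_{st}$ piece, establishing nonvanishing.

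The main obstacle I anticipate is the Hecke-algebra degree bound in the first step: while morally clear (only the empty subexpression can saturate the top $v$-power), a fully rigorous treatment requires some combinatorial care, either by proving the bound $\deg_v[H_y](H_{t_1}\cdots H_{t_k}) \le k - \ell(y)$ by induction on $k$ or by appealing to the Deodhar/light-leaves description of the Bott--Samelson element $\uH_\hu$.
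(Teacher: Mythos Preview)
Your Step~1 matches the paper's approach exactly: both invoke Lemma~\ref{lem:tilting-hom-formula} and a Hecke-algebra computation to conclude the Hom space is free of rank one over $\Z'$. Your worry about the degree bound is not the real obstacle; the paper simply calls this ``a straightforward calculation,'' and your sketch (only $I=\varnothing$ contributes to the $v^{m_{st}}$ coefficient) is adequate.

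The genuine gap is in Step~2. Your reduction to nonvanishing over each residue field is fine, but the stalk argument that follows is incomplete. The claim that ``compatibility with $\hatstar$'' forces $\iota_e^*(\ForFMLM(\hat\epsilon_\hu))$ to be the identity on the weight-$m_{st}$ piece is not justified: there is no general principle by which $\iota_e^*$ commutes with monodromic convolution, so knowing each $\hat\epsilon_{s_i}$ is nonzero on its top-weight subquotient does not immediately imply the same for their $\hatstar$-product. One can make this work by induction on the length of $\hu$, writing $\cT_\hu \cong C_{s_1}(\cT_{\hu'})$ and tracking the unique $\bbDelta_e\la m_{st}\ra$ subquotient through the exact functor $C_{s_1}$ using~\cite[Lemma~10.5.3]{amrw}; but this is considerably more work than you have written, and you would still need to verify the base case carefully from the construction of $\hat\epsilon_s$ in~\cite[\S5.3.4]{amrw}.

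The paper sidesteps this entirely. It passes to the right-equivariant category via $\ForFMRE$ and observes that, in the underlying chain complex model, $\ForFMRE(\Tmon_\hu)$ has lowest nonzero chain degree $-m_{st}$, where the only term is $\cE_\varnothing\{-m_{st}\}$; the chain map representing $\ForFMRE(\hat\epsilon_\hu)$ is $(-1)^{m_{st}}\cdot\id$ on this term, and there is no nonzero homotopy for degree reasons. This is a direct three-line check rather than an induction, and it amounts to the same ``top-weight'' observation you are aiming for, executed concretely in the chain model instead of abstractly via stalks.
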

\begin{proof}
 The space in question is free of rank 1 over $\Z'$ by Lemma~\ref{lem:tilting-hom-formula} and a straightforward calculation in the Hecke algebra, so it is enough to show that $\ForFMLM(\hat{\epsilon}_\hu)$ remains nonzero after extension of scalars to any field. This may be checked after applying $\ForLMRE$, i.e.~in the right equivariant category 
$\Dmix(\UGBold, \bk)$.
 We see from the definitions that the morphism $\ForFMRE(\hat{\epsilon}_\hu)$ may be represented by the following chain map, where $v$ is the simple reflection different from $u$:
 \[
  \begin{tikzcd}[column sep=huge]
   \vdots & \\
   \cE_u\{1 - m_{st}\} \oplus \cE_v\{1 - m_{st}\} \oplus \cdots \ar[u] \ar[rd, dashed] \\
   \cE_\varnothing\{-m_{st}\} \ar[u] \ar[r, "(-1)^{m_{st}} \cdot\id"] & \cE_\varnothing\{-m_{st}\}
  \end{tikzcd}
 \]
 Here, the left-hand column depicts the complex $\ForFMRE(\Tmon_\hu)$ in chain degrees $-m_{st}, 1-m_{st}, \ldots$ (the lowest chain degrees where it is nonzero); the right-hand column depicts $\ForFMRE(\Tmon_\varnothing\la m_{st}\ra)$, which is $\cE_\varnothing\{-m_{st}\}$ concentrated in chain degree $-m_{st}$; and the chain map has a single nonzero component
 \[
  (\Tmon_\hu)^{-m_{st}} = \cE_\varnothing\{-m_{st}\} \xrightarrow{(-1)^{m_{st}} \cdot\id} \cE_\varnothing\{-m_{st}\} = (\Tmon_\varnothing\la m_{st}\ra)^{-m_{st}}.
 \]
 There is no nonzero homotopy (dashed arrow) for degree reasons, so $\ForFMRE(\hat{\epsilon}_\hu)$ is nonzero.
\end{proof}

By Lemma~\ref{lem:epsilon-gen}, we have
\begin{equation} \label{eqn:epsilon-g-const}
 \ForFMLM(\hat{\epsilon}_\htt \circ \hat{g}_{s,t}) = c_{s,t}\ForFMLM(\hat{\epsilon}_\hs)
 \quad \text{and} \quad
 \ForFMLM(\hat{\epsilon}_\hs \circ \hat{g}_{t,s}) = c_{t,s}\ForFMLM(\hat{\epsilon}_\htt)
\end{equation}
for some $c_{s,t}, c_{t,s} \in \Z'$. We now set
\begin{equation} \label{eqn:2mst-defn}
 \begin{gathered}
 \hat{f}_{s,t} := c_{t,s}\hat{g}_{s,t} : \Tmon_\hs \to \Tmon_\htt
 \quad \text{and} \quad
 \hat{f}_{t,s} := c_{s,t}\hat{g}_{t,s} : \Tmon_\htt \to \Tmon_\hs,
 \end{gathered}
\end{equation}
and define these to be the image of the $2m_{st}$-valent vertices under $\Phi$:
\[
\Phi\left(
\begin{tikzpicture}[yscale=0.5,xscale=0.3,baseline,thick]
\draw (-2.5,-1) to (0,0) to (-1.5,1);
\draw (-0.5,-1) to (0,0) to (0.5,1);
\draw (1.5,-1) to (0,0) to (2.5,1);
\draw[red] (-1.5,-1) to (0,0) to (-2.5,1);
\draw[red] (0.5,-1) to (0,0) to (-0.5,1);
\draw[red] (2.5,-1) to (0,0) to (1.5,1);
\node at (-2.5,-1.3) {\tiny $s$\vphantom{$t$}};
\node at (-1.5,1.3) {\tiny $s$\vphantom{$t$}};
\node at (-0.5,-1.3) {\tiny $\cdots$};
\node at (-1.5,-1.3) {\tiny $t$};
\node at (-2.5,1.3) {\tiny $t$};
\node at (-0.5,1.3) {\tiny $\cdots$\vphantom{$t$}};
\end{tikzpicture}
\right) := \hat{f}_{s,t}
\quad \text{and} \quad
\Phi\left(
\begin{tikzpicture}[yscale=0.5,xscale=0.3,baseline,thick]
\draw[red] (-2.5,-1) to (0,0) to (-1.5,1);
\draw[red] (-0.5,-1) to (0,0) to (0.5,1);
\draw[red] (1.5,-1) to (0,0) to (2.5,1);
\draw (-1.5,-1) to (0,0) to (-2.5,1);
\draw (0.5,-1) to (0,0) to (-0.5,1);
\draw (2.5,-1) to (0,0) to (1.5,1);
\node at (-2.5,-1.3) {\tiny $t$};
\node at (-1.5,1.3) {\tiny $t$};
\node at (-0.5,-1.3) {\tiny $\cdots$\vphantom{$t$}};
\node at (-1.5,-1.3) {\tiny $s$\vphantom{$t$}};
\node at (-2.5,1.3) {\tiny $s$\vphantom{$t$}};
\node at (-0.5,1.3) {\tiny $\cdots$\vphantom{$t$}};
\end{tikzpicture}
\right) := \hat{f}_{t,s}.
\]

%--------------------------------------------------------------------------
\subsection{Verification of the relations}
\label{ss:rels-verif}
%--------------------------------------------------------------------------

In this subsection, we verify that the morphisms defined in~\S\ref{ss:rels-defn} satisfy the relations from~\cite[\S\S1.4.1--1.4.3]{ew}.

Each relation only involves a subset $S'$ of $S$ (of cardinality at most 3) that generates a finite subgroup $W'$ of $W$. Fix a relation and the corresponding subset $S'$. Consider the realization
\[
 \fh^*_{S', \Z'} = (V^*, \{ \alpha_s \}_{s \in S'}, \{ \alpha_s^\vee \}_{s \in S'})
\]
of $(W', S')$ over $\Z'$, and let $\LKM_{S'}$ be the Levi subgroup of $\GKM$ associated with $S'$ (a connected reductive group with Weyl group $W'$). Set also $\UKM_{S'}:=\UKM \cap \LKM_{S'}$; then we can consider the category $\TiltBS(\UKM_{S'} \fatbslash \LKM_{S'} \fatslash \UKM_{S'},\Z')$. There are obvious fully faithful monoidal functors
\begin{multline*}
 \DiagBS(\fh^*_{S', \Z'}, W') \to \DiagBS(\fh^*_{\Z'}, W) \quad \text{and} \\
 \TiltBS(\UKM_{S'} \fatbslash \LKM_{S'} \fatslash \UKM_{S'},\Z') \to \TiltBS(\UGU, \Z'),
\end{multline*}
and the definitions of all our morphisms are identical whether considered in the category $ \TiltBS(\UKM_{S'} \fatbslash \LKM_{S'} \fatslash \UKM_{S'},\Z')$ or $\TiltBS(\UGU, \Z')$ (in particular, the constants $c_{s,t}$ are unchanged by this replacement),
so that it suffices to verify the relation for the group $\LKM_{S'}$.
We may therefore assume from the start that $A$ is a finite type Cartan matrix. Moreover, by the last statement of Proposition~\ref{prop:morph-Tilt-UGU}, we may check the relation after extension of scalars along the map $\Z' \to \Q$. 

As a further reduction, we may check the relation after passing to the Karoubian envelope of the additive closure. From now on, we work in $\Tilt(\UGU, \Q)$, where the results of~\S\ref{ss:Pmon} are available: fix an object $\Pmon$ and a nonzero morphism $\xi : \Pmon \to \Tmon_1$, and use these to define a functor $\V$ and the various other structures from Section~\ref{sec:functor-V}. We may then check the relation in the category of graded $R^\vee$-bimodules, after applying the fully faithful functor $\V$.

To do this, we compute the image of the generating morphisms under $\V$. For $s \in S$, define $B^\bim_s := R^\vee \otimes_{(R^\vee)^s} R^\vee \la1\ra$. For any expression $\uw = (s_1, s_2, \ldots, s_{\ell(\uw)})$ in $S$, define
\[
 B^\bim_\uw := B^\bim_{s_1} \otimes_{R^\vee} \cdots \otimes_{R^\vee} B^\bim_{s_{\ell(\uw)}} = R^\vee \otimes_{(R^\vee)^{s_1}} \cdots \otimes_{(R^\vee)^{s_{\ell(\uw)}}} R^\vee \la\ell(\uw)\ra.
\]
We identify $B^\bim_\uw$ with $\V(\Tmon_\uw)$ via an isomorphism
\[
 \gamma_\uw : B^\bim_\uw \simto \V(\Tmon_\uw)
\]
defined as follows. For $\uw = \varnothing$, we set $\gamma_\varnothing = \gamma$, the isomorphism from the proof of Proposition~\ref{prop:V-monoidal}. Otherwise, define $\gamma_\uw$ to be the composition
\begin{multline*}
 B^\bim_{s_1} \otimes_{R^\vee} \cdots \otimes_{R^\vee} B^\bim_{s_{\ell(\uw)}}
 \xrightarrow[\sim]{\gamma_{s_1} \otimes \cdots \otimes \gamma_{s_{\ell(\uw)}}}
 \V(\Tmon_{s_1}) \otimes_{R^\vee} \cdots \otimes_{R^\vee} \V(\Tmon_{s_{\ell(\uw)}}) \\
 \xrightarrow[\sim]{\beta} \V(\Tmon_{s_1} \hatstar \cdots \hatstar \Tmon_{s_{\ell(\uw)}}),
\end{multline*}
where for $s \in S$, $\gamma_s$ is the isomorphism of Lemma~\ref{lem:V-Ts}, and $\beta$ is defined as in the proof of Proposition~\ref{prop:V-monoidal}. Let
\[
 \zeta_\uw = \gamma_\uw(1 \otimes \cdots \otimes 1) : \Pmon \to \Tmon_\uw\la \ell(\uw) \ra.
\]
Note that $\zeta_\varnothing = \xi$ and $\zeta_{(s)} = \zeta_s$ (with the notation of Lemma~\ref{lem:V-Ts}). It also follows from Lemma~\ref{lem:V-Ts} and the coalgebra axioms (see Proposition~\ref{prop:Pmon-coalgebra}) that
\begin{equation} \label{eqn:epsilon-zeta-xi}
 \hat{\epsilon}_\uw \la -{\ell(\uw)} \ra \circ \zeta_\uw = \xi.
\end{equation}

\begin{rmk}
Note that the grading on our bimodules is opposite to the ``traditional'' one from~\cite{soergel-bim}; for instance, our $B_s^\bim$ is concentrated in degrees in $1 + \Z_{\leq 0}$.
\end{rmk}

We now compute the image of our morphisms under $\V$, under the identifications $\gamma_\uw$.
\begin{enumerate}
 \item \emph{Polynomials:}
 For $x \in R^\vee$, we have
 \[
  \V(\mu_{\Tmon_\varnothing}(x)) = \mu_{\Tmon_\varnothing}(x) \circ ({-}) \overset{\eqref{eqn:mu-morph}}{=} ({-}) \circ \mu_{\Pmon}(x).
 \]
 This by definition is the left action of $x$ on $\V(\Tmon_\varnothing)$. Under the identification $\gamma : R^\vee \simto \V(\Tmon_\varnothing)$, this becomes multiplication by $x$ on $R^\vee$.

 \item \emph{The upper dot:} The space of graded $R^\vee$-bimodule homomorphisms
 $B^\bim_s \to R^\vee\la1\ra$
 is of dimension 1, with generator
\[
\mult_s: B^\bim_s \to R^\vee\la 1\ra
\qquad\text{given by}\qquad
\mult_s(f \otimes g) = fg. 
\]
Under the identifications above, the equation $(\hat{\epsilon}_s \langle -1 \rangle) \circ \zeta_s = \xi$ becomes $\V(\hat{\epsilon}_s)(1 \otimes 1) = 1$. Hence $\V(\hat{\epsilon}_s) = \mult_s$.

 \item \emph{The lower dot:} The space of graded $R^\vee$-bimodule homomorphisms
 $R^\vee \to B^\bim_s\la1\ra$ is of dimension 1, with generator
 \[
  \delta_s: R^\vee \to B^\bim_s\la 1\ra
  \qquad\text{given by}\qquad
  \delta_s(1) = \textstyle\frac12(\alpha_s^\vee \otimes 1 + 1 \otimes \alpha_s^\vee).
 \]
The map $\delta_s$ is characterized uniquely by the fact that $\mult_s \circ \delta_s = \alpha_s^\vee \cdot \id_{R^\vee}$. We computed in~\cite[Proposition~5.3.2(1)]{amrw} that $\hat{\epsilon}_s \circ \hat{\eta}_s = \mu_{\Tmon_\varnothing}(\alpha_s^\vee)$. Applying $\V$ and using the computations above, we get $\mult_s \circ \V(\hat{\eta}_s) = \alpha_s^\vee \cdot \id_{R^\vee}$. Hence $\V(\hat{\eta}_s) = \delta_s$.

 \item \emph{The trivalent vertices:} The space of graded $R^\vee$-bimodule homomorphisms
 \[
  B^\bim_s \to B^\bim_s \otimes_{R^\vee} B^\bim_s\la-1\ra, \quad \text{resp.}\quad B^\bim_s \otimes B^\bim_s \to B^\bim_s\la-1\ra,
 \]
 is of dimension 1, with generator
 \[
  t_1 : f \otimes g \mapsto f \otimes 1 \otimes g, \quad \text{resp.} \quad t_2 : f \otimes g \otimes h \mapsto f(\partial_s g) \otimes h,
 \]
 where we have identified $B^\bim_s \otimes_{R^\vee} B^\bim_s = R^\vee \otimes_{(R^\vee)^s} R^\vee \otimes_{(R^\vee)^s} R^\vee\la2\ra$. This generator is characterized uniquely by the identity
 \[
  (\mult_s\la-1\ra \otimes_{R^\vee} \id_{B^\bim_s}) \circ t_1 = \id_{B^\bim_s} \quad \text{resp.}\quad t_2\la1\ra \circ (\delta_s \otimes_{R^\vee} \id_{B^\bim_s}) = \id_{B^\bim_s}.
 \]
 Hence
 \[
  \V(\hat{b}_1) = t_1 \quad \text{and} \quad \V(\hat{b}_2) = t_2,
 \]
 as follows by applying $\V$ to the defining identities \eqref{eqn:trivalent-defn} of $\hat{b}_1, \hat{b}_2$ and using the fact that $\V(\hat{\epsilon}_s) = \mult_s$, $\V(\hat{\eta}_s) = \delta_s$.
\end{enumerate}

Before computing the image of the $2m_{st}$-valent vertices, some preparatory work is required.  For any expression $\uw = (s_1, \ldots, s_{\ell(\uw)})$, define
\[
 \mult_\uw := \mult_{s_1} \otimes_{R^\vee} \cdots \otimes_{R^\vee} {}\mult_{s_{\ell(\uw)}} : B^\bim_\uw \to R^\vee\la \ell(\uw) \ra.
\]
Next, recall from~\cite[Proposition~4.3]{libedinsky} that $\Hom_{R^\vee \grbiMod R^\vee}(B^\bim_\hs, B^\bim_\htt)$ has dimension~$1$.  An analogue of~\cite[Lemma~4.7]{libedinsky} shows that there is a unique morphism
\begin{equation}\label{eqn:bimod-lib}
j_{s,t}: B^\bim_\hs \to B^\bim_\htt
\end{equation}
that acts as the identity map in degree $m_{st}$.  Since $m_{st}$ is the largest degree in which the bimodules $B^\bim_\hs$ and $B^\bim_\htt$ have nonzero components, this condition can be rephrased as follows: $j_{s,t}$ is the unique morphism such that there is an equality of maps
\begin{equation}\label{eqn:bimod-lib-cond}
(\mult_\htt \circ j_{s,t}) \otimes_{R^\vee} \id_\bk = \mult_\hs \otimes_{R^\vee} \id_\bk:
B^\bim_\hs \otimes_{R^\vee} \bk \to R^\vee\la m_{st}\ra \otimes_{R^\vee} \bk = \bk\la m_{st}\ra.
\end{equation}

\begin{lem} \label{lem:cst-cts}
 The constants $c_{s,t}$, $c_{t,s} \in \Z'$ defined by \eqref{eqn:epsilon-g-const} satisfy
 \begin{equation} \label{eqn:cst-cts}
  c_{s,t}c_{t,s} = 1.
 \end{equation}
\end{lem}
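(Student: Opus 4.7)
The plan is to use the functor $\V$ from Section~\ref{sec:functor-V} to translate the identity $c_{s,t}c_{t,s} = 1$ into a statement about Libedinsky's bimodule morphism~\eqref{eqn:bimod-lib}, and then combine this with the localization functor $q^w$ introduced before Lemma~\ref{lem:2mst-isom-mod-lower-terms}. Since $c_{s,t}, c_{t,s} \in \Z'$ and the embedding $\Z' \hookrightarrow \Q$ is injective, it suffices to verify the equality in $\Q$; we therefore work under $\bk = \Q$, where $\V$ is fully faithful by Theorem~\ref{thm:V-fully-faithful}. The main subtlety will be tracking normalizations carefully: the defining equation~\eqref{eqn:epsilon-g-const} for $c_{s,t}$ and~\eqref{eqn:bimod-lib-cond} for $j_{s,t}$ must align in order to identify $\V(\hat{g}_{s,t})$ with the correct scalar multiple of $j_{s,t}$.

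First I would identify $\V(\hat{g}_{s,t})$. Since $\Hom_{R^\vee\grbiMod R^\vee}(B^\bim_\hs, B^\bim_\htt)_0$ is one-dimensional with generator $j_{s,t}$, we have $\V(\hat{g}_{s,t}) = \lambda_1 j_{s,t}$ for some $\lambda_1 \in \Q$. The identity $\mult_\htt \circ j_{s,t} = \mult_\hs$ (which follows from~\eqref{eqn:bimod-lib-cond} combined with the one-dimensionality of $\Hom_{R^\vee\grbiMod R^\vee}(B^\bim_\hs, R^\vee\la m_{st}\ra)_0$), together with the earlier identification $\V(\hat{\epsilon}_\uw) = \mult_\uw$, yields $\V(\hat{\epsilon}_\htt \circ \hat{g}_{s,t}) = \lambda_1 \V(\hat{\epsilon}_\hs)$. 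Applying the fully faithful $\V$ in reverse and then $\ForFMLM$, and comparing with~\eqref{eqn:epsilon-g-const} via Lemma~\ref{lem:epsilon-gen}, one forces $\lambda_1 = c_{s,t}$, and similarly $\V(\hat{g}_{t,s}) = c_{t,s} j_{t,s}$.

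Next, I would compute $j_{s,t} \circ j_{t,s}$. Over $\Q$, Soergel bimodule theory provides a decomposition $B^\bim_\htt \cong B^\bim_w \oplus U$ where $U$ is a direct sum of $B^\bim_v\la n\ra$ with $v < w$ (and likewise for $B^\bim_\hs$). Since $\Hom(B^\bim_\htt, B^\bim_\hs)_0$ is already known to be one-dimensional, it must be generated by the map given by projection $B^\bim_\htt \twoheadrightarrow B^\bim_w$ followed by inclusion $B^\bim_w \hookrightarrow B^\bim_\hs$; moreover, this generator acts as the identity on the (one-dimensional) top-degree component and hence equals $j_{t,s}$, and analogously for $j_{s,t}$. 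A direct computation then gives $j_{s,t} \circ j_{t,s} = e_w$, the idempotent of $\End(B^\bim_\htt)$ projecting onto the $B^\bim_w$ summand.

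Combining these, $\V(\hat{g}_{s,t} \circ \hat{g}_{t,s}) = c_{s,t} c_{t,s} \cdot e_w$, so by full faithfulness of $\V$, $\hat{g}_{s,t} \circ \hat{g}_{t,s} = c_{s,t} c_{t,s} \cdot p_w$ in $\End(\Tmon_\htt)$, where $p_w$ is the idempotent onto the $\Tmon_w$ summand of the analogous decomposition $\Tmon_\htt \cong \Tmon_w \oplus U'$ (guaranteed by~\cite[Theorem~10.7.1]{amrw}). To conclude, apply $q^w$: the summands of $U'$, being of the form $\Tmon_v\la n\ra$ with $v < w$, are supported on $\overline{\cX_v} \subseteq \partial\cX_w$, so $q^w$ kills them and therefore $q^w(p_w) = \id_{q^w(\Tmon_\htt)}$. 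On the other hand, by the very definition of $\hat{g}_{s,t}$ and $\hat{g}_{t,s}$, the morphisms $q^w(\hat{g}_{s,t})$ and $q^w(\hat{g}_{t,s})$ are mutually inverse canonical isomorphisms, so $q^w(\hat{g}_{s,t} \circ \hat{g}_{t,s}) = \id$. Matching the two computations yields $c_{s,t} c_{t,s} = 1$.
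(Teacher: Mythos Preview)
Your argument is essentially sound and reaches the right conclusion, but it takes a considerably more elaborate route than the paper, and one step is not fully justified as written.

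The paper's proof is shorter and more direct. From the very definition of $\hat{g}_{s,t}$ and $\hat{g}_{t,s}$ via the injective map $q^w$ (Lemma~\ref{lem:2mst-isom-mod-lower-terms}), one has $\hat{g}_{s,t}\circ\hat{g}_{t,s}\circ\hat{g}_{s,t}=\hat{g}_{s,t}$, since both sides have the same image under $q^w$. Applying $\ForFMLM(\hat{\epsilon}_\htt\circ{-})$ and using~\eqref{eqn:epsilon-g-const} three times gives $c_{s,t}(c_{s,t}c_{t,s}-1)=0$. It then remains only to show $c_{s,t}\neq 0$, for which the paper uses $\V'$ in a light way: under $\V'$ the map $f\mapsto\ForFMLM(\hat{\epsilon}_\htt\circ f)$ becomes $f\mapsto(\mult_\htt\circ f)\otimes_{R^\vee}\id_\bk$, which is visibly nonzero on $j_{s,t}$. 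So the paper never needs to compute $\V(\hat{g}_{s,t})$ exactly, nor to analyze idempotents.

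Your approach instead pushes everything through $\V$ and the Soergel bimodule decomposition, which works but is heavier. The one genuine loose end is your identification of $\V^{-1}(e_w)$ with ``the idempotent onto the $\Tmon_w$ summand.'' Full faithfulness of $\V$ tells you that $\V^{-1}(e_w)$ is \emph{some} idempotent in $\End(\Tmon_\htt)$, but identifying its image with $\Tmon_w$ (rather than some lower summand) amounts to knowing $\V(\Tmon_w)\cong B^\bim_w$, which you have not established at this point. There is an easy workaround: once you know $\hat{g}_{s,t}\circ\hat{g}_{t,s}=c_{s,t}c_{t,s}\cdot p$ for \emph{some} idempotent $p$, apply $q^w$. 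Since $\End_{\Dmix_w}(q^w(\Tmon_\htt))$ is one-dimensional in degree $0$ (Lemma~\ref{lem:2mst-isom-mod-lower-terms}), the only idempotents there are $0$ and $\id$; the equation $\id=c_{s,t}c_{t,s}\cdot q^w(p)$ (coming from $q^w(\hat{g}_{s,t}\circ\hat{g}_{t,s})=\id$) rules out $0$, forcing $q^w(p)=\id$ and hence $c_{s,t}c_{t,s}=1$. With that patch your argument goes through; but note that you have then essentially reproduced the triple-composition identity $\hat{g}_{s,t}\circ\hat{g}_{t,s}\circ\hat{g}_{s,t}=\hat{g}_{s,t}$ that the paper starts from.
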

\begin{proof}
 It follows from the definition of $\hat{g}_{s,t}$, $\hat{g}_{t,s}$ that
 \[
  \hat{g}_{s,t} \circ \hat{g}_{t,s} \circ \hat{g}_{s,t} = \hat{g}_{s,t}.
 \]
 Applying $\ForFMLM(\hat{\epsilon}_\htt \circ -)$ to both sides and using \eqref{eqn:epsilon-g-const} repeatedly, we deduce that $c_{s,t} c_{t,s} c_{s,t} = c_{s,t}$, or in other words that $c_{s,t}(c_{t,s}c_{s,t} - 1) = 0$.
 
 To conclude, it is therefore enough to show that $c_{s,t} \neq 0$. Since $\hat{g}_{s,t}$ is a generator for the space $\Hom_{\Dmix(\UGU, \Z')}(\Tmon_\hs, \Tmon_\htt)$, this would follow if the map
 \begin{align*}
  \Hom_{\Dmix(\UGU, \Z')}(\Tmon_\hs, \Tmon_\htt) &\to \Hom_{\Dmix(\UGB, \Z')}(\cT_\hs, \cT_\varnothing\la m_{st}\ra) \\
  f &\mapsto \ForFMLM(\hat{\epsilon}_\htt \circ f)
 \end{align*}
were known to be nonzero.
 
For this, we use the functors $\V, \V'$ constructed in Section~\ref{sec:functor-V}.  Under these identifications, our map becomes
 \begin{align*}
  \Hom_{R^\vee \grbiMod R^\vee}(B^\bim_\hs, B^\bim_\htt) &\to \Hom_{R^\vee \grlMod}(B^\bim_\hs \otimes_{R^\vee} \bk, (R^\vee\la m_{st}\ra) \otimes_{R^\vee} \bk) \\
  f &\mapsto (\mult_\htt \circ f) \otimes_{R^\vee} \id_\bk,
 \end{align*}
This map is clearly nonzero, since it sends $j_{s,t}$ (from~\eqref{eqn:bimod-lib}) to a nonzero element.
\end{proof}

Now we compute the image of the $2m_{st}$-valent vertices.
\begin{enumerate}
 \setcounter{enumi}{4}
 \item \emph{$2m_{st}$-valent vertices:} By~\eqref{eqn:epsilon-g-const}, \eqref{eqn:2mst-defn}, and Lemma~\ref{lem:cst-cts}, we have $\ForFMLM(\hat{\epsilon}_\htt \circ \hat{f}_{s,t}) = \ForFMLM(\hat{\epsilon_\hs})$.  Now apply the functor $\V'$, and use the commutative square~\eqref{eqn:VVprime-commute} to deduce that
 \[
 (\V(\hat{\epsilon}_\htt) \circ \V(\hat{f}_{s,t})) \otimes_{R^\vee} \bk =
 \V(\hat{\epsilon}_\hs) \otimes_{R^\vee} \bk.
\]
Since $\V(\hat{\epsilon}_\uw) = \mult_\uw$, we conclude from~\eqref{eqn:bimod-lib-cond} that $\V(\hat{f}_{s,t}) = j_{s,t}$.
\end{enumerate}

We have thus reduced the verification of the (fixed) relation to the same verification for the appropriate morphisms of graded $R^\vee$-bimodules found above. The argument in the final paragraph of \cite[\S10.5]{rw} reduces this to the same verification for a standard Cartan realization of $(W', S')$. In this case, all the relations are known to hold, as explained in~\cite[Claim~5.14]{ew}. This concludes the proof of Theorem~\ref{thm:Diag-Tilt}.

%%%%%%%%%%%%%%%%%%%%%%%%%%%%%%%%%%%%%%%%%%%%%%%%%%%%%%%%%%%%%%%%%%%%%%%%%%%
\section{Koszul duality}
\label{sec:Koszul-duality}
%%%%%%%%%%%%%%%%%%%%%%%%%%%%%%%%%%%%%%%%%%%%%%%%%%%%%%%%%%%%%%%%%%%%%%%%%%%

As in Section~\ref{sec:Diag-Tilt} we assume that $\bk$ is a Noetherian integral domain of finite global dimension such that there exists a ring morphism $\Z' \to \bk$. 

%--------------------------------------------------------------------------
\subsection{Statement and construction of the functors}
\label{ss:Phi-equiv}
%--------------------------------------------------------------------------

\newcommand{\monKos}{\tilde{\varkappa}}
\newcommand{\preselfKos}{\monKos'}
\newcommand{\selfKos}{\varkappa}

We begin by fixing notation related to the Langlands dual Kac--Moody group to $\GKM$.  Namely, consider the generalized Cartan matrix ${}^t \!A$, and let $\bX^* = \Hom_\Z(\bX,\Z)$.
The Kac--Moody root datum $(I, \bX^*, \{\alpha_i^\vee : i \in I\}, \{\alpha_i : i \in I\})$ determines a Kac--Moody group
$\GKM^\vee$ as in~\S\ref{ss:KM-groups}, with maximal torus $\TKM^\vee$, Borel subgroup $\BKM^\vee$ and pro-unipotent radical $\UKM^\vee$. 

Compared to the set-up of~\S\S\ref{ss:KM-groups}--\ref{ss:constructible-Dmix}, it will be convenient for us to swap the roles of constructions on the left and right when working with $\GKM^\vee$.  For instance, we define its flag variety by $\cX^\vee := \BKM^\vee\backslash\GKM^\vee$.  We will work with the monoidal category $\ParityBS(\BGBvee, \bk)$ of equivariant Bott--Samelson parity complexes on $\cX^\vee$ (and its variants).  But we also work with the \emph{left-equivariant derived category}, denoted by $\Dmix(\BGUvee,\bk)$.  To emphasize the parallel with~\S\ref{ss:constructible-Dmix}, we denote the forgetful functor by
\[
\ForBELE := \ParityBSp(\BGBvee,\bk) \to \Dmix(\BGUvee,\bk)
\]
rather than by $\mathsf{For}^{\BE}_{\mathsf{LE}}$.  This functor is compatible with the monoidal action of the former on the latter:
\[
\ForBELE(\cF \star \cG) \cong \cF \star \ForBELE(\cG).
\]
Objects in these categories will typically be denoted with a superscript ``$^\vee$'': for instance, $\cE^\vee_\uw$ or $\Delta^\vee_w$.

Our goal in this section is to prove the following theorem.

\begin{thm}[Monoidal Koszul duality]
\label{thm:monoidal-Koszul}
There is an equivalence of monoidal categories
\[
\monKos: (\ParityBS(\BGBvee,\bk), \star) \simto (\TiltBS(\UGU,\bk), \hatstar)
\]
satisfying $\monKos \circ \{1\} \cong \la 1\ra \circ \monKos$, and such that $\monKos(\cE^\vee_\uw) \cong \Tmon_\uw$.
\end{thm}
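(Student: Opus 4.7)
The plan is to realize $\monKos$ as a composite of two monoidal functors, and then prove it is an equivalence by combining a graded Nakayama argument with a Hecke-algebra rank match. Concretely, the Kac--Moody root datum $(I, \bX^*, \{\alpha_i^\vee\}, \{\alpha_i\})$ of $\GKM^\vee$ determines a realization over $\bk$ whose underlying module is $\bk \otimes_\Z \bX \cong V^*$ and in which the roles of simple roots and simple coroots are interchanged relative to $\fh_\bk$; this is canonically identified with $\fh^*_\bk$. Applying~\eqref{eqn:equivalence-rw} to $\GKM^\vee$ therefore yields a monoidal equivalence
\[
\Psi^\vee : \DiagBS(\fh^*_\bk, W) \simto \ParityBS(\BGBvee, \bk)
\]
intertwining $(1)$ with $\{1\}$ and sending $B_\uw$ to $\cE^\vee_\uw$. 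Define $\monKos := \Phi \circ (\Psi^\vee)^{-1}$, with $\Phi$ as in Theorem~\ref{thm:Diag-Tilt}. By construction, $\monKos$ is monoidal, intertwines $\{1\}$ with $\la 1\ra$, and sends $\cE^\vee_\uw$ to $\Tmon_\uw$. Essential surjectivity is immediate, since every object of $\TiltBS(\UGU,\bk)$ has the form $\Tmon_\uw\la n\ra = \monKos(\cE^\vee_\uw\{n\})$.

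For fully faithfulness it suffices to show that $\Phi$ is fully faithful, i.e.~that for any expressions $\uv, \uw$ the induced map
\[
\gHom_{\DiagBS}(B_\uv, B_\uw) \longrightarrow \gHom_\FM(\Tmon_\uv, \Tmon_\uw)
\]
is an isomorphism of graded right $R^\vee$-modules. Both sides are finitely generated graded free right $R^\vee$-modules: the target by Proposition~\ref{prop:morph-Tilt-UGU}, and the source either by standard facts about the diagrammatic category or, equivalently via $\Psi^\vee$, by well-known results for equivariant parity sheaves. By graded Nakayama applied to a map between graded free $R^\vee$-modules of finite rank, it suffices to check that the reduction modulo $R^\vee_+$ is an isomorphism. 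Proposition~\ref{prop:morph-Tilt-UGU} identifies the reduced target with $\gHom_\LM(\cT_\uv, \cT_\uw)$, whose graded $\bk$-rank equals $\la\uH_\uv, \uH_\uw\ra$ by Lemma~\ref{lem:tilting-hom-formula}; a parallel Hecke-algebra computation (via $\Psi^\vee$ and the standard graded dimension formula for Hom spaces of Bott--Samelson parity complexes) provides the same rank on the source side. Hence it is enough to establish surjectivity of the reduced map.

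The main obstacle is exactly this surjectivity. The plan is to reduce to $\bk = \Z'$ via the base-change statement of Proposition~\ref{prop:morph-Tilt-UGU}, then argue by induction on $\ell(\uv) + \ell(\uw)$, exploiting the monoidality of $\Phi$ together with the explicit description of the images of the generating morphisms of $\DiagBS(\fh^*_\bk, W)$ --- polynomial boxes, dots, trivalent vertices, and $2m_{st}$-valent vertices --- given in~\S\ref{ss:rels-defn}. Applying $\ForFMLM$, these images land in $\Tilt(\UGB, \bk)$; by the standard-filtration analysis underlying Lemma~\ref{lem:tilting-hom-formula-pre} and the results of~\cite{modrap2, amrw}, the morphisms so obtained generate $\gHom_\LM(\cT_\uv, \cT_\uw)$ under composition and the action of $\hatstar$. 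Combined with the rank match of the previous paragraph, this yields bijectivity of the reduced map, and hence by graded Nakayama the desired isomorphism on Hom spaces, completing the proof that $\monKos$ is an equivalence of monoidal categories.
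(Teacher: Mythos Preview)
Your construction of $\monKos$ and the reduction via graded Nakayama to showing that the induced map
\[
\gHom_{\uDiagBSp}(B_\uv,B_\uw) \longrightarrow \gHom_\LM(\cT_\uv,\cT_\uw)
\]
is an isomorphism are correct and match the paper exactly.  The rank match via the Hecke pairing is also fine.  The problem is the surjectivity step.  You assert that the images under $\ForFMLM\circ\Phi$ of the diagrammatic generators ``generate $\gHom_\LM(\cT_\uv,\cT_\uw)$ under composition and the action of~$\hatstar$'' by ``standard-filtration analysis'' and an unspecified induction on $\ell(\uv)+\ell(\uw)$.  But this is precisely the statement you are trying to prove, and nothing in Lemma~\ref{lem:tilting-hom-formula-pre} or~\cite{modrap2,amrw} provides it: those results compute multiplicities and graded ranks, not that a particular collection of morphisms spans.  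Monoidality alone does not give an inductive mechanism either, since knowing $\Phi$ is fully faithful for shorter expressions says nothing about morphisms between $\Tmon_\uv$ and $\Tmon_\uw$ that do not factor through shorter objects.

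The paper resolves this by proving the \emph{non-monoidal} Koszul duality (Theorem~\ref{thm:self-Koszul}) first.  One passes to bounded homotopy categories and constructs the triangulated functor $\selfKos:\Dmix(\BGUvee,\bk)\to\Dmix(\UGB,\bk)$; Proposition~\ref{prop:Psi-standards} shows $\selfKos$ sends $\Delta_w^\vee,\nabla_w^\vee$ to $\bbDelta_w,\bbnabla_w$ (by an induction using the triangles $\Delta_s^\vee\to\cE_s^\vee\to\Delta_1^\vee\{1\}$ and Lemma~\ref{lem:epsilon-delta}).  Be{\u\i}linson's lemma then gives full faithfulness of $\selfKos$ over a field, and base change handles $\Z'$ and general $\bk$.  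Since $\selfKos$ is an equivalence, so is the additive functor $\preselfKos$ on parity/tilting objects, hence $\underline{\Phi}$ is an equivalence, which is exactly the reduced isomorphism you need.  In short, the missing ingredient is the passage through standard and costandard objects, which replaces the unjustified generation claim.
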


In the course of the proof, we will simultaneously establish the following result.

\begin{thm}
\label{thm:self-Koszul}
There is an equivalence of triangulated categories
\[
\selfKos: \Dmix(\BGUvee,\bk) \simto \Dmix(\UGB,\bk)
\]
satisfying $\selfKos \circ \{1\} \cong \la 1 \ra \circ \selfKos$, and such that $\selfKos(\cE^\vee_\uw) \cong \cT_\uw$. This functor is monoidal, in the sense that for $\cF \in \ParityBS(\BGBvee,\bk)$ and $\cG \in \Dmix(\BGUvee,\bk)$, there is a natural isomorphism $\selfKos(\cF \star \cG) \cong \monKos(\cF) \hatstar \selfKos(\cG)$.
\end{thm}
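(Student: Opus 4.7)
The plan is to build $\tilde\varkappa$ as the composition $\Phi \circ (\Psi^\vee)^{-1}$, where
\[
\Psi^\vee : \DiagBS(\fh_\bk^*, W) \simto \ParityBS(\BGBvee, \bk)
\]
is the equivalence~\eqref{eqn:equivalence-rw} applied to the Langlands dual Kac--Moody root datum (whose associated Cartan realization over $\bk$ is precisely $\fh_\bk^*$), and $\Phi$ is the functor constructed in Theorem~\ref{thm:Diag-Tilt}. The claimed properties of $\tilde\varkappa$ (monoidality, intertwining of $\{1\}$ with $\la 1\ra$, and $\cE^\vee_\uw \mapsto \Tmon_\uw$) follow directly from the corresponding properties of $\Psi^\vee$ and $\Phi$.

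The heart of the argument is thus to show that $\Phi$ itself is an equivalence. Essential surjectivity is immediate: every object of $\TiltBS(\UGU,\bk)$ is isomorphic to some $\Tmon_\uw\la n\ra$, which lies in the image. For full faithfulness, I would first observe that on both sides, the graded Hom between Bott--Samelson objects is a finitely generated graded free right $R^\vee$-module---for the source by the double leaves theorem of Libedinsky and Elias--Williamson, and for the target by Proposition~\ref{prop:morph-Tilt-UGU}---with coinciding graded ranks, both given by the Hecke pairing $\la \uH_\uv, \uH_\uw \ra$ appearing in Lemma~\ref{lem:tilting-hom-formula}. Granted this matching of ranks, full faithfulness reduces to showing that $\Phi$ induces an isomorphism modulo $R^\vee_+$, or equivalently, that the composite $\ForFMLM \circ \Phi : \DiagBS(\fh_\bk^*, W) \to \Dmix(\UGB,\bk)$ is faithful on the underlying $\bk$-modules of morphisms.

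This last assertion can be further reduced, using the base-change statement in Proposition~\ref{prop:morph-Tilt-UGU}, to the case $\bk = \Q$. Moreover, since any morphism in $\DiagBS(\fh_\bk^*, W)$ involves only finitely many simple reflections, restricting to the Levi-like subdatum they generate---arguing as in~\S\ref{ss:rels-verif}---brings us into the finite-type setting, where Theorem~\ref{thm:V-fully-faithful} supplies the fully faithful functor $\V$. The explicit computations of~\S\ref{ss:rels-verif} identify $\V \circ \Phi$ with the classical diagrammatic Soergel-bimodule realization (sending dots to $\mult_s$ and $\delta_s$, trivalents to $t_1$ and $t_2$, and $2m_{st}$-valent vertices to $j_{s,t}$), whose faithfulness is standard. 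I expect the main obstacle to be executing this Kac--Moody-to-finite-type reduction cleanly for the full-faithfulness statement: while the reduction in~\S\ref{ss:rels-verif} works relation-by-relation, here one must control entire Hom spaces simultaneously and keep track of their free $R^\vee$-module structures under restriction to Levis.

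For Theorem~\ref{thm:self-Koszul}, I would extract $\varkappa$ from $\tilde\varkappa$ as follows. Passing to additive envelopes and composing with $\ForFMLM$ yields a functor $\ParityBSp(\BGBvee,\bk) \to \TiltBSp(\UGB,\bk) \subset \Dmix(\UGB,\bk)$. Taking bounded homotopy categories then gives a functor $\Kb\ParityBSp(\BGBvee,\bk) \to \Kb\TiltBSp(\UGB,\bk)$; composing with Lemma~\ref{lem:tilt-realization} on the target and with the identification $\Dmix(\BGUvee,\bk) \cong \Kb\ParityBSp(\BGUvee,\bk)$ (the Langlands dual analogue of the statement in~\S\ref{ss:constructible-Dmix}) on the source defines $\varkappa$. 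That $\varkappa$ is an equivalence follows from $\tilde\varkappa$ being one; the identifications $\varkappa(\cE^\vee_\uw) \cong \cT_\uw$ and the interchange of $\{1\}$ with $\la 1\ra$ are built in; and monoidality over $\tilde\varkappa$ is a consequence of the monoidality of $\tilde\varkappa$ combined with the compatibility~\eqref{eqn:For-hatstar} of $\ForFMLM$ with $\hatstar$.
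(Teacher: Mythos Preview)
Your construction of $\tilde\varkappa$ and the extraction of $\varkappa$ from it are correct and match the paper. However, your strategy for proving that $\Phi$ (equivalently $\tilde\varkappa$) is fully faithful has a genuine gap, and this reverses the logical flow of the paper's argument.

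The problem is the Levi reduction. You propose to reduce full faithfulness to the finite-type case by observing that any morphism in $\DiagBS(\fh_\bk^*,W)$ involves only finitely many simple reflections and passing to the subdatum they generate. But the parabolic subgroup generated by a finite subset $S'\subset S$ need \emph{not} be finite: in the affine case (which is the main application), taking $S'=S$ already gives the whole infinite affine Weyl group. The reduction in \S\ref{ss:rels-verif} works precisely because each \emph{relation} involves at most three simple reflections, and those subsets do generate finite groups; this is very different from controlling an entire Hom space $\Hom(B_\uv,B_\uw)$, where $\uv$ and $\uw$ together may involve all of $S$. So the passage ``brings us into the finite-type setting, where Theorem~\ref{thm:V-fully-faithful} supplies the fully faithful functor $\V$'' does not go through.

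The paper circumvents this by proving Theorem~\ref{thm:self-Koszul} \emph{first}, independently of any finite-type reduction, and only then deducing Theorem~\ref{thm:monoidal-Koszul}. Concretely: one shows (Proposition~\ref{prop:Psi-standards}) by induction on the Bruhat order, using the functors $C_s$ and Lemma~\ref{lem:epsilon-delta}, that $\varkappa(\Delta_w^\vee)\cong\bbDelta_w$ and $\varkappa(\nabla_w^\vee)\cong\bbnabla_w$ for all $w\in W$. Since the standard--costandard Hom pattern is the same on both sides, Be{\u\i}linson's lemma then gives full faithfulness of $\varkappa$ over a field; the integral and general cases follow by base change. With $\varkappa$ an equivalence in hand, $\underline{\Phi}\cong\varkappa'\circ\underline{\Psi}^\vee$ is an equivalence, and the graded Nakayama argument you sketched (free $R^\vee$-modules of matching rank) lifts this to full faithfulness of $\Phi$. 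Thus the direction of implication is Theorem~\ref{thm:self-Koszul} $\Rightarrow$ Theorem~\ref{thm:monoidal-Koszul}, not the reverse.
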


Note that when $\cG$ is the skyscraper sheaf $\cE^\vee_1$, the monoidal property of $\selfKos$ implies that $\selfKos(\ForBELE(\cF)) \cong \ForBELE(\monKos(\cF))$. 

The proofs of Theorems~\ref{thm:monoidal-Koszul} and~\ref{thm:self-Koszul} will be completed in~\S\ref{ss:proof-Phi-equiv}.  For now, let us explain how to define the functors $\monKos$ and $\selfKos$. As in~\eqref{eqn:equivalence-rw}, by~\cite[Theorem~10.6]{rw}, there exists a natural equivalence of monoidal categories
\begin{equation}
\label{eqn:equiv-Diag-Parity}
\Psi^\vee: \DiagBS(\fh_\bk^*,W) \simto \ParityBS(\BGBvee, \bk)
\end{equation}
intertwining the shifts $(1)$ and $\{1\}$, and sending $B^\vee_\uw$ to $\cE^\vee_\uw$.  We define
\[
\monKos := \Phi \circ (\Psi^\vee)^{-1}: \ParityBS(\BGBvee,\bk) \to \TiltBS(\UGU,\bk).
\]

Next, let $\uDiagBSp(\fh_\bk^*,W)$ be the additive category with shift $(1)$ whose objects are the same as those of $\DiagBSp(\fh_\bk^*,W)$, and whose morphism spaces are defined by
\[
\bigoplus_{n \in \Z} \Hom_{\uDiagBSp(\fh_\bk^*,W)}(M,N(n)) := \left( \bigoplus_{n \in \Z} \Hom_{\DiagBSp(\fh_\bk^*,W)}(M,N(n)) \right) \otimes_{R^\vee} \bk.
\]
(This notation should not be confused with the notation $\oDiagBSp$ used in~\cite{amrw}, where the \emph{left} action of polynomials is killed.)
Then~\eqref{eqn:equiv-Diag-Parity} induces an equivalence of additive categories
\begin{equation}
\label{eqn:equiv-Diag-Parity-const}
\underline{\Psi}^\vee: \uDiagBSp(\fh_\bk^*,W) \simto \ParityBSp(\BGUvee, \bk),
\end{equation}

Similarly, by Proposition~\ref{prop:morph-Tilt-UGU} the composition $\ForFMLM \circ \Phi$ factors through a functor
\[
\underline{\Phi} : \uDiagBSp(\fh_\bk^*,W) \to \TiltBSp(\UGB, \bk),
\]
so that we can consider the functor
\[
\preselfKos := \underline{\Phi} \circ (\underline{\Psi}^\vee)^{-1}: \ParityBSp(\BGUvee, \bk) \to \TiltBSp(\UGB, \bk).
\]
Finally, we define $\selfKos$ to be the composition
\begin{multline}
\label{eqn:def-uPsi}
\selfKos : \Dmix(\BGUvee, \bk) = \Kb \ParityBSp(\BGUvee, \bk) \\
\xrightarrow{\Kb(\preselfKos)} \Kb \TiltBSp(\UGB, \bk) \xrightarrow[\sim]{\text{Lemma~\ref{lem:tilt-realization}}} \Dmix(\UGB, \bk).
\end{multline}

%--------------------------------------------------------------------------
\subsection{Images of standard and costandard objects}
\label{ss:Psi-Delta-nabla}
%--------------------------------------------------------------------------

In this subsection we assume that $\bk$ is a field.
Let $s \in S$, and consider the functor
\[
\Tmon_s \hatstar (-) : \TiltBSp(\UGB, \bk) \to \TiltBSp(\UGB, \bk).
\]
Conjugating the functor $\Kb(\Tmon_s \hatstar (-))$ by the equivalence of Lemma~\ref{lem:tilt-realization} we obtain a
triangulated functor
\[
C'_s : \Dmix(\UGB, \bk) \to \Dmix(\UGB, \bk).
\]
Of course the same construction can be done for the functor $\Tmon_1 \hatstar (-)$ (which is isomorphic to the identity functor). These constructions are functorial in the sense that any morphism from $\Tmon_s$ to any shift of $\Tmon_1$ induces a morphism of functor from $C'_s$ to the corresponding shift of the identity functor. In particular, using the morphism $\hat{\epsilon}_s$ defined in~\cite[\S 5.3.4]{amrw} we obtain a morphism of functors $\tilde{\epsilon}_s : C_s' \to \id \langle 1 \rangle$.

As in Section~\ref{sec:functor-V}, for $v \in W$ we denote by $\bbDelta_v \in \Dmix(\UGB, \bk)$, resp.~$\bbnabla_v \in \Dmix(\UGB, \bk)$ the standard, resp.~costandard, perverse sheaf associated to $v$.

\begin{lem}
\label{lem:epsilon-delta}
For any $v \in W$, the morphism $\tilde{\epsilon}_s(\bbDelta_v) : C_s'(\bbDelta_v) \to \bbDelta_v \langle 1 \rangle$ is nonzero.
\end{lem}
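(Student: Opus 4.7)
The plan is to exploit naturality of $\tilde{\epsilon}_s$ together with the socle inclusion $\iota : \cT_1\la -\ell(v)\ra \hookrightarrow \bbDelta_v$ coming from the short exact sequence of \cite[Lemma~4.9]{modrap2} (already used in the proof of Lemma~\ref{lem:Cs-simple}). Naturality will give
\[
\tilde{\epsilon}_s(\bbDelta_v) \circ C_s'(\iota) = \iota\la 1\ra \circ \tilde{\epsilon}_s(\cT_1\la -\ell(v)\ra),
\]
reducing the problem to showing that the right-hand side is nonzero.

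The first step is to identify $\tilde{\epsilon}_s$ on tilting objects with $\hat{\epsilon}_s \hatstar(-)$. I would argue that both $C_s'$ and the functor $C_s$ of Lemma~\ref{lem:Cs-exact} are triangulated extensions of $\Tmon_s \hatstar(-) : \Tilt(\UGB,\bk) \to \Tilt(\UGB,\bk)$ along the realization equivalence of Lemma~\ref{lem:tilt-realization}, so Proposition~\ref{prop:realization-functor} produces a canonical isomorphism $C_s' \cong C_s$ under which $\tilde{\epsilon}_s|_{\Tilt(\UGB,\bk)}$ becomes $\hat{\epsilon}_s \hatstar \id_{(-)}$ via~\eqref{eqn:For-hatstar}. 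This identification is the main technical point to nail down.

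With it in hand, the right-hand side of the displayed identity becomes $\iota\la 1\ra \circ \ForFMLM(\hat{\epsilon}_s)\la -\ell(v)\ra$ under the canonical isomorphism $\Tmon_s \hatstar \cT_1 \cong \cT_s$. Now $\hat{\epsilon}_s$ is a generator of the appropriate graded piece of $\gHom_\FM(\Tmon_s, \Tmon_\varnothing)$, so Proposition~\ref{prop:morph-Tilt-UGU} ensures that $\ForFMLM(\hat{\epsilon}_s)$ is a nonzero element of the one-dimensional space $\Hom_{\Dmix(\UGB,\bk)}(\cT_s, \cT_1\la 1\ra)$. Since $\cT_1\la 1\ra$ is simple, $\ForFMLM(\hat{\epsilon}_s)$ is an epimorphism, and composing it (after a shift) with the monomorphism $\iota\la 1\ra$ from the nonzero object $\cT_1\la -\ell(v)+1\ra$ yields a nonzero morphism, as required.
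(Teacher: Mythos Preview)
Your argument is correct and follows essentially the same route as the paper: both use the socle inclusion $\cT_1\la -\ell(v)\ra \hookrightarrow \bbDelta_v$ from \cite[Lemma~4.9]{modrap2}, the naturality square for $\tilde{\epsilon}_s$, and the fact that $\tilde{\epsilon}_s(\cT_1)$ is the surjection $\cT_s \to \cT_1\la 1\ra$, whose composition with a monomorphism out of a nonzero object is nonzero. The only difference is that you make heavy weather of identifying $\tilde{\epsilon}_s$ on tilting objects with $\hat{\epsilon}_s \hatstar \id$: since $C_s'$ is \emph{defined} by conjugating $\Kb(\Tmon_s \hatstar(-))$ via the realization equivalence of Lemma~\ref{lem:tilt-realization}, and that equivalence restricts to the identity on $\Tilt(\UGB,\bk)$, this identification is immediate from the construction, and there is no need to invoke $C_s$ or Proposition~\ref{prop:realization-functor} (indeed, the isomorphism $C_s \cong C_s'$ is only established after this lemma in the paper).
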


\begin{proof}
By~\cite[Lemma~4.9]{modrap2}, there exists an embedding $f_v : \cT_1 \langle -\ell(v) \rangle \hookrightarrow \bbDelta_v$. We deduce a commutative diagram
\[
\begin{tikzcd}[column sep=large]
C_s'(\cT_1 \langle -\ell(v) \rangle) \ar[d, "C'_s(f_v)" swap] \ar[rr, "\tilde{\epsilon}_s(\cT_1 \langle -\ell(v) \rangle)"] && \cT_1 \langle -\ell(v) +1 \rangle \ar[d, "f_v"] \\
C_s'(\bbDelta_v) \ar[rr] && \bbDelta_v \langle 1 \rangle,
\end{tikzcd}
\]
where the lower arrow is $\tilde{\epsilon}_s(\bbDelta_v)$.
By construction we have $C_s'(\cT_1) = \Tmon_s \hatstar \cT_1 \cong \ForFMLM(\Tmon_s) = \cT_s$, and it is easy to see that $\tilde{\epsilon}_s(\cT_1)$ identifies with the surjective morphism $\cT_s \to \cT_1 \langle 1 \rangle$. From this we deduce that the composition of the upper horizontal arrow with the right vertical arrow is nonzero, proving that $\tilde{\epsilon}_s(\bbDelta_v)$ is also nonzero.
\end{proof}

Now, recall the functor $C_s$ of~\S\ref{ss:perv}.

\begin{lem}
\label{lem:Cs'}
There exists an isomorphism of functors $C_s \simto C_s'$.
\end{lem}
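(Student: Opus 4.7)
The plan is to reduce the comparison of $C_s$ and $C_s'$ to a common description via the realization equivalence $\real : \Kb \Tilt(\UGB, \bk) \simto \Dmix(\UGB, \bk)$ of Lemma~\ref{lem:tilt-realization}. By construction, $C_s'$ is exactly $\real \circ \Kb(\Tmon_s \hatstar -) \circ \real^{-1}$. It therefore suffices to establish a natural isomorphism making the square
\[
\begin{tikzcd}
\Kb \Tilt(\UGB,\bk) \ar[r, "\real"] \ar[d, "\Kb(\Tmon_s \hatstar -)"'] & \Dmix(\UGB,\bk) \ar[d, "C_s"] \\
\Kb \Tilt(\UGB,\bk) \ar[r, "\real"] & \Dmix(\UGB,\bk)
\end{tikzcd}
\]
commute up to natural isomorphism.

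To achieve this, I would invoke Proposition~\ref{prop:realization-functor} applied to $\sT_1 = \sT_2 = \Dmix(\UGB, \bk)$ (with filtered version furnished by the construction of~\cite[\S2.5]{ar:kdsf} via the equivalence $\Dmix(\UGB,\bk) \simeq \Dmix(\UGBold,\bk)$), the additive subcategory $\sA_1 = \sA_2 = \Tilt(\UGB, \bk)$ (which has no negative self-Exts by~\cite[Proposition~10.6.1]{amrw}), and the functor $F = C_s$. Since the restriction of $C_s$ to $\Tilt(\UGB, \bk)$ is isomorphic to the additive functor $\Tmon_s \hatstar (-)$ by the very definition of $C_s$, once we verify the remaining hypothesis of the Proposition — that $C_s$ lifts to a functor of filtered triangulated categories — we obtain the commutative square above, hence an isomorphism $C_s \cong C_s'$.

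The main obstacle is thus the construction of a filtered lift $\tilde C_s$ of $C_s$. Tracing through the construction of $C_s$ in~\cite[Proposition~7.6.3]{amrw}, this functor is obtained by convolution with a complex representing $\Tmon_s$, and at the chain level convolution is an additive bifunctor on $\ParityBSp$-complexes. Under the identification $\Dmix(\UGBold, \bk) \cong \Kb \ParityBSp(\UGBold, \bk)$ recalled in~\S\ref{ss:constructible-Dmix}, one can therefore realize $C_s$ as the $\Kb$-extension of an additive endofunctor of $\ParityBSp(\UGBold, \bk)$; such an $\Kb$-extension admits a canonical filtered lift by the construction of~\cite[\S2.5]{ar:kdsf}, which satisfies the hypotheses of Proposition~\ref{prop:realization-functor}. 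This completes the reduction.
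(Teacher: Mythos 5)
Your proposal is correct and takes essentially the same route as the paper: the paper's entire proof is the one-liner ``Since $C_s$ and $C_s'$ have isomorphic restrictions to $\TiltBSp(\UGB,\bk)$, this follows from Proposition~\ref{prop:realization-functor}.'' You have spelled out precisely the hypotheses of that proposition being checked — in particular the existence of a filtered lift of $C_s$, which the paper leaves implicit as a consequence of the chain-level construction of $C_s$ in \cite[Proposition~7.6.3]{amrw} (compare the analogous verification the paper carries out in the lemma preceding Theorem~\ref{thm:duality-par-Whit} for $(\pi_J)_*$). The only slips are cosmetic: Lemma~\ref{lem:tilt-realization} is stated for $\TiltBSp(\UGB,\bk)$ rather than $\Tilt(\UGB,\bk)$ (the Karoubian variant is the equivalence~\eqref{eqn:realization-Dmix}), and $C_s'$ is defined by conjugating through that $\TiltBSp$-realization.
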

\begin{proof}
Since $C_s$ and $C_s'$ have isomorphic restrictions to $\TiltBSp(\UGB,\bk)$, this follows from Proposition~\ref{prop:realization-functor}.
\end{proof}

For any $w \in W$ we can consider the standard and costandard (mixed) perverse sheaves $\Delta_w^\vee$, $\nabla_w^\vee$ in $\Dmix(\BGUvee, \bk)$ (constructed in~\cite{modrap2}), and also the objects $\bbDelta_w$, $\bbnabla_w$ in $\Dmix(\UGB, \bk)$ considered above. 
\begin{prop}
\label{prop:Psi-standards}
For any $w \in W$ we have
\[
\selfKos(\Delta_w^\vee) \cong \bbDelta_w, \quad \selfKos( \nabla_w^\vee) \cong \bbnabla_w.
\]
\end{prop}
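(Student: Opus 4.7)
The plan is to prove both isomorphisms simultaneously by induction on $\ell(w)$. The base case $w = e$ is immediate: $\Delta_e^\vee = \nabla_e^\vee = \cE_e^\vee$ and $\bbDelta_e = \bbnabla_e = \cT_e$, while $\preselfKos(\cE_e^\vee) \cong \cT_e$ by construction (both arise from the empty Bott--Samelson).

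For the inductive step, fix $w$ with $\ell(w) \geq 1$ and a simple reflection $s$ with $sw < w$. My first task is to produce a distinguished triangle in $\Dmix(\BGUvee,\bk)$ of the form
\[
\Delta_w^\vee \longrightarrow \cE_s^\vee \star \Delta_{sw}^\vee \xrightarrow{\epsilon_s^\vee \star \id} \Delta_{sw}^\vee\{1\} \longrightarrow,
\]
obtained by convolving with $\Delta_{sw}^\vee$ the basic triangle $\Delta_s^\vee \to \cE_s^\vee \to \Delta_e^\vee\{1\} \to$ (which reflects the parity-sheaf structure of $\cE_s^\vee$ on $\overline{\cX_s^\vee} \cong \mathbb P^1$) and using $\Delta_s^\vee \star \Delta_{sw}^\vee \cong \Delta_w^\vee$ (valid since $sw < w$). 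Here $\epsilon_s^\vee$ is the image under $\Psi^\vee$ of the diagrammatic dot generator. A symmetric argument with $\hat\eta_s^\vee$ handles the costandard case.

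Next I would apply $\selfKos$ to this triangle, using two ingredients. First, a partial monoidality of $\preselfKos$ with respect to the left $\cE_s^\vee$-action, namely $\preselfKos(\cE_s^\vee \star \cE_\uv^\vee) \cong \cT_s \hatstar \preselfKos(\cE_\uv^\vee)$ naturally in $\uv$; this should follow from the identities $\cE_s^\vee \star \cE_\uv^\vee \cong \cE_{s\uv}^\vee$ and $\cT_s \hatstar \cT_\uv \cong \cT_{s\uv}$, together with tracking the monoidal compatibilities of the equivalences $\underline\Psi^\vee$ and $\underline\Phi$ on the generator $B_s$. Second, Lemma~\ref{lem:Cs'}, which identifies the functor $\cT_s \hatstar (-)$ on $\TiltBSp(\UGB,\bk)$, extended to $\Dmix(\UGB,\bk)$ via the realization of Lemma~\ref{lem:tilt-realization}, with the exact functor $C_s$. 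Combining these with the inductive hypothesis $\selfKos(\Delta_{sw}^\vee) \cong \bbDelta_{sw}$, one obtains a distinguished triangle
\[
\selfKos(\Delta_w^\vee) \longrightarrow C_s(\bbDelta_{sw}) \xrightarrow{\tilde\epsilon_s(\bbDelta_{sw})} \bbDelta_{sw}\la 1\ra \longrightarrow.
\]

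To conclude, by the analysis recalled in the proof of Lemma~\ref{lem:tilting-hom-formula-pre}, the object $C_s(\bbDelta_{sw})$ admits a standard filtration with subquotients $\bbDelta_w$ and $\bbDelta_{sw}\la 1\ra$, and a direct computation shows that $\Hom_{\Dmix(\UGB,\bk)}(C_s(\bbDelta_{sw}), \bbDelta_{sw}\la 1\ra)$ is one-dimensional. By Lemma~\ref{lem:epsilon-delta} the morphism $\tilde\epsilon_s(\bbDelta_{sw})$ is nonzero, hence equals the standard-filtration quotient up to a nonzero scalar, so its fiber is $\bbDelta_w$; this gives $\selfKos(\Delta_w^\vee) \cong \bbDelta_w$, and the costandard case is handled in parallel. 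The main obstacle I anticipate is the first ingredient above: establishing this partial monoidality of $\preselfKos$ with respect to $\cE_s^\vee \star (-)$ \emph{before} we have Theorem~\ref{thm:self-Koszul} in full generality. I expect this to reduce to careful bookkeeping with the monoidal structures inherited from the Elias--Williamson diagrammatic category through $\underline\Psi^\vee$ and $\underline\Phi$, and in particular to require checking compatibility on morphism spaces, not just on objects.
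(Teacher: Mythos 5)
Your proposal follows essentially the same route as the paper: convolve the basic triangle for $\cE_s^\vee$ with $\Delta_{sw}^\vee$, transport it across $\selfKos$ using monoidality and Lemma~\ref{lem:Cs'}, then compare with the first triangle in~\cite[Lemma~10.5.3(1)]{amrw} using nonvanishing of $\tilde\epsilon_s(\bbDelta_{sw})$ (Lemma~\ref{lem:epsilon-delta}) and the one-dimensionality of $\Hom(C_s(\bbDelta_{sw}), \bbDelta_{sw}\la1\ra)$. The one place you flag as a potential obstacle — the ``partial monoidality'' of $\preselfKos$ with respect to $\cE_s^\vee \star(-)$ — is in fact not an obstacle and you are worrying unnecessarily: $\Phi$ is already a monoidal functor by Theorem~\ref{thm:Diag-Tilt}, hence so is $\monKos = \Phi\circ(\Psi^\vee)^{-1}$, and the compatibility $\selfKos(\cF\star\cG)\cong\monKos(\cF)\hatstar\selfKos(\cG)$ for $\cF$ a parity complex and $\cG$ in the constructible category is immediate from the definitions of $\underline\Phi$ and $\selfKos$ (as the paper notes at the start of \S\ref{ss:proof-Phi-equiv}), without any need to first know $\selfKos$ is an equivalence. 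The only other difference is cosmetic: you derive the one-dimensionality of the relevant Hom space via the multiplicity analysis of Lemma~\ref{lem:tilting-hom-formula-pre}, while the paper uses adjunction plus the triangle from~\cite[Lemma~10.5.3(1)]{amrw}; both are fine.
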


\begin{proof}
We only prove the first isomorphism; the proof of the second one is similar. We proceed by induction on $w$, the case $w=1$ being clear by construction.

Let $w \in W$, and choose $s \in S$ such that $sw < w$. By the explicit description of $\Delta^\vee_s$ (see in particular~\cite[\S 10.4]{amrw}),
there exists a distinguished triangle
\[
\Delta^\vee_s \to \cE^\vee_s \to \Delta^\vee_1 \{1\} \xrightarrow{[1]},
\]
where the second morphism is the image of the ``upper dot'' morphism under~\eqref{eqn:equiv-Diag-Parity-const}. Convolving with $\Delta^\vee_{sw}$ on the right and using~\cite[Proposition~4.4]{modrap2} we deduce a distinguished triangle
\begin{equation}
\label{eqn:triangle-delta-E}
\Delta_w^\vee \to \cE^\vee_s \ustar \Delta^\vee_{sw} \to \Delta^\vee_{sw} \{1\} \xrightarrow{[1]},
\end{equation}
where the second morphism is the convolution of $\id_{\Delta_{sw}^\vee}$ with the image of the upper dot morphism. Since $\monKos$ is a monoidal functor, and by construction of the functor $\cE_s^\vee \ustar (-) : \Dmix(\BGUvee, \bk) \to \Dmix(\BGUvee, \bk)$, there exists a canonical isomorphism
\[
\selfKos \circ (\cE_s^\vee \ustar (-)) \cong C_s' \circ \selfKos.
\]
Using Lemma~\ref{lem:Cs'}, taking the image of~\eqref{eqn:triangle-delta-E} we obtain a distinguished triangle
\[
\selfKos(\Delta_w^\vee) \to C_s \circ \selfKos(\Delta_{sw}^\vee) \to \selfKos(\Delta_{sw}^\vee) \langle 1 \rangle \xrightarrow{[1]},
\]
where the second morphism is induced by the composition $C_s \simto C_s' \xrightarrow{\tilde{\epsilon}_s} \id \langle 1 \rangle$. Using induction, we can rewrite this triangle in the following form:
\begin{equation}
\label{eqn:triangle-Cs-delta-proof}
\selfKos(\Delta_w^\vee) \to C_s (\bbDelta_{sw}) \to \bbDelta_{sw} \langle 1 \rangle \xrightarrow{[1]}.
\end{equation}

It follows from Lemma~\ref{lem:epsilon-delta} that the morphism $C_s (\bbDelta_{sw}) \to \bbDelta_{sw} \langle 1 \rangle$ appearing in~\eqref{eqn:triangle-Cs-delta-proof} is nonzero. Since by adjunction we have
\[
\Hom_{\Dmix(\UGB, \bk)}(\bbDelta_w, \bbDelta_{sw} \langle 1 \rangle) = \Hom_{\Dmix(\UGB, \bk)}(\bbDelta_w [1], \bbDelta_{sw} \langle 1 \rangle) = 0,
\]
the first distinguished triangle in~\cite[Lemma~10.5.3(1)]{amrw} shows that the $\bk$-vector space $\Hom_{\Dmix(\UGB)}(C_s (\bbDelta_{sw}), \bbDelta_{sw} \langle 1 \rangle)$ is $1$-dimensional. Hence the second morphism in~\eqref{eqn:triangle-Cs-delta-proof} coincides (up to scalar) with the similar morphism in the first distinguished triangle in~\cite[Lemma~10.5.3(1)]{amrw}. Comparing these triangles we deduce an isomorphism $\selfKos(\Delta_w^\vee) \cong \bbDelta_w$, as desired.
\end{proof}

%--------------------------------------------------------------------------
\subsection{Proof of Theorems~\ref{thm:monoidal-Koszul} and~\ref{thm:self-Koszul}}
\label{ss:proof-Phi-equiv}
%--------------------------------------------------------------------------

We need only show that $\monKos$ and $\selfKos$ are equivalences of categories, as all the other assertions in these theorems are immediate from the definitions of these functors.  

\begin{proof}[Proof of Theorem~{\rm\ref{thm:self-Koszul}}]
It is enough to show that $\selfKos$ is fully faithful, as it is easy to see that full faithfulness implies that it is also essentially surjective.

Let us first treat the case where $\bk$ is a field. Observe that
\[
\Hom_{\Dmix(\BGUvee, \bk)}(\Delta^\vee_w, \nabla_v^\vee \langle m \rangle [n]) \cong \begin{cases}
\bk & \text{if $v=w$ and $n=m=0$} \\
0 & \text{otherwise}
\end{cases}
\]
and
\[
\Hom_{\Dmix(\UGB, \bk)}(\bbDelta_w, \bbnabla_v \langle m \rangle [n]) \cong \begin{cases}
\bk & \text{if $v=w$ and $n=m=0$} \\
0 & \text{otherwise.}
\end{cases}
\]
In view of this,
combining Proposition~\ref{prop:Psi-standards} with a classical
result sometimes called ``Be{\u\i}linson's lemma'' (see
e.g.~\cite[Lem\-ma~3.9.3]{abg}), to conclude it suffices to prove that the image under $\selfKos$ of any nonzero morphism $f : \Delta^\vee_w \to \nabla_w^\vee$ is nonzero. However the cone of $f$ is supported on $\overline{\cX^\vee_w} \smallsetminus \cX^\vee_w$, and then Proposition~\ref{prop:Psi-standards} implies that the cone of $\selfKos(f)$ is supported on $\overline{\cX_w} \smallsetminus \cX_w$. Therefore, $\selfKos(f) \neq 0$.

We next consider the case $\bk=\Z'$. Let $\uv$ and $\uw$ be expressions, let $m \in \Z$, and consider the morphism
\begin{equation}\label{eqn:phi-ff-check3}
\Hom_{\ParityBSp(\BGUvee,\Z')}(\cE_\uv, \cE_\uw \{m\}) \to \Hom_{\TiltBSp(\UGB, \Z')}(\cT_\uv, \cT_\uw \langle m \rangle)
\end{equation}
induced by $\selfKos$. Both sides are free $\Z'$-modules of finite rank, by~\cite[Lemma~2.2]{mr:etsps} and Proposition~\ref{prop:morph-Tilt-UGU},
respectively. To prove that~\eqref{eqn:phi-ff-check3} is an isomorphism, it is enough to show that it becomes an isomorphism after extension of scalars to any field $\bk$ admitting a ring homomorphism $\Z' \to \bk$.  That is, we must show that the left-hand vertical map in the commutative diagram below is an isomorphism.
\begin{equation}\label{eqn:phi-ff-diag}
\begin{tikzcd}
\bk \otimes_{\Z'} \Hom(\cE^{\Z'}_\uv, \cE^{\Z'}_\uw (m))
  \ar[r] \ar[d, "\bk \otimes \selfKos^{\Z'}"'] &
  \Hom(\cE^\bk_\uv, \cE^\bk_\uw (m))
  \ar[d, "\selfKos^\bk"] \\
\bk \otimes_{\Z'} \Hom(\cT^{\Z'}_\uv, \cT^{\Z'}_\uw \langle m \rangle)
  \ar[r] &
  \Hom(\cT^\bk_\uv, \cT^\bk_\uw \langle m \rangle)
\end{tikzcd}
\end{equation}
Here, the horizontal maps are isomorphisms, by~\cite[Lemma~2.2]{mr:etsps} and by
Proposition~\ref{prop:morph-Tilt-UGU},
respectively. The right-hand vertical map is an isomorphism by the case of field coefficients considered above.  This completes the proof for $\Z'$.

Finally, the case of general $\bk$ can be deduced from the case of $\Z'$ using another diagram like~\eqref{eqn:phi-ff-diag}.
\end{proof}

\begin{proof}[Proof of Theorem~{\rm \ref{thm:monoidal-Koszul}}]
From the definition of $\monKos$, we see that to show that it is an equivalence, we must show that $\Phi$ is an equivalence.  This latter functor is essentially surjective by construction, so it remains to show that it is fully faithful.

Let $M,N \in \DiagBS(\fh_\bk^*,W)$, 
and consider the map
\begin{multline}\label{eqn:phi-ff-check}
\bigoplus_{m \in \Z} \Hom_{\DiagBS(\fh_\bk^*,W)}(M,N(m)) \\ \to \bigoplus_{m \in \Z}\Hom_{\TiltBS(\UGU, \bk)}(\Phi(M), \Phi(N) \langle m \rangle).
\end{multline}
As right $R^\vee$-modules, both sides are free of finite rank, by~\cite[Corollary~6.13]{ew} and Proposition~\ref{prop:morph-Tilt-UGU}, respectively. By the graded Nakayama lemma, to prove that~\eqref{eqn:phi-ff-check} is an isomorphism, it is enough show that the induced map
\begin{multline}\label{eqn:phi-ff-check2}
\left( \bigoplus_{m \in \Z} \Hom_{\DiagBS(\fh_\bk^*,W)}(M,N(m)) \right) \otimes_{R^\vee} \bk \\
\to \left( \bigoplus_{m \in \Z} \Hom_{\TiltBS(\UGU)}(\Phi(M), \Phi(N) \langle m \rangle) \right) \otimes_{R^\vee} \bk
\end{multline}
is an isomorphism.  This new map is the one that arises when we apply $\underline{\Phi}$ to $M$ and $N$, regarded as objects of $\uDiagBSp(\fh_\bk^*,W)$.  Now,  Theorem~\ref{thm:self-Koszul} tells us that $\selfKos$ is an equivalence.  It follows that $\preselfKos$ is also an equivalence, as is $\underline{\Phi} \cong \preselfKos \circ \underline{\Psi}^\vee$.  We conclude that~\eqref{eqn:phi-ff-check2} and~\eqref{eqn:phi-ff-check} are isomorphisms.
\end{proof}

%--------------------------------------------------------------------------
\subsection{Another formulation of Koszul duality}
\label{ss:Koszul-duality}
%--------------------------------------------------------------------------

In this subsection we assume that $\bk$ is a field or a complete local ring.  We will study a variant of Theorem~\ref{thm:self-Koszul} involving $\Dmix(\UGBold,\bk)$ and $\Dmix(\BGUvee,\bk)$.  Both of these categories admit perverse t-structures as in~\cite{modrap2}.  As usual, we denote the standard and costandard objects by $\Delta_w$, $\nabla_w$, $\Delta_w^\vee$, $\nabla_w^\vee$, and the indecomposable parity complexes by $\cE_w$, $\cE^\vee_w$.  To distinguish the indecomposable tilting perverse sheaves from the corresponding objects in $\TiltBSp(\UGB,\bk)$, we denote them instead by the new symbols $\mathcal{S}_w$ and $\mathcal{S}_w^\vee$. 

The following theorem generalizes~\cite[Theorem~5.4]{modrap2} to the Kac--Moody case.

\begin{thm}[Self-duality]
\label{thm:Koszul-duality-objects}
Assume that $\bk$ is a field or a complete local ring.  There is an equivalence of triangulated categories
\[
\kappa : \Dmix(\UGBold, \bk) \simto \Dmix(\BGUvee, \bk)
\]
satisfying $\kappa \circ \langle 1 \rangle \cong \langle -1 \rangle [1] \circ \kappa$, and such that
\[
\kappa(\Delta_w) \cong \Delta^\vee_w, \quad \kappa(\nabla_w) \cong \nabla^\vee_w, \quad \kappa(\mathcal{S}_w) \cong \cE^\vee_w, \quad \kappa(\cE_w) \cong \mathcal{S}^\vee_w
\]
for any $w \in W$.
\end{thm}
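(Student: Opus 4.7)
The plan is to define $\kappa$ as the quasi-inverse of the composite equivalence
\[
\ForLMRE \circ \selfKos : \Dmix(\BGUvee,\bk) \simto \Dmix(\UGBold,\bk),
\]
where $\selfKos$ is the Koszul duality functor of Theorem~\ref{thm:self-Koszul} and $\ForLMRE$ is the equivalence from Section~\ref{ss:constructible-Dmix}. Thus $\kappa := (\ForLMRE \circ \selfKos)^{-1}$ is automatically a triangulated equivalence. The shift identity $\kappa \circ \langle 1 \rangle \cong \langle -1 \rangle [1] \circ \kappa$ follows by inverting $\selfKos \circ \{1\} \cong \langle 1 \rangle \circ \selfKos$ from Theorem~\ref{thm:self-Koszul}, using that $\ForLMRE$ commutes with all relevant shifts and the convention $\{1\} \cong \langle -1 \rangle [1]$ in the mixed derived categories.

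For the standards and costandards, $\ForLMRE$ matches $\bbDelta_w \leftrightarrow \Delta_w$ and $\bbnabla_w \leftrightarrow \nabla_w$, so Proposition~\ref{prop:Psi-standards} yields $\kappa(\Delta_w) \cong \selfKos^{-1}(\bbDelta_w) \cong \Delta^\vee_w$ and similarly $\kappa(\nabla_w) \cong \nabla^\vee_w$.

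For the swap $\kappa(\mathcal{S}_w) \cong \cE^\vee_w$: since $\mathcal{S}_w \cong \ForLMRE(\cT_w)$, it suffices to show $\selfKos(\cE^\vee_w) \cong \cT_w$. Fixing a reduced expression $\uw$ for $w$, $\cE^\vee_w$ appears as a direct summand of $\cE^\vee_\uw$, and Theorem~\ref{thm:self-Koszul} gives $\selfKos(\cE^\vee_\uw) \cong \cT_\uw$. Since $\selfKos$ is an equivalence, $\selfKos(\cE^\vee_w)$ is indecomposable; and Proposition~\ref{prop:Psi-standards} transports the standard filtration of $\cE^\vee_w$ (containing $\Delta^\vee_w$ with multiplicity one and otherwise only $\Delta^\vee_v$ with $v < w$) to an analogous filtration of $\selfKos(\cE^\vee_w)$, uniquely identifying it as $\cT_w$.

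The swap $\kappa(\cE_w) \cong \mathcal{S}^\vee_w$ is the subtlest point. The idea is that the shift compatibility of $\kappa$ absorbs the cohomological shifts coming from parity filtrations into pure Tate twists. Since $\cE_w$ is pure in the parity sense, the recollement attached to the Bruhat stratification produces a triangulated filtration of $\cE_w$ in $\Dmix(\UGBold,\bk)$ with successive cones of the form $\Delta_v\{m\} \cong \Delta_v \langle -m \rangle [m]$ for various $v \leq w$, with $\Delta_w\{0\}$ appearing exactly once. Using the derived identity $\kappa \circ \langle -m \rangle [m] \cong \langle m \rangle \circ \kappa$, each such cone is sent to $\Delta^\vee_v \langle m \rangle$, which lies in the perverse heart of $\Dmix(\BGUvee,\bk)$. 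A standard long exact sequence argument then shows $\kappa(\cE_w)$ is itself perverse, and its filtration descends to a genuine standard filtration in the heart, in which $\Delta^\vee_w$ appears once and all other constituents satisfy $v < w$. The dual argument on the costandard side produces a costandard filtration of $\kappa(\cE_w)$, so $\kappa(\cE_w)$ is a tilting perverse sheaf; being indecomposable and having the correct highest weight, it must be $\mathcal{S}^\vee_w$. The main technical point to verify carefully is that the recollement triangles on the parity side really descend under $\kappa$ to short exact sequences in the perverse heart, thereby realizing the abstract characterization of tilting perverse sheaves.
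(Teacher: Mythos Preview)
Your definition of $\kappa$ as $(\ForLMRE \circ \selfKos)^{-1}$, the shift compatibility, and the identification $\kappa(\mathcal{S}_w) \cong \cE^\vee_w$ all match the paper's proof exactly.

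For $\kappa(\cE_w) \cong \mathcal{S}^\vee_w$ the paper takes a different and shorter route. Rather than transporting a filtration, it computes
\[
\Hom_{\Dmix(\BGUvee,\bk)}(\Delta^\vee_y, \kappa(\cE_w)\langle n\rangle[m]) \cong \Hom_{\Dmix(\UGBold,\bk)}(\Delta_y, \cE_w\{n\}[m]),
\]
and the right-hand side vanishes for $m \neq 0$ by adjunction and the parity-vanishing of~\cite[Remark~2.7]{modrap2}. The same computation on the $\nabla^\vee$ side shows $\kappa(\cE_w)$ has no nontrivial $\Hom$'s from standards or to costandards outside degree~$0$, so it is perverse and tilting; indecomposability and support then pin it down as $\mathcal{S}^\vee_w$. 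This Hom-vanishing characterization sidesteps the issue you flag at the end, namely checking that recollement triangles descend to short exact sequences in the heart. Your filtration argument is also correct (an iterated extension of perverse objects via triangles is perverse, by the long exact sequence in perverse cohomology), but the paper's approach is more direct and requires no such bookkeeping.

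One genuine gap: you invoke Proposition~\ref{prop:Psi-standards} for $\kappa(\Delta_w)$ and $\kappa(\nabla_w)$, but that proposition is proved only under the standing assumption (at the start of~\S\ref{ss:Psi-Delta-nabla}) that $\bk$ is a field. The theorem allows $\bk$ to be a complete local ring, and the paper handles that case by instead citing the argument of~\cite[Lemma~5.2]{modrap2}, noting that Proposition~\ref{prop:Psi-standards} gives an alternative proof only over fields.
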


\begin{proof}
We define $\kappa$ to be the inverse of the composition of equivalences
\[
\Dmix(\BGUvee,\bk) \xrightarrow[\sim]{\selfKos} \Dmix(\UGB,\bk) \xrightarrow[\sim]{\ForLMRE} \Dmix(\UGBold,\bk).
\]
It is immediate from the definition and Theorem~\ref{thm:self-Koszul} that $\kappa \circ \langle 1 \rangle \cong \langle -1 \rangle [1] \circ \kappa$, and that $\kappa(\mathcal{S}_w) \cong \cE^\vee_w$.  The calculation of $\kappa(\Delta_w)$ and $\kappa(\nabla_w)$ is identical to that in~\cite[Lemma~5.2]{modrap2}. (In the case of fields, this also follows directly from Proposition~\ref{prop:Psi-standards}.)

It remains to show that $\kappa(\cE_w) \cong \mathcal{S}^\vee_w$.  For $m,n \in \Z$ and $y,w \in W$ we have
\begin{multline*}
\Hom_{\Dmix(\BGUvee, \bk)}(\Delta^\vee_y, \kappa(\cE_w) \langle n \rangle [m]) \\
\cong \Hom_{\Dmix(\BGUvee, \bk)}(\kappa(\Delta_y), \kappa(\cE_w) \langle n \rangle [m]) \\
\cong \Hom_{\Dmix(\UGBold, \bk)}(\Delta_y, \cE_w \{n\}[m]),
\end{multline*}
so this space vanishes unless $m=0$ (by adjunction and~\cite[Remark~2.7]{modrap2}). Similar arguments show that
\[
\Hom_{\Dmix(\BGUvee, \bk)}(\kappa(\cE_w), \nabla^\vee_y \langle n \rangle [m])=0
\]
unless $m=0$. Together, these results imply that $\kappa(\cE_w)$ belongs to the heart of the perverse t-structure on $\Dmix(\BGUvee, \bk)$, and is a tilting object therein. Since $\kappa$ is an equivalence, this object is indecomposable, and then it is easy to see that it is isomorphic to $\mathcal{S}^\vee_w$.
\end{proof}

\begin{rmk}
Using Theorem~\ref{thm:Koszul-duality-objects} and the results of~\cite[Part~III]{rw}, one can express the ranks of the free $\bk$-modules $\Hom_{\Dmix(\UGBold, \bk)}(\Delta_y, \mathcal{S}_w \langle n \rangle)$ in terms of the $\ell$-canonical basis of a certain Hecke algebra in the sense of~\cite{jw}. (See Corollary~\ref{cor:characters-tilting-Gr} below for a more precise formulation of this property in a particular case.) This can be considered as a ``modular analogue'' of the results of~\cite{yun}.
\end{rmk}

%%%%%%%%%%%%%%%%%%%%%%%%%%%%%%%%%%%%%%%%%%%%%%%%%%%%%%%%%%%%%%%%%%%%%%%%%%%
\section{Parabolic--Whittaker Koszul duality}
\label{sec:parabolic-whittaker}
%%%%%%%%%%%%%%%%%%%%%%%%%%%%%%%%%%%%%%%%%%%%%%%%%%%%%%%%%%%%%%%%%%%%%%%%%%%

In this section we fix a subset $J \subset S$ of finite type. We denote by $W_J$ the subgroup of $W$ generated by $J$ (which is finite by assumption), by $w_0^J$ the longest element in $W_J$, and by $\JW \subset W$ the subset consisting of elements $w$ which are minimal in $W_J \cdot w$. Our goal is to prove a ``parabolic--Whittaker'' version of the equivalence of~\S\ref{ss:Koszul-duality} in the sense considered in~\cite{by}, with respect to the parabolic subgroups associated with $J$.

%--------------------------------------------------------------------------
\subsection{Whittaker-type derived category}
\label{ss:Whit-derived-cat}
%--------------------------------------------------------------------------

In this section we change our setting slightly, and consider the ``\'etale context'' of~\cite[\S 9.3]{rw}, as opposed to the ``classical context'' considered until now (and in~\cite{amrw}).

More precisely, we let $\F$ be an algebraically closed field of characteristic $p>0$. We redefine $\GKM$ to be the base change to $\F$ of the ind-group scheme $\GKM_\Z$ associated with our Kac--Moody root datum $(I, \bX, \{\alpha_i\}_{i \in I}, \{\alpha_i^\vee\}_{i \in I})$. We similarly now assume that $\BKM$ and $\TKM$ are defined over $\F$. We denote by $\UKM$ the pro-unipotent radical of $\BKM$, so that $\BKM = \TKM \ltimes \UKM$. Then $\GKM$ is an ind-group scheme over $\F$, and $\UKM$ and $\BKM$ are $\F$-group schemes (of infinite type).

We fix a prime number $\ell \neq p$, and assume that $\bk$ is either an algebraic closure of $\Ql$, or a finite extension of $\Ql$, or the ring of integers of such an extension, or a finite field of characteristic $\ell$. We also assume that there exists a ring morphism $\Z' \to \bk$. Then we can consider the \'etale $\BKM$-equivariant derived category $\Db(\BGB, \bk)$. (For a detailed treatment of the Bernstein--Lunts construction in the \'etale setting, see~\cite{weidner}.) All the categories constructed out of this in~\cite{amrw} make sense in this new setting, and for simplicity we will use the same notation.

To $J$ we can associate a subgroup scheme $\PKM_J$ of $\GKM$, as in~\cite[\S 9.1]{rw}. Following~\cite[\S 11.1]{rw} we denote by $\UKM^J$ the pro-unipotent radical of $\PKM_J$, 
and by $\LKM_J$ its Levi factor, 
which is a connected reductive $\F$-group. Finally, let $\UKM_J^-$ be the unipotent radical of the Borel subgroup of $\LKM_J$ which is opposite to $\BKM \cap \LKM_J$ (with respect to $\TKM$). Then the $\UKM^J \UKM_J^-$-orbits on $\GKM/\BKM$ are parametrized by $W$ (in the obvious way). We will denote the orbit parametrized by $w$ by $\cX_{w}^{\Whit,J}$, and its dimension by $d_w^J$.

For any $s \in J$ we have a root subgroup $\UKM_s^- \subset \UKM_J^-$. Moreover, the natural embedding induces an isomorphism of algebraic groups
\[
 \prod_{s \in J} \UKM_s^- \simto \UKM_J^- / [\UKM_J^-, \UKM_J^-].
\]
For each $s \in J$, choose, once and for all, an isomorphism $\Ga \simto \UKM_s^-$. We then obtain a morphism of algebraic groups
\[
 \UKM^J \UKM_J^- \to \UKM_J^- \to \UKM_J^- / [\UKM_J^-, \UKM_J^-] \simto \prod_{s \in J} \UKM_s^- \simto (\Ga)^J \xrightarrow{+} \Ga,
\]
which we will denote $\chi_J$.

Let us also fix a nontrivial additive character $\psi : \Z/p\Z \to \bk^\times$. (We assume that such a character exists.)
This determines a rank-one local system $\cL_\psi$ on $\Ga$, defined as the $\psi$-isotypic component in the direct image of the constant sheaf under the Artin--Schreier map $\Ga \to \Ga$ defined by $x \mapsto x^p-x$. Then $\cL_\psi$ is a multiplicative local system in the sense of~\cite[Appendix~A]{modrap1}, and hence so is $(\chi_J)^* \cL_\psi$. We will denote by
\[
 \Db_{\Whit,J}(\GKM/\BKM, \bk)
\]
the triangulated category of $(\UKM^J \UKM_J^-, \chi_J^* \cL_\psi)$-equivariant complexes on the ind-variety $\GKM/\BKM$ (see~\cite[Definition~A.1]{modrap1}). Note that if $w \in W$, then $\cX_w^{\Whit,J}$
supports a $(\UKM^J \UKM_J^-, \chi_J^* \cL_\psi)$-equivariant local system if and only if $w \in \JW$. In this case there exists a unique such local system of rank one (up to isomorphism), which we will denote by $\cL_w^J$.  We also have $d_w^J=\ell(w) + \ell(w_0^J)$.

%--------------------------------------------------------------------------
\subsection{Whittaker-type parity complexes and mixed derived categories}
%--------------------------------------------------------------------------

As observed in particular in~\cite[\S 11.1]{rw}, the notion of parity complexes from~\cite{jmw} makes sense in $\Db_{\Whit,J}(\GKM/\BKM, \bk)$; we will denote by $\Parity_{\Whit, J}(\GKM/\BKM, \bk)$ the corresponding full subcategory of $\Db_{\Whit,J}(\GKM/\BKM, \bk)$. The indecomposable objects in this category are parametrized by $\JW \times \Z$ in the standard way (see~\cite[Remark~11.6]{rw}); the object corresponding to $(w,0)$ will be denoted $\cE_w^J$.

Since we have the category $\Parity_{\Whit, J}(\GKM/\BKM, \bk)$, we can define the mixed derived category
\[
 \Dmix_{\Whit, J}(\GKM/\BKM, \bk) := \Kb \Parity_{\Whit, J}(\GKM/\BKM, \bk).
\]
The recollement formalism developed in~\cite[\S 2.4]{modrap2} also works in this setting (for the closed subvarieties consisting of a union of a finite number of $\UKM^J \UKM_J^-$-orbits and their open complements). Hence, for $w \in \JW$, if we denote by $i_w^J$ the embedding of the $\UKM^J \UKM_J^-$-orbit parametrized by $w$ in $\GKM/\BKM$,
we can define the standard and costandard objects
\[
 \Delta_{w,J}^{\Whit} := (i_w^J)_! \cL_w^J \{d_w^J\}, \qquad \nabla_{w,J}^{\Whit} := (i_w^J)_* \cL_w^J \{d_w^J\},
\]
as in~\cite[\S 2.5]{modrap2}. By~\cite[Lemma~3.2]{modrap2}, these objects satisfy
\[
 \Hom_{\Dmix_{\Whit, J}(\GKM/\BKM, \bk)} \bigl( \Delta_{v,J}^{\Whit}, \nabla_{w,J}^{\Whit} \langle m \rangle [n] \bigr) \cong
 \begin{cases}
   \bk & \text{if $v=w$ and $n=m=0$;} \\
   0 & \text{otherwise.}
 \end{cases}
\]

There exists a natural ``averaging'' triangulated functor from $\Db(\BGB, \bk)$ to $\Db_{\Whit, J}(\GKM/\BKM, \bk)$, 
defined as convolution on the left with the object $\Delta_{1,J}^\Whit$.
By~\cite[Corollary~11.5]{rw}, this functor sends parity complexes to parity complexes, and hence defines a functor
\[
 \Av^{\mathrm{eq}}_J : \Parity(\BGB, \bk) \to \Parity_{\Whit, J}(\GKM/\BKM, \bk).
\]
It is not difficult to check that for any $f \in \mathsf{H}^i_{\BKM}(\pt, \bk) = \Hom_{\Db(\BGB,\bk)}(\cE_1, \cE_1\{i\})$, if $i>0$ we have $\Av_J^{\mathrm{eq}}(f)=0$. From this it follows that $\Av^{\mathrm{eq}}_J$ factors through a functor
\[
 \Av_J : \Parity(\UGBold, \bk) \to \Parity_{\Whit, J}(\GKM/\BKM, \bk),
\]
where $\Parity(\UGBold, \bk)$ is the category of $\UKM$-equivariant parity complexes on $\GKM/\BKM$.

\begin{lem}
\label{lem:Av-Delta-nabla}
For any $w \in \JW$, we have
\[
\Av_J(\Delta_w) \cong \Delta_{w,J}^{\Whit}, \qquad \Av_J(\nabla_w) \cong \nabla_{w,J}^{\Whit}.
\]
\end{lem}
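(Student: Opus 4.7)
The plan is to verify that $\Av_J(\Delta_w) = \Delta_{1,J}^\Whit \star \Delta_w$ satisfies the characterizing properties of $\Delta_{w,J}^\Whit$, namely that it is the $!$-extension from $\cX_w^{\Whit,J}$ of $\cL_w^J\{d_w^J\}$. The costandard isomorphism $\Av_J(\nabla_w) \cong \nabla_{w,J}^\Whit$ will then follow either by the dual argument (with $*$-extensions replacing $!$-extensions) or by Verdier duality.

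I would expand $\Av_J$ as a convolution via the usual diagram
\[
\GKM/\BKM \xleftarrow{\, p \,} \GKM \times^\BKM \GKM/\BKM \xrightarrow{\, m \,} \GKM/\BKM,
\]
so that $\Av_J(\cF) = m_!(\Delta_{1,J}^\Whit \,\widetilde{\boxtimes}\, \cF)$, with $\widetilde{\boxtimes}$ denoting the $\BKM$-twisted external product. Since $\Delta_{1,J}^\Whit$ is supported on $\cX_1^{\Whit,J}$ and $\Delta_w$ on $\cX_w$, the convolution is supported on $m(\cX_1^{\Whit,J} \times^\BKM \cX_w) = \UKM^J \UKM_J^- \cdot \cX_w$. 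The crucial geometric input is that for $w \in \JW$ this set is the single orbit $\cX_w^{\Whit,J}$, and moreover the restricted action map
\[
a_w : \cX_1^{\Whit,J} \times^\BKM \cX_w \longrightarrow \cX_w^{\Whit,J}
\]
is an isomorphism. This reflects a transversality between $\UKM_J^-$-orbits and $\BKM$-orbits at $w\BKM$: the root subgroups making up $\UKM_J^-$ (the negative simple roots of $\LKM_J$) and those making up $\UKM \cdot w\BKM/\BKM$ (the inversion roots of $w$) are disjoint precisely when $w \in \JW$, i.e., when $sw > w$ for all $s \in J$; the dimension count $\ell(w_0^J) + \ell(w) = d_w^J$ then yields surjectivity.

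Given the isomorphism $a_w$, base change identifies $\Av_J(\Delta_w)$ with the $!$-extension from $\cX_w^{\Whit,J}$ of a rank-one local system that, by construction, is $(\UKM^J\UKM_J^-, \chi_J^* \cL_\psi)$-equivariant. Since $\cL_w^J$ is the unique such equivariant rank-one local system, and the shift $\{d_w^J\} = \{d_1^J + \ell(w)\}$ matches the sum of the shifts in $\Delta_{1,J}^\Whit$ and $\Delta_w$, we obtain $\Av_J(\Delta_w) \cong \Delta_{w,J}^\Whit$.

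The main obstacle will be rigorously verifying that $a_w$ is an isomorphism in the Kac--Moody ind-scheme setting. This reduces to a statement about a large enough finite-dimensional Schubert subvariety containing $\overline{\cX_w^{\Whit,J}}$, after which one exploits the section $\UKM_J^- \hookrightarrow \cX_1^{\Whit,J}$ (which exists because $\UKM_J^- \cap \BKM = \{1\}$) to realize $\cX_1^{\Whit,J} \times^\BKM \cX_w$ as $\UKM_J^- \times \cX_w$, and checks the multiplication map on root coordinates directly using $w \in \JW$. Once this geometric input is in hand, the remaining verification (equivariance, uniqueness of $\cL_w^J$, matching of shifts) is purely formal.
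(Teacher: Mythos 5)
Your approach is genuinely different from the paper's. The paper proceeds by induction on $\ell(w)$: it uses $\Av_J(\Delta_w) \cong \Av_J(\Delta_1)\ustar\Delta_w$ and the factorization of $\Delta_w$ to reduce everything to the single-step identity $\Delta_{v,J}^\Whit\ustar\Delta_s\cong\Delta_{vs,J}^\Whit$, which it proves via the triangle $\Delta_s\to\cE_s\to\cE_1\{1\}$, a small geometric computation of $\Delta_{v,J}^\Whit\ustar\cE_s$ (a convolution in which one factor is the parity complex $\cE_s$), and the recollement formalism. You instead compute the whole convolution $m_!(\Delta_{1,J}^\Whit\widetilde{\boxtimes}\Delta_w)$ at once. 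Your key geometric claim --- that for $w\in\JW$ the action map $a_w\colon\cX_1^{\Whit,J}\times^\BKM\cX_w\to\cX_w^{\Whit,J}$ is an isomorphism --- is correct, and the root-disjointness you describe is exactly the right mechanism; this is in fact close in spirit to the geometric ingredients in \cite[Lemma~11.4, Corollary~11.5, Lemma~11.7]{rw} that underlie the existence and support properties of $\Av_J$.

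The genuine gap is the passage from the ordinary derived category to the mixed one. The lemma lives in $\Dmix=\Kb\Parity$: for $\ell(w)>0$ the object $\Delta_w$ is not an actual sheaf but a formal chain complex of parity objects, defined via the recollement structure on $\Kb\Parity$, and the convolution $\ustar$ is computed term-by-term on such complexes. Your diagram
\[
\GKM/\BKM \xleftarrow{\,p\,} \GKM\times^\BKM\GKM/\BKM \xrightarrow{\,m\,} \GKM/\BKM
\]
and the base-change argument take place in the ordinary constructible derived category, where $\Delta_w$ is a genuine $!$-extension of a shifted constant sheaf. There is a triangulated degrading functor $\Kb\Parity\to\Db$ (totalize the chain complex), and your calculation does show that the degradings of $\Av_J(\Delta_w)$ and $\Delta_{w,J}^\Whit$ agree; but the degrading functor is not conservative, so this alone does not identify the two as objects of $\Kb\Parity$. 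To close this, you would need either to argue that the mixed object is uniquely determined among objects of this form by its degrading (not automatic), or --- more in the spirit of what the geometry is telling you --- to show that $\Av_J$ intertwines the recollement structures indexed by $\JW$ on the two sides, i.e. that it commutes with $j_!$, $j^*$, $i_*$, etc. in the appropriate sense. Your final paragraph treats the isomorphism $a_w$ as ``the main obstacle'' and the rest as ``purely formal,'' but this mixed-vs-ordinary translation is precisely the nontrivial formal step that the paper's inductive argument is designed to avoid: in the paper, the only geometric computation required is $\Delta_{v,J}^\Whit\ustar\cE_s$, with the cleaner object $\cE_s$ on one side, after which the conclusion is squeezed out via the triangle and recollement rather than via a direct sheaf-theoretic identification.
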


\begin{proof}
We only prove the first isomorphism; the proof of the second one is similar.

The category $\Parity_{\Whit, J}(\GKM/\BKM, \bk)$ admits a natural convolution action on the right by $\Parity(\BGB, \bk)$ (see in particular~\cite[Lemma~11.4]{rw}). We deduce an action of $\Dmix(\BGB, \bk)$ on $\Dmix_{\Whit,J}(\GKM/\BKM)$, which will be denoted $\ustar$.
By construction, the functor $\Av_J$ commutes with convolution on the right (see e.g.~\cite[(11.1)]{rw}); therefore we have
\[
\Av_J(\Delta_w) \cong \Av_J(\Delta_1 \ustar \Delta_w) \cong \Av_J(\Delta_1) \ustar \Delta_w.
\]
Now, by definition, we have $\Av_J(\Delta_1) \cong \Delta_{1,J}^{\Whit}$. Hence to conclude it suffices to prove that if $v \in \JW$ and $s \in S$ are such that $vs \in \JW$ and $\ell(vs) = \ell(v)+1$, we have
\[
\Delta_{v,J}^{\Whit} \ustar \Delta_s \cong \Delta_{vs,J}^\Whit.
\]

As in the proof of Proposition~\ref{prop:Psi-standards}
there exists a distinguished triangle
\[
\Delta_s \to \cE_s \to \cE_1\{1\} \xrightarrow{[1]},
\]
where the second map is induced by restriction along the closed embedding $\BKM/\BKM \hookrightarrow \PKM_s/\BKM$, where $\PKM_s$ is the minimal standard parabolic subgroup of $\GKM$ associated with $s$. Convolving on the left with $\Delta_{v,J}^\Whit$ we obtain a distinguished triangle
\begin{equation}
\label{eqn:triangle-Delta-Whit}
\Delta_{v,J}^{\Whit} \ustar \Delta_s \to \Delta_{v,J}^{\Whit} \ustar \cE_s
\to \Delta_{v,J}^{\Whit} \{1\} \xrightarrow{[1]}.
\end{equation}
Now it is not difficult to check that $\Delta_{v,J}^{\Whit} \ustar \cE_s$ is isomorphic to the $!$-pushforward of the shift by $\ell(v)+1$ of the unique rank-$1$ $(\UKM^J \UKM_J^-, \chi_J^* \cL_\psi)$-equivariant local system on $\cX_v^{\Whit,J} \sqcup \cX_{vs}^{\Whit,J}$, in such a way that the second map in~\eqref{eqn:triangle-Delta-Whit} is induced by the $*$-adjunction morphism associated with the closed embedding $\cX_v^{\Whit,J} \hookrightarrow \cX_v^{\Whit,J} \sqcup \cX_{vs}^{\Whit,J}$. The recollement formalism implies that the cocone of this morphism is $\Delta_{vs,J}^\Whit$, and the desired isomorphism follows.
\end{proof}

Let now $\langle \cE_w : w \notin \JW \rangle_{\oplus,\{1\}}$ be the full additive subcategory of the category $\Parity(\UGBold, \bk)$ whose objects are the direct sums of objects of the form $\cE_w \{n\}$ with $w \in W \smallsetminus \JW$ and $n \in \Z$. By~\cite[Lemma~11.7]{rw}, the functor $\Av_J$ vanishes on $\langle \cE_w : w \notin \JW \rangle_{\oplus,\{1\}}$, so it induces a functor
\[
 \Parity(\UGBold, \bk) / \langle \cE_w : w \notin \JW \rangle_{\oplus,\{1\}} \to \Parity_{\Whit, J}(\GKM/\BKM, \bk).
\]
(Here the quotient we consider is the ``naive'' quotient of additive categories, i.e.~the category whose $\Hom$-groups are the quotients of those in $\Parity(\UGBold, \bk)$ by the subgroup of morphisms which factor through an object of $\langle \cE_w : w \notin \JW \rangle_{\oplus,\{1\}}$.)

The following is a restatement of~\cite[Theorem~11.11]{rw}.

\begin{prop}
\label{prop:Av-Parity}
 The functor
 \[
 \Parity(\UGBold, \bk) / \langle \cE_w : w \notin \JW \rangle_{\oplus,\{1\}} \to \Parity_{\Whit, J}(\GKM/\BKM, \bk)
\]
induced by $\Av_J$ is an equivalence of categories.
\end{prop}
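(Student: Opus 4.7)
Since this is a restatement of~\cite[Theorem~11.11]{rw}, the plan is to outline the strategy following that reference. The goal is to establish essential surjectivity up to direct summands (which combined with Krull--Schmidt gives essential surjectivity after Karoubian envelope) and full faithfulness of the induced functor.

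The plan is to first analyze the behavior of $\Av_J$ on the indecomposable parity complexes. For $w \in \JW$, I would show that $\Av_J(\cE_w)$ is a parity complex containing $\cE_w^J$ as a summand with multiplicity one, with all other summands of the form $\cE_v^J \langle n \rangle$ with $v < w$. To see this, note that $\cE_w$ admits filtrations by standard objects $\Delta_x$ (resp.\ costandard objects $\nabla_x$) with $x \le w$, and the multiplicity of $\Delta_w$ (resp.\ $\nabla_w$) is one. Applying $\Av_J$ and using Lemma~\ref{lem:Av-Delta-nabla}, together with the vanishing $\Av_J(\Delta_x) = 0 = \Av_J(\nabla_x)$ for $x \notin \JW$ (which follows from the support condition combined with~\cite[Lemma~11.7]{rw}), one concludes that $\Av_J(\cE_w)$ is supported on $\overline{\cX_w^{\Whit,J}}$ and restricts to $\cL_w^J[d_w^J]$ on the open orbit. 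The standard uniqueness of parity extensions forces $\cE_w^J$ to appear as a summand with multiplicity one.

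For full faithfulness, the plan is to compute morphism spaces on both sides via filtrations and compare. By the analogue of~\cite[Proposition~2.6]{jmw}, graded Hom spaces between parity complexes on either side are free $\bk$-modules whose ranks are determined by the composition of a standard filtration of one object with a costandard filtration of the other, together with Hom spaces between simple local systems on strata. The key input is that $\Av_J$ preserves these filtrations (Lemma~\ref{lem:Av-Delta-nabla}) and kills exactly the strata indexed by $W \smallsetminus \JW$. Thus for $v, w \in \JW$, the induced map
\[
\Hom(\cE_v, \cE_w\{n\}) \to \Hom_{\Parity_{\Whit,J}}(\cE^J_v, \cE^J_w \langle n \rangle)
\]
is surjective, and its kernel consists precisely of morphisms factoring through the strata indexed by $W \smallsetminus \JW$; such morphisms factor through a summand of the form $\cE_x\{m\}$ with $x \notin \JW$, hence lie in the ideal being quotiented.

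The main obstacle is the precise identification of the kernel of the morphism on Hom spaces with the ideal generated by $\langle \cE_w : w \notin \JW \rangle_{\oplus,\{1\}}$. The inclusion ``kernel contains the ideal'' is immediate from $\Av_J(\cE_x) = 0$ for $x \notin \JW$, but the reverse inclusion requires a careful analysis: given a morphism $f : \cE_v \to \cE_w\{n\}$ whose image under $\Av_J$ vanishes, one must exhibit a factorization of $f$ through a direct sum of objects $\cE_x\{m\}$ with $x \notin \JW$. This is achieved by inductively truncating along the stratification, using the long exact sequences of Hom spaces associated to the recollement and the vanishing of Hom spaces between standards/costandards on the Whittaker side for $x \notin \JW$.
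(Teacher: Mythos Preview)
The paper does not give its own proof of this proposition; it simply records it as a restatement of \cite[Theorem~11.11]{rw}. So there is no in-paper argument to compare against, only the cited reference.

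Your outline has the right overall architecture (essential surjectivity via analyzing $\Av_J(\cE_w)$, full faithfulness via a Hom computation), but there is a conceptual slip that would derail the argument as written. You say that $\cE_w$ ``admits filtrations by standard objects $\Delta_x$ (resp.\ costandard objects $\nabla_x$)'' and then appeal to Lemma~\ref{lem:Av-Delta-nabla}. But $\cE_w$ is a parity complex in the ordinary derived category, not a tilting perverse sheaf, and it does not admit a $\Delta$/$\nabla$ filtration in any sense to which that lemma applies; moreover, Lemma~\ref{lem:Av-Delta-nabla} concerns objects of the \emph{mixed} derived category, which is a different setting. What one actually uses for parity complexes is the stalk/costalk description of Hom spaces from \cite[Proposition~2.6]{jmw}, not standard/costandard filtrations.

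In fact the reference proves something sharper than what you state: for $w \in \JW$ one has $\Av_J(\cE_w) \cong \cE_w^J$ on the nose (this is \cite[Corollary~11.10]{rw}, also invoked later in the present paper in the proof of Theorem~\ref{thm:duality-par-Whit}), not merely that $\cE_w^J$ occurs as a summand. With this in hand, essential surjectivity is immediate. For full faithfulness, the argument in \cite{rw} compares graded Hom spaces between parity complexes on both sides by computing them stratum by stratum via stalks and costalks, and shows that the kernel of the map induced by $\Av_J$ is exactly the ideal of morphisms factoring through the $\cE_x$ with $x \notin \JW$; your identification of this as the crux is correct, but the tool is parity-sheaf Hom formulas rather than standard/costandard filtrations.
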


%--------------------------------------------------------------------------
\subsection{Mixed tilting perverse sheaves on parabolic flag varieties}
%--------------------------------------------------------------------------

In this subsection, $\bk$ is an arbitrary complete local ring.
%principal ideal domain.

We consider the Langlands dual Kac--Moody group $\GKM^\vee$ (still defined over $\mathbb{C}$), and its subgroups $\TKM^\vee$, $\BKM^\vee$ and $\UKM^\vee$ as in~\S\ref{ss:Phi-equiv}.
The choice of $J$ determines a parabolic subgroup $\PKM_J^\vee$ in $\GKM^\vee$, so that we can consider the $\BKM^\vee$-equivariant derived category of sheaves on the parabolic flag variety $\PKM_J^\vee \backslash \GKM^\vee$, which we will denote $\Db(\PGBvee, \bk)$, and then the mixed derived category $\Dmix(\PGUvee, \bk)$ constructed as the bounded homotopy category of the category of parity complexes. In this category we have standard and costandard objects parame\-trized by $\JW$ (see~\cite{modrap2}), which will be denoted by $\Delta_{w,J}^{\vee}$ and $\nabla_{w,J}^\vee$ respectively.

We denote by
\[
 \pi_J : \BKM^\vee \backslash \GKM^\vee \to \PKM_J^\vee \backslash \GKM^\vee
\]
the quotient map. The functor
\[
 (\pi_J)_* : \Db(\BGUvee, \bk) \to \Db(\PGUvee, \bk)
\]
sends parity complexes to parity complexes (see~\cite[\S 9.4]{rw}), so it induces a triangulated functor from $\Dmix(\BGUvee, \bk)$ to $\Dmix(\PGUvee, \bk)$, which will also be denoted $(\pi_J)_*$.
If $w \in W$ and if $v$ is the minimal element in $W_J \cdot w$, then by~\cite[Lemma~3.8]{modrap2} we have
\begin{equation}
\label{eqn:piJ-Delta-nabla}
  (\pi_J)_* \Delta_w^\vee \cong \Delta_{v,J}^\vee \{\ell(v)-\ell(w)\}, \qquad (\pi_J)_* \nabla_w^\vee \cong \nabla_{v,J}^\vee \{-\ell(v)+\ell(w)\}.
\end{equation}

Recall from~\S\ref{ss:Koszul-duality} that we have the subcategory
\[
\Tilt^\mix(\BGUvee, \bk) \subset \Dmix(\BGUvee, \bk)
\]
of tilting objects in the heart of the perverse t-structure, whose indecomposable objects are parametrized by $W \times \Z$, and that we denote by $\mathcal{S}^\vee_w$ the object corresponding to $(w,0)$. Similarly we have the category $\Tilt^\mix(\PGUvee, \bk)$ of tilting objects in the heart of the perverse t-structure on $\Dmix(\PGUvee, \bk)$, whose indecomposable objects are parametrized by $\JW \times \Z$; we will denote by $\mathcal{S}^\vee_{w,J}$ the object corresponding to $(w,0)$.

\begin{lem}
\label{lem:piJ-tilting}
 \begin{enumerate}
  \item 
  \label{it:piJ-tilting-1}
  The functor $(\pi_J)_*$ restricts to a functor
  \[
   \Tilt^\mix(\BGUvee, \bk) \to \Tilt^\mix(\PGUvee, \bk).
  \]
  \item
  \label{it:piJ-tilting-2}
  If $w \in W \smallsetminus \JW$, we have $(\pi_J)_* \mathcal{S}^\vee_w = 0$.
 \end{enumerate}
\end{lem}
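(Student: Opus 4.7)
The plan is to prove the two parts together, with part~(1) setting up the structure needed for a character-theoretic proof of part~(2).

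For part~(1), I would use the standard Ext-vanishing characterization of tilting perverse sheaves: $\mathcal{F} \in \Perv(\PGUvee, \bk)$ (where $\Perv$ denotes the heart of the perverse t-structure on $\Dmix(\PGUvee, \bk)$) is tilting if and only if $\Hom(\Delta^\vee_{y, J}\la m\ra, \mathcal{F}[n]) = 0 = \Hom(\mathcal{F}, \nabla^\vee_{y, J}\la m\ra[n])$ for all $n > 0$, $y \in \JW$, $m \in \Z$. For $\mathcal{F} = (\pi_J)_*\mathcal{S}_w^\vee$, applying the $(\pi_J^*, (\pi_J)_*)$ and $((\pi_J)_*, \pi_J^!)$ adjunctions translates these into Ext groups on $\BGUvee$ computed against $\pi_J^* \Delta^\vee_{y, J}$ and $\pi_J^! \nabla^\vee_{y, J}$. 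Since $\pi_J$ is smooth and proper of relative dimension $d = \ell(w_0^J)$, and since for $y \in \JW$ we have a disjoint decomposition
\[
 \pi_J^{-1}(\PKM_J^\vee \backslash \PKM_J^\vee y \BKM^\vee) = \bigsqcup_{u \in W_J} \BKM^\vee \backslash \BKM^\vee uy \BKM^\vee,
\]
smooth base change realizes $\pi_J^* \Delta^\vee_{y, J}$ (resp.\ $\pi_J^! \nabla^\vee_{y, J}$) as a direct sum indexed by $u \in W_J$ of cohomologically shifted copies of the standards $\Delta^\vee_{uy}$ (resp.\ costandards $\nabla^\vee_{uy}$). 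Substituting back and invoking the tilting property of $\mathcal{S}_w^\vee$ (vanishing of $\Hom$ from standards and to costandards in nonzero cohomological degrees) yields the required vanishings. Perversity of $(\pi_J)_*\mathcal{S}_w^\vee$ is then obtained by controlling which perverse degrees the images of the standards in a filtration of $\mathcal{S}_w^\vee$ can reach, using the shifts from~\eqref{eqn:piJ-Delta-nabla}.

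For part~(2), part~(1) tells us that $(\pi_J)_*\mathcal{S}_w^\vee$ is a tilting perverse sheaf on $\PGUvee$, hence decomposes as a direct sum of Tate twists of the $\mathcal{S}^\vee_{z, J}$'s. To show it vanishes it suffices to show its class in the Grothendieck group $K_0(\Perv(\PGUvee, \bk))$ is zero, since the classes of the $\mathcal{S}^\vee_{z, J}\la n\ra$ are $\Z$-linearly independent. This Grothendieck group, viewed as a $\Z[v, v^{-1}]$-module via $v = [\la 1\ra]$, identifies with the antispherical module $M^{\JW} = \mathrm{sgn}_{W_J} \otimes_{\cH_{W_J}} \cH_W$, with $[\Delta^\vee_{z, J}]$ corresponding to $1 \otimes H_z$. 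Formula~\eqref{eqn:piJ-Delta-nabla} identifies the map on Grothendieck groups induced by $(\pi_J)_*$ with the natural surjection $\cH_W \to M^{\JW}$, $h \mapsto 1 \otimes h$. Since the class of $\mathcal{S}_w^\vee$ is (up to sign conventions) the Kazhdan--Lusztig basis element $\underline{H}_w$, we must show $1 \otimes \underline{H}_w = 0$ in $M^{\JW}$ for $w \notin \JW$. This is an elementary Hecke algebra fact proved by induction on $\ell(w)$: picking $s \in J$ with $sw < w$, the Kazhdan--Lusztig recursion gives $\underline{H}_w = \underline{H}_s \underline{H}_{sw} - \sum_{z} \mu(z, sw) \underline{H}_z$ where each $z$ in the sum satisfies $sz < z$ and hence lies outside $\JW$. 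The first term maps to zero since $\underline{H}_s$ annihilates $M^{\JW}$ from the left (defining property of $\mathrm{sgn}_{W_J}$), and the remaining terms vanish by induction.

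The main obstacle is the identification of $K_0(\Perv(\PGUvee, \bk))$ with the antispherical module, together with the compatibility of the class map with $(\pi_J)_*$ via formula~\eqref{eqn:piJ-Delta-nabla}. This is essentially parabolic Kazhdan--Lusztig theory transposed into the modular mixed setting, but requires careful bookkeeping of Tate twists and of the cohomological shifts appearing in that formula; once set up, part~(2) reduces to the clean Hecke-algebra computation above.
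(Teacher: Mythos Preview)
For part~(1), your adjunction approach is viable, but the claim that $\pi_J^*\Delta^\vee_{y,J}$ is a \emph{direct sum} of shifted standards is imprecise: smooth base change gives the $!$-extension of a constant sheaf from the full preimage of the orbit, which is a union of Bruhat cells and hence admits only a \emph{filtration} by the $\Delta^\vee_{uy}$. This still suffices for the Ext-vanishing via long exact sequences. The paper's argument is more direct: it combines the standard filtration of $\cF$ with~\eqref{eqn:piJ-Delta-nabla} to bound the stalks of $(\pi_J)_*\cF$ from one side, and the costandard filtration (plus perversity of the $\nabla^\vee_{v,J}$) to bound perverse degree from the other; together these pin down the stalks, and Verdier duality handles the costalks.

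Part~(2) has a genuine gap. You assert that the class of $\mathcal{S}^\vee_w$ in $K_0$ is the Kazhdan--Lusztig element $\underline{H}_w$ and then invoke the Kazhdan--Lusztig recursion. This identification holds only when $\bk$ has characteristic~$0$; for general $\bk$ the class of $\mathcal{S}^\vee_w$ is the $\ell$-canonical basis element, which need not satisfy that recursion. The paper circumvents this by first proving the vanishing for $\bk=\Q$ via a positivity argument: Koszul duality together with the Decomposition Theorem forces $(\mathcal{S}^\vee_u:\Delta^\vee_v\langle n\rangle)=0$ for $v<u$ unless $n>0$, so after~\eqref{eqn:piJ-Delta-nabla} the pushforward would violate Verdier self-duality unless zero. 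It then bootstraps: over $\Z'$, a Bott--Samelson tilting $\mathcal{S}^{\vee,\Z'}_{\underline{w}}$ starting with a simple reflection in $J$ has vanishing pushforward over $\Q$, hence (by freeness of stalks) over $\Z'$; finally over any $\bk$ by extension of scalars, since $\mathcal{S}^\vee_w$ is a summand of such a Bott--Samelson. Your Grothendieck-group strategy \emph{can} be made to work directly if you instead use the relation $\underline{H}_s\cdot {}^\ell\underline{H}_w=(v+v^{-1})\,{}^\ell\underline{H}_w$ for $s\in J$ with $sw<w$ (reflecting that $\cE_w$ descends to $\PKM_s^\vee\backslash\GKM^\vee$), together with torsion-freeness of the antispherical module over $\Z[v,v^{-1}]$; but that is not the argument you wrote.
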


\begin{proof}
 The proof is copied from~\cite{yun}. For any $v \in \JW$, we denote by $i_{v,J}^\vee$ the embedding of the $\BKM^\vee$-orbit on $\PKM_J^\vee \backslash \GKM^\vee$ parametrized by $v$.
 
 \eqref{it:piJ-tilting-1}
 Let $\cF$ be in $\Tilt^\mix(\BGUvee, \bk)$. Then $\cF$ belongs to the subcategory of $\Dmix(\BGUvee, \bk)$ generated under extensions by objects of the form $\Delta_w^\vee \langle n \rangle$ with $w \in W$ and $n \in \Z$. In view of~\eqref{eqn:piJ-Delta-nabla}, we deduce that $(\pi_J)_* \cF$ belongs to the subcategory of $\Dmix(\PGUvee, \bk)$ generated under extensions by objects of the form $\Delta_{v,J}^\vee \langle n \rangle [m]$ with $v \in \JW$, $n \in \Z$ and $m \in \Z_{\leq 0}$. This implies that $(i_{v,J}^\vee)^* (\pi_J)_* \cF$ belongs to the subcategory generated under extensions by objects of the form $\bigl( \underline{\bk}\{\ell(v)\} \bigr) \langle n \rangle [m]$ with $m \leq 0$.
 
On the other hand, $\cF$ belongs to the subcategory of $\Dmix(\BGUvee, \bk)$ generated under extensions by objects of the form $\nabla_w^\vee \langle n \rangle$ with $w \in W$ and $n \in \Z$. Using~\eqref{eqn:piJ-Delta-nabla} and the fact that the objects $\nabla^\vee_{v,J}$ are perverse (see~\cite[Theorem~4.7]{modrap2}), this implies that $(\pi_J)_* \cF$ lives in nonpositive perverse degrees, i.e.~that $(i_{v,J}^\vee)^* (\pi_J)_* \cF$ is in nonpositive perverse degrees for any $v$.

Combining these two properties, we obtain that for any $v \in W$ the object $(i_{v,J}^\vee)^* (\pi_J)_* \cF$ is a direct sum of objects of the form $\bigl( \underline{\bk}\{\ell(v)\} \bigr)\langle n \rangle$. Using Verdier duality we obtain the same property for $(i_{v,J}^\vee)^! (\pi_J)_* \cF$, which finally implies that $(\pi_J)_* \cF$ is a tilting perverse sheaf.
  
 \eqref{it:piJ-tilting-2}
 First we assume that $\bk=\Q$. In this setting the indecomposable parity complex $\cE_w$ on $\GKM/\BKM$ coincides with the intersection cohomology complex $\IC_w$ (see~\cite{kl, springer}). Using Theorem~\ref{thm:Koszul-duality-objects} and~\cite[Remark~2.7]{modrap2}, we deduce that if $u,v \in W$ and $v<u$ we have
 \[
  (\mathcal{S}_u^\vee : \Delta_v^\vee \langle n \rangle) = 0 \quad \text{unless $n>0$.}
 \]
 Then using~\eqref{eqn:piJ-Delta-nabla} we obtain that if $w \in W \smallsetminus \JW$ then
 \[
 \bigl( (\pi_J)_* \mathcal{S}_w^\vee :  \Delta_{v,J}^\vee \langle n \rangle \bigr) = 0 \quad \text{unless $n>0$.}
 \]
 If $(\pi_J)_* \mathcal{S}_w^\vee \neq 0$ and $v$ is maximal such that this multiplicity is nonzero, then this property contradicts the Verdier self-duality of $(\pi_J)_* \mathcal{S}_w^\vee$.

Now, for any expression $\uw$, and for any choice of coefficients $\bk'$, we denote by $\mathcal{S}_{\uw}^{\vee,\bk'}$ the image of the Bott--Samelson type tilting mixed perverse sheaf on $\BKM^\vee \backslash \GKM^\vee$ (constructed as in~\S\ref{ss:constructible-Dmix}, but for the Langlands dual group, and with the roles of left and right multiplication swapped) under the forgetful functor $\ForLMRE$. We claim that if $\uw$
starts with a simple reflection in $J$, we have $(\pi_J)_* \mathcal{S}_{\uw}^{\vee, \Z'}=0$. In fact we have
 \[
 \Q \bigl( (\pi_J)_* \mathcal{S}_{\uw}^{\vee, \Z'} \bigr) \cong (\pi_J)_* \mathcal{S}_{\uw}^{\vee, \Q},
 \]
 where $\Q$ is the natural extension-of-scalars functor.
 Now it is not difficult to see that $\mathcal{S}_{\uw}^{\vee, \Q}$ is a direct sum of objects of the form $\mathcal{S}_v^{\vee, \Q} \langle m \rangle$ with $m \in \Z$ and $v \in W \smallsetminus \JW$. (One can e.g.~use Koszul duality to translate the question to the setting of parity complexes, where it follows from equivariance considerations.) Hence, by the case of $\Q$ treated above, we have $\Q \bigl( (\pi_J)_* \mathcal{S}_{\uw}^{\vee, \Z'} \bigr)=0$. On the other hand it is not difficult to see that $(\pi_J)_* \mathcal{S}_{\uw}^{\vee, \Z'}$ has stalks that are free over $\Z'$. Hence these stalks are $0$, which implies that $(\pi_J)_* \mathcal{S}_{\uw}^{\vee, \Z'} =0$.
 
 Finally we prove the claim in general. For this we choose a reduced expression $\uw$ for $w$ starting with a simple reflection in $J$. Then $\mathcal{S}_w^{\vee, \bk}$ is a direct summand of $\mathcal{S}_{\uw}^{\vee, \bk}$. But
 \[
 (\pi_J)_* \mathcal{S}_{\uw}^{\vee, \bk} \cong \bk \bigl( (\pi_J)_* \mathcal{S}_{\uw}^{\vee, \Z'} \bigr) = 0,
 \]
 which proves the desired vanishing.
\end{proof}

Now let $\langle \mathcal{S}^\vee_w : w \notin \JW \rangle_{\oplus, \langle 1 \rangle}$ be the full additive subcategory of the category $\Tilt^\mix(\BGUvee, \bk)$ consisting of direct sums of objects of the form $\mathcal{S}^\vee_w \langle m \rangle$ with $w \in W \smallsetminus \JW$ and $m \in \Z$. Lemma~\ref{lem:piJ-tilting} tells us that the functor $(\pi_J)_*$ restricts to a functor
\[
(\pi_J)_*^\Tilt: \Tilt^\mix(\BGUvee, \bk) \to \Tilt^\mix(\PGUvee, \bk)
\]
that then factors through a functor
\begin{multline}
 \label{eqn:def-PiJ}
 \Pi_J : \Tilt^\mix(\BGUvee, \bk) / \langle \mathcal{S}^\vee_w : w \notin \JW \rangle_{\oplus,\langle 1 \rangle} \to \\
 \Tilt^\mix(\PGUvee, \bk).
\end{multline}

We denote by $\Perv^\mix(\BGUvee,\bk)$ and $\Perv^\mix(\PGUvee, \bk)$ the hearts of the perverse t-structures on $\Dmix(\BGUvee, \bk)$ and $\Dmix(\PGUvee, \bk)$ respectively.  The following statement uses the theory of realization functors from~\S\ref{ss:realization}.

\begin{lem}
 The following diagram commutes up to isomorphism:
 \[
  \begin{tikzcd}
 \Kb \Tilt^\mix(\BGUvee,\bk) \ar[d, "\Kb((\pi_J)_*^{\Tilt})" swap] \ar[rr, "\real"] && \Dmix(\BGUvee, \bk) \ar[d, "(\pi_J)_*"] \\
 \Kb \Tilt^\mix(\PGUvee,\bk) \ar[rr, "\real"] && \Dmix(\PGUvee, \bk).
\end{tikzcd}
 \]
\end{lem}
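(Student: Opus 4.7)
The strategy is a direct application of Proposition~\ref{prop:realization-functor}, with $\sT_1 = \Dmix(\BGUvee,\bk)$, $\sT_2 = \Dmix(\PGUvee,\bk)$, $\sA_1 = \Tilt^\mix(\BGUvee,\bk)$, $\sA_2 = \Tilt^\mix(\PGUvee,\bk)$, $F = (\pi_J)_*$, and $F_0 = (\pi_J)_*^\Tilt$. So what needs to be checked is: (a) both $\sT_i$ admit filtered versions; (b) both $\sA_i$ have no negative self-Exts and $F$ restricts to $F_0 : \sA_1 \to \sA_2$; (c) $F$ lifts to a functor of filtered triangulated categories $\tilde F : \tilde \sT_1 \to \tilde \sT_2$.

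Points (a) and (b) are essentially already in hand. Each of $\Dmix(\BGUvee,\bk)$ and $\Dmix(\PGUvee,\bk)$ is by definition the bounded homotopy category of a category of parity complexes, so by~\cite[\S 2.5]{ar:kdsf} they admit natural filtered versions, as noted in~\S\ref{ss:realization}. That $\Tilt^\mix(\BGUvee,\bk)$ and $\Tilt^\mix(\PGUvee,\bk)$ have no negative self-Exts follows from the general fact that tilting objects in the heart of a t-structure of a highest weight category have vanishing higher Ext's (compare the argument for~\eqref{eqn:realization-Dmix} and~\cite[Proposition~10.6.1]{amrw}, which has an obvious parabolic analogue). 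That $(\pi_J)_*$ restricts to $(\pi_J)_*^\Tilt$ is Lemma~\ref{lem:piJ-tilting}\eqref{it:piJ-tilting-1}.

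The main step is (c). Recall that $(\pi_J)_*$ preserves parity complexes (see~\cite[\S 9.4]{rw}), hence is induced by an additive functor
\[
(\pi_J)_*^{\Parity} : \ParityBSp(\BGUvee,\bk) \to \ParityBSp(\PGUvee,\bk)
\]
via the identifications $\Dmix(\BGUvee,\bk) = \Kb\ParityBSp(\BGUvee,\bk)$ and similarly on the parabolic side. Since the filtered versions of these homotopy categories in~\cite[\S 2.5]{ar:kdsf} are built levelwise from the underlying additive categories, the additive functor $(\pi_J)_*^{\Parity}$ induces a functor of filtered triangulated categories $\widetilde{(\pi_J)_*}$ lifting $(\pi_J)_*$. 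This is the desired lift, and Proposition~\ref{prop:realization-functor} then yields the commutativity of the diagram.

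The only delicate point is to make sure the filtered version on each side is compatible with the description of $\Dmix$ used when applying the realization functor to $\Kb\Tilt^\mix$. Since the realization functor of Lemma~\ref{lem:tilt-realization} (and its parabolic analogue) is itself constructed from the very same filtered version of the homotopy category of parity complexes (passing through the inclusion $\Tilt^\mix \hookrightarrow \Dmix$), this compatibility is built into the construction, and no further work is needed.
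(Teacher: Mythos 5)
Your proof is correct and takes essentially the same route as the paper: the paper's (very terse) argument is exactly that $(\pi_J)_*$ is induced by an additive functor on parity complexes, hence lifts to the filtered versions from~\cite[\S 2.5]{ar:kdsf}, so Proposition~\ref{prop:realization-functor} applies. You have simply spelled out the hypothesis checks (filtered versions exist, no negative self-Exts of tilting objects, restriction to $(\pi_J)_*^\Tilt$) that the paper leaves implicit.
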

\begin{proof}
Since the functor $(\pi_J)_*$ is induced by a functor from $\Parity(\BGUvee, \bk)$ to $\Parity(\PGUvee,\bk)$, it lifts to a functor between the filtered versions of the categories on the right-hand side. The lemma then follows from Proposition~\ref{prop:realization-functor}.
\end{proof}

%--------------------------------------------------------------------------
\subsection{Parabolic--Whittaker Koszul duality}
\label{ss:parabolic-Whittaker}
%--------------------------------------------------------------------------

We come back to the assumptions of~\S\ref{ss:Whit-derived-cat}.  Recall the equivalence of categories $\kappa$ constructed in~\S\ref{ss:Koszul-duality}. 

\begin{thm}
\label{thm:duality-par-Whit}
 There exists an equivalence of triangulated categories $\kappa_J$ which fits into the following commutative diagram:
 \[
   \begin{tikzcd}[column sep=large]
 \Dmix(\UGBold, \bk) \ar[d, "\Av_J" swap] \ar[rr, "\kappa"] && \Dmix(\BGUvee, \bk) \ar[d, "(\pi_J)_*"] \\
 \Dmix_{\Whit, J}(\GKM/\BKM, \bk) \ar[rr, "\kappa_J"] && \Dmix(\PGUvee, \bk).
\end{tikzcd}
 \]
 Moreover, $\kappa_J$ satisfies
 \begin{equation}
 \label{eqn:kappaJ-objects}
 \kappa_J(\Delta_{w,J}^\Whit) \cong \Delta^\vee_{w,J}, \quad \kappa_J(\Delta_{w,J}^\Whit) \cong \Delta^\vee_{w,J}, \quad \kappa_J(\cE_w^J) \cong \mathcal{S}^\vee_{w,J}
 \end{equation}
 for any $w \in \JW$.
\end{thm}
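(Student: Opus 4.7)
The plan is to construct $\kappa_J$ by transporting the self-duality $\kappa$ of Theorem~\ref{thm:Koszul-duality-objects} through compatible quotients on the two sides of the diagram. On the source side, Proposition~\ref{prop:Av-Parity} identifies $\Parity_{\Whit,J}(\GKM/\BKM,\bk)$ with the additive quotient $\Parity(\UGBold,\bk) / \langle \cE_w : w \notin \JW \rangle_{\oplus,\{1\}}$. On the target side, the functor $\Pi_J$ of~\eqref{eqn:def-PiJ} is defined on the quotient $\Tilt^\mix(\BGUvee,\bk) / \langle \mathcal{S}^\vee_w : w \notin \JW \rangle_{\oplus,\langle 1\rangle}$. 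Since Theorem~\ref{thm:Koszul-duality-objects} provides an equivalence $\Parity(\UGBold,\bk) \simto \Tilt^\mix(\BGUvee,\bk)$ sending $\cE_w$ to $\mathcal{S}^\vee_w$, it matches these two ideals. I will therefore define $\kappa_J$ as the composition
\[
\Kb\Parity_{\Whit,J} \xrightarrow{\sim} \Kb\bigl(\Tilt^\mix(\BGUvee)/\langle \mathcal{S}^\vee_w : w \notin \JW\rangle\bigr) \xrightarrow{\Kb(\Pi_J)} \Kb\Tilt^\mix(\PGUvee) \xrightarrow[\real]{\sim} \Dmix(\PGUvee,\bk),
\]
where the source is $\Dmix_{\Whit,J}(\GKM/\BKM,\bk)$ by definition and the first equivalence comes from Proposition~\ref{prop:Av-Parity} combined with (the inverse of) the restriction of $\kappa$ to tiltings.

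The main obstacle is to show that $\Pi_J$ is an equivalence of additive categories, and that the final realization functor is a triangulated equivalence. For the realization functor I will mimic~\eqref{eqn:realization-Dmix}: the category $\Dmix(\PGUvee,\bk) = \Kb\Parity(\PGUvee,\bk)$ carries a perverse t-structure whose heart is graded highest weight~\cite{modrap2}, with $\Tilt^\mix(\PGUvee,\bk)$ as its subcategory of tilting objects, so the realization functor from $\Kb\Tilt^\mix(\PGUvee,\bk)$ is an equivalence by~\cite[Lemma~A.5]{modrap2}. For $\Pi_J$, the crucial input is that for $w \in \JW$ the tilting perverse sheaf $(\pi_J)_*\mathcal{S}^\vee_w$ is isomorphic to $\mathcal{S}^\vee_{w,J}$: applying~\eqref{eqn:piJ-Delta-nabla} termwise to a standard filtration of $\mathcal{S}^\vee_w$ shows that $\Delta^\vee_{w,J}$ occurs with multiplicity one in a standard filtration of $(\pi_J)_*\mathcal{S}^\vee_w$ (the contribution of $\Delta^\vee_w$ itself), while all other standard subquotients have the form $\Delta^\vee_{v,J}\la n\ra$ for $v<w$; uniqueness of indecomposable tilting objects with prescribed highest-weight multiplicity then identifies the pushforward. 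Essential surjectivity of $\Pi_J$ follows. Full faithfulness reduces to a comparison of graded ranks of Hom-spaces between indecomposables: both source and target are Krull--Schmidt, with indecomposables indexed (up to Tate twist) by $\JW$, and the relevant graded Hom ranks are computed on each side by parabolic analogues of Lemma~\ref{lem:tilting-hom-formula}, namely pairings of antispherical Hecke-algebra elements, yielding identical formulas.

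With $\kappa_J$ constructed, commutativity of the diagram is verified as follows. Both compositions $\kappa_J\circ\Av_J$ and $(\pi_J)_*\circ\kappa$, when restricted along the inclusion $\Parity(\UGBold,\bk) \hookrightarrow \Dmix(\UGBold,\bk)$, yield naturally isomorphic additive functors into $\Tilt^\mix(\PGUvee,\bk)$, since both functors factor through the quotient by $\langle \cE_w : w \notin \JW\rangle_{\oplus,\{1\}}$ and induce the same map on indecomposables by construction. Each composition lifts to the associated filtered derived categories, so Proposition~\ref{prop:realization-functor} propagates the natural isomorphism to all of $\Dmix(\UGBold,\bk)$. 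Finally, the identities in~\eqref{eqn:kappaJ-objects} follow from this commutativity together with the input formulas $\Av_J(\Delta_w)\cong\Delta_{w,J}^\Whit$, $\Av_J(\nabla_w)\cong\nabla_{w,J}^\Whit$ (Lemma~\ref{lem:Av-Delta-nabla}), $\kappa(\Delta_w)\cong\Delta^\vee_w$, $\kappa(\nabla_w)\cong\nabla^\vee_w$, $\kappa(\cE_w)\cong\mathcal{S}^\vee_w$ (Theorem~\ref{thm:Koszul-duality-objects}), $\Av_J(\cE_w)\cong\cE_w^J$ (from Proposition~\ref{prop:Av-Parity}), and~\eqref{eqn:piJ-Delta-nabla} specialized to $w \in \JW$.
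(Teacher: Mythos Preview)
Your construction of $\kappa_J$ is exactly the one in the paper, and the commutativity of the diagram follows from the construction as you indicate. The difference is in proving that $\kappa_J$ is an equivalence, and here your argument has a genuine gap.

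You try to show that $\Pi_J$ is an equivalence of additive categories by first proving $(\pi_J)_*\mathcal{S}^\vee_w \cong \mathcal{S}^\vee_{w,J}$ for $w \in \JW$. Your argument for this isomorphism establishes only that $(\pi_J)_*\mathcal{S}^\vee_w$ is a tilting perverse sheaf in which $\Delta^\vee_{w,J}$ occurs once and all other standard subquotients are of the form $\Delta^\vee_{v,J}\langle n\rangle$ with $v<w$. This does \emph{not} force indecomposability: a priori $(\pi_J)_*\mathcal{S}^\vee_w$ could be $\mathcal{S}^\vee_{w,J}$ plus a nontrivial sum of Tate twists of $\mathcal{S}^\vee_{v,J}$ for $v<w$. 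Without indecomposability, your essential surjectivity claim does not immediately follow, and your full-faithfulness argument (a graded-rank comparison via parabolic Hecke pairings that is not actually carried out in the paper) becomes the entire burden of the proof rather than a formality.

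The paper avoids this circularity by reversing the order. It does \emph{not} attempt to prove $\Pi_J$ is an equivalence directly. Instead, having constructed $\kappa_J$ and checked the diagram commutes, it uses Lemma~\ref{lem:Av-Delta-nabla} and~\eqref{eqn:piJ-Delta-nabla} to see that $\kappa_J(\Delta^\Whit_{w,J}) \cong \Delta^\vee_{w,J}$ and $\kappa_J(\nabla^\Whit_{w,J}) \cong \nabla^\vee_{w,J}$, and then checks that $\kappa_J$ induces an isomorphism on the one-dimensional space $\Hom(\Delta^\Whit_{w,J},\nabla^\Whit_{w,J})$ (this follows because $(\pi_J)_*$ is already an isomorphism on $\Hom(\Delta^\vee_w,\nabla^\vee_w)$ for $w \in \JW$). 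Be{\u\i}linson's lemma then gives that $\kappa_J$ is an equivalence. Only \emph{after} this does the paper deduce $\kappa_J(\cE^J_w) \cong \mathcal{S}^\vee_{w,J}$: since $\kappa_J$ is now known to be an equivalence, $\kappa_J(\cE^J_w) \cong (\pi_J)_*\mathcal{S}^\vee_w$ is an indecomposable tilting object, and the support identification finishes. This route is both shorter and avoids the unproved Hecke-algebra input you invoke.
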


\begin{proof}
 The equivalence of categories
 \[
  \Parity(\UGBold, \bk) \simto \Tilt^\mix(\BGUvee, \bk)
 \]
obtained by restricting $\kappa$ induces an equivalence of categories
\begin{multline*}
 \Parity(\UGBold, \bk) / \langle \cE_w : w \notin \JW \rangle_{\oplus, \{1\}} \\
 \simto \Tilt^\mix(\BGUvee, \bk) / \langle \mathcal{S}_w^\vee : w \notin \JW \rangle_{\oplus, \langle 1 \rangle}.
\end{multline*}
Using Proposition~\ref{prop:Av-Parity} we deduce an equivalence of categories
\[
  \Parity_{\Whit, J}(\GKM/\BKM, \bk) \simto \Tilt^\mix(\BGUvee, \bk) / \langle \mathcal{S}_w^\vee : w \notin \JW \rangle_{\oplus, \langle 1 \rangle}.
\]
We denote by
\[
 \kappa_J : \Dmix_{\Whit, J}(G/B, \bk) \to \Dmix(\PGUvee, \bk)
\]
the functor obtained by composing this equivalence with the functor $\Pi_J$ from~\eqref{eqn:def-PiJ}, and then passing to bounded homotopy categories. With this definition the diagram of the statement clearly commutes.

Now we prove that $\kappa_J$ is an equivalence of categories.
Using the commutativity of our diagram and comparing Lemma~\ref{lem:Av-Delta-nabla} and~\eqref{eqn:piJ-Delta-nabla} we see that for any $w \in \JW$ we have
\[
 \kappa_J(\Delta^{\Whit}_{w,J}) \cong \Delta^\vee_{w,J}, \quad \kappa_J(\nabla^{\Whit}_{w,J}) \cong \nabla^\vee_{w,J}.
\]
Moreover, since the functor $(\pi_J)_*$ induces an isomorphism
\[
\Hom_{\Dmix(\BGUvee, \bk)}(\Delta^\vee_{w}, \nabla^\vee_{w}) \simto \Hom_{\Dmix(\PGUvee, \bk)}(\Delta^\vee_{w,J}, \nabla^\vee_{w,J}),
\]
we see that $\kappa_J$ induces an isomorphism
\[
 \Hom_{\Dmix_{\Whit, J}(\GKM/\BKM, \bk)}(\Delta^{\Whit}_{w,J}, \nabla^{\Whit}_{w,J}) \simto \Hom_{\Dmix(\PGUvee, \bk)}(\Delta^\vee_{w,J}, \nabla^\vee_{w,J}).
\]
Then standard arguments (see e.g.~the proof of Theorem~\ref{thm:self-Koszul}) imply that $\kappa_J$ is an equivalence of categories.

Finally we prove the isomorphisms~\eqref{eqn:kappaJ-objects}. The first two isomorphisms have already been observed above. For the third isomorphism, recall that $\cE_w^J \cong \Av_J(\cE_w)$, see~\cite[Corollary~11.10]{rw}. It follows that $\kappa_J(\cE_w^J) \cong (\pi_J)_* \kappa(\cE_w) \cong (\pi_J)_* \mathcal{S}^\vee_w$, hence that this object is a tilting perverse sheaf by Lemma~\ref{lem:piJ-tilting}. Since $\kappa_J$ is an equivalence this object is indecomposable, and then it is easy to see that it is isomorphic to $\mathcal{S}_{w,J}^\vee$.
\end{proof}

\begin{rmk}
\begin{enumerate}
\item
The proof of Theorem~\ref{thm:duality-par-Whit} shows that the equivalence $\Phi$ induces an equivalence
\[
\Diag^{\mathrm{asph}, \bk}_J(\GKM) \simto \Tilt^\mix(\PGUvee, \bk)
\]
(where the left-hand side is defined in~\cite[\S 11.5]{rw}), and that for any $w \in \JW$ we have $(\pi_J)_*(\mathcal{S}^\vee_w) \cong \mathcal{S}^\vee_{w,J}$. (In the case of characteristic-$0$ coefficients, such an isomorphism follows from~\cite[Proposition~3.4.1]{yun}.)
\item
Using the same constructions as in~\cite{modrap2} one can endow the category $\Dmix_{\Whit,J}(\GKM/\BKM, \bk)$ with a perverse t-structure, whose heart is a graded highest weight category with standard, resp.~costandard, objects $\{\Delta^\Whit_{w,J} : w \in \JW\}$, resp.~$\{\nabla^\Whit_{w,J} : w \in \JW\}$. Then one can easily check that the images under $\kappa_J$ of the indecomposable tilting objects in this heart are the indecomposable parity complexes on $\PKM^\vee_J \backslash \GKM^\vee$, seen as objects in $\Dmix(\PGUvee, \bk)$.
\end{enumerate}
\end{rmk}

%%%%%%%%%%%%%%%%%%%%%%%%%%%%%%%%%%%%%%%%%%%%%%%%%%%%%%%%%%%%%%%%%%%%%%%%%%%
\section{Application to the tilting character formula}
\label{sec:application}
%%%%%%%%%%%%%%%%%%%%%%%%%%%%%%%%%%%%%%%%%%%%%%%%%%%%%%%%%%%%%%%%%%%%%%%%%%%

In this section we apply our preceding results together with those of~\cite{prinblock} to prove the character formula for tilting representations of reductive algebraic groups over fields of positive characteristic conjectured in~\cite{rw}.

%--------------------------------------------------------------------------
\subsection{Koszul duality for affine flag varieties}
\label{ss:duality-affine}
%--------------------------------------------------------------------------

Let $\Gp$ be a semisimple, simply connected complex algebraic group, let $\BGp$ be a Borel subgroup, and let $\TGp \subset \BGp$ be a maximal torus. We denote by $\Wf$ the Weyl group of $(\Gp, \TGp)$, and by $\mathfrak{R}$ its root system. Let also $\mathfrak{R}^+ \subset \mathfrak{R}$ be the system of positive roots consisting of the $\TGp$-weights in $\mathrm{Lie}(\Gp)/\mathrm{Lie}(\BGp)$, and let $\Sf \subset \Wf$ be the corresponding subset of simple reflections.

We set $\mathscr{K}:=\mathbb{C} ( \hspace{-1pt} ( z ) \hspace{-1pt} )$, $\mathscr{O}:=\mathbb{C} [ \hspace{-1pt} [ z ] \hspace{-1pt} ]$, and consider the group ind-scheme $\Gp(\mathscr{K})$. We denote by $\Iw$ the Iwahori subgroup of $\Gp(\mathscr{K})$ determined by $\BGp$, i.e.~the inverse image of $\BGp$ under the morphism $\Gp(\mathscr{O}) \to \Gp$ induced by the ring map $\mathscr{O} \to \mathbb{C}$ sending $z$ to $0$. Let also $\Iw^u$ be the pro-unipotent radical of $\Iw$, i.e.~the inverse image of the unipotent radical of $\BGp$ under the map $\Gp(\mathscr{O}) \to \Gp$ considered above. We define the affine flag variety $\Fl$ and its ``left variant'' $\Fl'$ as the quotients
\[
\Fl := \Gp(\mathscr{K}) / \Iw, \quad \Fl' := \Iw \backslash \Gp(\mathscr{K}).
\]

The ind-varieties $\Fl$ and $\Fl'$ have Bruhat decompositions (with respect to the natural action of $\Iw$) parametrized by the affine Weyl group
\[
W:=\Wf \ltimes X_*(\TGp), 
\]
and for any integral complete local 
%principal ideal domain 
ring $\bk$, 
we can consider the Bruhat-construc\-tible (or equivalently $\Iw^u$-equivariant) mixed derived categories $\Dmix_{(\Iw)}(\Fl, \bk)$ and $\Dmix_{(\Iw)}(\Fl', \bk)$, cf.~\cite[\S 10.7]{rw}.

For $w \in W$, we have standard objects $\Delta_w$ in $\Dmix_{(\Iw)}(\Fl, \bk)$ and $\Delta'_w$ in $\Dmix_{(\Iw)}(\Fl', \bk)$, costandard objects $\nabla_w$ in $\Dmix_{(\Iw)}(\Fl, \bk)$ and $\nabla'_w$ in $\Dmix_{(\Iw)}(\Fl', \bk)$, indecomposable parity complexes $\cE_w$ in $\Dmix_{(\Iw)}(\Fl, \bk)$ and $\cE_w'$ in $\Dmix_{(\Iw)}(\Fl', \bk)$, and indecomposable mixed tilting perverse sheaves $\mathcal{S}_w$ in $\Dmix_{(\Iw)}(\Fl, \bk)$ and $\mathcal{S}_w'$ in $\Dmix_{(\Iw)}(\Fl', \bk)$.

Let $S \subset W$ be the set of simple reflections (chosen as the reflections along the walls of the fundamental alcove $\{\lambda \in \mathbb{R} \otimes_{\Z} X_*(\TGp) \mid \forall \alpha \in \mathfrak{R}, \, 0 \leq \langle \lambda, \alpha \rangle \leq 1 \}$). We consider the realization $\fh = (V, \{\alpha_s^\vee\}, \{\alpha_s\})$ of $(W,S)$ over $\bk$ defined as follows:
\begin{enumerate}
 \item
 the underlying free $\bk$-module is $V = \bk \otimes_{\Z} X_*(\TGp)$;
 \item
 if $s \in \Sf$, $\alpha_s$ is the image in $V^* \cong \bk \otimes_{\Z} X^*(\TGp)$ of the simple root associated with $s$, and $\alpha_s^\vee$ is the image in $V$ of the simple coroot associated with $s$;
 \item
 if $s \in S \smallsetminus \Sf$, let $\gamma$ be the unique positive root such that the image of $s$ in $\Wf \cong W / X_*(\TGp)$ is $s_\gamma$; then $\alpha_s$ is the image of $-\gamma$ in $V^*$ and $\alpha_s^\vee$ is the image of $-\gamma^\vee$ in $V$.
\end{enumerate}

\begin{lem}
\label{lem:cartan-dual}
Assume that $2$ and all the prime numbers which are not very good for $\Gp$ are invertible in $\bk$. Then there is a $\Wf$-equivariant isomorphism $\varphi: V \to V^*$ such that for each $s \in S$, there is a scalar $b_s \in \bk^\times$ such that $\varphi(\alpha_s^\vee) = b_s \alpha_s$.
\end{lem}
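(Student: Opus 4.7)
The plan is to construct $\varphi$ as the isomorphism induced by a suitable $\Wf$-invariant symmetric bilinear form on $V$. Since the lattices $X_*(\TGp)$, $X^*(\TGp)$, the group $\Wf$, the simple (co)roots, and the set $S$ all decompose as direct products according to the simple factors of $\Gp$, I would first reduce to the case where $\Gp$ is simple (and simply connected). In that setting, $V_\Q := \Q \otimes_\Z X_*(\TGp) = \Q \otimes_\Z Q^\vee$ carries, up to scalar, a unique $\Wf$-invariant non-degenerate symmetric bilinear form, which I would normalize by $(\alpha_s^\vee, \alpha_t^\vee) := d_s a_{st}$ for $s,t \in \Sf$, where $(a_{st})$ is the Cartan matrix of $\mathfrak{R}$ and $(d_s)_{s \in \Sf}$ are the standard positive-integer symmetrizers satisfying $d_s a_{st} = d_t a_{ts}$. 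This form takes integer values on $X_*(\TGp) = Q^\vee$, and hence, after extension of scalars, induces a $\bk$-linear map $\varphi : V \to V^*$ that is automatically $\Wf$-equivariant.

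A direct calculation would then yield $\varphi(\alpha_s^\vee) = d_s \alpha_s$ for $s \in \Sf$, since by construction $\varphi(\alpha_s^\vee)$ evaluates on $\alpha_t^\vee$ to $d_s a_{st} = d_s \cdot \alpha_s(\alpha_t^\vee)$. For the affine simple reflection $s \in S \smallsetminus \Sf$, where $\alpha_s = -\gamma$ and $\alpha_s^\vee = -\gamma^\vee$ for a positive root $\gamma \in \mathfrak{R}$, I would write $\gamma = w(\alpha_{s'})$ with $w \in \Wf$ and $s' \in \Sf$; then $\gamma^\vee = w(\alpha_{s'}^\vee)$, and the $\Wf$-equivariance of $\varphi$ gives $\varphi(\alpha_s^\vee) = -w(\varphi(\alpha_{s'}^\vee)) = -d_{s'} w(\alpha_{s'}) = d_{s'} \alpha_s$. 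I may thus take $b_s := d_s$ for $s \in \Sf$ and $b_s := d_{s'}$ otherwise; each $d_s$ lies in $\{1,2,3\}$ and equals $1$ or a bad prime for $\Gp$, so belongs to $\bk^\times$ by hypothesis.

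The final step is to verify that $\varphi$ is an isomorphism over $\bk$. In the coroot basis of $V$ and the fundamental weight basis of $V^* = \bk \otimes_\Z P$ (using that $\Gp$ is simply connected, so $X^*(\TGp) = P$), the identity $\alpha_s = \sum_t a_{ts} \omega_t$ together with $\varphi(\alpha_s^\vee) = d_s \alpha_s$ shows that the matrix of $\varphi$ equals $A \cdot \mathrm{diag}(d_s)_{s \in \Sf}$, so $\det(\varphi) = \det(A) \cdot \prod_s d_s$. Now $\det(A) = [P : Q]$ is the order of the center of $\Gp$: it equals $n$ in type $A_{n-1}$ (invertible by the very good prime hypothesis), and is a power of a bad prime in every other simple type. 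Combined with the invertibility of each $d_s$, every prime factor of $\det(\varphi)$ is either $2$ or a prime that is not very good for $\Gp$, and is therefore invertible in $\bk$. Hence $\varphi \in \bk^\times$-isomorphism, as required.

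The main (modest) obstacle is essentially the arithmetic bookkeeping encoded in the very good prime hypothesis: one must confirm type by type that the bad primes of $\Gp$ together with the prime factors of $[P:Q]$ are exactly captured by ``$2$ and all primes which are not very good for $\Gp$''. This is a routine check against the standard tables of bad primes and of fundamental groups of simple simply connected groups, but it is the only place in the argument where the specific arithmetic hypothesis enters.
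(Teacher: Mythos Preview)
Your proposal is correct and follows essentially the same approach as the paper: both construct $\varphi$ from the $\Wf$-invariant symmetric bilinear form on $V$ obtained by symmetrizing the Cartan matrix, and both deduce the claim for arbitrary $s \in S$ from $\Wf$-equivariance. Your treatment is more explicit in two places---the reduction to simple factors and the determinant computation verifying invertibility---whereas the paper simply asserts that the Cartan matrix and the symmetrizers are invertible under the hypotheses and cites Kumar for $\Wf$-invariance; but the underlying argument is the same.
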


\begin{proof}
Write $\alpha_1, \ldots, \alpha_r$ for the simple roots of $\Gp$, and $\alpha_1^\vee, \ldots, \alpha_r^\vee$ for its simple coroots.
Let $A_{\mathrm{f}}=( \alpha_j(\alpha_i^\vee) \rangle)_{i,j=1, \ldots, r}$ be the Cartan matrix for $\Gp$.  Our assumptions imply that $A_{\mathrm{f}}$ is invertible over $\bk$.  Let $D=\mathrm{diag}(\epsilon_1, \ldots, \epsilon_r)$ be the minimal matrix such that $D^{-1} A$ is symmetric, in the sense of~\cite[Definition~1.5.1]{kumar}. Then $\epsilon_1, \ldots, \epsilon_r$ are invertible in $\bk$. The images of the simple coroots span $V$, so that we can define a symmetric perfect pairing on $V$ by setting
\[
\langle 1 \otimes \alpha_i^\vee, 1 \otimes \alpha_j^\vee \rangle = \alpha_i(\alpha_j^\vee) \epsilon_i = \alpha_j(\alpha_i^\vee) \epsilon_j.
\]
The proof of~\cite[Proposition~1.5.2]{kumar} shows that this pairing is $\Wf$-equivariant, so that the induced isomorphism $V \simto V^*$ is $\Wf$-equivariant as well. The fact that $\varphi(\alpha_s^\vee) \in \bk^\times \cdot \alpha_s$ follows from the $\Wf$-equivariance.
\end{proof}

\begin{thm}
\label{thm:duality-affine}
Assume that $2$ and all the prime numbers which are not very good for $\Gp$ are invertible in $\bk$. Then there exists an equivalence of triangulated categories
\[
\kappa : \Dmix_{(\Iw)}(\Fl, \bk) \simto \Dmix_{(\Iw)}(\Fl', \bk)
\]
which satisfies $\kappa \circ \langle 1 \rangle \cong \langle -1 \rangle [1]$ and, for any $w \in W$,
\begin{align*}
\kappa(\Delta_w) \cong \Delta'_w, \quad & \quad \kappa(\nabla_w) \cong \nabla'_w, \\
\kappa(\cE_w) \cong \mathcal{S}'_w, \quad & \quad \kappa(\mathcal{S}_w) \cong \cE'_w.
\end{align*}
\end{thm}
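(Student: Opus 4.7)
The plan is to reduce Theorem~\ref{thm:duality-affine} to the general Kac--Moody self-duality (Theorem~\ref{thm:Koszul-duality-objects}), applied to an affine Kac--Moody group $\GKM$ whose underlying realization of $(W,S)$ is the affine realization $\fh_\bk$ described above Lemma~\ref{lem:cartan-dual}. Standard comparisons between Iwahori and Borel structures (see e.g.~\cite[\S\S 9.1--9.2]{rw}) provide canonical identifications
\[
 \Dmix_{(\Iw)}(\Fl, \bk) \simeq \Dmix(\UGBold, \bk), \quad \Dmix_{(\Iw)}(\Fl', \bk) \simeq \Dmix(\BKM \backslash \GKM / \UKM, \bk),
\]
compatible with the labelled families of standard, costandard, parity, and tilting objects. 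Applying Theorem~\ref{thm:Koszul-duality-objects} to $\GKM$ then yields an equivalence
\[
 \kappa_0 : \Dmix(\UGBold, \bk) \simto \Dmix(\BGUvee, \bk)
\]
with the prescribed behavior on distinguished objects; its target is defined using the Langlands dual Kac--Moody group $\GKM^\vee$, whose combinatorics is governed by $\fh^*_\bk$.

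The remaining task is to identify $\Dmix(\BGUvee, \bk)$ with $\Dmix(\BKM \backslash \GKM / \UKM, \bk)$ compatibly with the $W$-labels. This is where Lemma~\ref{lem:cartan-dual} enters: the $\bk$-linear isomorphism $\varphi : V \simto V^*$ identifies $\fh_\bk$ with $\fh^*_\bk$ as realizations of $(W,S)$ up to a diagonal rescaling of each simple root by a unit $b_s \in \bk^\times$. Such a rescaling amounts to a rescaling of the lower-dot generators in the Elias--Williamson presentation (and a corresponding rescaling of the images of the relations by units), so it induces an equivalence of monoidal categories $\DiagBS(\fh_\bk, W) \simeq \DiagBS(\fh^*_\bk, W)$. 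Composing with the two instances of~\eqref{eqn:equivalence-rw} (one for $\GKM$ and one for $\GKM^\vee$) transports this into an equivalence of categories of Bott--Samelson parity complexes, which in turn --- after passing to Karoubian envelopes and then bounded homotopy categories, following the construction of Section~\ref{sec:prelim} --- yields the required equivalence
\[
 \Dmix(\BGUvee, \bk) \simeq \Dmix(\BKM \backslash \GKM / \UKM, \bk)
\]
respecting $W$-labels. The desired functor $\kappa$ is then the composition of $\kappa_0$ with these identifications, and the object-level assertions follow from Theorem~\ref{thm:Koszul-duality-objects}.

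The main subtlety is that $\GKM$ and $\GKM^\vee$ are \emph{not} literally isomorphic as (integral) Kac--Moody groups: they correspond respectively to the affine root systems of $\Gp$ and of the Langlands dual $\Gp^\vee$. What the argument exploits is only that, after base change to $\bk$ and under our assumption on $\bk$, the underlying realizations become isomorphic, so that the diagrammatic data presenting the parity and tilting categories agree on both sides. Verifying that the rescaling by the units $b_s$ can indeed be absorbed into a well-defined monoidal equivalence of the Elias--Williamson categories --- i.e.~that the defining relations of~\cite{ew} can be matched up to units on both sides --- is the step I expect to be most delicate, although it is essentially a routine bookkeeping exercise once Lemma~\ref{lem:cartan-dual} is in place.
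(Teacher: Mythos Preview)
Your overall strategy coincides with the paper's: pass through the Elias--Williamson category, use Lemma~\ref{lem:cartan-dual} to identify the realizations $\fh_\bk$ and $\fh^*_\bk$ up to a unit rescaling of each simple (co)root, and then compose. Your remark that this rescaling is absorbed by a monoidal autoequivalence of the diagrammatic category is exactly what the paper does; in fact the paper writes the equivalence $\DiagBSp \simto {}'\DiagBSp$ explicitly on generators (identity on upper dots and $2m_{st}$-valent vertices, $b_s^{-1}$ on lower dots, $b_s$ on the ``merge'' trivalent vertex, $\imath$ on polynomials), and checking the relations is indeed routine.

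Where your argument has a genuine gap is in the sentence ``applied to an affine Kac--Moody group $\GKM$ whose underlying realization of $(W,S)$ is the affine realization $\fh_\bk$.'' No such $\GKM$ is available in the framework of~\S\ref{ss:KM-groups}. The realization $\fh_\bk$ of~\S\ref{ss:duality-affine} has underlying module $V = \bk \otimes_\Z X_*(\TGp)$ of rank $|\Sf|$, whereas $|S| = |\Sf| + 1$ (for $\Gp$ simple); the affine simple roots are linearly \emph{dependent} in $V^*$, so this is not the Cartan realization attached to any Kac--Moody root datum whose flag variety is $\Fl$. Conversely, if you use the standard affine Kac--Moody group (with its extended Cartan), its realization is strictly larger than $\fh_\bk$, and Lemma~\ref{lem:cartan-dual} no longer applies to it. Either way, Theorem~\ref{thm:Koszul-duality-objects} cannot be invoked as a black box.

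The paper's proof circumvents this by not going through a Kac--Moody group at all. Instead it takes as input the equivalence $\DiagBSp \simto \ParityBSp(\Iw \backslash \Gp(\mathscr{K}) / \Iw, \bk)$ of~\cite[Theorem~10.16]{rw}, and observes that the entire machinery of~\cite{amrw} and of Sections~\ref{sec:Diag-Tilt}--\ref{sec:Koszul-duality} can be rerun verbatim with this equivalence in place of~\eqref{eqn:equivalence-rw}: one builds $\TiltBSp(\Iw^u \fatbslash \Gp(\mathscr{K}) \fatslash \Iw^u, \bk)$ directly on the loop group, and the analogue of Theorem~\ref{thm:monoidal-Koszul} then produces ${}'\DiagBSp \simto \TiltBSp(\Iw^u \fatbslash \Gp(\mathscr{K}) \fatslash \Iw^u, \bk)$. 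After that, your step (the rescaled diagrammatic equivalence $\DiagBSp \simto {}'\DiagBSp$) is exactly what is used to close the loop, and the object-level statements follow by the same arguments as for Theorem~\ref{thm:Koszul-duality-objects}.
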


\begin{proof}
%Let $S \subset W$ be the set of simple reflections (chosen as the reflections along the walls of the fundamental alcove $\{\lambda \in \mathbb{R} \otimes_{\Z} X_*(\TGp) \mid \forall \alpha \in \mathfrak{R}, \, 0 \leq \langle \lambda, \alpha \rangle \leq 1 \}$). We consider the realization of $(W,S)$ over $\bk$ defined as follows:
%\begin{enumerate}
% \item
% the underlying free $\bk$-module is $\fh = \bk \otimes_{\Z} X_*(\TGp)$;
% \item
% if $s \in \Sf$, $\alpha_s$ is the image in $\fh^* \cong \bk \otimes_{\Z} X^*(\TGp)$ of the simple root associated with $s$, and $\alpha_s^\vee$ is the image in $\fh$ of the simple coroot associated with $s$;
% \item
% if $s \in S \smallsetminus \Sf$, let $\gamma$ be the unique positive root such that the image of $s$ in $\Wf \cong W / X_*(\TGp)$ is $s_\gamma$; then $\alpha_s$ is the image of $-\gamma$ in $\fh^*$ and $\alpha_s^\vee$ is the image of $-\gamma^\vee$ in $\fh$.
%\end{enumerate}
For brevity, we write $\DiagBSp$ instead of $\DiagBSp(\fh,W)$ for the additive envelope of the  Elias--Williamson diagrammatic category associated to the realization $\fh$ of $W$.
By~\cite[Theorem~10.16]{rw},
there exists a canonical equivalence of additive monoi\-dal categories
\begin{equation}
\label{eqn:equiv-Diag-Par-aff}
 \DiagBSp \simto \ParityBSp(\Iw \backslash \Gp(\mathscr{K}) / \Iw, \bk),
\end{equation}
where the right-hand side denotes the category of direct sums of Bott--Samelson type $\Iw$-equivariant parity complexes on $\Fl$.
Using this as a starting point, one can run the same constructions as in~\cite{amrw} to construct a category
\[
 \TiltBSp(\Iw^u \fatbslash \Gp(\mathscr{K}) \fatslash \Iw^u, \bk)
\]
of Bott--Samelson type free-monodromic tilting perverse sheaves, and then the same constructions as in Section~\ref{sec:Koszul-duality} provide
an equivalence of categories
\begin{equation}
\label{eqn:equiv-D-Tilt-aff}
 {}' \hspace{-1pt} \DiagBSp \simto \TiltBSp(\Iw^u \fatbslash \Gp(\mathscr{K}) \fatslash \Iw^u, \bk),
\end{equation}
where ${}' \hspace{-1pt} \DiagBSp$ is the additive envelope of the Elias--Williamson diagrammatic category associated to the realization of $(W,S)$ with underlying free $\bk$-module $V^*$, roots $\{\alpha_s^\vee : s \in S\}$ and coroots $\{\alpha_s^\vee : s \in S\}$.

Let $\uDiagBSp$, resp.~${}' \hspace{-1pt} \uDiagBSp$, denote the category obtained from $\DiagBSp$, resp.~${}' \hspace{-1pt} \DiagBSp$, by taking quotients of morphism spaces by the morphisms of the form $f \cdot \lambda$ for $\lambda \in V^*$, resp.~of the form $f \cdot h$ for $h \in V$. Then the equivalence~\eqref{eqn:equiv-Diag-Par-aff} induces an equivalence of categories
\begin{equation}
\label{eqn:equiv-Diag-Par-aff-2}
 \uDiagBSp \simto \ParityBSp(\Iw \backslash \Gp(\mathscr{K}) / \Iw^u, \bk),
\end{equation}
and the equivalence~\eqref{eqn:equiv-D-Tilt-aff} induces an equivalence
\begin{equation}
\label{eqn:equiv-D-Tilt-aff-2}
 {}' \hspace{-1pt} \uDiagBSp \simto \TiltBSp(\Iw^u \fatbslash \Gp(\mathscr{K}) / \Iw, \bk).
\end{equation}

%Under our assumptions on $\bk$, there exists a $\Wf$-invariant symmetric bilinear form on $\fh$, hence in particular a $\Wf$-invariant isomorphism
%\[
% \varphi : \fh \simto \fh^*.
%\]
%The $\Wf$-invariance of $\varphi$ implies that for any $\beta \in \mathfrak{R}^+$ there exists $a_\beta \in \bk^\times$ such that
%\[
% \varphi(\beta^\vee) = a_\beta \cdot \beta, \qquad \beta \circ \varphi^{-1} = (a_\beta)^{-1} \cdot \beta^\vee. 
%\]
%Moreover, these scalars satisfy
%\[
% \langle \alpha, \beta^\vee \rangle = \frac{a_\beta}{a_\alpha} \cdot \langle \beta, \alpha^\vee \rangle
%\]
%for any $\alpha, \beta \in \mathfrak{R}$.
%For $s \in S$ we set $b_s := a_{\alpha_s}$. 

As usual, let $R = \Sym(V^*)$ and $R^\vee = \Sym(V)$, and then let $\imath : R \simto R^\vee$ be the isomorphism induced by the isomorphism $\varphi$ of Lemma~\ref{lem:cartan-dual}.  We can define an equivalence of categories $\DiagBSp \simto {}' \hspace{-1pt} \DiagBSp$ which is the identity on objects, and which is induced on morphisms by the assignment
\begin{align*}
  \begin{array}{c}
    \begin{tikzpicture}[thick,scale=0.07,baseline]
      \node at (0,0) {$f$};
    \end{tikzpicture}
  \end{array} &\mapsto   \begin{array}{c}
    \begin{tikzpicture}[thick,scale=0.07,baseline]
      \node at (0,0) {$\imath(f)$};
    \end{tikzpicture}
  \end{array} \\
    \begin{array}{c}
    \begin{tikzpicture}[thick,scale=0.07,baseline]
      \draw (0,-5) to (0,0);
      \node at (0,0) {$\bullet$};
      \node at (0,-6.7) {\tiny $s$};
    \end{tikzpicture}
  \end{array} &\mapsto
    \begin{array}{c}
    \begin{tikzpicture}[thick,scale=0.07,baseline]
      \draw (0,-5) to (0,0);
      \node at (0,0) {$\bullet$};
      \node at (0,-6.7) {\tiny $s$};
    \end{tikzpicture}
  \end{array} \\
    \begin{array}{c}
    \begin{tikzpicture}[thick,baseline,xscale=0.07,yscale=-0.07]
      \draw (0,-5) to (0,0);
      \node at (0,0) {$\bullet$};
      \node at (0,-6.7) {\tiny $s$};
    \end{tikzpicture}
  \end{array} &\mapsto
    \frac{1}{b_s} \cdot \begin{array}{c}
    \begin{tikzpicture}[thick,baseline,xscale=0.07,yscale=-0.07]
      \draw (0,-5) to (0,0);
      \node at (0,0) {$\bullet$};
      \node at (0,-6.7) {\tiny $s$};
    \end{tikzpicture}
  \end{array} \\
    \begin{array}{c}
    \begin{tikzpicture}[thick,baseline,scale=0.07]
      \draw (-4,5) to (0,0) to (4,5);
      \draw (0,-5) to (0,0);
      \node at (0,-6.7) {\tiny $s$};
      \node at (-4,6.4) {\tiny $s$};
            \node at (4,6.4) {\tiny $s$};
    \end{tikzpicture}
  \end{array} &\mapsto
      \begin{array}{c}
    \begin{tikzpicture}[thick,baseline,scale=0.07]
      \draw (-4,5) to (0,0) to (4,5);
      \draw (0,-5) to (0,0);
      \node at (0,-6.7) {\tiny $s$};
      \node at (-4,6.4) {\tiny $s$};
            \node at (4,6.4) {\tiny $s$};
    \end{tikzpicture}
  \end{array} \\
    \begin{array}{c}
    \begin{tikzpicture}[thick,baseline,scale=-0.07]
      \draw (-4,5) to (0,0) to (4,5);
      \draw (0,-5) to (0,0);
      \node at (0,-6.7) {\tiny $s$};
            \node at (-4,6.4) {\tiny $s$};
            \node at (4,6.4) {\tiny $s$};
    \end{tikzpicture}
  \end{array} &\mapsto
   b_s \cdot \begin{array}{c}
    \begin{tikzpicture}[thick,baseline,scale=-0.07]
      \draw (-4,5) to (0,0) to (4,5);
      \draw (0,-5) to (0,0);
      \node at (0,-6.7) {\tiny $s$};
            \node at (-4,6.4) {\tiny $s$};
            \node at (4,6.4) {\tiny $s$};
    \end{tikzpicture}
  \end{array} \\
  \begin{array}{c}
  \begin{tikzpicture}[yscale=0.5,xscale=0.3,baseline,thick]
\draw (-2.5,-1) to (0,0) to (-1.5,1);
\draw (-0.5,-1) to (0,0) to (0.5,1);
\draw (1.5,-1) to (0,0) to (2.5,1);
\draw[red] (-1.5,-1) to (0,0) to (-2.5,1);
\draw[red] (0.5,-1) to (0,0) to (-0.5,1);
\draw[red] (2.5,-1) to (0,0) to (1.5,1);
\node at (-2.5,-1.3) {\tiny $s$\vphantom{$t$}};
\node at (-1.5,1.3) {\tiny $s$\vphantom{$t$}};
\node at (-0.5,-1.3) {\tiny $\cdots$\vphantom{$t$}};
\node at (-1.5,-1.3) {\tiny $t$};
\node at (-2.5,1.3) {\tiny $t$};
\node at (-0.5,1.3) {\tiny $\cdots$\vphantom{$t$}};
    \end{tikzpicture}
\end{array} &\mapsto
  \begin{array}{c}
  \begin{tikzpicture}[yscale=0.5,xscale=0.3,baseline,thick]
\draw (-2.5,-1) to (0,0) to (-1.5,1);
\draw (-0.5,-1) to (0,0) to (0.5,1);
\draw (1.5,-1) to (0,0) to (2.5,1);
\draw[red] (-1.5,-1) to (0,0) to (-2.5,1);
\draw[red] (0.5,-1) to (0,0) to (-0.5,1);
\draw[red] (2.5,-1) to (0,0) to (1.5,1);
\node at (-2.5,-1.3) {\tiny $s$\vphantom{$t$}};
\node at (-1.5,1.3) {\tiny $s$\vphantom{$t$}};
\node at (-0.5,-1.3) {\tiny $\cdots$\vphantom{$t$}};
\node at (-1.5,-1.3) {\tiny $t$};
\node at (-2.5,1.3) {\tiny $t$};
\node at (-0.5,1.3) {\tiny $\cdots$\vphantom{$t$}};
    \end{tikzpicture} .
\end{array}
\end{align*}
(In fact, the only thing one has to check is that this assignment defines a functor, which can be checked by hand using the defining relations.)

Composing the induced equivalence $\uDiagBSp \simto {}' \hspace{-1pt} \uDiagBSp$ with~\eqref{eqn:equiv-D-Tilt-aff-2} we obtain an equivalence of categories 
\[
 \uDiagBSp \simto \TiltBSp(\Iw^u \fatbslash \Gp(\mathscr{K}) / \Iw, \bk).
\]
Comparing with~\eqref{eqn:equiv-Diag-Par-aff-2},
passing to bounded homotopy categories to then composing with the appropriate forgetful functor we deduce the desired equivalence $\kappa$. The fact that $\kappa$ has the stated properties follows from the same arguments as for Theorem~\ref{thm:Koszul-duality-objects}.
\end{proof}

\begin{rmk}
\begin{enumerate}
\item
 It should be clear from the proof of Theorem~\ref{thm:duality-affine} that a similar claim holds
  in the equivariant/free-monodromic setting. We leave this variant to the reader.
\item
The same arguments as in the proof of Theorem~\ref{thm:duality-affine} show that, in the setting of Section~\ref{sec:Koszul-duality}, if $\GKM$ is symmetrizable then the equivalence of Theorem~\ref{thm:Koszul-duality-objects} can be seen as an equivalence
\[
\Dmix(\BKM \backslash \GKM / \UKM, \bk) \simto \Dmix(\UKM \backslash \GKM / \BKM, \bk)
\]
provided a certain finite set of prime numbers depending on $\GKM$ is invertible in $\bk$. (We leave it to the interested reader to make this statement precise.)
\end{enumerate}
\end{rmk}

%--------------------------------------------------------------------------
\subsection{Koszul duality for affine Grassmannians}
\label{ss:duality-Gr}
%--------------------------------------------------------------------------

The parabolic--Whittaker duality of~\S\ref{ss:parabolic-Whittaker} can
also be stated in the present ``affine'' setting. For simplicity we
restrict to the case of the (left variant of the) affine Grassmannian
\[
 \Gr' := \Gp(\mathscr{O}) \backslash \Gp(\mathscr{K}).
\]
The $\Iw$-orbits on this ind-variety are parametrized in a natural way by the subset $\fW \subset W$ consisting of elements $w$ which are minimal in $\Wf \cdot w$.
If $\bk$ is an integral complete local ring,
%a complete local principal ideal domain, 
we denote by $\Dmix_{(\Iw)}(\Gr', \bk)$ the corresponding mixed derived category. For $w \in \fW$, we have a corresponding standard object $\Delta_w^{\Gr'}$, costandard object $\nabla_w^{\Gr'}$, indecomposable parity complex $\cE_w^{\Gr'}$, and indecomposable tilting perverse sheaf $\mathcal{S}_w^{\Gr'}$ in $\Dmix_{(\Iw)}(\Gr', \bk)$.

Now we assume that $\F$ and $\bk$ are as in~\S\ref{ss:Whit-derived-cat}.
We denote by $\TGp_\F$ the $\F$-torus whose lattice of characters is $X^*(\TGp)$, and let $\Gp_\F$ be the semisimple, simply-connected algebraic $\F$-group with maximal torus $\TGp_\F$ and root system $\mathfrak{R}$. Then we can define the Iwahori subgroups $\Iw_\F$ and $\Iw_\F^\circ$ of $\Gp_\F(\F ( \hspace{-1pt} ( z ) \hspace{-1pt} ))$ associated with the Borel subgroups of $\Gp_\F$ containing $\TGp_\F$ with roots $-\mathfrak{R}^+$ and $\mathfrak{R}^+$ respectively.

We redefine the affine flag variety $\Fl$ as the quotient $\Gp_\F(\F ( \hspace{-1pt} ( z ) \hspace{-1pt} )) / \Iw_\F$, an ind-variety over $\F$. Choosing identifications between $\F$ and each root subgroup of $G_\F$ associated with a simple root, as in~\S\ref{ss:Whit-derived-cat} we obtain an algebraic group morphism
\[
 \chi : \Iw_\F^\circ \to \Ga.
\]
Choosing also a nontrivial additive character
$\psi : \Z/p \Z \to \bk^\times$ (assumed to exist), with corresponding Artin--Schreier local system $\cL_\psi$, we can consider the category
\[
 \Dmix_{\mathcal{IW}}(\Fl, \bk)
\]
of $(\Iw_\F^\circ, \chi^*(\cL_\psi))$-equivariant mixed complexes. (Here ``$\mathcal{IW}$'' stands for ``Iwahori--Whittaker''; this terminology is taken from~\cite{ab}.) The $\Iw_\F^\circ$-orbits supporting an equivariant local system are labelled in a natural way by $\fW$. For $w \in \fW$, we have a corresponding standard object $\Delta_w^\IW$, costandard object $\nabla_w^\IW$, and indecomposable parity complex $\cE_w^\IW$
 in $\Dmix_{\mathcal{IW}}(\Fl, \bk)$.

The proof of the following theorem is identical to that of Theorem~\ref{thm:duality-par-Whit}.

\begin{thm}
Assume that $2$ and the prime numbers which are not very good for $\Gp$ are invertible in~$\bk$. Then
there exists an equivalence of triangulated categories
 \[
  \kappa_{\Gr'} : \Dmix_{\mathcal{IW}}(\Fl, \bk) \simto \Dmix_{(\Iw)}(\Gr', \bk)
 \]
which satisfies $\kappa \circ \langle 1 \rangle \cong \langle -1 \rangle [1]$ and, for any $w \in \fW$,
\[
\kappa(\Delta_w^\IW) \cong \Delta^{\Gr'}_w, \qquad \kappa(\nabla_w^\IW) \cong \nabla^{\Gr'}_w, \qquad
\kappa(\cE_w^\IW) \cong \mathcal{S}^{\Gr'}_w.
\]
\end{thm}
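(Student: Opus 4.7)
The plan is to run the argument of Theorem~\ref{thm:duality-par-Whit} verbatim in the affine setting, with the following dictionary: the parabolic subgroup $\PKM_J \subset \GKM$ is replaced by the parahoric $\Gp_{\F}(\mathscr{O}) \subset \Gp_\F(\F(\hspace{-1pt}(z)\hspace{-1pt}))$ (on the Langlands dual side, the flag variety $\Fl'$ is replaced by the affine Grassmannian $\Gr'$), the coset set $\JW$ is replaced by $\fW$, and the character $\chi_J$ defining the parabolic Whittaker equivariance is replaced by the character $\chi$ defining Iwahori--Whittaker equivariance. The starting point is the equivalence $\kappa: \Dmix_{(\Iw)}(\Fl, \bk) \simto \Dmix_{(\Iw)}(\Fl', \bk)$ from Theorem~\ref{thm:duality-affine}.

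First I would set up the two averaging/pushforward functors that produce the vertical maps of the commutative square. On the Iwahori--Whittaker side, define
\[
\Av : \Parity_{(\Iw)}(\Fl, \bk) \to \Parity_{\mathcal{IW}}(\Fl, \bk)
\]
as convolution on the left with $\Delta_1^{\IW}$; this sends parity to parity (by the analogue of~\cite[Corollary 11.5]{rw}) and, by an argument identical to Lemma~\ref{lem:Av-Delta-nabla}, satisfies $\Av(\Delta_w) \cong \Delta_w^{\IW}$ and $\Av(\nabla_w) \cong \nabla_w^{\IW}$ for $w \in \fW$. The analogue of Proposition~\ref{prop:Av-Parity} (proved e.g.\ as in~\cite[Theorem 11.11]{rw}) then gives an equivalence
\[
\Parity_{(\Iw)}(\Fl,\bk) / \langle \cE_w : w \notin \fW\rangle_{\oplus, \{1\}} \simto \Parity_{\mathcal{IW}}(\Fl, \bk).
\]
On the coherent-dual side, consider the pushforward $(\pi)_* : \Dmix_{(\Iw)}(\Fl', \bk) \to \Dmix_{(\Iw)}(\Gr', \bk)$ along the quotient $\Fl' \to \Gr'$. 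By the analogue of Lemma~\ref{lem:piJ-tilting}, $(\pi)_*$ sends indecomposable tilting perverse sheaves $\mathcal{S}'_w$ to $\mathcal{S}^{\Gr'}_w$ if $w \in \fW$, and to zero if $w \notin \fW$; this is where one uses that $\Sf \subset S$ is of finite type, which permits exactly the Verdier self-duality argument given there (first over $\Q$, then by extension of scalars).

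Next I would combine these: the equivalence $\kappa$ restricts to an equivalence $\Parity_{(\Iw)}(\Fl,\bk) \simto \Tilt^{\mix}_{(\Iw)}(\Fl',\bk)$ between the additive subcategories, and hence descends to an equivalence between the quotients by $\langle \cE_w : w \notin \fW \rangle$ and $\langle \mathcal{S}'_w : w \notin \fW \rangle$ respectively. Composing with the two results above and passing to bounded homotopy categories via the realization functor (Proposition~\ref{prop:realization} and Lemma~\ref{lem:tilt-realization}, both of which apply since $\Dmix_{\mathcal{IW}}(\Fl,\bk) \cong \Kb \Parity_{\mathcal{IW}}(\Fl,\bk)$ by construction) yields the triangulated functor $\kappa_{\Gr'}$, together with a commutative diagram analogous to the one in Theorem~\ref{thm:duality-par-Whit}. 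The identifications $\kappa_{\Gr'}(\Delta_w^{\IW}) \cong \Delta_w^{\Gr'}$, $\kappa_{\Gr'}(\nabla_w^{\IW}) \cong \nabla_w^{\Gr'}$ for $w \in \fW$ follow by chasing the diagram using the compatibilities established above, and then $\kappa_{\Gr'}(\cE_w^{\IW}) \cong \mathcal{S}_w^{\Gr'}$ follows from $\cE_w^{\IW} \cong \Av(\cE_w)$ together with $\kappa(\cE_w) \cong \mathcal{S}'_w$.

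Finally, to conclude that $\kappa_{\Gr'}$ is an equivalence, I would argue as in the last paragraph of the proof of Theorem~\ref{thm:duality-par-Whit}: the isomorphism
\[
\Hom(\Delta_w^{\IW}, \nabla_w^{\IW}) \simto \Hom(\Delta_w^{\Gr'}, \nabla_w^{\Gr'})
\]
induced by $\kappa_{\Gr'}$ is nontrivial (one-dimensional on both sides, and sent to a nonzero morphism by construction), after which Be{\u\i}linson's lemma combined with a standard dévissage upgrades this to full faithfulness and then essential surjectivity. The main obstacle I expect is the analogue of Lemma~\ref{lem:piJ-tilting}(2), i.e.\ showing that $(\pi)_*\mathcal{S}'_w = 0$ for $w \notin \fW$: the characteristic-zero argument invokes IC = parity and Verdier self-duality, and the passage to general $\bk$ goes through the Bott--Samelson lifts and torsion-freeness of stalks. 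One must check carefully that the torsion-freeness statement holds for the relevant $\mathscr{S}^{',\Z'}_{\uw}$ in the affine setting, but this should follow from the standard parity arguments together with the hypothesis on $\bk$ inherited via Theorem~\ref{thm:duality-affine}.
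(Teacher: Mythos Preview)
Your proposal is correct and matches the paper's approach exactly: the paper's proof consists of the single sentence ``The proof of the following theorem is identical to that of Theorem~\ref{thm:duality-par-Whit},'' and what you have written is a careful unpacking of precisely that argument in the affine/Iwahori--Whittaker setting, with the same dictionary and the same sequence of reductions.
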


%--------------------------------------------------------------------------
\subsection{Character formula for tilting mixed perverse sheaves on \texorpdfstring{$\Gr'$}{Gr'}}
%--------------------------------------------------------------------------

We now assume that $\bk$ is a field (which does not necessarily satisfy the conditions of~\S\ref{ss:Whit-derived-cat}). We denote its characteristic by $\ell$, and assume that $\ell$ is odd and very good for $G$. We let
\[
 \{ \ppn_{y,w} : y,w \in \fW \}
\]
be the antispherical $\ell$-Kazhdan--Lusztig polynomials as considered in~\cite[\S 1.4]{rw}.
The following corollary was our main motivation to develop the ``parabolic--Whitta\-ker'' formalism of Section~\ref{sec:parabolic-whittaker}.

\begin{cor}
\label{cor:characters-tilting-Gr}
 For any $w,y \in \fW$ we have
 \[
  \ppn_{y,w}(v) = \sum_{i \in \Z} \bigl( \mathcal{S}^{\Gr'}_w : \nabla^{\Gr'}_y \langle -i \rangle \bigr) \cdot v^i.
 \]
\end{cor}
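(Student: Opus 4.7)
My plan is to combine the Koszul equivalence $\kappa_{\Gr'}$ just established with the categorification of the antispherical $\ell$-canonical basis by the indecomposable Iwahori--Whittaker parity complexes on $\Fl$, as developed in Part~III of~\cite{rw}. The first step is to express the tilting multiplicity as a Hom dimension on the $\Gr'$ side. Since $\mathcal{S}_w^{\Gr'}$ admits a filtration by costandards and $\Hom_{\Dmix_{(\Iw)}(\Gr',\bk)}(\Delta_y^{\Gr'}, \nabla_z^{\Gr'}\la m\ra[n])$ is one-dimensional exactly when $(z,m,n)=(y,0,0)$ and vanishes otherwise, induction on such a filtration yields
\[
(\mathcal{S}_w^{\Gr'} : \nabla_y^{\Gr'}\la -i\ra) = \dim_\bk \Hom_{\Dmix_{(\Iw)}(\Gr',\bk)}(\Delta_y^{\Gr'}, \mathcal{S}_w^{\Gr'}\la i\ra).
\]

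Next I would transport this computation to the Iwahori--Whittaker side via $\kappa_{\Gr'}^{-1}$. The commutation relation $\kappa_{\Gr'}\circ\la 1\ra \cong \la -1\ra[1]\circ\kappa_{\Gr'}$ implies $\kappa_{\Gr'}^{-1}(\mathcal{S}_w^{\Gr'}\la i\ra) \cong \cE_w^\IW\la -i\ra[i]$, and combined with $\kappa_{\Gr'}^{-1}(\Delta_y^{\Gr'}) \cong \Delta_y^\IW$ this gives
\[
(\mathcal{S}_w^{\Gr'} : \nabla_y^{\Gr'}\la -i\ra) = \dim_\bk \Hom_{\Dmix_{\mathcal{IW}}(\Fl,\bk)}(\Delta_y^\IW, \cE_w^\IW\la -i\ra[i]).
\]

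Finally, I would identify the resulting graded sum with $\ppn_{y,w}(v)$. Working in $\Dmix_{\mathcal{IW}}(\Fl,\bk)=\Kb\Parity_{\mathcal{IW}}(\Fl,\bk)$, the standard convention for the Tate twist on $\Kb\Parity$ absorbs $\la -i\ra[i]$ into the parity shift $\{i\}$, so that the mixed Hom above reduces to $\Hom_{\Parity_{\mathcal{IW}}(\Fl,\bk)}(\Delta_y^\IW, \cE_w^\IW\{i\})$, which computes the graded rank of the stalk of the parity complex $\cE_w^\IW$ along the Iwahori--Whittaker orbit indexed by $y$. By the main results of Part~III of~\cite{rw} --- specifically, the equivalence between $\Parity_{\mathcal{IW}}(\Fl,\bk)$ and the antispherical diagrammatic Hecke category, together with the identification of the $\ell$-canonical basis of the antispherical module with the classes $[\cE_w^\IW]$ --- the graded sum $\sum_i \dim \Hom(\Delta_y^\IW, \cE_w^\IW\la -i\ra[i]) v^i$ equals the expansion coefficient of $[\cE_w^\IW]$ in $[\Delta_y^\IW]$, which is by definition $\ppn_{y,w}(v)$. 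Summing the displayed identity over $i$ then yields the claim.

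The main obstacle will be checking that the various shift conventions line up precisely --- the Tate twist $\la 1\ra$ in the mixed category, the cohomological shift $[1]$, and the parity shift $\{1\}$ on $\Kb\Parity_{\mathcal{IW}}$, together with the normalizations chosen for $\Delta_y^\IW$ and $\cE_w^\IW$ --- so that the resulting $v$-grading matches that of $\ppn_{y,w}(v)$ exactly, without any residual sign or $v$-shift.
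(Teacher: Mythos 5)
Your argument traces essentially the same chain of identifications as the paper's, just in the opposite direction: you start from the tilting multiplicity on $\Gr'$, pass through $\kappa_{\Gr'}^{-1}$, and land on the Iwahori--Whittaker parity-stalk expression for $\ppn_{y,w}$; the paper starts from $\ppn_{y,w}(v) = \sum_i \dim\Hom_{\Dmix_{\mathcal{IW}}(\Fl,\bk)}(\Delta_y^\IW,\cE_w^\IW\{i\})\,v^i$ (using \cite[Theorem~11.11]{rw}) and pushes it forward through $\kappa_{\Gr'}$. The shift bookkeeping you flag as a potential obstacle is in fact fine as you have written it: $\la -i\ra[i] = \{i\}$ in the conventions of~\cite{modrap2}, and your $\Hom(\Delta_y^\IW, \cE_w^\IW\la -i\ra[i])$ is exactly the paper's $\Hom(\Delta_y^\IW, \cE_w^\IW\{i\})$.

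However, there is a genuine gap: the corollary is stated for an arbitrary field $\bk$ of odd characteristic $\ell$ very good for $\Gp$, whereas the equivalence $\kappa_{\Gr'}$ was only constructed (via the \'etale Iwahori--Whittaker machinery and the Artin--Schreier local system) under the restrictive hypotheses of~\S\ref{ss:Whit-derived-cat}, i.e.\ for $\bk$ an $\ell$-adic field, the ring of integers of one, or a \emph{finite} field of characteristic $\ell$. For, say, $\bk = \overline{\F}_\ell$, the category $\Dmix_{\mathcal{IW}}(\Fl,\bk)$ on which your $\kappa_{\Gr'}^{-1}$ is supposed to act is not even defined, so you cannot simply ``transport via $\kappa_{\Gr'}^{-1}$.'' The paper's proof first reduces to the case where $\bk$ does satisfy those hypotheses, by invoking~\cite[Lemma~3.8]{williamson} to show that the indecomposable tilting perverse sheaves on $\Gr'$ (hence the multiplicities $(\mathcal{S}_w^{\Gr'}:\nabla_y^{\Gr'}\la -i\ra)$) are unchanged under extension of scalars along field maps; since $\ppn_{y,w}$ also only depends on the characteristic, one can then specialize to a finite subfield. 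Your proof needs this reduction step prepended; without it the argument does not apply to the stated generality of $\bk$.
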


\begin{proof}
The same arguments as in~\cite[Lemma~3.8]{williamson} show that if $\bk \to \bk'$ is a field extension, then the extension-of-scalars functor $\Dmix_{(\Iw)}(\Gr', \bk) \to \Dmix_{(\Iw)}(\Gr', \bk')$ sends the indecomposable tilting perverse sheaf labelled by $w$ with coefficients $\bk$ to its counterpart for coefficients $\bk'$. Therefore, we can assume that $\bk$ satisfies the conditions of~\S\ref{ss:Whit-derived-cat}.
Then by definition and~\cite[Theorem~11.11]{rw}, if $\F$ is as above, we have
\[
 \ppn_{y,w}(v) = \sum_{i \in \Z} \dim \Hom_{\Dmix_{\mathcal{IW}}(\Fl, \bk)}(\Delta^\IW_y, \cE^\IW_w \{i\}) \cdot v^i.
\]
Using the equivalence $\kappa_{\Gr'}$ we deduce that
\[
 \ppn_{y,w}(v) = \sum_{i \in \Z} \dim \Hom_{\Dmix_{(\Iw)}(\Gr', \bk)}(\Delta^{\Gr'}_y, \mathcal{S}^{\Gr'}_w \langle i\rangle) \cdot v^i.
\]
The claim follows.
\end{proof}

%--------------------------------------------------------------------------
\subsection{Tilting character formula}
%--------------------------------------------------------------------------

From now on we assume that $\bk$ is an algebraically closed field of characteristic $\ell>0$, and let
$\bG$ be a connected reductive group over $\bk$ with simply-connected derived subgroup. We let $h$ be the Coxeter number of $\bG$, and assume that $\ell>h$.

We choose a Borel subgroup $\bB \subset \bG$ and a maximal torus $\bT \subset \bB$. We let $\mathfrak{S}$ be the root system of $(\bG, \bT)$, and $\mathfrak{S}^+ \subset \mathfrak{S}$ be the system of positive roots consisting of the $\bT$-weights in $\mathrm{Lie}(\bG) / \mathrm{Lie}(\bB)$. We also set $\bbX := X^*(\bT)$, and denote by $\bbX^+ \subset \bbX$ the subset of dominant weights.

For any $\lambda \in \bbX^+$, we denote by $\nabla(\lambda)$, resp.~$\Delta(\lambda)$, resp.~$\mathsf{T}(\lambda)$, the induced, resp.~Weyl, resp.~indecomposable tilting, $\bG$-module of highest weight $\lambda$.

We also denote by $\TGp$ the complex torus with weights $\Hom_{\Z}(\Z \mathfrak{S}, \Z)$, and let $\Gp$ be the semisimple, simply-connected complex algebraic group with maximal torus $\TGp$ and coroot system $\mathfrak{S}$. We have an associated affine Weyl group $W$ as in~\S\ref{ss:duality-affine} (which identifies with the semi-direct product $\Wf \ltimes \Z\mathfrak{S}$), and antispherical $\ell$-Kazhdan--Lusztig polynomials $\ppn_{y,w}$ as in~\S\ref{ss:duality-Gr}.

Let $\rho=\frac{1}{2} \sum_{\alpha \in \mathfrak{S}^+} \alpha$; then we can consider the ``dot-action'' of $W$ on $\bbX$ defined by
\[
 (w t_\lambda) \cdot_p \mu = w(\mu + p\lambda + \rho)-\rho
\]
for $w \in \Wf$ and $\lambda \in \Z\mathfrak{S}$.
The following result proves the ``combinatorial'' part of the main conjecture from~\cite{rw}.

\begin{thm}
\label{thm:char-formula-tiltings}
 For any $w,y \in \fW$ we have
 \[
  \bigl( \mathsf{T}(w \cdot_p 0) : \nabla(y \cdot_p 0) \bigr) = \ppn_{y,w}(1).
 \]
\end{thm}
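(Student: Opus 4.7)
The plan is to derive the tilting character formula from Corollary~\ref{cor:characters-tilting-Gr} by transporting it across the graded Finkelberg--Mirkovi\'c equivalence established in~\cite{prinblock}. First, by combining~\cite{arider,mr:etsps,prinblock} one obtains a functor
\[
\Phi_0 \colon \Dmix_{(\Iw)}(\Gr', \bk) \to \Db \mathrm{Rep}_0(\mathbf{G})
\]
whose restriction to $\Perv^\mix_{(\Iw)}(\Gr', \bk)$ realizes this category as a graded version of $\mathrm{Rep}_0(\mathbf{G})$. Under this functor and the standard parametrization of weights in the principal block by $\fW$ (via the dot action), one has, for each $w \in \fW$,
\[
\nabla_w^{\Gr'} \mapsto \nabla(w \cdot_p 0), \quad \Delta_w^{\Gr'} \mapsto \Delta(w \cdot_p 0), \quad \mathcal{S}_w^{\Gr'} \mapsto \mathsf{T}(w \cdot_p 0).
\]

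Second, a general property of graded highest-weight covers is that the ungraded costandard-filtration multiplicity of an indecomposable tilting object is the sum of the corresponding graded multiplicities over all Tate twists:
\[
\bigl( \mathsf{T}(w \cdot_p 0) : \nabla(y \cdot_p 0) \bigr) = \sum_{i \in \Z} \bigl( \mathcal{S}_w^{\Gr'} : \nabla_y^{\Gr'} \la -i \ra \bigr).
\]
On the other hand, evaluating Corollary~\ref{cor:characters-tilting-Gr} at $v = 1$ gives exactly
\[
\ppn_{y,w}(1) = \sum_{i \in \Z} \bigl( \mathcal{S}_w^{\Gr'} : \nabla_y^{\Gr'} \la -i \ra \bigr),
\]
and comparing the last two displays yields the theorem.

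The substantive mathematical content of this argument lies upstream: the parabolic--Whittaker Koszul duality $\kappa_{\Gr'}$ of~\S\ref{ss:duality-Gr}, itself built on the monoidal Koszul duality of Theorem~\ref{thm:monoidal-Koszul}, supplies the $\ell$-Kazhdan--Lusztig character formula on the sheaf side, while~\cite{prinblock} supplies the bridge to $\mathrm{Rep}_0(\mathbf{G})$. The only remaining obstacle is therefore bookkeeping: one must verify that Tate twists, the passage between the left-variant $\Gr'$ and the ordinary $\Gr$, the normalization of the highest-weight structure on $\Perv^\mix_{(\Iw)}(\Gr', \bk)$, and the parametrization of weights in the principal block by $\fW \cdot_p 0$ are all compatible across the composition of equivalences being invoked.
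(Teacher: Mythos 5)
Your proposal matches the paper's argument: the key input is precisely the multiplicity identity $\bigl( \mathsf{T}(w \cdot_p 0) : \nabla(y \cdot_p 0) \bigr) = \sum_{i} \bigl( \mathcal{S}^{\Gr'}_w : \nabla^{\Gr'}_y \langle i \rangle \bigr)$, which the paper cites directly as \cite[Theorem~11.7]{prinblock} (with \cite[Remark~11.3(2)]{prinblock} absorbing the bookkeeping you flag), and then specializing Corollary~\ref{cor:characters-tilting-Gr} at $v=1$. The only cosmetic difference is that you phrase the degrading step via a functor between derived categories, whereas the functor from~\cite{prinblock} is really between the abelian categories $\Perv^\mix_{(\Iw)}(\Gr,\bk)$ and $\mathrm{Rep}_0(\mathbf{G})$; this has no bearing on the multiplicity count.
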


\begin{proof}
 It follows from~\cite[Theorem~11.7]{prinblock} that we have
 \[
 \bigl( \mathsf{T}(w \cdot_p 0) : \nabla(y \cdot_p 0) \bigr) = \sum_{i \in \Z} \bigl( \mathcal{S}^{\Gr'}_w : \nabla^{\Gr'}_y \langle i \rangle \bigr).
 \]
(See~\cite[Remark~11.3(2)]{prinblock} for the comparison between our present conventions and those of~\cite{prinblock}.) Then the desired formula follows from Corollary~\ref{cor:characters-tilting-Gr}.
\end{proof}

%%%%%%%%%%%%%%%%%%%%%%%%%%%%%%%%%%%%%%%%%%%%%%%%%%%%%%%%%%%%%%%%%%%%%%%%%%%

\end{document}